\documentclass[10pt]{amsart}
      \usepackage{amsmath,amsfonts}
      %

             \hoffset -1.3cm
      \voffset -1cm
\textwidth 16truecm
      \textheight 22.5truecm

\def\mapright#1{\smash{
\mathop{\rg}\limits^{#1}}}
\def\mapdown#1{\bigg\downarrow
\rlap{$\vcenter{\hbox{$\scriptstyle#1$}}$}}

\def\rg{\hbox to 30pt{\rightarrowfill}}
\def\lg{\hbox to 30pt{\leftarrowfill}}

      \parskip\smallskipamount
          \newtheorem{theorem}{Theorem}[section]
      \newtheorem{definition}[theorem]{Definition}
      \newtheorem{proposition}[theorem]{Proposition}
      \newtheorem{corollary}[theorem]{Corollary}
      \newtheorem{lemma}[theorem]{Lemma}

      \newtheorem{remark}[theorem]{Remark}
      \makeatletter
      \@addtoreset{equation}{section}
      \makeatother

      \newcommand{\BB}{{\mathbb B}}
      \newcommand{\CC}{{\mathbb C}}
      \newcommand{\NN}{{\mathbb N}}
      
      \newcommand{\ZZ}{{\mathbb Z}}
      \newcommand{\DD}{{\mathbb D}}
      \newcommand{\RR}{{\mathbb R}}
      \newcommand{\FF}{{\mathbb F}}
      \newcommand{\TT}{{\mathbb T}}

      \newcommand{\cA}{{\mathcal A}}
      \newcommand{\cB}{{\mathcal B}}
      \newcommand{\cC}{{\mathcal C}}
      \newcommand{\cD}{{\mathcal D}}
      \newcommand{\cE}{{\mathcal E}}
      
      \newcommand{\cG}{{\mathcal G}}
      \newcommand{\cH}{{\mathcal H}}
      \newcommand{\cK}{{\mathcal K}}
      \newcommand{\cL}{{\mathcal L}}
      \newcommand{\cM}{{\mathcal M}}
      \newcommand{\cN}{{\mathcal N}}
      \newcommand{\cO}{{\mathcal O}}
      \newcommand{\cQ}{{\mathcal Q}}

      \newcommand{\cS}{{\mathcal S}}
      \newcommand{\cT}{{\mathcal T}}
      \newcommand{\cU}{{\mathcal U}}
      \newcommand{\cV}{{\mathcal V}}

      \newcommand{\supp}{\hbox{\rm{supp}}\,}
      \newcommand{\rank}{\hbox{\rm{rank}}\,}

      \newdimen\expt
      \expt=.1ex
      \def\boxit#1{\setbox0\hbox{$\displaystyle{#1}$}
            \hbox{\lower.4\expt
       \hbox{\lower3\expt\hbox{\lower\dp0
            \hbox{\vbox{\hrule height.4\expt
       \hbox{\vrule width.4\expt\hskip3\expt
            \vbox{\vskip3\expt\box0\vskip2\expt}%
       \hskip3\expt\vrule width.4\expt}\hrule height.4\expt}}}}}}
      \begin{document}
       \pagestyle{myheadings}
      \markboth{ Gelu Popescu}{  Curvature invariant on noncommutative  polyballs  }

      \title [ Holomorphic automorphisms  of noncommutative polyballs   ]
      { Holomorphic automorphisms  of noncommutative polyballs }
        \author{Gelu Popescu}
\date{February 18, 2015}
      \thanks{Research supported in part by an NSF grant}
      \subjclass[2000]{Primary:  46L40; 47A13;   Secondary: 46L06; 46L52.}
      \keywords{Noncommutative polyball;    Automorphism;  Berezin transform;  Fock space; Creation operators; Cuntz-Toeplitz algebra.
}

      \address{Department of Mathematics, The University of Texas
      at San Antonio \\ San Antonio, TX 78249, USA}
      \email{\tt gelu.popescu@utsa.edu}

\begin{abstract}
The  abstract regular polyball ${\bf B_n}$, ${\bf n}=(n_1,\ldots, n_k)\in \NN^k$,   is a noncommutative analogue of the scalar polyball \, $(\CC^{n_1})_1\times\cdots \times  (\CC^{n_1})_1$, which has been recently studied in connection with operator model theory, curvature invariant, and Euler characteristic. In this paper, we study free holomorphic functions on ${\bf B_n}$ and provide analogues of several classical results from complex analysis such as: Abel theorem, Hadamard formula, Cauchy inequality, Schwarz lemma, and maximum principle. These results are used together with a class of  noncommutative Berezin transforms to obtain a complete description of the group $\text{\rm Aut}({\bf B_n})$ of all free holomorphic automorphisms of the polyball ${\bf B_n}$, which is an analogue of Rudin's characterization of the holomorphic automorphisms of the polydisc,  and show that
$$
\text{\rm Aut}({\bf B_n}) \simeq\text{\rm Aut}((\CC^{n_1})_1\times\cdots \times  (\CC^{n_1})_1).
$$
If ${\bf m}=(m_1,\ldots, m_q)\in \NN^q$, we show that the polyballs
${\bf B_n}$ and
${\bf B_m}$ are free biholomorphic equivalent
if and only if    $k=q$ and there is a permutation $\sigma$ such that $m_{\sigma(i)}=n_i$ for any $i\in \{1,\ldots, k\}$. This extends  Poincar\' e's  result  that  the open unit ball of $\CC^n$ is not biholomorphic  equivalent to the polydisk $\DD^n$, to our noncommutative setting.

The abstract polyball  ${\bf B_n}$ has a universal model ${\bf S}:=\{{\bf S}_{i,j}\}$ consisting of left creation operators acting on the tensor product  $F^2(H_{n_1})\otimes \cdots \otimes F^2(H_{n_k})$ of full Fock spaces. The noncommutative Hardy algebra ${\bf F}_{\bf n}^\infty$ (resp.\,the polyball algebra $\boldsymbol\cA_{\bf n}$) is the weakly closed (resp.\,norm closed) non-selfadjoint  algebra generated by $\{{\bf S}_{i,j}\}$ and the identity.
We prove that
$$
\text{\rm Aut}_{\boldsymbol\cA_{\bf n}}(C^*({\bf S}))\simeq \text{\rm Aut}_u(\boldsymbol\cA_{\bf n})\simeq  \text{\rm Aut}_u({\bf F}_{\bf n}^\infty)\simeq \text{\rm Aut}({\bf B_n}),
$$
where $\text{\rm Aut}_{\boldsymbol\cA_{\bf n}}(C^*({\bf S}))$  is the group of automorphisms of  the Cuntz-Toeplitz  $C^*$-algebra $C^*({\bf S})$  which leaves invariant the   noncommutative polyball algebra $\boldsymbol\cA_{\bf n}$, and
$\text{\rm Aut}_u(\boldsymbol\cA_{\bf n})$ (resp.\,${\rm Aut}_u({\bf F}_{\bf n}^\infty)$) is the group of unitarily implemented  automorphisms of  the  algebra $\boldsymbol\cA_{\bf n}$ (resp.\,${\bf F}_{\bf n}^\infty)$). Moreover, we obtain  formulas for the elements of these automorphism groups in terms of noncommutative Berezin transforms. As a consequence, we obtain a concrete description for the group of automorphisms of  the tensor product $\cT_{n_1}\otimes\cdots \otimes\cT_{n_k}$ of Cuntz-Toeplitz algebras which leave invariant the tensor product $\cA_{n_1}\otimes_{min}\cdots \otimes_{min}\cA_{n_k}$ of noncommutative disc algebras, which extends Voiculescu's result when $k=1$.

We  prove that the free holomorphic automorphism group \
 $\text{\rm Aut}({\bf B_n})$ is a   $\sigma$-compact, locally
compact topological group with respect to the topology induced by the metric
$$
d_{\bf B_n}(\phi,\psi):=\|\phi -\psi\|_\infty +
\|\phi^{-1}(0)-\psi^{-1}(0)\|,\qquad \phi,\psi\in  \text{\rm Aut}({\bf B_n}).
$$
Finally, we obtain  a  concrete unitary projective representation of the topological group
  $\text{\rm Aut}({\bf B_n})$  in terms
      of noncommutative Berezin kernels associated with regular polyballs.

\end{abstract}

      \maketitle

\section*{Contents}
{\it

\quad Introduction

\begin{enumerate}
   \item[1.]     Noncommutative   polyballs and free holomorphic functions
   \item[2.]   Maximum principle and Schwarz type results
   \item[3.]     Holomorphic automorphisms of noncommutative polyballs
   \item[4.]    Automorphisms of Cuntz-Toeplitz algebras
       \item[5.] Automorphisms of the   polyball algebra
    $\cA ({\bf B_n})$ and  the Hardy algebra $H^\infty({\bf B_n})$
    \item[6.] The automorphism group $Aut({\bf B_n})$ and unitary projective representations
\end{enumerate}

\quad References

}

\bigskip

\section*{Introduction}

Recently (see \cite{Po-Berezin3}, \cite{Po-Berezin-poly}), we have tried to unify the multivariable operator model theory  for ball-like domains and commutative polydiscs, and extend it to a more general class of noncommutative polydomains (which includes the regular polyballs) and use it to  develop a theory of free holomorphic functions. What is remarkable for these polydomains is that they have universal models, in a certain sense, which are (weighted)  creation operators acting on tensor products of full Fock spaces. The model theory and  the free holomorphic function theory  on these polydomains are related, via noncommutative Berezin transforms, to the study of the operator algebras generated by the universal models, as well as to the theory of functions in several complex variable (\cite{Kr}, \cite{Ru1}, \cite{Ru2}). It is the interplay between these three fields that lead to a rich analytic function theory on these noncommutative polydomains. Our work on curvature invariant \cite{Po-curvature-polyball} and Euler characteristic \cite{Po-Euler-charact} on noncommutative regular  polyballs has led us to   study  the free holomorphic automorphisms  of these polyballs, which is the goal of the present paper and  continues   work  of Voiculescu \cite{Vo},  of  Davidson and Pitts \cite{DP2},  of Helton, Klep, McCullough and Singled \cite{HKMS}, of Benhida and Timotin \cite{BeTi2}, \cite{BeTi}, and of the author in \cite{Po-automorphism}, \cite{Po-classification}.  In a related context we mention the work of
Muhly and Solel  \cite{MuSo3}, and of
   Power and Solel \cite{PS}.

Throughout this paper, $B(\cH)$ stands for the algebra of all bounded linear operators on a Hilbert space $\cH$. We denote by  $B(\cH)^{n_1}\times_c\cdots \times_c B(\cH)^{n_k}$, where $n_i \in\NN:=\{1,2,\ldots\}$,
   the set of all tuples  ${\bf X}:=({ X}_1,\ldots, { X}_k)$ in $B(\cH)^{n_1}\times\cdots \times B(\cH)^{n_k}$
     with the property that the entries of ${X}_s:=(X_{s,1},\ldots, X_{s,n_s})$  are commuting with the entries of
      ${X}_t:=(X_{t,1},\ldots, X_{t,n_t})$  for any $s,t\in \{1,\ldots, k\}$, $s\neq t$.
  Note that  the operators $X_{s,1},\ldots, X_{s,n_s}$ are not necessarily commuting.
  Let ${\bf n}:=(n_1,\ldots, n_k)$ and define  the polyball
  $${\bf P_n}(\cH):=[B(\cH)^{n_1}]_1\times_c \cdots \times_c [B(\cH)^{n_k}]_1,
  $$
  where
    $$[B(\cH)^{n}]_1:=\{(X_1,\ldots, X_n)\in B(\cH)^{n}:\ \|  X_1X_1^*+\cdots +X_nX_n^*\|<1\}, \quad n\in \NN.
    $$
    If $A$ is  a positive invertible operator, we write $A>0$. The {\it regular polyball} on the Hilbert space $\cH$  is defined by
$$
{\bf B_n}(\cH):=\left\{ {\bf X}\in {\bf P_n}(\cH) : \ {\bf \Delta_{X}}(I)> 0  \right\},
$$
where
 the {\it defect mapping} ${\bf \Delta_{X}}:B(\cH)\to  B(\cH)$ is given by
$$
{\bf \Delta_{X}}:=\left(id -\Phi_{X_1}\right)\circ \cdots \circ\left(id -\Phi_{ X_k}\right),
$$
 and
$\Phi_{X_i}:B(\cH)\to B(\cH)$  is the completely positive linear map defined by
$$\Phi_{X_i}(Y):=\sum_{j=1}^{n_i}   X_{i,j} Y X_{i,j} ^*, \qquad Y\in B(\cH).
$$ We call  the operator ${\bf \Delta_{X}}(I)$ the defect of ${\bf X}$. Note that if $k=1$, then ${\bf B_n}(\cH)$ coincides with the noncommutative unit ball $[B(\cH)^{n_1}]_1$.
   We remark that the scalar representation of
  the the ({\it abstract})
 {\it regular polyball} ${\bf B}_{\bf n}:=\{{\bf B_n}(\cH):\ \cH \text{\ is a Hilbert space} \}$ is
   ${\bf B_n}(\CC)={\bf P_n}(\CC)= (\CC^{n_1})_1\times \cdots \times (\CC^{n_k})_1$.

Let $H_{n_i}$ be
an $n_i$-dimensional complex  Hilbert space with orthonormal basis $e^i_1,\ldots, e^i_{n_i}$.
  We consider the {\it full Fock space}  of $H_{n_i}$ defined by
$F^2(H_{n_i}):=\CC 1 \oplus\bigoplus_{p\geq 1} H_{n_i}^{\otimes p},$
where  $H_{n_i}^{\otimes p}$ is the
(Hilbert) tensor product of $p$ copies of $H_{n_i}$. Let $\FF_{n_i}^+$ be the unital free semigroup on $n_i$ generators
$g_{1}^i,\ldots, g_{n_i}^i$ and the identity $g_{0}^i$.
  Set $e_\alpha^i :=
e^i_{j_1}\otimes \cdots \otimes e^i_{j_p}$ if
$\alpha=g^i_{j_1}\cdots g^i_{j_p}\in \FF_{n_i}^+$
 and $e^i_{g^i_0}:= 1\in \CC$.
  The length of $\alpha\in
\FF_{n_i}^+$ is defined by $|\alpha|:=0$ if $\alpha=g_0^i$  and
$|\alpha|:=p$ if
 $\alpha=g_{j_1}^i\cdots g_{j_p}^i$, where $j_1,\ldots, j_p\in \{1,\ldots, n_i\}$.
 We  define
 the {\it left creation  operator} $S_{i,j}$ acting on the  Fock space $F^2(H_{n_i})$  by setting
$
S_{i,j} e_\alpha^i:=  e^i_{g_j^i \alpha}$, $\alpha\in \FF_{n_i}^+,
$
 and
 the operator ${\bf S}_{i,j}$ acting on the tensor  product
$F^2(H_{n_1})\otimes\cdots\otimes F^2(H_{n_k})$ by setting
$${\bf S}_{i,j}:=\underbrace{I\otimes\cdots\otimes I}_{\text{${i-1}$
times}}\otimes S_{i,j}\otimes \underbrace{I\otimes\cdots\otimes
I}_{\text{${k-i}$ times}},
$$
where  $i\in\{1,\ldots,k\}$ and  $j\in\{1,\ldots,n_i\}$. We introduce the noncommutative Hardy algebra ${\bf F}_{\bf n}^\infty$ (resp. the polyball algebra $\boldsymbol\cA_{\bf n}$) as the weakly closed (resp.~norm closed) non-selfadjoint  algebra generated by $\{{\bf S}_{i,j}\}$ and the identity.

 We proved in \cite{Po-Berezin-poly} (in a more general setting)   that
    ${\bf X}\in B(\cH)^{n_1}\times\cdots \times B(\cH)^{n_k}$
    is a {\it pure} element in the regular polyball  ${\bf B_n}(\cH)^-$, i.e.  $\lim_{q_i\to \infty}\Phi_{X_i}^{q_i}(I)=0$ in the weak operator topology,  if and only if
there is a Hilbert space $\cK$ and a subspace $\cM\subset F^2(H_{n_1})\otimes \cdots \otimes  F^2(H_{n_k})\otimes \cK$ invariant under each  operator ${\bf S}_{i,j}\otimes I$ such that
$X_{i,j}^*=({\bf S}_{i,j}^*\otimes I)|_{\cM^\perp}$ under an appropriate identification of $\cH$ with $\cM^\perp$.
The $k$-tuple ${\bf S}:=({\bf S}_1,\ldots, {\bf S}_k)$, where  ${\bf S}_i:=({\bf S}_{i,1},\ldots,{\bf S}_{i,n_i})$, is  an element  in the
regular polyball $ {\bf B_n}(\otimes_{i=1}^kF^2(H_{n_i}))^-$ and  plays the role of  {\it universal model} for
   the abstract
  polyball ${\bf B}_{\bf n}^-:=\{{\bf B_n}(\cH)^-:\ \cH \text{\ is a Hilbert space} \}$. The existence of the universal model  will play an important role in our paper, since it will make the connection between noncommutative function theory, operator algebras, and complex function theory in several variables. The latter is due to the fact that the joint eingenvectors for the universal model are parameterized by the scalar polyball $(\CC^{n_1})_1\times\cdots \times (\CC^{n_k})_1$ via the Berezin transforms (see \cite{Po-Berezin3}).

In Section 1, we  show that the regular polyball ${\bf B_n}$ is a logarithmically convex complete Reinhardt noncommutative domain, in an appropriate sense. We provide   characterizations for free holomorphic functions on polyballs in terms of their universal models, obtain an analogue of Abel theorem from complex analysis, Cauchy type inequalities for the coefficients of free holomorphic functions, and an analogue of Liouville's theorem for entire functions.  We prove that
the largest regular polyball $\gamma{\bf B_n}$, $\gamma\in[0,\infty]$, which is included in the universal domain of convergence of a formal power series  $\varphi$ in indeterminates $\{Z_{i,j}\}$ and representation
 $\varphi=\sum\limits_{(\alpha)} A_{(\alpha)}\otimes Z_{(\alpha)}$ with   $A_{(\alpha)}\in B(\cK)$,  is given by the relation

 $$
 \frac{1}{\gamma}:=\limsup_{(p_1,\ldots, p_k)\in \ZZ_+^k} \|\sum\limits_{{ \alpha_i\in \FF_{n_i}^+, |\alpha_i|=p_i}\atop{i\in \{1,\ldots, k\} }}A_{(\alpha)}^*A_{(\alpha)}\|^{\frac{1}{2(p_1+\cdots +p_k)}},
 $$
where  ${Z}_{(\alpha)}:= {Z}_{1,\alpha_1}\cdots {Z}_{k,\alpha_k}$ if  $(\alpha):=(\alpha_1,\ldots, \alpha_k)\in \FF_{n_1}^+\times \cdots \times\FF_{n_k}^+$ and   $Z_{i,\alpha_i}:=Z_{i,j_1}\cdots Z_{i,j_p}\in \FF_{n_i}^+$
  if   $\alpha_i=g_{j_1}^i\cdots g_{j_p}^i$.

 In Section 2, we prove a Schwarz type result (\cite{Ru2}) which states that  if $F:{\bf B_n}(\cH)\to B(\cH)^p$ is a bounded  free holomorphic function with $\|F\|_\infty\leq 1$ and  $F(0)=0$, then
 $$
 \|F({\bf X})\|\leq m_{\bf B_n}({\bf X})<1\quad \text{ and } \quad  m_{\bf B_n}({\bf X})\leq \|{\bf X}\|, \qquad {\bf X}\in {\bf B_n}(\cH),
 $$
 where $m_{\bf B}$ is the Minkovski functional associated with the regular polyball ${\bf B_n}$.
 This result is used to prove a maximum principle for bounded free holomorphic functions on polyballs  which states that if   $F:{\bf B_n}(\cH)\to B(\cH)$ is a bounded  free holomorphic function and there exists ${\bf X}_0\in {\bf B_n}(\cH)$  such that
 $$
 \|F({\bf X})\|\leq \|F({\bf X}_0)\|, \qquad {\bf X}\in {\bf B_n}(\cH),
 $$
  then $F$ must be a constant. The results of Section 2 will play an important role in the next sections.

 In Section 3, we give a complete description of the  free holomorphic  automorphisms of the polyball ${\bf B_n}$ (see Theorem \ref{structure}), which extends  Rudin's characterization of the  holomorphic automorphisms of  the polydisc \cite{Ru2},  and prove some of  their  basic properties (see Theorem  \ref{structure2}).  We also present an analogue   of  Poincar\' e's result  \cite{Kr}, that  the open unit ball of $\CC^n$ is not biholomorphic  equivalent to the polydisk $\DD^n$, for noncommutative regular polyballs.
More precisely,  if  ${\bf n}=(n_1,\ldots, n_k)\in \NN^k$ and  ${\bf m}=(m_1,\ldots, m_q)\in \NN^q$, we show that there is a biholomorphic map between the polyballs
${\bf B_n}$ and
${\bf B_m}
$
if and only if    $k=q$ and there is a permutation $\sigma $ of the set $\{1,\ldots, k\}$  such that $m_{\sigma(i)}=n_i$ for any $i\in \{1,\ldots, k\}$.  Moreover, any free biholomorphic function $F:{\bf B_n}\to
{\bf B_m}$ is up to a permutation of $(m_1,\ldots, m_k)$  an automorphism of the noncommutative regular polyball ${\bf B_n}$. This  resembles  the classical result of Ligocka \cite{L} and Tsyganov \cite{T} concerning biholomorphic automorphisms of product spaces with nice boundaries. The results of  this section are used to  show that
$$
{\bf Aut}({\bf B_n})\simeq \text{\rm Aut}((\CC^{n_1})_1\times\cdots \times (\CC^{n_k})_1).
$$
More precisely, we prove that the map
$\Lambda$
defined by
$$\Lambda({\bf \Psi})({\bf z}):=({\boldsymbol\cB_z} [\hat {\bf \Psi}_1],\ldots,{\boldsymbol\cB_z} [\hat {\bf \Psi}_k]) \qquad {\bf z}\in (\CC^{n_1})_1\times\cdots \times (\CC^{n_k})_1,
$$
is a group isomorphism, where $\hat {\bf \Psi}:=\text{\rm SOT-}\lim_{r\to 1} {\bf \Psi}(r{\bf S})$ is the boundary function of ${\bf \Psi}=(\Psi_1,\ldots, \Psi_k)\in \text{\rm Aut}({\bf B_n})$ with respect to the universal model ${\bf S}$, and ${\boldsymbol\cB_z}$ is the noncommutative Berezin transform at ${\bf z}$.

In Section 4, we  prove that
 any automorphism  $\Gamma$ of the   Cuntz-Toeplitz $C^*$-algebra $C^*({\bf S})$, generated by the universal model ${\bf S}=\{{\bf S}_{i,j}\}$,  which leaves invariant the   noncommutative polyball algebra $\boldsymbol\cA_{\bf n}$, i.e. $\Gamma(\boldsymbol\cA_{\bf n})=\boldsymbol\cA_{\bf n}$,  has the form

$$
\Gamma(g):=\boldsymbol\cB_{\hat \Psi}[g]={\bf K}_{\hat {\Psi}}[g\otimes I_{\cD_{\hat {\Psi}}}]{\bf K}_{\hat {\Psi}}^*,\qquad
g\in  C^*({\bf S}),
$$
where  $\Psi\in \text{\rm Aut}({\bf B_n})$ and $\boldsymbol\cB_{\hat \Psi}$ is the noncommutative Berezin transform at   the boundary function $\hat{\Psi}$.  In this case,
the noncommutative Berezin kernel ${\bf K}_{\hat {\Psi}}$ is a unitary operator and $\Gamma$ is a unitarily implemented automorphism of
  $C^*({\bf S})$.
Moreover, we have
$$
\text{\rm Aut}_{\boldsymbol\cA_{\bf n}}(C^*({\bf S}))\simeq  \text{\rm Aut}({\bf B_n}),
$$
where $\text{\rm Aut}_{\boldsymbol\cA_{\bf n}}(C^*({\bf S}))$  is the group of automorphisms of   $C^*({\bf S})$ which leave invariant the   noncommutative polyball algebra $\boldsymbol\cA_{\bf n}$.  As a consequence, we obtain a concrete description for the group of automorphisms of  the tensor product $\cT_{n_1}\otimes\cdots \otimes\cT_{n_k}$ of Cuntz-Toeplitz algebras which leave invariant the tensor product $\cA_{n_1}\otimes_{min}\cdots \otimes_{min}\cA_{n_k}$ of noncommutative disc algebras, which extends Voiculescu's result when $k=1$.
In particular, each holomorphic automorphism of the regular polyball ${\bf B_n}$  induces an automorphism of the  tensor product  of Cuntz algebras $\cO_{n_1}\otimes \cdots \otimes  \cO_{n_k}$ which leaves invariant the non-self-adjoint  subalgebra $\cA_{n_1}\otimes_{min}\cdots \otimes_{min} \cA_{n_k}$.

 In Section 5, we  prove that
 any unitarily implemented  automorphism of  the noncommutative polyball algebra  $\boldsymbol\cA_{\bf n} $  (resp.\,the noncommutative Hardy algebra     ${\bf F}_{\bf n}^\infty $)  is the Berezin transform  of a boundary function   $\hat\Psi$,  where  $\Psi\in \text{\rm Aut}({\bf B_n})$. Moreover, we have
$$
\text{\rm Aut}_u(\boldsymbol\cA_{\bf n})\simeq \text{\rm Aut}_u({\bf F}_{\bf n}^\infty)\simeq \text{\rm Aut}({\bf B_n}).
$$
 When $k=1$, we recover  some of the results obtained by Davidson and Pitts \cite{DP2} and the author \cite{Po-automorphism}.
Let $H^\infty({\bf B_n})$ be the Hardy algebra of all  bounded free holomorphic functions on the regular polyball.
If $\Lambda: H^\infty({\bf B_n})\to H^\infty({\bf B_n})$
is a unital algebraic  homomorphism, it induces a unique homomorphism $\tilde
\Lambda:{\bf F^\infty_n}\to {\bf F^\infty_n}$ such that   $\Lambda \boldsymbol\cB=\boldsymbol\cB\tilde \Lambda$, where $\boldsymbol\cB$ is the noncommutative  Berezin transform.
We prove that
    $\tilde \Lambda$ is a unitarily implemented automorphism of ${\bf F}^\infty_{\bf n}$ if and only if
 there is $\varphi\in \text{\rm Aut}({\bf B_n})$ such that
    $$\Lambda(f)=f\circ \varphi,\qquad f\in H^\infty({\bf B_n}).$$
 A similar result holds for the algebra $A({\bf B_n})$ of all bounded free holomorphic functions on
 ${\bf B_n}(\cH)$ with continuous extension to ${\bf B_n}(\cH)^-$.

 In Section 6, we prove that the free holomorphic automorphism group \
 $\text{\rm Aut}({\bf B_n})$ is a   $\sigma$-compact, locally
compact topological group with respect to the topology induced by the metric
$$
d_{\bf B_n}(\phi,\psi):=\|\phi -\psi\|_\infty +
\|\phi^{-1}(0)-\psi^{-1}(0)\|,\qquad \phi,\psi\in  \text{\rm Aut}({\bf B_n}).
$$
We also show that if  ${\bf n}=(n_1,\ldots, n_k)\in \NN^k$, then
 the free holomorphic automorphism group \
 $\text{\rm Aut}({\bf B_n})$  has $\text{\rm card} (\Sigma)$  path connected components, where
 $$
 \Sigma:=\{\sigma\in \cS_k: \ (n_{\sigma(1)},\ldots, n_{\sigma(k)})=(n_1,\ldots, n_k)\}
 $$
 and $\cS_k$ is the symmetric group on the set $\{1,\ldots, k\}$.
We mention that a   map
$\pi: \text{\rm Aut}({\bf B_n})\to \cU(\cK)$, where $\cU(\cK)$ is the unitary group on the Hilbert space $\cK$,   is called (unitary)
projective representation if
  $\pi(id)=I$,
 $$
 \pi({\bf \Phi}) \pi({\bf \Psi})=c_{({\bf \Phi},{\bf \Psi})} \pi({{\bf \Phi}\circ {\bf \Psi}}),\qquad {\bf \Phi}, {\bf \Psi}\in \text{\rm Aut}({\bf B_n}),
 $$
   where $c_{({\bf \Phi},{\bf \Psi})}$
  is a complex number  with
 $|c({\bf \Phi},{\bf \Psi})|=1$, and
 the map $\text{\rm Aut}({\bf B_n})\ni {\bf \Phi}\mapsto \left<\pi({\bf\Phi})\xi,\eta\right> \in \CC$
is continuous for each $\xi,\eta\in \cK$.
Using the structure of the free holomorphic automorphisms  of the regular polyball ${\bf B_n}$, we conclude Section 6 by providing   a  concrete unitary projective representation of the topological group
  $\text{\rm Aut}({\bf B_n})$, with respect to the metric $d_{\bf B_n}$, in terms
      of noncommutative Berezin kernels associated with regular polyballs.

We mention that the techniques of the present paper  will be used in a future one to study the structure of the automorphism groups  associated with certain classes of noncommutative varieties in polyballs, including the case of commutative operatorial polyballs. We also expect some of our results to extend to more general noncommutative polydomains ( \cite{Po-Berezin3}, \cite{Po-Berezin-poly}).

\bigskip

\section{ Noncommutative   polyballs and free holomorphic functions }

In this section, we show that the regular polyball ${\bf B_n}$ is a logarithmically convex complete Reinhardt noncommutative domain. We study free holomorphic functions on regular polyballs and provide analogues of several classical results from complex analysis such as: Abel theorem, Hadamard formula, Cauchy inequality,  and  Liouville theorem for entire functions.

First, we introduce a class of  noncommutative Berezin  transforms associated  with regular polyballs.
 Let  ${\bf X}=({ X}_1,\ldots, { X}_k)\in {\bf B_n}(\cH)^-$ with $X_i:=(X_{i,1},\ldots, X_{i,n_i})$.
We  use the notation $X_{i,\alpha_i}:=X_{i,j_1}\cdots X_{i,j_p}$
  if  $\alpha_i=g_{j_1}^i\cdots g_{j_p}^i\in \FF_{n_i}^+$ and
   $X_{i,g_0^i}:=I$.
The {\it noncommutative Berezin kernel} associated with any element
   ${\bf X}$ in the noncommutative polyball ${\bf B_n}(\cH)^-$ is the operator
   $${\bf K_{X}}: \cH \to F^2(H_{n_1})\otimes \cdots \otimes  F^2(H_{n_k}) \otimes  \overline{{\bf \Delta_{X}}(I) (\cH)}$$
   defined by
   $$
   {\bf K_{X}}h:=\sum_{\beta_i\in \FF_{n_i}^+, i=1,\ldots,k}
   e^1_{\beta_1}\otimes \cdots \otimes  e^k_{\beta_k}\otimes {\bf \Delta_{X}}(I)^{1/2} X_{1,\beta_1}^*\cdots X_{k,\beta_k}^*h.
   $$
A very  important property of the Berezin kernel is that
     ${\bf K_{X}} { X}^*_{i,j}= ({\bf S}_{i,j}^*\otimes I)  {\bf K_{X}}$ for any  $i\in \{1,\ldots, k\}$ and $ j\in \{1,\ldots, n_i\}.
    $
    The {\it Berezin transform at} $X\in {\bf B_n}(\cH)$ is the map $ \boldsymbol{\cB_{\bf X}}: B(\otimes_{i=1}^k F^2(H_{n_i}))\to B(\cH)$
 defined by
\begin{equation*}
 {\boldsymbol\cB_{\bf X}}[g]:= {\bf K^*_{\bf X}} (g\otimes I_\cH) {\bf K_{\bf X}},
 \qquad g\in B(\otimes_{i=1}^k F^2(H_{n_i})).
 \end{equation*}
  If $g$ is in the $C^*$-algebra generated by ${\bf S}_{i,1},\ldots,{\bf S}_{i,n_i}$, we  define the Berezin transform at  $X\in {\bf B_n}(\cH)^-$, by
  $${\boldsymbol\cB_{\bf X}}[g]:=\lim_{r\to 1} {\bf K^*_{r\bf X}} (g\otimes I_\cH) {\bf K_{r\bf X}},
 \qquad g\in  C^*({\bf S}),
 $$
 where the limit is in the operator norm topology.
In this case, the Berezin transform at $X$ is a unital  completely positive linear  map such that
 $${\boldsymbol\cB_{\bf X}}({\bf S}_{(\alpha)} {\bf S}_{(\beta)}^*)={\bf X}_{(\alpha)} {\bf X}_{(\beta)}^*, \qquad (\alpha), (\beta) \in \FF_{n_1}^+\times \cdots \times\FF_{n_k}^+,
 $$
 where  ${\bf S}_{(\alpha)}:= {\bf S}_{1,\alpha_1}\cdots {\bf S}_{k,\alpha_k}$ if  $(\alpha):=(\alpha_1,\ldots, \alpha_k)\in \FF_{n_1}^+\times \cdots \times\FF_{n_k}^+$.

The  Berezin transform will play an important role in this paper.
   More properties  concerning  noncommutative Berezin transforms and multivariable operator theory on noncommutative balls and  polydomains, can be found in \cite{Po-poisson}, \cite{Po-holomorphic}, \cite{Po-automorphism},    \cite{Po-Berezin3}, and \cite{Po-Berezin-poly}.
For basic results on completely positive (resp. bounded)  maps  we refer the reader to \cite{Pa-book} and \cite{Pi-book}.

In what follows, we present some properties of the regular polyballs. Our first observation is that, in general, the inclusion  ${\bf B_n}(\cH)\subset {\bf P_n}(\cH)$ is strict. Indeed, consider the particular case   $n_1=\cdots=n_k=1$. Let $\cM$ be a Hilbert space, $\cH=\cM\oplus \cM$,
and $T_i:=\left(\begin{matrix} 0&0\\A_i&0\end{matrix}\right)$, $i\in \{1,\ldots, k\}$, where $A_i\in B(\cM)$ and $\|A_i\|<1$.  It is clear that $T_iT_s=T_sT_i$ for $i,s\in \{1,\ldots,k\}$, and
${\bf \Delta_T}=\left(\begin{matrix} I&0\\0&I-A_1A_1^*-\cdots-A_kA_k^*\end{matrix}\right)$.
Consequently, ${\bf T}=(T_1,\ldots, T_k)\in {\bf B}_{(1,\ldots, 1)}(\cH)$ if and only if $\|A_1A_1^*+\cdots+A_kA_k^*\|<1$. This clearly proves our assertion.
On the other hand,  note that there is $r\in (0,1)$ such that $r{\bf P_n}(\cH)\subset
{\bf B_n}(\cH)$. Moreover, due to Proposition 1.3 from \cite{Po-Berezin-poly},
one can easily see  that  $[B(\cH)^{n_1+\cdots +n_k}]_1\subset {\bf B_n}(\cH)$.

If ${\bf z}=({\bf z}_1,\ldots, {\bf z}_k)$,  where ${\bf z}_i=(z_{i,1},\ldots, z_{i,n_i})\in \CC^{n_i}$,
and ${\bf X}:=({\bf X}_1,\ldots, {\bf X}_k)$ is in the cartesian product   $B(\cH)^{n_1}\times\cdots \times B(\cH)^{n_k}$ with
${\bf X}_i=(X_{i,1},\ldots, X_{i,n_i})$, we denote
${\bf z X}:=({\bf z}_1{\bf X}_1,\ldots, {\bf z}_k {\bf X}_k)$, where ${\bf z}_i{\bf X}_i:=(z_{i,1}X_{i,1},\ldots, z_{i,k}X_{i,n_i})$. If ${\bf r }:=(r_1,\ldots, r_k)$, $r_i>0$, we set ${\bf rX}:=(r_1{\bf X}_1,\ldots, r_k{\bf X}_k)$.
 When $r\in \RR^+$, the notation $r{\bf X}$ is clear.

\begin{lemma} \label{ineqS}
If $\lambda_i\in \overline{\DD}$, $i\in \{1,\ldots, k\}$,    and ${\bf S}=({\bf S}_1,\ldots, {\bf S}_k)$ is the universal model for the regular polyball ${\bf B_n^-}$,  then
$$
(id-\Phi_{\lambda_1 {\bf S}_1})^{p_1}\circ\cdots \circ (id-\Phi_{\lambda_k {\bf S}_k})^{p_k}(I)\geq \prod_{i=1}^k (1-|\lambda_i|^2)^{p_i}I.
$$
 If   ${\bf z}=({\bf z}_1,\ldots, {\bf z}_k)$,  where ${\bf z}_i=(z_{i,1},\ldots, z_{i,n_i})\in \overline{\DD}^{n_i}$, then
$$
(id-\Phi_{{\bf z}_1 {\bf S}_1})^{p_1}\circ\cdots \circ (id-\Phi_{{\bf z}_k {\bf S}_k})^{p_k}(I)
\geq
(id-\Phi_{{\bf S}_1})^{p_1}\circ\cdots \circ (id-\Phi_{{\bf S}_k})^{p_k}(I),\qquad p_i\in \{0,1\}.
$$

\end{lemma}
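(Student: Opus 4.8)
The plan is to exploit two structural features of the universal model ${\bf S}$: the maps $id-\Phi_{{\bf S}_i}$, $i\in\{1,\dots,k\}$, act on distinct tensor factors of $F^2(H_{n_1})\otimes\cdots\otimes F^2(H_{n_k})$ and therefore mutually commute; and each of them sends $I$ to a positive operator. I would first record that $\Phi_{\lambda_i{\bf S}_i}=|\lambda_i|^2\Phi_{{\bf S}_i}$, which is immediate from $\Phi_{\lambda_i{\bf S}_i}(Y)=\sum_{j=1}^{n_i}(\lambda_i{\bf S}_{i,j})Y(\lambda_i{\bf S}_{i,j})^*$. Since $\sum_{j=1}^{n_i}S_{i,j}S_{i,j}^*$ is the orthogonal projection onto the words of length $\ge1$ in $F^2(H_{n_i})$, one gets
$$
(id-\Phi_{{\bf S}_i})(I)=\underbrace{I\otimes\cdots\otimes I}_{i-1}\otimes\,P_{\CC}\,\otimes\underbrace{I\otimes\cdots\otimes I}_{k-i}=:P_i,
$$
where $P_{\CC}$ is the rank-one projection onto the vacuum $1\in F^2(H_{n_i})$. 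These $P_i$ commute and, acting on distinct slots, satisfy $\prod_{i\in A}(id-\Phi_{{\bf S}_i})(I)=\prod_{i\in A}P_i$ for every $A\subseteq\{1,\dots,k\}$; being a product of commuting projections, $\prod_{i\in A}P_i$ is itself a projection, in particular positive.

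For the first inequality (in the range $p_i\in\{0,1\}$, so that the product runs over $B:=\{i:p_i=1\}$) I would use the convex decomposition $id-\Phi_{\lambda_i{\bf S}_i}=(1-t_i)\,id+t_i\,(id-\Phi_{{\bf S}_i})$ with $t_i:=|\lambda_i|^2\in[0,1]$, and expand the commuting composition:
$$
\prod_{i\in B}\bigl[(1-t_i)\,id+t_i(id-\Phi_{{\bf S}_i})\bigr](I)=\sum_{A\subseteq B}\Bigl(\prod_{i\in B\setminus A}(1-t_i)\Bigr)\Bigl(\prod_{i\in A}t_i\Bigr)\prod_{i\in A}P_i.
$$
The term $A=\emptyset$ equals $\prod_{i\in B}(1-t_i)\,I=\prod_{i=1}^k(1-|\lambda_i|^2)^{p_i}\,I$, while every other summand is a nonnegative scalar times the positive operator $\prod_{i\in A}P_i$; summing yields the claimed lower bound.

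For the second inequality I would instead use the additive decomposition
$$
id-\Phi_{{\bf z}_i{\bf S}_i}=(id-\Phi_{{\bf S}_i})+\Psi_i,\qquad\Psi_i(Y):=\sum_{j=1}^{n_i}(1-|z_{i,j}|^2)\,{\bf S}_{i,j}Y{\bf S}_{i,j}^*,
$$
where each $\Psi_i$ is completely positive because $|z_{i,j}|\le1$. Expanding $\prod_{i\in B}\bigl[(id-\Phi_{{\bf S}_i})+\Psi_i\bigr](I)$ over subsets $A\subseteq B$, the term $A=\emptyset$ is exactly the right-hand side $\prod_{i=1}^k(id-\Phi_{{\bf S}_i})^{p_i}(I)=\prod_{i\in B}P_i$, and every remaining summand has the form $\bigl(\prod_{i\in A}\Psi_i\bigr)\bigl(\prod_{i\in B\setminus A}P_i\bigr)$. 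Since $\prod_{i\in B\setminus A}P_i\ge0$ and the $\Psi_i$ are completely positive, each such summand is positive, which gives the asserted domination.

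The conceptual work, and the only real obstacle, is the positivity bookkeeping in these expansions: one must justify that the factors genuinely commute (so the composition may be expanded term by term and each mixed summand reorganized as a composition of the completely positive maps $\Psi_i$ applied to a product of the projections $P_i$), and that a product of commuting positive operators is again positive. It is precisely the restriction $p_i\in\{0,1\}$ that makes this work, since then only first iterates $(id-\Phi_{{\bf S}_i})(I)=P_i$ enter the expansion; the higher iterates $(id-\Phi_{{\bf S}_i})^m(I)$ with $m\ge2$ are in general no longer positive, so for larger exponents the same scheme would not close and a separate argument would be required.
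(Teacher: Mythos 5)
Your proof is correct, and it rests on the same structural fact as the paper's --- that the entries of ${\bf S}_i$ and ${\bf S}_t$ are doubly commuting for $i\neq t$ --- but it organizes the argument differently. The paper factors the composed map applied to $I$ directly as a product of commuting positive operators, $(id-\Phi_{\lambda_1{\bf S}_1})^{p_1}\circ\cdots\circ(id-\Phi_{\lambda_k{\bf S}_k})^{p_k}(I)=\prod_{i=1}^k\bigl(I-\Phi_{\lambda_i{\bf S}_i}(I)\bigr)^{p_i}$, and then compares factor by factor via $I-\Phi_{\lambda_i{\bf S}_i}(I)\geq(1-|\lambda_i|^2)I$ and $I-\Phi_{{\bf z}_i{\bf S}_i}(I)\geq I-\Phi_{{\bf S}_i}(I)$; this is shorter but quietly relies on the fact that for commuting operators $A_i\geq B_i\geq 0$ one has $\prod A_i\geq\prod B_i$. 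You instead decompose each map --- as a convex combination $(1-t_i)\,id+t_i(id-\Phi_{{\bf S}_i})$ in the first case and as $(id-\Phi_{{\bf S}_i})+\Psi_i$ with $\Psi_i$ completely positive in the second --- and expand over subsets, so that the leading term is exactly the claimed lower bound and every correction term is manifestly positive (a nonnegative scalar times a product of commuting projections, or a composition of completely positive maps applied to such a product). Your version costs a little more bookkeeping but makes the positivity of each discarded term explicit rather than appealing to the comparison of commuting operator products. Your closing observation is also well taken: the identity $(id-\Phi_{{\bf S}_i})^{p}(I)=\bigl(I-\Phi_{{\bf S}_i}(I)\bigr)^{p}$ genuinely fails for $p\geq 2$ (since $\Phi(\Phi(I))\neq\Phi(I)^2$), so the restriction $p_i\in\{0,1\}$ is essential; the paper's proof uses it tacitly, and every application in the sequel respects it.
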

\begin{proof} We recall that two operators $A,B\in B(\cH)$ are called doubly commuting if $AB=BA$ and $AB^*=B^*A$. Since the entries of ${\bf S}_i$ are doubly commuting with the entries of ${\bf S}_t$, whenever $i,t\in\{1,\ldots, k\}$, $i\neq t$, we have
$$(id-\Phi_{\lambda_1 {\bf S}_1})^{p_1}\circ\cdots \circ (id-\Phi_{\lambda_k {\bf S}_k})^{p_k}(I)=
\prod_{i=1}^k (I-\Phi_{\lambda_i{\bf S}_i}(I))^{p_i}.
 $$
 Taking into account that
 $I-\Phi_{\lambda_i{\bf S}_i}(I)\geq (1-|\lambda_i|^2)I$, the first inequality follows. Similarly, using the inequality $I-\Phi_{{\bf z}_i{\bf S}_i}(I)\geq I-\Phi_{{\bf S}_i}(I)$, one can deduce the second inequality.
\end{proof}

 \begin{definition}
 Let $G$ be a subset of $ B(\cH)^{n_1}\times\cdots \times B(\cH)^{n_k}$.
 \begin{enumerate}
 \item[(i)]
   $G$ is a complete Reinhardt set  if \, ${\bf zX}\in G$ for any ${\bf X}\in G$ and  ${\bf z}\in  \overline{\DD}^{n_1+\cdots+n_k}$.
 \item[(ii)]
  $G$ is a  logarithmically convex  set if
  $$\{(\log\|X_1\|,\ldots,\log\|X_k\|):\ (X_1,\ldots, X_k)\in G, X_i\neq 0\}
  $$
  is a convex subset of $\RR^k$.
  \end{enumerate}
 \end{definition}

\begin{proposition}\label{reg-poly} The  following properties hold:
\begin{enumerate}
\item [(i)]  The regular polyball  ${\bf B_n}(\cH)$ is relatively  open in   $ B(\cH)^{n_1}\times_c\cdots \times_c B(\cH)^{n_k}$, and its closure in the operator norm topology satisfies the relation
$$
{\bf B_n}(\cH)^-=\left\{ {\bf X}\in B(\cH)^{n_1}\times_c\cdots \times_c B(\cH)^{n_k}: \ {\bf \Delta_{X}^p}(I)\geq 0 \ \text{ for }\ {\bf p}=(p_1,\ldots, p_k)\ \text{ with } p_i\in \{0,1\}\right\},
$$
where $
{\bf \Delta_{X}^p}:=\left(id -\Phi_{X_1}\right)^{p_1}\circ \cdots \circ\left(id -\Phi_{ X_k}\right)^{p_k}
$
and  $(id-\Phi_{X_i})^0:=id$.
\item [(ii)] ${\bf B_n}(\cH)$ is a complete Reinhardt domain such that
$$
{\bf B_n}(\cH)=\bigcup_{{\bf z}\in  \overline{\DD}^{n_1+\cdots+n_k}}{\bf z}{\bf B_n}(\cH)=\bigcup_{{\bf z}\in  {\DD}^{n_1+\cdots+n_k}}{\bf z}{\bf B_n}(\cH)^-=\bigcup_{{\bf z}\in  {\DD}^{n_1+\cdots+n_k}}{\bf z}{\bf B_n}(\cH).
$$
and $$ {\bf B_n}(\cH)=\bigcup_{0\leq r<1}r{\bf B_n}(\cH)=\bigcup_{0\leq r<1}r{\bf B_n}(\cH)^-.
$$
\item [(iii)] ${\bf B_n}(\cH)^-$ is a complete Reinhardt  set and
$$
{\bf B_n}(\cH)^-=\bigcup_{{\bf z}\in  \overline{\DD}^{n_1+\cdots+n_k}}{\bf z}{\bf B_n}(\cH)^- = \bigcup_{0\leq r\leq 1}r{\bf B_n}(\cH)^-.
$$
\end{enumerate}
\end{proposition}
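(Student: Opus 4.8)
The plan is to prove (i) first and then extract (ii) and (iii) as formal consequences. For the openness in (i), I would observe that on the affine subspace $B(\cH)^{n_1}\times_c\cdots\times_c B(\cH)^{n_k}$ the maps ${\bf X}\mapsto\Phi_{X_i}(I)=\sum_j X_{i,j}X_{i,j}^*$ and ${\bf X}\mapsto{\bf \Delta_X}(I)$ are norm-continuous (operator polynomials in the entries), while $\{A:\|A\|<1\}$ and the cone of positive invertible operators are open in $B(\cH)$; hence ${\bf B_n}(\cH)$, being the intersection of the preimages of these open sets, is relatively open. For the inclusion "$\subseteq$" of the closure formula I would first show that every ${\bf X}\in{\bf B_n}(\cH)$ satisfies ${\bf \Delta_X^p}(I)\geq 0$ for all ${\bf p}\in\{0,1\}^k$. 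Such an ${\bf X}$ is pure, since $\Phi_{X_i}(I)\leq\|\Phi_{X_i}(I)\|\,I$ gives $\Phi_{X_i}^{q}(I)\leq\|\Phi_{X_i}(I)\|^{q}I\to 0$, so its Berezin kernel ${\bf K_X}$ is isometric and ${\boldsymbol\cB_{\bf X}}[g]={\bf K_X^*}(g\otimes I){\bf K_X}$ is a unital completely positive map with ${\boldsymbol\cB_{\bf X}}[{\bf S}_{(\alpha)}{\bf S}_{(\beta)}^*]={\bf X}_{(\alpha)}{\bf X}_{(\beta)}^*$. Since ${\bf \Delta_S^p}(I)=\prod_{i:p_i=1}\bigl(I-\sum_j{\bf S}_{i,j}{\bf S}_{i,j}^*\bigr)$ expands into a finite combination of monomials ${\bf S}_{(\alpha)}{\bf S}_{(\beta)}^*$ and equals the product of the commuting vacuum projections $P_i$ (hence $\geq 0$), applying ${\boldsymbol\cB_{\bf X}}$ termwise yields ${\boldsymbol\cB_{\bf X}}[{\bf \Delta_S^p}(I)]={\bf \Delta_X^p}(I)\geq 0$. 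As ${\bf X}\mapsto{\bf \Delta_X^p}(I)$ is norm-continuous and the positive cone is closed, these inequalities (and the commuting relations) pass to every ${\bf X}\in{\bf B_n}(\cH)^-$, giving "$\subseteq$".

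For "$\supseteq$", suppose ${\bf \Delta_X^p}(I)\geq 0$ for all ${\bf p}$. The key computation is the splitting $id-r^2\Phi_{X_i}=(1-r^2)\,id+r^2(id-\Phi_{X_i})$; because the entries of $X_i$ and $X_t$ commute for $i\neq t$, the maps $\Phi_{X_i}$ commute pairwise, so for $0\le r<1$ the product expands binomially as
$$
{\bf \Delta_{rX}}(I)=\prod_{i=1}^{k}\bigl((1-r^2)\,id+r^2(id-\Phi_{X_i})\bigr)(I)=\sum_{S\subseteq\{1,\dots,k\}}(1-r^2)^{k-|S|}r^{2|S|}\,{\bf \Delta_X^{p(S)}}(I),
$$
where $p(S)_i=1$ iff $i\in S$. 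Every summand is $\geq 0$ by hypothesis and the term $S=\emptyset$ contributes $(1-r^2)^k I$, so ${\bf \Delta_{rX}}(I)\geq(1-r^2)^k I>0$; combined with $\|\Phi_{rX_i}(I)\|=r^2\|\Phi_{X_i}(I)\|\le r^2<1$ (using the ${\bf p}=e_i$ instance $\Phi_{X_i}(I)\leq I$), this shows $r{\bf X}\in{\bf B_n}(\cH)$ for all $r<1$, and letting $r\to1$ gives ${\bf X}\in{\bf B_n}(\cH)^-$.

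For (ii) and (iii) the engine is the monotonicity ${\bf \Delta_{zX}^p}(I)\geq{\bf \Delta_X^p}(I)$ for ${\bf z}\in\overline{\DD}^{n_1+\cdots+n_k}$. I would write $id-\Phi_{(zX)_i}=(id-\Phi_{X_i})+D_i$ with $D_i(Y):=\sum_j(1-|z_{i,j}|^2)X_{i,j}YX_{i,j}^*$ completely positive, expand the commuting product exactly as above, and note that each resulting summand applies the completely positive maps $D_i$ to a nonnegative partial defect of ${\bf X}$. Applied to ${\bf X}\in{\bf B_n}(\cH)$ (whose partial defects are $\geq 0$ by the "$\subseteq$" step) this gives ${\bf \Delta_{zX}}(I)\geq{\bf \Delta_X}(I)>0$, so ${\bf zX}\in{\bf B_n}(\cH)$, proving the complete Reinhardt property; applied with the characterization in (i) it gives ${\bf \Delta_{zX}^p}(I)\geq{\bf \Delta_X^p}(I)\geq 0$, so ${\bf zX}\in{\bf B_n}(\cH)^-$. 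The union identities then follow formally: $\bigcup_{\bf z}{\bf z}(\cdot)\supseteq(\cdot)$ by taking ${\bf z}=\mathbf 1$; the reverse inclusions for the $\overline{\DD}$ and $r\le1$ unions are the complete Reinhardt property, and for the $\DD$ and $r<1$ unions they are the scaling $r{\bf B_n}(\cH)^-\subseteq{\bf B_n}(\cH)$ from the "$\supseteq$" step; finally ${\bf B_n}(\cH)\subseteq\bigcup_{r<1}r{\bf B_n}(\cH)^-$ uses openness, since ${\bf X}\in{\bf B_n}(\cH)$ forces $\tfrac1\lambda{\bf X}\in{\bf B_n}(\cH)$ for $\lambda<1$ near $1$, whence ${\bf X}\in\lambda{\bf B_n}(\cH)^-$.

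The hard part is the positivity of the intermediate defects ${\bf \Delta_X^p}(I)$ for ${\bf X}\in{\bf B_n}(\cH)$ when $k\geq 3$: unlike the full defect (positive by definition) and the one-factor defects (positive from the ball constraints), the mixed ones are not forced by pure algebra and genuinely require transporting the manifest positivity of the model defects ${\bf \Delta_S^p}(I)=\prod_{i:p_i=1}P_i$ through the completely positive Berezin transform, which is why the purity of ${\bf X}$ and the isometry of ${\bf K_X}$ enter at that step.
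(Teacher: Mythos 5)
Your argument is correct, and it reaches the conclusions by a partly different route, so the comparison is worth recording. The paper's engine is Lemma \ref{ineqS}: both of its inequalities are proved once for the universal model ${\bf S}$ (where the factors $I-\Phi_{{\bf S}_i}(I)$ are explicit commuting positive operators) and then transported to an arbitrary tuple by applying a noncommutative Berezin transform at a suitably rescaled point ($\tfrac{1}{r}{\bf X}$, ${\bf Y}$, ${\bf T}$, or $r{\bf T}$, depending on the step). You invoke the model only where it is genuinely indispensable, namely to obtain ${\bf \Delta_X^p}(I)\geq 0$ for the mixed multi-indices ${\bf p}$: you push the manifestly positive product of vacuum projections ${\bf \Delta_S^p}(I)$ through the completely positive map $\boldsymbol\cB_{\bf X}$, having first checked that ${\bf X}\in {\bf B_n}(\cH)$ is pure so that ${\bf K_X}$ is an isometry and $\boldsymbol\cB_{\bf X}[{\bf S}_{(\alpha)}{\bf S}_{(\beta)}^*]={\bf X}_{(\alpha)}{\bf X}_{(\beta)}^*$; your closing observation that this is the only non-algebraic point, and only when $k\geq 3$, is accurate. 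Everything else --- the inclusion $\cD\subseteq {\bf B_n}(\cH)^-$ and the Reinhardt and monotonicity statements in (ii) and (iii) --- you obtain from purely algebraic binomial expansions of the commuting completely positive maps, via $id-\Phi_{rX_i}=(1-r^2)\,id+r^2(id-\Phi_{X_i})$ and $id-\Phi_{(zX)_i}=(id-\Phi_{X_i})+D_i$. This is more elementary than the paper's treatment, and it has the side benefit of sidestepping a small awkwardness in the published proof of $\cD\subseteq {\bf B_n}(\cH)^-$, where the Berezin transform is applied at a point ${\bf Y}\in\cD$ before ${\bf Y}$ is known to lie in ${\bf B_n}(\cH)^-$ (where that transform is defined); your expansion uses only the positivity hypotheses defining $\cD$. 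What the paper's route buys in exchange is a single reusable lemma that covers all the scalings ($\lambda_i$, ${\bf z}$, $r$) uniformly, at the cost of routing some essentially algebraic inequalities through the universal model.
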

\begin{proof}
If ${\bf X}=(X_1,\ldots, X_k)\in {\bf B_n}(\cH)$, then there is $c>0$ such that ${\bf \Delta_{X}}(I)>cI$.  Given $d\in (0,c)$, there is $\epsilon>0$  such that
$
-dI\leq {\bf \Delta_{Y}}(I)-{\bf \Delta_{X}}(I)\leq dI
$
for any ${\bf Y}=(Y_1,\ldots, Y_k)\in B(\cH)^{n_1}\times_c\cdots \times_c B(\cH)^{n_k}$ with $\max_{i\in \{1,\ldots, k\}}\|X_i-Y_i\|<\epsilon$.
Consequently, we have
$$
{\bf \Delta_{Y}}(I)=({\bf \Delta_{Y}}(I)-{\bf \Delta_{X}}(I))+{\bf \Delta_{X}}(I)\geq (c-d)I>0,
$$
which proves that ${\bf B_n}(\cH)$ is relatively  open in   $ B(\cH)^{n_1}\times_c\cdots \times_c B(\cH)^{n_k}$ with respect to the product topology. To prove the second part of item (i), set
$$
\cD:=\left\{ {\bf X}\in B(\cH)^{n_1}\times_c\cdots \times_c B(\cH)^{n_k}: \ {\bf \Delta_{X}^p}(I)\geq 0 \ \text{ for }\ {\bf p}=(p_1,\ldots, p_k)\ \text{ with } p_i\in \{0,1\}\right\}.
$$
We shall prove that ${\bf B_n}(\cH)^-=\cD$.
Since ${\bf B_n}(\cH)$ is open, if  ${\bf X}\in {\bf B_n}(\cH)$, then there is $r\in [0,1)$ such that $\frac{1}{r}{\bf X}\in {\bf B_n}(\cH)$. Applying the Berezin transform at $\frac{1}{r}{\bf X}$ to the first inequality of Lemma \ref{ineqS}, when $\lambda_i=r$, we deduce that
\begin{equation*}
{\bf \Delta_{X}^p}(I)=(id-\Phi_{X_1})^{p_1}\circ\cdots \circ (id-\Phi_{ {X}_k})^{p_k}(I)\geq \prod_{i=1}^k (1-r^2)^{p_i}I.
\end{equation*}
 Hence, if ${\bf Y}\in {\bf B_n}(\cH)^-$,  a limiting process implies that
 ${\bf \Delta_{Y}^p}(I)\geq 0$ for any ${\bf p}=(p_1,\ldots, p_k)$ with  $ p_i\in \{0,1\}$.
 Therefore, ${\bf B_n}(\cH)^-\subseteq\cD$. To prove the reverse inequality, let
${\bf Y}=(Y_1,\ldots, Y_k)\in \cD$. In particular, we have $\|rY_i\|<1$ for any $r\in [0,1)$. Due to Lemma \ref{ineqS} and using the Berezin transform at $Y$, we have ${\bf \Delta}_{r{\bf Y}}(I)\geq (1-r^2)^kI$, which shows that $r{\bf Y}\in {\bf B_n}(\cH)$. Since $ r{\bf Y}\to {\bf Y}$, as $r\to 1$, we conclude that $\cD\subseteq  {\bf B_n}(\cH)^-$, which proves item (i).

If ${\bf z}\in  \overline{\DD}^{n_1+\cdots+n_k}$ and ${\bf T}\in {\bf B_n}(\cH)$, then applying the Berezin transform at ${\bf T}$ to the second inequality of Lemma \ref{ineqS} we obtain
$
{\bf \Delta_{zT}^p}(I)\geq {\bf \Delta_T^p}(I)>0
$
for any ${\bf p}=(p_1,\ldots, p_k)$ with $ p_i\in \{0,1\}$. Consequently, we have
$${\bf z}{\bf B_n}(\cH)\subseteq {\bf B_n}(\cH), \qquad {\bf z}\in  \overline{\DD}^{n_1+\cdots+n_k},
$$
which shows that ${\bf B_n}(\cH)$ is a complete Reinhardt domain and    ${\bf B_n}(\cH)=\bigcup_{{\bf z}\in  \overline{\DD}^{n_1+\cdots+n_k}}{\bf z}{\bf B_n}(\cH)$.

Let ${\bf T}\in {\bf B_n}(\cH)^-$ and ${\bf z}\in  {\DD}^{n_1+\cdots+n_k}$. Then there is $r\in(0,1)$ such that $\frac{1}{r}{\bf z}\in \DD^{n_1+\cdots +n_k}$.
Applying the Berezin transform at $r{\bf T}$ to the first inequality of Lemma \ref{ineqS} when $\lambda_1=\cdots =\lambda_k=r$,  we deduce that $r{\bf T}\in {\bf B_n}(\cH)$.
 Therefore, ${\bf zT}\in\frac{1}{r}{\bf z} {\bf B_n}(\cH)\in {\bf B_n}(\cH)$, which shows that
\begin{equation}
\label{subset1}
{\bf z}{\bf B_n}(\cH)^-\subseteq {\bf B_n}(\cH), \qquad {\bf z}\in  {\DD}^{n_1+\cdots+n_k}.
\end{equation}

Since ${\bf B_n}(\cH)$ is open, for any ${\bf X}\in {\bf B_n}(\cH)$, there is $r\in (0,1)$ such that ${\bf X}\in r{\bf B_n}(\cH)$. Consequently,
\begin{equation}
\label{subset2}
{\bf B_n}(\cH)\subset \bigcup_{0\leq r<1}r{\bf B_n}(\cH)\subset
\bigcup_{{\bf z}\in  {\DD}^{n_1+\cdots+n_k}}{\bf z}{\bf B_n}(\cH)\subseteq\bigcup_{{\bf z}\in  {\DD}^{n_1+\cdots+n_k}}{\bf z}{\bf B_n}(\cH)^-
\end{equation}
and
\begin{equation}
\label{subset3}
{\bf B_n}(\cH)\subset \bigcup_{0\leq r<1}r{\bf B_n}(\cH)\subset
\bigcup_{0\leq r<1}r{\bf B_n}(\cH)^-.
 \end{equation}
The relations \eqref{subset1} and \eqref{subset2} show that the first sequence of equalities in (ii) holds.
 Due to relation \eqref{subset1}, for each $r\in [0,1)$, we have
$ r{\bf B_n}(\cH)^-\subseteq {\bf B_n}(\cH)$ which together with  relation and \eqref{subset3} show that the second sequence of equalities in item (ii) holds. Now, one can easily see that item (iii) follows immediately from (ii).
The proof is complete.
\end{proof}

We remark that if  ${\bf r }:=(r_1,\ldots, r_k)$, $r_i>0$, then we also have
 $ {\bf B_n}(\cH)=\bigcup_{0\leq r_i<1}{\bf r}{\bf B_n}(\cH)^-.
$
Note also that the regular polyball ${\bf B_n}(\cH)$ is a logarithmically convex  complete Reinhardt domain.

For each $i\in\{1,\ldots, k\}$, let $Z_i:=(Z_{i,1},\ldots, Z_{i,n_i})$ be
an  $n_i$-tuple of noncommuting indeterminates and assume that, for any
$p,q\in \{1,\ldots, k\}$, $p\neq q$, the entries in $Z_p$ are commuting
 with the entries in $Z_q$. We set $Z_{i,\alpha_i}:=Z_{i,j_1}\cdots Z_{i,j_p}$
  if $\alpha_i\in \FF_{n_i}^+$ and $\alpha_i=g_{j_1}^i\cdots g_{j_p}^i$, and
   $Z_{i,g_0^i}:=1$, where $g_0^i$ is the identity in $\FF_{n^i}^+$.
   Given $A_{(\alpha_1,\ldots, \alpha_k)}\in B(\cK)$ with  $(\alpha_1,\ldots, \alpha_k)\in \FF_{n_1}^+\times \cdots \times\FF_{n_k}^+$,
we consider formal power series
$$
\varphi=\sum_{\alpha_1\in \FF_{n_1}^+,\ldots, \alpha_k\in \FF_{n_k}^+} A_{(\alpha_1,\ldots, \alpha_k)} \otimes Z_{1,\alpha_1}\cdots Z_{k,\alpha_k},\qquad A_{(\alpha_1,\ldots, \alpha_k)}\in B(\cK),
$$
in ideterminates $Z_{i,j}$. Denoting  $(\alpha):=(\alpha_1,\ldots, \alpha_k)\in \FF_{n_1}^+\times \cdots \times\FF_{n_k}^+$, $Z_{(\alpha)}:= Z_{1,\alpha_1}\cdots Z_{k,\alpha_k}$, and $A_{(\alpha)}:=A_{(\alpha_1,\ldots, \alpha_k)}$, we can also use
   the abbreviation
$\varphi=\sum\limits_{(\alpha)} A_{(\alpha)}\otimes Z_{(\alpha)}$.

The next result is an analogue of Abel theorem from complex analysis in  our noncommutative  multivariable setting.

\begin{theorem} \label{Abel} If  $\varphi= \sum\limits_{(p_1,\ldots, p_k)\in \ZZ_+^k}\sum\limits_{{ \alpha_i\in \FF_{n_i}^+, |\alpha_i|=p_i}\atop{i\in \{1,\ldots, k\} }}A_{(\alpha)}\otimes Z_{(\alpha)}$ is a formal power series and ${\bf r}=(r_1,\ldots, r_k)$, $r_i>0$, then the following statements hold.
\begin{enumerate}
\item[(i)] If the set
$$
A:=\{ \|r_1^{2p_1}\cdots r_k^{2p_k}\sum\limits_{{ \alpha_i\in \FF_{n_i}^+, |\alpha_i|=p_i}\atop{i\in \{1,\ldots, k\} }}A_{(\alpha)}^*A_{(\alpha)}\|:\ (p_1,\ldots, p_k)\in \ZZ_+^k\}
$$
is bounded, then the series
$$\sum\limits_{(p_1,\ldots, p_k)\in \ZZ_+^k}\|\sum\limits_{{ \alpha_i\in \FF_{n_i}^+, |\alpha_i|=p_i}\atop{i\in \{1,\ldots, k\} }}A_{(\alpha)}\otimes X_{(\alpha)}\|
$$
is convergent in ${\bf r}{\bf B_n}(\cH)$,  the regular polyball of  polyradius ${\bf r}=(r_1,\ldots, r_k)$,  and uniformly convergent on ${\bf s}{\bf B_n}(\cH)^-$ for any ${\bf s}=(s_1,\ldots, s_k)$ with $0\leq s_i<r_i$.
\item[(ii)] If the set $A$ is unbounded, then the series
$$\sum\limits_{(p_1,\ldots, p_k)\in \ZZ_+^k}\|\sum\limits_{{ \alpha_i\in \FF_{n_i}^+, |\alpha_i|=p_i}\atop{i\in \{1,\ldots, k\} }}A_{(\alpha)}\otimes X_{(\alpha)}\|\quad \text{ and } \quad
\sum\limits_{(p_1,\ldots, p_k)\in \ZZ_+^k}\sum\limits_{{ \alpha_i\in \FF_{n_i}^+, |\alpha_i|=p_i}\atop{i\in \{1,\ldots, k\} }}A_{(\alpha)}\otimes X_{(\alpha)}
$$
are divergent for some ${\bf X}\in {\bf r}{\bf B_n}(\cH)^-$ and some Hilbert space  $\cH$.
\end{enumerate}
\end{theorem}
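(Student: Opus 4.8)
The plan is to reduce both parts to a norm estimate for each homogeneous block
$$P_{(p)}({\bf X}):=\sum_{|\alpha_i|=p_i}A_{(\alpha)}\otimes X_{(\alpha)},\qquad (p)=(p_1,\ldots,p_k)\in\ZZ_+^k,$$
where throughout the inner sum runs over all $\alpha_i\in\FF_{n_i}^+$ with $|\alpha_i|=p_i$, $i\in\{1,\ldots,k\}$. First I would factor $P_{(p)}({\bf X})=W'V'$, where $V'\colon\cK\otimes\cH\to\bigoplus_{(\alpha)}(\cK\otimes\cH)$ stacks the operators $A_{(\alpha)}\otimes I$ into a column and $W'\colon\bigoplus_{(\alpha)}(\cK\otimes\cH)\to\cK\otimes\cH$ assembles the operators $I\otimes X_{(\alpha)}$ into a row. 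Computing $V'^*V'$ and $W'W'^*$ gives $\|V'\|=\|\sum A_{(\alpha)}^*A_{(\alpha)}\|^{1/2}$ and $\|W'\|=\|\sum X_{(\alpha)}X_{(\alpha)}^*\|^{1/2}$, so that
$$\|P_{(p)}({\bf X})\|\le b_{(p)}\,\Big\|\sum_{|\alpha_i|=p_i}X_{(\alpha)}X_{(\alpha)}^*\Big\|^{1/2},\qquad b_{(p)}:=\Big\|\sum_{|\alpha_i|=p_i}A_{(\alpha)}^*A_{(\alpha)}\Big\|^{1/2}.$$
Choosing this factorization, rather than the one producing $A_{(\alpha)}A_{(\alpha)}^*$, is exactly what makes $b_{(p)}$ match the quantity occurring in the set $A$; the second factor is the defect-type sum $\sum X_{(\alpha)}X_{(\alpha)}^*=\Phi_{X_1}^{p_1}\circ\cdots\circ\Phi_{X_k}^{p_k}(I)$, which is immediate from $X_{(\alpha)}=X_{1,\alpha_1}\cdots X_{k,\alpha_k}$.

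For part (i) I would write ${\bf X}={\bf r}{\bf Y}$ with ${\bf Y}\in{\bf B_n}(\cH)$, so that $c_i:=\|\Phi_{X_i}(I)\|=r_i^2\|\Phi_{Y_i}(I)\|<r_i^2$. Since each $\Phi_{X_i}$ is positive, the inequality $\Phi_{X_i}(I)\le c_iI$ iterates to $\Phi_{X_1}^{p_1}\circ\cdots\circ\Phi_{X_k}^{p_k}(I)\le\prod_i c_i^{p_i}I$, whence $\|\sum X_{(\alpha)}X_{(\alpha)}^*\|\le\prod_i c_i^{p_i}$. If $A$ is bounded by $M$, then $b_{(p)}\le M^{1/2}\prod_i r_i^{-p_i}$, and combining the two estimates yields $\|P_{(p)}({\bf X})\|\le M^{1/2}\prod_i\rho_i^{p_i}$ with $\rho_i:=c_i^{1/2}/r_i<1$; summing the geometric series over $\ZZ_+^k$ gives convergence on ${\bf r}{\bf B_n}(\cH)$. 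For the uniform statement on ${\bf s}{\bf B_n}(\cH)^-$ with $0\le s_i<r_i$, I would write ${\bf X}={\bf s}{\bf Y}$ with ${\bf Y}\in{\bf B_n}(\cH)^-$, so $c_i\le s_i^2$ and hence $\rho_i\le s_i/r_i<1$ uniformly, and then apply the Weierstrass $M$-test with the ${\bf X}$-independent majorant $M^{1/2}\prod_i(s_i/r_i)^{p_i}$.

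For part (ii) I would exhibit divergence at the scaled universal model ${\bf X}={\bf r}{\bf S}$ on the Fock space $\cH=\otimes_{i=1}^k F^2(H_{n_i})$, which lies in ${\bf r}{\bf B_n}(\cH)^-$ because ${\bf S}$ belongs to the closed polyball and $t{\bf S}\in{\bf B_n}$ for $t\in[0,1)$ by Proposition \ref{reg-poly}. Here the left creation operators give the matching \emph{exact} computation: since ${\bf S}_{(\alpha)}^*{\bf S}_{(\beta)}=\delta_{\alpha\beta}I$ for multi-indices of equal length, one obtains $P_{(p)}({\bf r}{\bf S})^*P_{(p)}({\bf r}{\bf S})=r_1^{2p_1}\cdots r_k^{2p_k}\big(\sum_{|\alpha_i|=p_i}A_{(\alpha)}^*A_{(\alpha)}\big)\otimes I$, so $\|P_{(p)}({\bf r}{\bf S})\|^2$ is precisely the member of $A$ indexed by $(p)$. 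When $A$ is unbounded these block norms are unbounded over $(p)$, so the general term of each of the two series fails to tend to $0$, forcing divergence. The work is essentially bookkeeping rather than conceptual; the only points needing genuine care are the two structural identities $\sum X_{(\alpha)}X_{(\alpha)}^*=\Phi_{X_1}^{p_1}\circ\cdots\circ\Phi_{X_k}^{p_k}(I)$ and ${\bf S}_{(\alpha)}^*{\bf S}_{(\beta)}=\delta_{\alpha\beta}I$, together with the verification that ${\bf r}{\bf S}$ genuinely belongs to the closed scaled polyball, so that part (ii) yields a legitimate point of divergence.
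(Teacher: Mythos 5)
Your proof is correct, and for part (ii) it coincides with the paper's argument: evaluate at ${\bf r}{\bf S}$, use that the ${\bf S}_{(\alpha)}$ with $|\alpha_i|=p_i$ are isometries with orthogonal ranges so that $\|P_{(p)}({\bf r}{\bf S})\|^2$ is exactly the element of $A$ indexed by $(p_1,\ldots,p_k)$, and conclude divergence of both series from the unboundedness of these terms. For part (i) you take a genuinely different route. The paper's estimate rests on the von Neumann type inequality of \cite{Po-Berezin-poly}: for ${\bf X}\in{\bf s}{\bf B_n}(\cH)^-$ one has $\|\sum A_{(\alpha)}\otimes X_{(\alpha)}\|\le \|\sum s_1^{p_1}\cdots s_k^{p_k}A_{(\alpha)}\otimes{\bf S}_{(\alpha)}\|= s_1^{p_1}\cdots s_k^{p_k}\|\sum A_{(\alpha)}^*A_{(\alpha)}\|^{1/2}$, the last equality being the exact norm computation at the universal model, and pointwise convergence on ${\bf r}{\bf B_n}(\cH)$ is then obtained from the decomposition ${\bf r}{\bf B_n}(\cH)=\bigcup_{0\le s_i<r_i}{\bf s}{\bf B_n}(\cH)^-$ of Proposition \ref{reg-poly}. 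You reach the same majorant $C^{1/2}\prod_i(s_i/r_i)^{p_i}$ by the elementary row--column factorization $\|P_{(p)}({\bf X})\|\le\|\sum A_{(\alpha)}^*A_{(\alpha)}\|^{1/2}\,\|\Phi_{X_1}^{p_1}\circ\cdots\circ\Phi_{X_k}^{p_k}(I)\|^{1/2}$ together with the iterated bound $\Phi_{X_1}^{p_1}\circ\cdots\circ\Phi_{X_k}^{p_k}(I)\le \prod_i\|\Phi_{X_i}(I)\|^{p_i}I$, which is a sound use of positivity of the maps $\Phi_{X_i}$. This replaces the dilation-theoretic input by a Cauchy--Schwarz argument that uses only $\|\Phi_{X_i}(I)\|<r_i^2$ (respectively $\le s_i^2$), i.e., only the definition of the polyball, and it also settles convergence at each point of ${\bf r}{\bf B_n}(\cH)$ directly without the union decomposition. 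Both versions end with the same geometric series and Weierstrass test; the paper's von Neumann route has the side benefit of identifying the exact supremum over ${\bf s}{\bf B_n}(\cH)^-$ with the norm at the scaled universal model, a fact it reuses later, while yours is more self-contained.
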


\begin{proof} Let $s_i<r_i$, $i\in \{1,\ldots, k\}$, and ${\bf X}\in {\bf r} {\bf B_n}(\cH)$, and assume that  there is $C>0$ such that
$$\|r_1^{2p_1}\cdots r_k^{2p_k}\sum\limits_{{ \alpha_i\in \FF_{n_i}^+, |\alpha_i|=p_i}\atop{i\in \{1,\ldots, k\} }}A_{(\alpha)}^*A_{(\alpha)}\|\leq C,\qquad (p_1,\ldots, p_k)\in \ZZ_+^k.
$$
Due to the von Neumann type  inequality \cite{Po-Berezin-poly}, we have
\begin{equation*}
\begin{split}
\|\sum\limits_{{ \alpha_i\in \FF_{n_i}^+, |\alpha_i|=p_i}\atop{i\in \{1,\ldots, k\} }}A_{(\alpha)}\otimes X_{(\alpha)}\|
&\leq
\|\sum\limits_{{ \alpha_i\in \FF_{n_i}^+, |\alpha_i|=p_i}\atop{i\in \{1,\ldots, k\} }}s_1^{p_1}\cdots s_k^{p_k}A_{(\alpha)}\otimes {\bf S}_{(\alpha)}\|\\
&=s_1^{p_1}\cdots s_k^{p_k}\|\sum\limits_{{ \alpha_i\in \FF_{n_i}^+, |\alpha_i|=p_i}\atop{i\in \{1,\ldots, k\} }}A_{(\alpha)}^*A_{(\alpha)} \|^{1/2}\\
&< \left(\frac{s_1}{r_1}\right)^{p_1}\cdots \left(\frac{s_k}{r_k}\right)^{p_k}C^{1/2}
\end{split}
\end{equation*}
for any ${\bf X}\in s{\bf B_n}(\cH)^-$. On the other hand, due to Proposition \ref{reg-poly}, we have
${\bf r}{\bf B_n}(\cH)= \bigcup_{0\leq s_i<r_i}{\bf s}{\bf B_n}(\cH)^-$.
Now, one can easily complete the proof of part (i).

To prove (ii), assume that the set $A$ is unbounded.
Then, using the fact that the isometries ${\bf S}_{(\alpha)}$, with
 $(\alpha)=(\alpha_1,\ldots, \alpha_k)\in \FF_{n_1}^+\times\cdots\times \FF_{n_k}^+$, $|\alpha_i|=p_i$, have orthogonal ranges, one can easily deduce that  the series
$$\sum\limits_{(p_1,\ldots, p_k)\in \ZZ_+^k}\|\sum\limits_{{ \alpha_i\in \FF_{n_i}^+, |\alpha_i|=p_i}\atop{i\in \{1,\ldots, k\} }}A_{(\alpha)}\otimes r_1^{p_1}\cdots r_k^{p_k}{\bf S}_{(\alpha)}\|
$$
and $$\sum\limits_{(p_1,\ldots, p_k)\in \ZZ_+^k}\sum\limits_{{ \alpha_i\in \FF_{n_i}^+, |\alpha_i|=p_i}\atop{i\in \{1,\ldots, k\} }}A_{(\alpha)}\otimes r_1^{p_1}\cdots r_k^{p_k}{\bf S}_{(\alpha)}
$$
are divergent,  and ${\bf rS}:=(r_1{\bf S}_1,\ldots, r_k {\bf S}_k)\in {\bf r}{\bf B_n}(\otimes_{i=1}^k F^2(H_{n_i}))^-$.
\end{proof}

 \begin{definition} A power series $\varphi=\sum\limits_{(\alpha)} A_{(\alpha)}\otimes Z_{(\alpha)}$ is called   {\it free holomorphic function} (with coefficients in $B(\cK)$) on the
{\it abstract  polyball}
$\boldsymbol\rho{\bf B_n}:=
\{\boldsymbol\rho{\bf  B_n}(\cH):\ \cH \text{ is a Hilbert space}\}$, $\boldsymbol\rho=(\rho_1,\ldots,\rho_k)$,  $\rho_i>0$, if the series
$$
\varphi({\bf X} ):=\sum\limits_{(p_1,\ldots, p_k)\in \ZZ_+^k}\sum\limits_{{ \alpha_i\in \FF_{n_i}^+, |\alpha_i|=p_i}\atop{i\in \{1,\ldots, k\} }} A_{(\alpha)}\otimes  X_{(\alpha)}
$$
is convergent in the operator norm topology for any ${\bf X}=\{X_{i,j}\}\in \boldsymbol\rho{\bf B_n}(\cH)$ with $i\in \{1,\ldots, k\}$ and  $j\in \{1,\ldots, n_i\}$,  and any Hilbert space $\cH$. We denote by $Hol({\bf \boldsymbol\rho B_n})$ the set of all free holomorphic functions on  ${\bf \boldsymbol\rho B_n}$ with scalar coefficients.
\end{definition}

Using Theorem \ref{Abel}, one can easily deduce the following characterization for free holomorphic functions on regular polyballs.

\begin{corollary} Let   ${\bf S} $
    be the universal model associated with the abstract regular polyball
  ${\bf B_n}$. A formal power series $\varphi=\sum\limits_{(\alpha)} A_{(\alpha)}\otimes Z_{(\alpha)}$ is a  free holomorphic function (with coefficients in $B(\cK)$) on the
abstract  polyball
$\boldsymbol\rho{\bf B_n}$, where
  $\boldsymbol\rho=(\rho_1,\ldots,\rho_k)$,  $\rho_i>0$, if and only if  the series
$$\sum\limits_{(p_1,\ldots, p_k)\in \ZZ_+^k}\|\sum\limits_{{ \alpha_i\in \FF_{n_i}^+, |\alpha_i|=p_i}\atop{i\in \{1,\ldots, k\} }}A_{(\alpha)}\otimes r_1^{p_1}\cdots r_k^{p_k}{\bf S}_{(\alpha)}\|
$$
converges for any $r_i\in [0, \rho_i)$, $i\in \{1,\ldots, k\}$.
\end{corollary}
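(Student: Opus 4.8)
The plan is to read the statement entirely through the dichotomy of Theorem~\ref{Abel}, using the observation that evaluating the power series at the scaled universal model ${\bf r}{\bf S}=(r_1{\bf S}_1,\ldots,r_k{\bf S}_k)$ reproduces exactly the two series appearing there. Since the isometries ${\bf S}_{(\alpha)}$ with $|\alpha_i|=p_i$ have orthogonal ranges, the computation already carried out in the proof of Theorem~\ref{Abel} gives $\|\sum_{|\alpha_i|=p_i}A_{(\alpha)}\otimes r_1^{p_1}\cdots r_k^{p_k}{\bf S}_{(\alpha)}\|=(r_1^{2p_1}\cdots r_k^{2p_k}\|\sum_{|\alpha_i|=p_i}A_{(\alpha)}^*A_{(\alpha)}\|)^{1/2}$, so the series in the statement is literally the first series of Theorem~\ref{Abel} evaluated at ${\bf X}={\bf r}{\bf S}$, and its convergence is governed by whether the set $A$ of that theorem (formed with polyradius ${\bf r}$) is bounded. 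A second ingredient, used repeatedly, is that $r_i<\rho_i$ forces ${\bf r}{\bf S}\in\boldsymbol\rho{\bf B_n}$: writing $t_i:=r_i/\rho_i<1$ one has ${\bf r}{\bf S}=\boldsymbol\rho({\bf t}{\bf S})$, and Lemma~\ref{ineqS} yields ${\bf \Delta}_{{\bf t}{\bf S}}(I)\ge\prod_{i=1}^k(1-t_i^2)I>0$, so ${\bf t}{\bf S}\in{\bf B_n}$ and hence ${\bf r}{\bf S}\in\boldsymbol\rho{\bf B_n}$.

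For the forward implication, I would fix $r_i\in[0,\rho_i)$ and choose an intermediate radius $r_i'\in(r_i,\rho_i)$. Since ${\bf r}'{\bf S}\in\boldsymbol\rho{\bf B_n}$, free holomorphy of $\varphi$ gives norm convergence of $\varphi({\bf r}'{\bf S})$. Were the set $A$ at polyradius ${\bf r}'$ unbounded, Theorem~\ref{Abel}(ii) (whose proof exhibits the divergence precisely at the scaled universal model) would force $\varphi({\bf r}'{\bf S})$ to diverge, a contradiction; hence $A$ at ${\bf r}'$ is bounded. Theorem~\ref{Abel}(i) then gives uniform convergence of the series of norms on ${\bf s}{\bf B_n}(\cH)^-$ for every $s_i<r_i'$; specializing to $\cH=\otimes_{i=1}^kF^2(H_{n_i})$, $s_i=r_i$, and ${\bf X}={\bf r}{\bf S}\in{\bf r}{\bf B_n}(\cH)^-$ yields convergence of the asserted series.

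For the converse, assume the series converges for all $r_i\in[0,\rho_i)$ and take an arbitrary ${\bf X}\in\boldsymbol\rho{\bf B_n}(\cH)$. Writing ${\bf X}=\boldsymbol\rho{\bf Y}$ with ${\bf Y}\in{\bf B_n}(\cH)$ and invoking Proposition~\ref{reg-poly}(ii) to realize ${\bf Y}\in t{\bf B_n}(\cH)^-$ for some $t\in[0,1)$, I obtain ${\bf X}\in{\bf s}{\bf B_n}(\cH)^-$ with $s_i:=t\rho_i<\rho_i$. Picking $r_i\in(s_i,\rho_i)$, the hypothesis gives convergence of the series at ${\bf r}$, whence its general term tends to $0$ and the set $A$ at ${\bf r}$ is bounded; Theorem~\ref{Abel}(i) then provides uniform convergence of the series of norms on ${\bf s}{\bf B_n}(\cH)^-$, in particular at ${\bf X}$. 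As convergence of the series of norms forces norm convergence of $\varphi({\bf X})$, the function $\varphi$ is free holomorphic on $\boldsymbol\rho{\bf B_n}$.

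I do not expect a serious obstacle: the content is entirely in Theorem~\ref{Abel}, and the remaining work reduces to bookkeeping with three strictly ordered polyradii ${\bf s}$, ${\bf r}$, $\boldsymbol\rho$ so that Theorem~\ref{Abel}(i) applies, together with the membership ${\bf r}{\bf S}\in\boldsymbol\rho{\bf B_n}$ that lets holomorphy be tested against the universal model. The one point demanding a little care is ensuring strict coordinatewise inequalities at each step, which is exactly what permits the geometric-series comparison implicit in Theorem~\ref{Abel}(i); no new estimate is needed beyond the identification of the stated series with the Abel series.
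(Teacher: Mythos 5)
Your proposal is correct and follows the same route the paper intends: the corollary is stated immediately after Theorem \ref{Abel} with the remark that it is an easy consequence of that theorem, and your argument — identifying the stated series with the Abel series at ${\bf r}{\bf S}$ via the orthogonal-ranges identity, then running the boundedness/unboundedness dichotomy of Theorem \ref{Abel} with intermediate polyradii and using Proposition \ref{reg-poly} and Lemma \ref{ineqS} to place ${\bf r}{\bf S}$ and a general ${\bf X}$ in the right sets — is exactly the bookkeeping the paper leaves to the reader. No gaps.
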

Throughout the paper, we say that the abstract polyball ${\bf B_n}$ or a free holomorphic function $F$ on ${\bf B_n}$ has a certain property, if the property holds for any Hilbert space representation of ${\bf B_n}$ and $F$, respectively.
We remark that the coefficients of a free holomorphic function on a polyball are uniquely determined by its representation on an infinite dimensional Hilbert space. Indeed, assume that $F=\sum\limits_{(\alpha)} A_{(\alpha)}\otimes Z_{(\alpha)}$, $A_{(\alpha)}\in \cK$,  is a  free holomorphic function with $F(r{\bf S})=0$ for any $r\in [0,1)$. Then, for any $x,y\in \cK$, we have
$$
\left<F(r{\bf S})(x\otimes 1),(y\otimes {\bf S}_{(\alpha)}1\right>=r^{|\alpha_1|+\cdots +|\alpha_k|}\left< A_{(\alpha)}x,y\right>=0
$$
for any $(\alpha)=(\alpha_1,\ldots, \alpha_k)\in \FF_{n_1}^+\times \cdots \times \FF_{n_k}^+$. Hence $A_{(\alpha)}=0$, which proves our assertion.

\begin{corollary} \label{scalar} If $\varphi=\sum\limits_{(\alpha)} a_{(\alpha)}\otimes Z_{(\alpha)}$, $a_{(\alpha)}\in \CC$  is a  free holomorphic function  on the
abstract  polyball
$\boldsymbol\rho{\bf B_n}$,  $\rho=(\rho_1,\ldots, \rho_k)$, then its representation on $\CC$, i.e.
$$
\varphi(\lambda_1,\ldots, \lambda_k)=\sum\limits_{(\alpha)} a_{(\alpha)}\otimes \lambda_{(\alpha)},\quad \lambda_i=(\lambda_{i,1},\ldots, \lambda_{i, n_i}),
$$
is a holomorphic function on the scalar polyball $\boldsymbol\rho{\bf P_n}(\CC)= (\CC^{n_1})_{\rho_1}\times \cdots \times (\CC^{n_k})_{\rho_k}$
\end{corollary}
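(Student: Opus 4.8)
The plan is to reduce the statement to the local uniform convergence of the scalar power series on compact subsets of the scalar polyball and then invoke the Weierstrass convergence theorem for holomorphic functions of several complex variables. As a first step I would identify the scalar domain. Evaluating the defect mapping at $\cH=\CC$ shows that for $\lambda=(\lambda_1,\ldots,\lambda_k)$ with $\lambda_i=(\lambda_{i,1},\ldots,\lambda_{i,n_i})$ one has $\Phi_{\lambda_i}(Y)=\big(\sum_{j=1}^{n_i}|\lambda_{i,j}|^2\big)Y$, whence ${\bf \Delta}_{\lambda}(I)=\prod_{i=1}^{k}\big(1-\sum_{j=1}^{n_i}|\lambda_{i,j}|^2\big)$. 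Consequently ${\bf B_n}(\CC)={\bf P_n}(\CC)$, and after scaling by $\boldsymbol\rho$ we get $\boldsymbol\rho{\bf B_n}(\CC)=\boldsymbol\rho{\bf P_n}(\CC)=(\CC^{n_1})_{\rho_1}\times\cdots\times(\CC^{n_k})_{\rho_k}$. In particular, applying the definition of free holomorphicity with $\cH=\CC$ already shows that the series $\varphi(\lambda)$ converges at every point of this scalar polyball, so it is at least well defined there.

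Next I would extract the quantitative coefficient bound from the characterization of free holomorphic functions established just above. Since $\varphi$ is free holomorphic on $\boldsymbol\rho{\bf B_n}$, that corollary gives that $\sum_{(p_1,\ldots,p_k)\in\ZZ_+^k}\|\sum_{|\alpha_i|=p_i}a_{(\alpha)}\otimes r_1^{p_1}\cdots r_k^{p_k}{\bf S}_{(\alpha)}\|$ converges for all $r_i\in[0,\rho_i)$. For a fixed degree $(p_1,\ldots,p_k)$ the operators ${\bf S}_{(\alpha)}$ are isometries with pairwise orthogonal ranges, so ${\bf S}_{(\alpha)}^*{\bf S}_{(\beta)}=\delta_{(\alpha)(\beta)}I$; with scalar coefficients this yields $\|\sum_{|\alpha_i|=p_i}a_{(\alpha)}{\bf S}_{(\alpha)}\|=\big(\sum_{|\alpha_i|=p_i}|a_{(\alpha)}|^2\big)^{1/2}$, and therefore the convergent series becomes
\[
(\star)\qquad \sum_{(p_1,\ldots,p_k)\in\ZZ_+^k} r_1^{p_1}\cdots r_k^{p_k}\Big(\sum_{|\alpha_i|=p_i}|a_{(\alpha)}|^2\Big)^{1/2}<\infty,\qquad r_i\in[0,\rho_i).
\]

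The main step is then a termwise domination of the scalar series by $(\star)$. A compact subset of $(\CC^{n_1})_{\rho_1}\times\cdots\times(\CC^{n_k})_{\rho_k}$ is contained in some set $\{\lambda:\sum_{j}|\lambda_{i,j}|^2\le s_i^2\}$ with $s_i<\rho_i$. For each homogeneous block I would apply Cauchy--Schwarz,
\[
\Big|\sum_{|\alpha_i|=p_i} a_{(\alpha)}\lambda_{(\alpha)}\Big|\le\Big(\sum_{|\alpha_i|=p_i}|a_{(\alpha)}|^2\Big)^{1/2}\Big(\sum_{|\alpha_i|=p_i}|\lambda_{(\alpha)}|^2\Big)^{1/2},
\]
together with the separation-of-variables identity
\[
\sum_{|\alpha_i|=p_i}|\lambda_{(\alpha)}|^2=\prod_{i=1}^{k}\Big(\sum_{j=1}^{n_i}|\lambda_{i,j}|^2\Big)^{p_i},
\]
which holds because $\lambda_{(\alpha)}=\lambda_{1,\alpha_1}\cdots\lambda_{k,\alpha_k}$ and the sums over $|\alpha_i|=p_i$ factor. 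On the compact set above this bounds each block by $s_1^{p_1}\cdots s_k^{p_k}\big(\sum_{|\alpha_i|=p_i}|a_{(\alpha)}|^2\big)^{1/2}$, and summing over $(p_1,\ldots,p_k)$ returns exactly the convergent series $(\star)$ with $r_i=s_i$. Hence the scalar series converges absolutely and uniformly on compact subsets of the scalar polyball; since its partial sums over finite boxes $\{p_i\le N_i\}$ are polynomials in the variables $\lambda_{i,j}$, the limit $\varphi(\lambda)$ is holomorphic on $(\CC^{n_1})_{\rho_1}\times\cdots\times(\CC^{n_k})_{\rho_k}$.

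I expect no serious obstacle here: the essential content is upgrading the operator-norm (indeed merely pointwise) convergence of the free series to local uniform convergence of the scalar series, and this is supplied by $(\star)$ combined with the orthogonality of the ranges of the ${\bf S}_{(\alpha)}$ and the factorization of $\sum|\lambda_{(\alpha)}|^2$. The only point needing a word of care is the elementary observation that every compact subset of the open scalar polyball lies in a closed sub-polyball of strictly smaller polyradius, which is immediate from the product structure of the domain.
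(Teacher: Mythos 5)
Your proof is correct and fills in exactly the argument the paper leaves implicit: local uniform convergence of the scalar series follows from the operator-norm convergence criterion via the orthogonality of the ranges of the ${\bf S}_{(\alpha)}$, Cauchy--Schwarz, and the factorization $\sum_{|\alpha_i|=p_i}|\lambda_{(\alpha)}|^2=\prod_{i}\bigl(\sum_j|\lambda_{i,j}|^2\bigr)^{p_i}$. The only available shortcut is to cite Theorem \ref{Abel}(i) directly with $\cH=\CC$, which already yields uniform convergence on the closed sub-polyballs ${\bf s}{\bf B_n}(\CC)^-$, $s_i<\rho_i$; your hand derivation reproduces precisely that estimate, so the two routes coincide.
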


In what follows, we obtain Cauchy type inequalities for the coefficients of  free holomorphic functions on regular polyballs.
\begin{theorem}\label{Cauchy-ineq} Let $F:\boldsymbol\rho{\bf B_n}(\cH)\to B(\cK)\otimes_{min} B(\cH)$ be a free holomorphic function with representation
$$F({\bf X})=\sum\limits_{(p_1,\ldots, p_k)\in \ZZ_+^k}\sum\limits_{{ \alpha_i\in \FF_{n_i}^+, |\alpha_i|=p_i}\atop{i\in \{1,\ldots, k\} }}A_{(\alpha)}\otimes X_{(\alpha)}.
$$
Let ${\bf r}=(r_1,\ldots, r_k)$ be such that $0<r_i<\rho_i$ and define  $M({\bf r}):=\sup_{{\bf X}\in {\bf r}{\bf B_n}(\cH)^-}\|F({\bf X})\|$.
Then, for each $(p_1,\ldots, p_k)\in \ZZ_+^k$, we have
$$
\|\sum\limits_{{ \alpha_i\in \FF_{n_i}^+, |\alpha_i|=p_i}\atop{i\in \{1,\ldots, k\} }}A_{(\alpha)}^*A_{(\alpha)}\|^{1/2}\leq \frac{1}{r_1^{p_1}\cdots r_k^{p_k}}M({\bf r}).
$$
Moreover, $M({\bf r})=\|F({\bf rS})\|$, where ${\bf S}$ is the universal model of the regular polyball ${\bf B_n}$.
\end{theorem}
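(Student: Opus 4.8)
The plan is to prove the final assertion $M({\bf r})=\|F({\bf rS})\|$ first, and then read the coefficient estimates off the single operator $F({\bf rS})$ by testing it against the vacuum vector. I would split the argument into a von Neumann/Berezin step (the genuine content) and a Parseval step (routine).

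For $M({\bf r})=\|F({\bf rS})\|$, one inequality is immediate: since ${\bf rS}\in{\bf r}{\bf B_n}(\otimes_{i=1}^k F^2(H_{n_i}))^-$ (as noted in the proof of Theorem~\ref{Abel}), we get $\|F({\bf rS})\|\leq M({\bf r})$. For the reverse inequality I would use the Berezin transform to realize every value of $F$ on the closed ball as the image of $F({\bf rS})$ under a contractive map. Given ${\bf X}\in{\bf r}{\bf B_n}(\cH)^-$, write ${\bf X}={\bf rY}$ with ${\bf Y}\in{\bf B_n}(\cH)^-$, and establish the interpolation identity
$$
F({\bf X})=\left({\rm id}_{B(\cK)}\otimes\boldsymbol\cB_{\bf Y}\right)[F({\bf rS})].
$$
This is checked on each homogeneous partial sum using $\boldsymbol\cB_{\bf Y}[{\bf S}_{(\alpha)}]={\bf Y}_{(\alpha)}$ together with $r_1^{p_1}\cdots r_k^{p_k}{\bf Y}_{(\alpha)}={\bf X}_{(\alpha)}$ (where $|\alpha_i|=p_i$), and then passed to the limit: the Abel series for $F({\bf rS})$ converges in norm because $r_i<\rho_i$, and $\boldsymbol\cB_{\bf Y}$ is bounded on $C^*({\bf S})$, so the transform may be applied termwise. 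As ${\rm id}\otimes\boldsymbol\cB_{\bf Y}$ is unital and completely positive, hence contractive, it follows that $\|F({\bf X})\|\leq\|F({\bf rS})\|$; taking the supremum over ${\bf X}$ gives $M({\bf r})\leq\|F({\bf rS})\|$.

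With $M({\bf r})=\|F({\bf rS})\|$ in hand, I would extract the coefficients by evaluating $F({\bf rS})$ on a vector $x\otimes 1$, where $1$ is the vacuum of $\otimes_{i=1}^k F^2(H_{n_i})$. Since ${\bf S}_{(\alpha)}1=e^1_{\alpha_1}\otimes\cdots\otimes e^k_{\alpha_k}$ and these vectors are orthonormal for distinct $(\alpha)$, one has
$$
F({\bf rS})(x\otimes 1)=\sum\limits_{(p_1,\ldots,p_k)\in\ZZ_+^k}\sum\limits_{{\alpha_i\in\FF_{n_i}^+,\,|\alpha_i|=p_i}\atop{i\in\{1,\ldots,k\}}} r_1^{p_1}\cdots r_k^{p_k}\,(A_{(\alpha)}x)\otimes({\bf S}_{(\alpha)}1).
$$
Applying the orthogonal projection onto the closed linear span of the vectors $\xi\otimes{\bf S}_{(\alpha)}1$ with $\xi\in\cK$ and $|\alpha_i|=p_i$ (a contraction) and invoking orthonormality yields
$$
\left(r_1^{p_1}\cdots r_k^{p_k}\right)^2\sum\limits_{{\alpha_i\in\FF_{n_i}^+,\,|\alpha_i|=p_i}\atop{i\in\{1,\ldots,k\}}}\|A_{(\alpha)}x\|^2\leq\|F({\bf rS})(x\otimes 1)\|^2\leq\|F({\bf rS})\|^2\,\|x\|^2.
$$
Taking the supremum over $\|x\|\leq 1$ and using that $\|\sum_{|\alpha_i|=p_i}A_{(\alpha)}^*A_{(\alpha)}\|=\sup_{\|x\|\leq1}\sum_{|\alpha_i|=p_i}\|A_{(\alpha)}x\|^2$ for the positive operator involved, together with $\|F({\bf rS})\|=M({\bf r})$, gives after taking square roots the claimed inequality.

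The last paragraph is essentially a Parseval computation at the vacuum and should present no difficulty once the equality $M({\bf r})=\|F({\bf rS})\|$ is available. The main obstacle is therefore the Berezin-transform (von Neumann type) step: one must justify the interpolation identity $F({\bf X})=({\rm id}\otimes\boldsymbol\cB_{\bf Y})[F({\bf rS})]$ on the whole closed ball, which requires the norm convergence of the Abel series so that $\boldsymbol\cB_{\bf Y}$ can be applied term by term, and requires handling boundary points ${\bf Y}\in{\bf B_n}(\cH)^-$ through the $r\to1$ limit built into the definition of $\boldsymbol\cB_{\bf Y}$ on $C^*({\bf S})$.
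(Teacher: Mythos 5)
Your proof is correct and follows essentially the same route as the paper: the identity $M({\bf r})=\|F({\bf rS})\|$ is the von Neumann type inequality (which the paper cites and you re-derive via the Berezin transform), and the coefficient bound is extracted from $F({\bf rS})$ at the vacuum vector using the orthogonality of the ranges of the isometries ${\bf S}_{(\alpha)}$. The only cosmetic difference is that you read off the homogeneous component by a Parseval computation on $F({\bf rS})(x\otimes 1)$, whereas the paper pairs $F({\bf rS})$ against $\sum_{|\alpha_i|=p_i}A_{(\alpha)}^*\otimes {\bf S}_{(\alpha)}^*$; both yield the same estimate.
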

\begin{proof}
Using the fact that the isometries ${\bf S}_{(\alpha)}$, with
 $(\alpha)=(\alpha_1,\ldots, \alpha_k)\in \FF_{n_1}^+\times\cdots\times \FF_{n_k}^+$, $|\alpha_i|=p_i$, have orthogonal ranges, we deduce that
\begin{equation*}
\begin{split}
\left|\left< \left(\sum\limits_{{ \alpha_i\in \FF_{n_i}^+, |\alpha_i|=p_i}\atop{i\in \{1,\ldots, k\} }}A_{(\alpha)}^*\otimes  {\bf S}_{(\alpha)}^*\right)F({\bf r S})(h\otimes 1), h\otimes 1\right>\right|
&\leq \left\|\sum\limits_{{ \alpha_i\in \FF_{n_i}^+, |\alpha_i|=p_i}\atop{i\in \{1,\ldots, k\} }}A_{(\alpha)}^*\otimes  {\bf S}_{(\alpha)}^*\right\| M({\bf r}) \|h\|^2\\
&=
\|\sum\limits_{{ \alpha_i\in \FF_{n_i}^+, |\alpha_i|=p_i}\atop{i\in \{1,\ldots, k\} }}A_{(\alpha)}^*A_{(\alpha)}\|^{1/2}M({\bf r}) \|h\|^2
\end{split}
\end{equation*}
for any $h\in \cK$. On the other hand, we have
\begin{equation*}
\begin{split}
&\left< \left(\sum\limits_{{ \alpha_i\in \FF_{n_i}^+, |\alpha_i|=p_i}\atop{i\in \{1,\ldots, k\} }}A_{(\alpha)}^*\otimes  {\bf S}_{(\alpha)}^*\right)F({\bf r S})(h\otimes 1), h\otimes 1\right>\\
&=r_1^{p_1}\cdots r_k^{p_k}
\left< \left(\sum\limits_{{ \alpha_i\in \FF_{n_i}^+, |\alpha_i|=p_i}\atop{i\in \{1,\ldots, k\} }}A_{(\alpha)}^* { A}_{(\alpha)}\otimes I\right) (h\otimes 1), h\otimes 1\right>\\
&=r_1^{p_1}\cdots r_k^{p_k}\|\left(\sum\limits_{{ \alpha_i\in \FF_{n_i}^+, |\alpha_i|=p_i}\atop{i\in \{1,\ldots, k\} }}A_{(\alpha)}^*A_{(\alpha)}\right)^{1/2}h\|^2.
\end{split}
\end{equation*}
Hence, using the previous inequality, we deduce that

$$
r_1^{p_1}\cdots r_k^{p_k}\|\left(\sum\limits_{{ \alpha_i\in \FF_{n_i}^+, |\alpha_i|=p_i}\atop{i\in \{1,\ldots, k\} }}A_{(\alpha)}^*A_{(\alpha)}\right)^{1/2}h\|^2 \leq
\|\sum\limits_{{ \alpha_i\in \FF_{n_i}^+, |\alpha_i|=p_i}\atop{i\in \{1,\ldots, k\} }}A_{(\alpha)}^*A_{(\alpha)}\|^{1/2}M({\bf r}) \|h\|^2
$$
for any $h\in \cK$, and the inequality in the theorem follows.
The fact that $M({\bf r})=\|F({\bf rS})\|$ is due to von Neumann inequality \cite{Po-poisson}. The proof is complete.
\end{proof}

We remark that due to the fact that there is $r\in (0,1)$ such that $r{\bf P_n}(\cH)\subset
{\bf B_n}(\cH)$, we have
$$
 B(\cH)^{n_1}\times_c\cdots \times_c B(\cH)^{n_k}=\bigcup_{\rho>0} \rho{\bf B_n}(\cH).
$$
We say that $F$ is an {\it entire function} in $B(\cH)^{n_1}\times_c\cdots \times_c B(\cH)^{n_k}$ if $F$ is free holomorphic on every regular polyball $\rho{\bf B_n}(\cH)$, $\rho>0$.

Here is an analogue of Liouville's theorem for entire functions  on $B(\cH)^{n_1}\times_c\cdots \times_c B(\cH)^{n_k}$.

\begin{corollary} If $F:B(\cH)^{n_1}\times_c\cdots \times_c B(\cH)^{n_k}\to B(\cK)\otimes_{min} B(\cH)$ is an entire function  with the property that there is a constant $C>0$  and $(p_1,\ldots, p_k)\in \ZZ_+^k$ such that
$$
\|F({\bf X})\|\leq C \|\sum\limits_{{ \alpha_i\in \FF_{n_i}^+, |\alpha_i|=q_i}\atop{i\in \{1,\ldots, k\} }}{\bf X}_{(\alpha)}{\bf X}_{(\alpha)}^*\|^{1/2}
$$
for any ${\bf X}\in B(\cH)^{n_1}\times_c\cdots \times_c B(\cH)^{n_k}$, then $F$ is a polynomial of degree at most $q_1+\cdots+q_k$. In particular, a bounded free holomorphic function must be constant.
\end{corollary}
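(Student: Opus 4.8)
The plan is to expand $F$ in its (unique) free holomorphic power series and to control the homogeneous coefficient blocks by combining the Cauchy inequality of Theorem \ref{Cauchy-ineq} with the growth hypothesis, evaluated on the universal model. Write
$$
F({\bf X})=\sum_{(p_1,\ldots,p_k)\in\ZZ_+^k}\ \sum_{{\alpha_i\in\FF_{n_i}^+,\,|\alpha_i|=p_i}\atop{i\in\{1,\ldots,k\}}}A_{(\alpha)}\otimes X_{(\alpha)} ,
$$
which converges on every polyball $\rho{\bf B_n}(\cH)$ since $F$ is entire, and whose coefficients $A_{(\alpha)}$ are well defined by the uniqueness remark following the characterization of free holomorphic functions. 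Fix a polyradius ${\bf r}=(r_1,\ldots,r_k)$ with $r_i>0$ and set $M({\bf r}):=\sup_{{\bf X}\in{\bf r}{\bf B_n}(\cH)^-}\|F({\bf X})\|$.

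The key step is to obtain a sharp estimate for $M({\bf r})$. By Theorem \ref{Cauchy-ineq} we have $M({\bf r})=\|F({\bf rS})\|$, so the supremum collapses to an evaluation at the single element ${\bf rS}=(r_1{\bf S}_1,\ldots,r_k{\bf S}_k)\in{\bf r}{\bf B_n}(\otimes_{i=1}^kF^2(H_{n_i}))^-$. Applying the growth hypothesis at ${\bf X}={\bf rS}$ and using that the isometries ${\bf S}_{(\alpha)}$ with $|\alpha_i|=q_i$ have mutually orthogonal ranges (so that $\|\sum_{|\alpha_i|=q_i}{\bf S}_{(\alpha)}{\bf S}_{(\alpha)}^*\|=1$), I would conclude
$$
M({\bf r})=\|F({\bf rS})\|\leq C\Bigl\|\sum_{|\alpha_i|=q_i}({\bf rS})_{(\alpha)}({\bf rS})_{(\alpha)}^*\Bigr\|^{1/2}=C\,r_1^{q_1}\cdots r_k^{q_k}.
$$
Feeding this into the Cauchy inequality of Theorem \ref{Cauchy-ineq}, for every multi-degree $(m_1,\ldots,m_k)$ one gets
$$
\Bigl\|\sum_{|\alpha_i|=m_i}A_{(\alpha)}^*A_{(\alpha)}\Bigr\|^{1/2}\leq\frac{M({\bf r})}{r_1^{m_1}\cdots r_k^{m_k}}\leq C\,r_1^{q_1-m_1}\cdots r_k^{q_k-m_k}.
$$

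Since $F$ is entire, this last inequality holds for arbitrarily large ${\bf r}$; taking $r_1=\cdots=r_k=r$ gives the bound $C\,r^{(q_1+\cdots+q_k)-(m_1+\cdots+m_k)}$. Whenever $m_1+\cdots+m_k>q_1+\cdots+q_k$ the exponent is strictly negative, so letting $r\to\infty$ forces $\sum_{|\alpha_i|=m_i}A_{(\alpha)}^*A_{(\alpha)}=0$, hence $A_{(\alpha)}=0$ for all such multi-indices. Thus only terms of total degree at most $q_1+\cdots+q_k$ survive, and $F$ is a polynomial of degree at most $q_1+\cdots+q_k$. For the final assertion, a bounded entire $F$ (say $\|F({\bf X})\|\leq C$) satisfies the hypothesis with $q_1=\cdots=q_k=0$, because then the single surviving term has ${\bf X}_{(\alpha)}=I$ and the right-hand side equals $C$; the conclusion then yields a polynomial of degree $0$, i.e. a constant. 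The only point requiring genuine care is the orthogonality computation $\|\sum_{|\alpha_i|=q_i}{\bf S}_{(\alpha)}{\bf S}_{(\alpha)}^*\|=1$ that converts the growth bound into the exact homogeneity $M({\bf r})\leq C\,r_1^{q_1}\cdots r_k^{q_k}$; everything else is a routine limiting argument built on Theorem \ref{Cauchy-ineq}.
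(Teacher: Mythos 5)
Your proof is correct and follows essentially the same route as the paper: evaluate the growth hypothesis at ${\bf rS}$ to get $M({\bf r})\leq C\,r_1^{q_1}\cdots r_k^{q_k}$, feed this into the Cauchy inequality of Theorem \ref{Cauchy-ineq}, and let the radii tend to infinity. The only (harmless) difference is that you take $r_1=\cdots=r_k=r\to\infty$, which kills exactly the blocks of total degree exceeding $q_1+\cdots+q_k$, whereas the paper sends one $r_s\to\infty$ at a time and thereby annihilates every block with $p_s>q_s$ for some $s$ — a slightly stronger intermediate conclusion, though both suffice for the stated degree bound.
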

\begin{proof} Let $F$ have the representation
$$F({\bf X})=\sum\limits_{(p_1,\ldots, p_k)\in \ZZ_+^k}\sum\limits_{{ \alpha_i\in \FF_{n_i}^+, |\alpha_i|=p_i}\atop{i\in \{1,\ldots, k\} }}A_{(\alpha)}\otimes X_{(\alpha)}.
$$
Due to the hypothesis, we have
$$\|F({\bf rS})\|\leq C r_1^{q_1}\cdots r_k^{q_k}\|\sum\limits_{{ \alpha_i\in \FF_{n_i}^+, |\alpha_i|=q_i}\atop{i\in \{1,\ldots, k\} }}{\bf S}_{(\alpha)}{\bf S}_{(\alpha)}^*\|^{1/2}\leq C r_1^{q_1}\cdots r_k^{q_k}
$$
for any $r_i>0$.
Hence, and using  Theorem \ref{Cauchy-ineq}, we deduce that
\begin{equation*}
\begin{split}
\|\sum\limits_{{ \alpha_i\in \FF_{n_i}^+, |\alpha_i|=p_i}\atop{i\in \{1,\ldots, k\} }}A_{(\alpha)}^*A_{(\alpha)}\|^{1/2}
&\leq \frac{1}{r_1^{p_1}\cdots r_k^{p_k}}M({\bf r})
\leq  \frac{1}{r_1^{p_1}\cdots r_k^{p_k}} \|F({\bf rS})\|\\
&\leq  C\frac{1}{r_1^{p_1-q_1}\cdots r_k^{p_k-q_k}}
\end{split}
\end{equation*}
for any $r_i>0$ and $i\in\{1,\ldots, k\}$.
Consequently, if there is $s\in\{1,\ldots, k\}$ such that  $p_s>q_s$, then taking $r_s\to \infty$ we obtain $$\sum\limits_{{ \alpha_i\in \FF_{n_i}^+, |\alpha_i|=p_i}\atop{i\in \{1,\ldots, k\} }}A_{(\alpha)}^*A_{(\alpha)}=0,
$$
which implies $A_{(\alpha)}=0$ for any $(\alpha)=(\alpha_1,\ldots, \alpha_k)$ with $\alpha_i\in \FF_{n_i}^+$ and  $|\alpha_i|=p_i$ and any $p_i\in \ZZ^+$, $i\neq s$. Hence, we have
$$F({\bf X})=\sum\limits_{{(p_1,\ldots, p_k)\in \ZZ_+^k}\atop{p_i\leq q_i}}\sum\limits_{{ \alpha_i\in \FF_{n_i}^+, |\alpha_i|=p_i}\atop{i\in \{1,\ldots, k\} }}A_{(\alpha)}\otimes X_{(\alpha)}.
$$
The proof is complete.
\end{proof}

Define the set
$$
\Lambda:=\{{\bf r}=(r_1,\ldots, r_k)\in \RR_+^k: \ \{\|\sum\limits_{{ \alpha_i\in \FF_{n_i}^+, |\alpha_i|=p_i}\atop{i\in \{1,\ldots, k\} }}r_1^{2p_1}\cdots r_k^{2p_k}A_{(\alpha)}^*A_{(\alpha)}  \|\}_{(p_1,\ldots, p_k)\in \ZZ_+^k} \ \text{ is bounded}\}.
$$
  Given a  formal power series $\varphi=\sum\limits_{(\alpha)} A_{(\alpha)}\otimes Z_{(\alpha)}$, we define the  set
 $$
 {\bf D}_\varphi(\cH):=\bigcup_{{\bf r}\in \Lambda} {\bf r}{\bf B_n}(\cH).
 $$
We say that ${\bf D}_\varphi$ is logarithmically convex if  $\Lambda$ is log-convex, i.e.
the set $$\{(\log r_1,\ldots, \log r_k): \ (r_1,\ldots, r_k)\in \Lambda, r_i>0\}$$ is  convex.

\begin{proposition} Let $\varphi=\sum\limits_{(\alpha)} A_{(\alpha)}\otimes Z_{(\alpha)}$  be a formal power series. The following statements hold.
\begin{enumerate}
\item[(i)] $\varphi$ is free holomorphic on  ${\bf D}_\varphi$
and $$\varphi({\bf X})=\sum\limits_{(p_1,\ldots, p_k)\in \ZZ_+^k}\sum\limits_{{ \alpha_i\in \FF_{n_i}^+, |\alpha_i|=p_i}\atop{i\in \{1,\ldots, k\} }}A_{(\alpha)}\otimes X_{(\alpha)}, \qquad {\bf X}\in {\bf D}_\varphi,
$$
where the series is  convergent in the operator norm.
\item[(ii)] ${\bf D}_\varphi$  is  a logarithmically convex complete Reinhardt domain.
\end{enumerate}
\end{proposition}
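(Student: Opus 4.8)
The plan is to derive both parts directly from Theorem \ref{Abel} and Proposition \ref{reg-poly}, the only genuinely new ingredient being the log-convexity of the index set $\Lambda$. For part (i), I would fix an arbitrary ${\bf X}\in {\bf D}_\varphi(\cH)$ and, by the definition of the union, choose ${\bf r}=(r_1,\ldots,r_k)\in\Lambda$ with ${\bf X}\in {\bf r}{\bf B_n}(\cH)$. The defining condition ${\bf r}\in\Lambda$ is precisely the boundedness hypothesis on the set $A$ appearing in Theorem \ref{Abel}(i). Hence that theorem yields norm convergence of $\sum_{(p)}\|\sum_{|\alpha_i|=p_i}A_{(\alpha)}\otimes X_{(\alpha)}\|$ at ${\bf X}$, together with uniform convergence on each ${\bf s}{\bf B_n}(\cH)^-$ with $0\le s_i<r_i$. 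Since ${\bf X}$ and $\cH$ are arbitrary, the series representing $\varphi$ converges in the operator norm throughout ${\bf D}_\varphi$ and does so locally uniformly, which is exactly the assertion that $\varphi$ is free holomorphic on ${\bf D}_\varphi$ with the stated representation.

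For the complete Reinhardt and openness claims in part (ii), I would argue levelwise. By Proposition \ref{reg-poly}(ii), ${\bf B_n}(\cH)$ is a complete Reinhardt domain; since scaling the blocks by a positive tuple commutes with scaling by ${\bf z}\in\overline{\DD}^{n_1+\cdots+n_k}$, i.e.\ ${\bf z}({\bf r}{\bf X})={\bf r}({\bf z}{\bf X})$, each set ${\bf r}{\bf B_n}(\cH)$ is again complete Reinhardt, and a union of complete Reinhardt sets is complete Reinhardt. Openness follows because ${\bf X}\mapsto{\bf r}{\bf X}$ is a homeomorphism of the commuting product $B(\cH)^{n_1}\times_c\cdots\times_c B(\cH)^{n_k}$ for positive ${\bf r}$, so each ${\bf r}{\bf B_n}(\cH)$ is relatively open by Proposition \ref{reg-poly}(i), and the union ${\bf D}_\varphi$ is therefore relatively open.

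The heart of the proof is the log-convexity of $\Lambda$. Writing $c_{(p)}:=\|\sum_{|\alpha_i|=p_i}A_{(\alpha)}^*A_{(\alpha)}\|\ge 0$, the scalar factors pull out of the norm, so ${\bf r}\in\Lambda$ exactly when $\sup_{(p)\in\ZZ_+^k} r_1^{2p_1}\cdots r_k^{2p_k}\,c_{(p)}<\infty$. Given ${\bf r},{\bf r}'\in\Lambda$ with strictly positive entries and $t\in[0,1]$, I would set $r_i'':=r_i^{t}(r_i')^{1-t}$, the geometric mean corresponding to the convex combination of the logarithms, and use $c_{(p)}=c_{(p)}^{t}c_{(p)}^{1-t}$ to factor
$$(r_1'')^{2p_1}\cdots (r_k'')^{2p_k}c_{(p)}=\left(r_1^{2p_1}\cdots r_k^{2p_k}c_{(p)}\right)^{t}\left((r_1')^{2p_1}\cdots (r_k')^{2p_k}c_{(p)}\right)^{1-t}.$$
Bounding the right-hand side by the product of the two finite suprema raised to the powers $t$ and $1-t$ shows $\sup_{(p)}(r_1'')^{2p_1}\cdots(r_k'')^{2p_k}c_{(p)}<\infty$, i.e.\ ${\bf r}''\in\Lambda$, so the log-image of $\Lambda$ is convex and ${\bf D}_\varphi$ is logarithmically convex.

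I expect this geometric-mean estimate to be the main computational step. The chief points requiring care elsewhere are matching the boundedness condition defining $\Lambda$ with the hypothesis of Theorem \ref{Abel}, and, if one wishes to be fully rigorous about openness, observing that $\Lambda$ is downward closed (if ${\bf r}\in\Lambda$ and $0\le s_i\le r_i$ then ${\bf s}\in\Lambda$, since $s_1^{2p_1}\cdots s_k^{2p_k}\sum A_{(\alpha)}^*A_{(\alpha)}\le r_1^{2p_1}\cdots r_k^{2p_k}\sum A_{(\alpha)}^*A_{(\alpha)}$ as positive operators) so that the union may be taken over tuples with strictly positive coordinates, the degenerate tuples contributing nothing to the interior.
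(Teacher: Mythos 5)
Your proof is correct and follows essentially the same route as the paper: part (i) is read off from Theorem \ref{Abel} applied to each ${\bf r}\in\Lambda$, the Reinhardt property comes from Proposition \ref{reg-poly}, and log-convexity is the geometric-mean estimate on the quantities $r_1^{2p_1}\cdots r_k^{2p_k}\|Q_{\bf p}\|$. The only cosmetic difference is that you factor the scalar $c_{(p)}=\|Q_{\bf p}\|$ as $c_{(p)}^{t}c_{(p)}^{1-t}$, whereas the paper factors the positive operator $Q_{\bf p}=Q_{\bf p}^{t}Q_{\bf p}^{1-t}$ and invokes $\|A^{t}\|=\|A\|^{t}$; these are equivalent, and your added remarks on downward closedness of $\Lambda$ and openness of the union are harmless extras.
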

\begin{proof}
According to Theorem \ref{Abel} and due to the uniqueness of the representation for free holomorphic functions on polyballs, $\varphi$ is a free holomorphic function on
$
 {\bf D}_\varphi(\cH):=\bigcup_{{\bf r}\in \Lambda} {\bf r}{\bf B_n}(\cH)
 $
and has the representation of item (i). To prove (ii), note first that, due to Proposition \ref{reg-poly}, ${\bf D}_\varphi$  is  a  complete Reinhardt domain. Now, let $(r_1,\ldots, r_k)$ and
$(s_1,\ldots, s_k)$ be in $\Lambda$. Then there is a constant $C>0$ such that
$$
\|r_1^{2p_1}\cdots r_k^{2p_k}Q_{\bf p} \|
\leq C\quad \text{ and }\quad
\|s_1^{2p_1}\cdots s_k^{2p_k}Q_{\bf p}  \|\leq C
$$
for any ${\bf p}=(p_1,\ldots, p_k)\in \ZZ_+^k$, where
 $Q_{\bf p}:=\sum\limits_{{ \alpha_i\in \FF_{n_i}^+, |\alpha_i|=p_i}\atop{i\in \{1,\ldots, k\} }}A_{(\alpha)}^*A_{(\alpha)}$.  Consequently, due to the spectral theorem for positive operators, we have
\begin{equation*}
\begin{split}
\left\|( r_1^t s_1^{1-t})^{2p_1}\cdots ( r_k^t s_k^{1-t})^{2p_k}Q_{\bf p}\right\|
&=
\left\|(r_1^{2p_1}\cdots r_k^{2p_k} Q_{\bf p})^t (s_1^{2p_1}\cdots s_k^{2p_k} Q_{\bf p})^{1-t}\right\|\\
&\leq
\left\|(r_1^{2p_1}\cdots r_k^{2p_k} Q_{\bf p})^t\right\| \left\|(s_1^{2p_1}\cdots s_k^{2p_k} Q_{\bf p})^{1-t}\right\|\\
&\leq
\left\|r_1^{2p_1}\cdots r_k^{2p_k} Q_{\bf p}\right\|^t \left\|s_1^{2p_1}\cdots s_k^{2p_k} Q_{\bf p}\right\|^{1-t}\\
&\leq C^t C^{1-t}=C.
\end{split}
\end{equation*}
Consequently,  $(r_1^t s_1^{1-t},\ldots,  r_k^t k_1^{1-t})\in \Lambda$, which proves that
 ${\bf D}_\varphi$  is   logarithmically convex.
The proof is complete.
\end{proof}

We remark that, due to Theorem \ref{Abel},  if $\boldsymbol\rho=(\rho_1,\ldots, \rho_k)\notin \Lambda$, then
$\sum\limits_{(p_1,\ldots, p_k)\in \ZZ_+^k}\sum\limits_{{ \alpha_i\in \FF_{n_i}^+, |\alpha_i|=p_i}\atop{i\in \{1,\ldots, k\} }}A_{(\alpha)}\otimes X_{(\alpha)}
$
is  divergent for some ${\bf X}\in {\boldsymbol\rho}{\bf B_n}(\cH)^-$ and some Hilbert space  $\cH$.
Indeed, take ${\bf X}=\boldsymbol \rho {\bf S}$ and use Theorem \ref{Abel}.
We call the set ${\bf D}_\varphi$  the {\it universal domain of convergence} of the power series $\varphi$.

 Our next task is to find the largest polyball $r{\bf B_n}(\cH)$, $r>0$, which is included in the universal domain of convergence of $\varphi$.

 \begin{theorem} \label{Hadamard}
 Let $\varphi=\sum\limits_{(\alpha)} A_{(\alpha)}\otimes Z_{(\alpha)}$  be a formal power series
 and define
 $\gamma\in[0,\infty]$ by setting
 $$
 \frac{1}{\gamma}:=\limsup_{(p_1,\ldots, p_k)\in \ZZ_+^k} \|\sum\limits_{{ \alpha_i\in \FF_{n_i}^+, |\alpha_i|=p_i}\atop{i\in \{1,\ldots, k\} }}A_{(\alpha)}^*A_{(\alpha)}\|^{\frac{1}{2(p_1+\cdots +p_k)}}.
 $$
 Then the following statements hold.
 \begin{enumerate}
 \item[(i)]  The series
 $$
 \sum\limits_{(p_1,\ldots, p_k)\in \ZZ_+^k}\|\sum\limits_{{ \alpha_i\in \FF_{n_i}^+, |\alpha_i|=p_i}\atop{i\in \{1,\ldots, k\} }}A_{(\alpha)}\otimes X_{(\alpha)}\|, \qquad {\bf X}\in \gamma{\bf B_n}(\cH),
 $$
 is convergent. Moreover, the convergence is uniform on $r{\bf B_n}(\cH)^-$ if $0\leq r<\gamma$.
 \item[(ii)]
 For any $s>\gamma$, there is a Hilbert space $\cH$ and ${\bf Y}\in s{\bf B_n}(\cH)^-$  such that the series
 $$
 \sum\limits_{(p_1,\ldots, p_k)\in \ZZ_+^k}\sum\limits_{{ \alpha_i\in \FF_{n_i}^+, |\alpha_i|=p_i}\atop{i\in \{1,\ldots, k\} }}A_{(\alpha)}\otimes Y_{(\alpha)}
 $$
 is divergent in the operator norm topology.
 \end{enumerate}
 \end{theorem}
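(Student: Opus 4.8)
The plan is to reduce everything to the diagonal polyradius ${\bf r}=(r,\ldots,r)$ and then feed the result of the root test into the Abel theorem (Theorem \ref{Abel}). Write $|{\bf p}|:=p_1+\cdots+p_k$ and $Q_{\bf p}:=\sum_{\alpha_i\in\FF_{n_i}^+,\,|\alpha_i|=p_i}A_{(\alpha)}^*A_{(\alpha)}$, so that the defining relation becomes $\tfrac1\gamma=\limsup_{|{\bf p}|\to\infty}\|Q_{\bf p}\|^{\frac{1}{2|{\bf p}|}}$, and note that for the diagonal choice ${\bf r}=(r,\ldots,r)$ the quantity in the set $A$ of Theorem \ref{Abel} is precisely $r^{2|{\bf p}|}\|Q_{\bf p}\|$. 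The theorem is then just the translation between boundedness of $\{r^{2|{\bf p}|}\|Q_{\bf p}\|\}_{\bf p}$ and the position of $r$ relative to $\gamma$; the cases $\gamma=0$ (part (i) vacuous) and $\gamma=\infty$ (entire) are covered by the same argument.

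For part (i) I would fix $0\le r<\gamma$ and choose $r'$ with $r<r'<\gamma$. By the definition of the limsup there is $N$ with $\|Q_{\bf p}\|^{\frac{1}{2|{\bf p}|}}<\tfrac1{r'}$, hence $r'^{2|{\bf p}|}\|Q_{\bf p}\|<1$, for all ${\bf p}$ with $|{\bf p}|\ge N$; the finitely many remaining terms are bounded, so the set $A$ attached to ${\bf r}=(r',\ldots,r')$ is bounded. Theorem \ref{Abel}(i) then gives convergence of the series on $r'{\bf B_n}(\cH)$ and uniform convergence on $r{\bf B_n}(\cH)^-$ (take ${\bf s}=(r,\ldots,r)$ with $r<r'$). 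Since this holds for every $r<\gamma$ and, by Proposition \ref{reg-poly}, $\gamma{\bf B_n}(\cH)=\bigcup_{0\le r<\gamma}r{\bf B_n}(\cH)^-$, every point of $\gamma{\bf B_n}(\cH)$ lies in some $r{\bf B_n}(\cH)^-$ with $r<\gamma$, yielding convergence on all of $\gamma{\bf B_n}(\cH)$ with the stated uniformity.

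For part (ii) I would fix $s>\gamma$ and choose $s'$ with $\gamma<s'<s$, so that $\tfrac1s<\tfrac1{s'}<\tfrac1\gamma$. The definition of the limsup supplies infinitely many ${\bf p}$ with $\|Q_{\bf p}\|^{\frac{1}{2|{\bf p}|}}>\tfrac1{s'}$, and along any such family $|{\bf p}|\to\infty$, whence $s^{2|{\bf p}|}\|Q_{\bf p}\|>(s/s')^{2|{\bf p}|}\to\infty$. Thus the set $A$ attached to ${\bf r}=(s,\ldots,s)$ is unbounded, and Theorem \ref{Abel}(ii) produces a Hilbert space $\cH$ and a point ${\bf Y}\in s{\bf B_n}(\cH)^-$ — concretely ${\bf Y}=s{\bf S}$ on $\otimes_{i=1}^k F^2(H_{n_i})$ — at which the series diverges in the operator norm.

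The only genuinely delicate point is the meaning of the limsup over $\ZZ_+^k$: the proof requires reading it as the limit superior as $|{\bf p}|\to\infty$, and the subsequence extraction in (ii) rests on the observation that only finitely many multi-indices have bounded total length, so that every infinite family of ${\bf p}$ satisfies $|{\bf p}|\to\infty$. Everything else is the scalar Hadamard root test transported through Theorem \ref{Abel}, the passage from the individual radii $r<\gamma$ to the full ball $\gamma{\bf B_n}(\cH)$ being provided by the complete Reinhardt structure established in Proposition \ref{reg-poly}.
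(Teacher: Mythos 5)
Your proof is correct and follows essentially the same route as the paper: both rest on the observation that, by the von Neumann inequality, the partial sums at $r{\bf S}$ have norm $r^{p_1+\cdots+p_k}\|\sum A_{(\alpha)}^*A_{(\alpha)}\|^{1/2}$, combined with the root test and the Reinhardt structure from Proposition \ref{reg-poly}. The only cosmetic difference is that you factor the estimate through Theorem \ref{Abel} with the diagonal polyradius, whereas the paper repeats the von Neumann estimate directly inside the proof; the witness ${\bf Y}=s{\bf S}$ in part (ii) is the same in both.
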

 \begin{proof} Assume that $\gamma>0$ and let ${\bf X}\in r{\bf B_n}(\cH)^-$, where $0\leq r<\gamma$. Fix $\rho\in (r,\gamma)$ and note that
 $$\|\sum\limits_{{ \alpha_i\in \FF_{n_i}^+, |\alpha_i|=p_i}\atop{i\in \{1,\ldots, k\} }}A_{(\alpha)}^*A_{(\alpha)}\|^{\frac{1}{2(p_1+\cdots +p_k)}}<\frac{1}{\rho}
 $$
for all but finitely many $(p_1,\ldots, p_k)\in \ZZ_+^k$. Consequently, due to the von Neumann type inequality \cite{Po-poisson}, we have
\begin{equation*}
\begin{split}
\|\sum\limits_{{ \alpha_i\in \FF_{n_i}^+, |\alpha_i|=p_i}\atop{i\in \{1,\ldots, k\} }}A_{(\alpha)}\otimes X_{(\alpha)}\|
&\leq
\|\sum\limits_{{ \alpha_i\in \FF_{n_i}^+, |\alpha_i|=p_i}\atop{i\in \{1,\ldots, k\} }}A_{(\alpha)}\otimes r^{p_1+\cdots + p_k} {\bf S}_{(\alpha)}\|\\
&=r^{p_1+\cdots p_k}\|\sum\limits_{{ \alpha_i\in \FF_{n_i}^+, |\alpha_i|=p_i}\atop{i\in \{1,\ldots, k\} }}A_{(\alpha)}^*A_{(\alpha)}\|^{1/2}<\left(\frac{r}{\rho}\right)^{p_1+\cdots +p_k}
\end{split}
\end{equation*}
for all but finitely many $(p_1,\ldots, p_k)\in \ZZ_+^k$. Hence, item (i) holds and also implies that   the series
 $
 \sum\limits_{q=0}^\infty\|\sum\limits_{  {|\alpha_1|+\cdots +|\alpha_k|=q}\atop {\alpha_i\in \FF_{n_i}^+}}A_{(\alpha)}\otimes X_{(\alpha)}\|
 $
  is uniformly convergent  on $r{\bf B_n}(\cH)^-$. The case when $\gamma=\infty$ can be treated in a similar manner.
Now, assume that $\gamma<\rho<s$ and let ${\bf Y}:=s{\bf S}$, where ${\bf S}$ is the universal model of ${\bf B_n}^-$. It is clear that ${\bf Y}\in s{\bf B_n}(\cH)^-$ and
$$
\|\sum\limits_{{ \alpha_i\in \FF_{n_i}^+, |\alpha_i|=p_i}\atop{i\in \{1,\ldots, k\} }}A_{(\alpha)}\otimes Y_{(\alpha)}\|=
s^{p_1+\cdots +p_k}\|\sum\limits_{{ \alpha_i\in \FF_{n_i}^+, |\alpha_i|=p_i}\atop{i\in \{1,\ldots, k\} }}A_{(\alpha)}^*A_{(\alpha)}\|^{1/2}.
$$
Since $\frac{1}{\rho}<\frac{1}{\gamma}$, there are infinitely many tuples
$(p_1,\ldots, p_k)\in \ZZ_+^k$ such that
$$\|\sum\limits_{{ \alpha_i\in \FF_{n_i}^+, |\alpha_i|=p_i}\atop{i\in \{1,\ldots, k\} }}A_{(\alpha)}^*A_{(\alpha)}\|^{\frac{1}{2(p_1+\cdots +p_k)}}>\frac{1}{\rho}
 $$
and, consequently,
$\|\sum\limits_{{ \alpha_i\in \FF_{n_i}^+, |\alpha_i|=p_i}\atop{i\in \{1,\ldots, k\} }}A_{(\alpha)}\otimes Y_{(\alpha)}\|>\left(\frac{s}{\rho}\right)^{p_1+\cdots+p_k}$. This shows that item (ii) holds and, moreover,
that the series
 $$
 \sum\limits_{(p_1,\ldots, p_k)\in \ZZ_+^k}\left\|\sum\limits_{{ \alpha_i\in \FF_{n_i}^+, |\alpha_i|=p_i}\atop{i\in \{1,\ldots, k\} }}A_{(\alpha)}\otimes Y_{(\alpha)}\right\|
 $$
 is divergent.
  \end{proof}

 The number $\gamma$ satisfying properties (i) and (ii) in the theorem above is unique and is called the {\it polyball radius of convergence} for the power series $\varphi$.

 \begin{corollary} \label{Hadamard2} Under the conditions of Theorem \ref{Hadamard}, the following statements hold.
  \begin{enumerate}
 \item[(i)]  The series
 $$
 \sum\limits_{q=0}^\infty\|\sum\limits_{  {|\alpha_1|+\cdots +|\alpha_k|=q}\atop {\alpha_i\in \FF_{n_i}^+}}A_{(\alpha)}\otimes X_{(\alpha)}\|
 $$
  is uniformly convergent  on $r{\bf B_n}(\cH)^-$ if $0\leq r<\gamma$.
 \item[(ii)]
 For any $s>\gamma$, there is ${\bf Y}\in s{\bf B_n}(\cH)^-$ such that the series
 $$
 \sum\limits_{q=0}^\infty\sum\limits_{  {|\alpha_1|+\cdots +|\alpha_k|=q}\atop {\alpha_i\in \FF_{n_i}^+}}A_{(\alpha)}\otimes Y_{(\alpha)}
 $$
 is divergent in the operator norm topology.
 \end{enumerate}
 \end{corollary}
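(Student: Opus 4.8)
The plan is to deduce Corollary \ref{Hadamard2} directly from Theorem \ref{Hadamard} by recognizing that the two series differ only in the way the terms are grouped. In Theorem \ref{Hadamard} the outer sum is indexed by the full multi-index $(p_1,\ldots,p_k)\in\ZZ_+^k$, whereas in the corollary the outer sum is indexed by the total degree $q=p_1+\cdots+p_k\in\ZZ_+$. First I would observe that for each fixed $q$, the inner sum $\sum_{|\alpha_1|+\cdots+|\alpha_k|=q}A_{(\alpha)}\otimes X_{(\alpha)}$ collects exactly those homogeneous terms whose multi-degrees $(p_1,\ldots,p_k)$ satisfy $p_1+\cdots+p_k=q$, and that there are only finitely many such tuples (namely $\binom{q+k-1}{k-1}$ of them). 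Thus the corollary's series is obtained from the theorem's series by a finite regrouping within each total-degree level.

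For part (i), I would argue as follows. By the triangle inequality,
\begin{equation*}
\left\|\sum_{{|\alpha_1|+\cdots+|\alpha_k|=q}\atop{\alpha_i\in\FF_{n_i}^+}}A_{(\alpha)}\otimes X_{(\alpha)}\right\|
\leq
\sum_{{(p_1,\ldots,p_k)\in\ZZ_+^k}\atop{p_1+\cdots+p_k=q}}
\left\|\sum_{{\alpha_i\in\FF_{n_i}^+,\,|\alpha_i|=p_i}\atop{i\in\{1,\ldots,k\}}}A_{(\alpha)}\otimes X_{(\alpha)}\right\|,
\end{equation*}
so summing over $q\geq 0$ gives
\begin{equation*}
\sum_{q=0}^\infty\left\|\sum_{{|\alpha_1|+\cdots+|\alpha_k|=q}\atop{\alpha_i\in\FF_{n_i}^+}}A_{(\alpha)}\otimes X_{(\alpha)}\right\|
\leq
\sum_{(p_1,\ldots,p_k)\in\ZZ_+^k}\left\|\sum_{{\alpha_i\in\FF_{n_i}^+,\,|\alpha_i|=p_i}\atop{i\in\{1,\ldots,k\}}}A_{(\alpha)}\otimes X_{(\alpha)}\right\|.
\end{equation*}
The right-hand side is precisely the series whose uniform convergence on $r{\bf B_n}(\cH)^-$ for $0\leq r<\gamma$ is guaranteed by Theorem \ref{Hadamard}(i). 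Since the partial sums over $q$ are dominated (uniformly in ${\bf X}$) by the tails of a uniformly convergent series, the corollary's series is uniformly convergent on $r{\bf B_n}(\cH)^-$ as well; this is just a comparison/Weierstrass $M$-test argument using the estimate $\left(\frac{r}{\rho}\right)^{p_1+\cdots+p_k}$ established in the proof of the theorem.

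For part (ii), I would take the same witness ${\bf Y}=s{\bf S}$ used in the proof of Theorem \ref{Hadamard}(ii), where ${\bf S}$ is the universal model and $\gamma<\rho<s$. The key point is that the isometries ${\bf S}_{(\alpha)}$ with distinct multi-indices $(\alpha)$ have orthogonal ranges, so no cancellation occurs when one passes from the multi-degree grouping to the total-degree grouping; quantitatively, for each fixed $q$ one still has a block whose norm is bounded below by the norm of any of its constituent multi-degree pieces. Since the proof of the theorem exhibits infinitely many tuples $(p_1,\ldots,p_k)$ with $\left\|\sum_{|\alpha_i|=p_i}A_{(\alpha)}\otimes Y_{(\alpha)}\right\|>\left(\frac{s}{\rho}\right)^{p_1+\cdots+p_k}$, these tuples populate infinitely many total-degree levels $q$ (or, after grouping, force infinitely many blocks to have norm exceeding a quantity tending to infinity), so the general term of the total-degree series fails to tend to zero and the series diverges. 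The only step requiring genuine care—the main obstacle—is verifying that the regrouping does not destroy the divergence in (ii): one must confirm, using the orthogonality of the ranges of the ${\bf S}_{(\alpha)}$, that grouping terms of the same total degree cannot shrink the norms enough to restore convergence, i.e. that the lower bounds survive the coarser grouping.
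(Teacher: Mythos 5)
Part (i) of your argument is fine and matches the paper, which simply observes that the total-degree regrouping of the norm series is dominated, level by level, by the multi-degree norm series already shown to converge uniformly in the proof of Theorem \ref{Hadamard}. The problem is in part (ii), where your key justification rests on a false premise: the isometries ${\bf S}_{(\alpha)}$ with distinct multi-indices $(\alpha)$ of the \emph{same total degree} do \emph{not} have orthogonal ranges. Orthogonality of ranges holds only within a fixed multi-degree, i.e.\ when $|\alpha_i|=|\beta_i|$ for every $i$; across different multi-degrees it fails. For instance, with $k=2$ and $q=1$, the operators ${\bf S}_{1,1}=S_{1,1}\otimes I$ and ${\bf S}_{2,1}=I\otimes S_{2,1}$ have ranges $(g_1^1 F^2(H_{n_1}))\otimes F^2(H_{n_2})$ and $F^2(H_{n_1})\otimes (g_1^2 F^2(H_{n_2}))$, whose intersection is nonzero; indeed ${\bf S}_{1,1}^*{\bf S}_{2,1}\neq 0$. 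So the assertion that "no cancellation occurs" when passing to the coarser grouping is exactly the point that your argument fails to establish.

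The conclusion you want — that the norm of the total-degree block dominates the norm of each of its multi-degree constituents — is nevertheless true, but the correct reason is that the \emph{vectors} ${\bf S}_{(\alpha)}1=e_{(\alpha)}$ are orthonormal for all distinct $(\alpha)$. Applying the block $T_q:=\sum_{|\alpha_1|+\cdots+|\alpha_k|=q}A_{(\alpha)}\otimes Y_{(\alpha)}$ (with ${\bf Y}=s{\bf S}$) to a vector $x\otimes 1$ gives
\begin{equation*}
\|T_q(x\otimes 1)\|^2=s^{2q}\sum_{|\alpha_1|+\cdots+|\alpha_k|=q}\|A_{(\alpha)}x\|^2\geq s^{2q}\sum_{{\alpha_i\in\FF_{n_i}^+,\,|\alpha_i|=p_i}}\|A_{(\alpha)}x\|^2
\end{equation*}
for any fixed $(p_1,\ldots,p_k)$ with $p_1+\cdots+p_k=q$, and taking the supremum over $\|x\|\leq 1$ recovers the lower bound $\|T_q\|\geq s^q\|\sum_{|\alpha_i|=p_i}A_{(\alpha)}^*A_{(\alpha)}\|^{1/2}$; from there your "infinitely many levels $q$ with $\|T_q\|>(s/\rho)^q$" argument goes through. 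Note that the paper proves (ii) by a different, contradiction-based route: assuming the total-degree series converges at $s{\bf S}$, it applies the sum to $x\otimes 1$ to get WOT-convergence of $\sum_{(\alpha)}s^{2q}A_{(\alpha)}^*A_{(\alpha)}$, and then shows the multi-degree series would converge absolutely on $\rho{\bf B_n}(\cH)^-$ for suitable $\rho\in(\gamma,s)$, contradicting Theorem \ref{Hadamard}(ii). Your direct route is legitimate and arguably shorter once the lower bound is justified correctly, but as written the justification is wrong and must be replaced.
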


 \begin{proof} A closer look at the proof of Theorem \ref{Hadamard} reveals that  item (i) was already proved  and the only thing  that we need in order to complete the proof of  item (ii) is that, under the condition $\gamma<\rho<s$,
$$
 \sum\limits_{q=0}^\infty\sum\limits_{  {|\alpha_1|+\cdots |\alpha_k|=q}\atop {\alpha_i\in \FF_{n_i}^+}}A_{(\alpha)}\otimes s^q{\bf S}_{(\alpha)}
 $$
 is divergent in the operator norm topology. Assume the contrary and apply the convergent series above to the vector $x\otimes 1$, where  $x\in \cK$. We deduce that
$ \sum\limits_{q=0}^\infty\sum\limits_{  {|\alpha_1|+\cdots |\alpha_k|=q}\atop {\alpha_i\in \FF_{n_i}^+}}A_{(\alpha)}\otimes s^qe_{(\alpha)}
$
is in the Hilbert space $\cK\otimes\bigotimes_{i=1}^k F^2(H_{n_i})$. Since $\{e_{(\alpha)}\}_{(\alpha)\in \FF_{n_1}^+\times\cdots \times  \FF_{n_k}^+}$ is an orthonormal basis for $\bigotimes_{i=1}^k F^2(H_{n_i})$, we conclude that  the series
$\sum_{(\alpha) } A_{(\alpha)}^*A_{(\alpha)}$ is WOT-convergent.
Let $r\in [0,1)$ and note that

 \begin{equation*}
 \begin{split}
 &\sum_{p=0}^\infty r^p\sum_{{(p_1,\ldots, p_k)\in\ZZ_+^k}\atop{p_1+\cdots +p_k=p}}
 \|\sum\limits_{{ \beta_i\in \FF_{n_i}^+, |\beta_i|=p_i}\atop{i\in \{1,\ldots, k\} }}A_{(\beta)}\otimes s^{p_1+\cdots +p_k} {\bf S}_{(\beta)}\| \\
 &\leq
 \sum_{p=0}^\infty r^p\sum_{{(p_1,\ldots, p_k)\in\ZZ_+^k}\atop{p_1+\cdots +p_k=p}}
 \|\sum\limits_{{ \beta_i\in \FF_{n_i}^+, |\beta_i|=p_i}\atop{i\in \{1,\ldots, k\} }}s^{2(|\beta_1|+\cdots +|\beta_k|)}A_{(\beta)}^* A_{(\beta)} \|^{1/2}\\
 &\leq
 \sum_{p=0}^\infty r^p \left(\begin{matrix} p+k-1\\k-1\end{matrix}\right)
 \|\sum\limits_{{ \beta_i\in \FF_{n_i}^+ }\atop{i\in \{1,\ldots, k\} }}s^{2(|\beta_1|+\cdots +|\beta_k|)}A_{(\beta)}^* A_{(\beta)} \|^{1/2}.
\end{split}
 \end{equation*}
Since the latter series is convergent for any $r\in [0,1)$, we deduce that
$$
\sum_{p=0}^\infty  \sum_{{(p_1,\ldots, p_k)\in\ZZ_+^k}\atop{p_1+\cdots +p_k=p}}
 \|\sum\limits_{{ \beta_i\in \FF_{n_i}^+, |\beta_i|=p_i}\atop{i\in \{1,\ldots, k\} }}A_{(\beta)}\otimes (rs)^{p_1+\cdots +p_k} {\bf S}_{(\beta)}\|<\infty,
 $$
which implies  that
$$
\sum_{p=0}^\infty  \sum_{{(p_1,\ldots, p_k)\in\ZZ_+^k}\atop{p_1+\cdots +p_k=p}}
 \|\sum\limits_{{ \beta_i\in \FF_{n_i}^+, |\beta_i|=p_i}\atop{i\in \{1,\ldots, k\} }}A_{(\beta)}\otimes   {\bf X}_{(\beta)}\|<\infty
 $$
for any ${\bf X}\in \rho{\bf B_n}(\cH)^-$, where $\rho\in (\gamma, s)$, which contradicts  Theorem \ref{Hadamard} (see the end of its proof).
Therefore, item (ii) holds.
\end{proof}

A closer look at the proofs of  Theorem \ref{Hadamard} and Corollary \ref{Hadamard2} reveals the following result.

 \begin{corollary}  The  radius of convergence of a power series $\varphi=\sum\limits_{(\alpha)} A_{(\alpha)}\otimes Z_{(\alpha)}$  satisfies the relation
 \begin{equation*}\begin{split}
 \gamma &=\sup\left\{r\geq 0:\
 \sum\limits_{q=0}^\infty\sum\limits_{  {|\alpha_1|+\cdots +|\alpha_k|=q}\atop {\alpha_i\in \FF_{n_i}^+}}A_{(\alpha)}\otimes r^q{\bf S}_{(\alpha)} \ \text{is convergent in the operator norm}\right\}\\
 &=
 \sup\left\{r\geq 0:\
 \sum\limits_{(p_1,\ldots, p_k)\in \ZZ_+^k}\sum\limits_{{ \alpha_i\in \FF_{n_i}^+, |\alpha_i|=p_i}\atop{i\in \{1,\ldots, k\} }}A_{(\alpha)}\otimes  r^{p_1+\cdots +p_k}{\bf S}_{(\alpha)} \ \text{is convergent in the operator norm}\right\}.
 \end{split}
 \end{equation*}
 \end{corollary}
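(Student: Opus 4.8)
The plan is to identify $\gamma$ with each of the two suprema by evaluating the power series at the universal model $r{\bf S}=(r{\bf S}_1,\ldots,r{\bf S}_k)$, which lies in $r{\bf B_n}(\bigotimes_{i=1}^k F^2(H_{n_i}))^-$, and then quoting the convergence half of the preceding analysis for $r<\gamma$ and the divergence half for $r>\gamma$. Two features of the universal model drive everything. First, because the ${\bf S}_{(\alpha)}$ with $|\alpha_i|=p_i$ have orthogonal ranges, evaluation at $r{\bf S}$ computes the block norms exactly,
\[
\Big\|\sum_{|\alpha_i|=p_i,\,i\in\{1,\ldots,k\}} A_{(\alpha)}\otimes r^{p_1+\cdots+p_k}{\bf S}_{(\alpha)}\Big\|
= r^{p_1+\cdots+p_k}\Big\|\sum_{|\alpha_i|=p_i,\,i\in\{1,\ldots,k\}} A_{(\alpha)}^*A_{(\alpha)}\Big\|^{1/2},
\]
which is precisely the quantity feeding the Hadamard formula for $1/\gamma$. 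Second, by the von Neumann type inequality, $r{\bf S}$ is extremal in $r{\bf B_n}(\cH)^-$, so convergence at $r{\bf S}$ already forces convergence uniformly over the whole closed polyball. In each case I will only need that the set of radii $r\ge 0$ for which the relevant series converges in operator norm contains $[0,\gamma)$ and is disjoint from $(\gamma,\infty)$; its supremum is then $\gamma$.

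For the second supremum (the series grouped by the multi-index $(p_1,\ldots,p_k)$), take first $r<\gamma$. Theorem~\ref{Hadamard}(i) applied at ${\bf X}=r{\bf S}\in r{\bf B_n}(\cH)^-$ shows that the series of block norms converges; since absolute convergence implies convergence in operator norm, the series $\sum_{(p_1,\ldots,p_k)}\sum_{|\alpha_i|=p_i}A_{(\alpha)}\otimes r^{p_1+\cdots+p_k}{\bf S}_{(\alpha)}$ converges, so every $r<\gamma$ lies in the defining set and $\gamma\le\gamma_2$. For the reverse bound take $s>\gamma$ and invoke Theorem~\ref{Hadamard}(ii): its divergence witness is exactly ${\bf Y}=s{\bf S}$, with the block norms growing like $(s/\rho)^{p_1+\cdots+p_k}$ for some $\gamma<\rho<s$ along infinitely many multi-indices, so the general term fails to tend to zero and the series diverges. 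Hence no $s>\gamma$ lies in the defining set, giving $\gamma_2\le\gamma$ and therefore $\gamma_2=\gamma$.

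The first supremum, in which the series is grouped by total degree $q=|\alpha_1|+\cdots+|\alpha_k|$, is treated the same way with Corollary~\ref{Hadamard2} in place of Theorem~\ref{Hadamard}: for $r<\gamma$, Corollary~\ref{Hadamard2}(i) at ${\bf X}=r{\bf S}$ gives absolute, hence operator-norm, convergence, so $\gamma\le\gamma_1$; for $s>\gamma$, Corollary~\ref{Hadamard2}(ii)—whose witness is again $s{\bf S}$—gives divergence, so $\gamma_1\le\gamma$. The one point that genuinely requires care, and the only real obstacle, is the divergence direction for $\gamma_1$: grouping by total degree destroys the clean block-norm identity, so divergence cannot be read off term by term. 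Instead one argues as in the proof of Corollary~\ref{Hadamard2}(ii) by contradiction, namely that operator-norm convergence of the degree-grouped series at $s{\bf S}$ would force $\sum_{(\alpha)}A_{(\alpha)}^*A_{(\alpha)}$ to be WOT-convergent and, after an averaging estimate, convergence of the multi-index grouped series on $\rho{\bf B_n}(\cH)^-$ for some $\rho\in(\gamma,s)$, contradicting the behaviour already established for $\gamma_2$. This confirms $\gamma_1=\gamma$ and completes the proof.
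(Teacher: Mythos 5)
Your proposal is correct and takes essentially the same route the paper intends: the paper gives no written proof, merely remarking that ``a closer look at the proofs of Theorem \ref{Hadamard} and Corollary \ref{Hadamard2} reveals'' the result, and your argument is exactly that inspection carried out --- convergence at $r{\bf S}$ for $r<\gamma$ from the part (i) statements, divergence at $s{\bf S}$ for $s>\gamma$ from the part (ii) proofs (whose witness is precisely $s{\bf S}$), with the supremum then pinned to $\gamma$. You also correctly isolate the one delicate point, namely that divergence of the degree-grouped series at $s{\bf S}$ comes from the contradiction argument in Corollary \ref{Hadamard2} rather than from a term-by-term block-norm computation.
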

Moreover, we have the following characterization for free holomorphic functions on polyballs.

\begin{corollary} Let   ${\bf S} $
    be the universal model associated with the abstract regular polyball
  ${\bf B_n}$. A formal power series $\varphi=\sum\limits_{(\alpha)} A_{(\alpha)}\otimes Z_{(\alpha)}$ is a  free holomorphic function (with coefficients in $B(\cK)$) on the
abstract  polyball
$\boldsymbol\rho{\bf B_n}$, where
  $\boldsymbol\rho=(\rho_1,\ldots,\rho_k)$,  $\rho_i>0$, if and only if  the series
$$
\sum_{q=0}^\infty \sum_{{(\alpha)\in \FF_{n_1}^+\times \cdots \times\FF_{n_k}^+ }\atop {|\alpha_1|+\cdots +|\alpha_k|=q}} A_{(\alpha)} \otimes r^q \rho_1^{|\alpha_1|}\cdots \rho_k^{|\alpha_k|}{\bf S}_{(\alpha)}
$$
is convergent in the operator norm topology for any $r\in [0,1)$.
 Moreover, the set $Hol({\bf \boldsymbol\rho B_n})$  of all free holomorphic functions on  ${\bf \boldsymbol\rho B_n}$ is an algebra.
\end{corollary}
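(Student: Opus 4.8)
The plan is to reduce the statement to the single-radius case already settled by the Hadamard-type results, by rescaling the coefficients. Given $\varphi=\sum_{(\alpha)}A_{(\alpha)}\otimes Z_{(\alpha)}$ and $\boldsymbol\rho=(\rho_1,\ldots,\rho_k)$ with $\rho_i>0$, I would introduce the formal power series $\widetilde\varphi:=\sum_{(\alpha)}\widetilde A_{(\alpha)}\otimes Z_{(\alpha)}$ with $\widetilde A_{(\alpha)}:=\rho_1^{|\alpha_1|}\cdots\rho_k^{|\alpha_k|}A_{(\alpha)}$. Since the polyball of polyradius $\boldsymbol\rho$ is by definition $\boldsymbol\rho{\bf B_n}(\cH)=\{\boldsymbol\rho{\bf Y}:{\bf Y}\in{\bf B_n}(\cH)\}$ and $X_{(\alpha)}=\rho_1^{|\alpha_1|}\cdots\rho_k^{|\alpha_k|}Y_{(\alpha)}$ when ${\bf X}=\boldsymbol\rho{\bf Y}$, one checks term by term, at each multidegree $(p_1,\ldots,p_k)$, that $\varphi(\boldsymbol\rho{\bf Y})=\widetilde\varphi({\bf Y})$. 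The map ${\bf Y}\mapsto\boldsymbol\rho{\bf Y}$ being a bijection of ${\bf B_n}(\cH)$ onto $\boldsymbol\rho{\bf B_n}(\cH)$, this yields the equivalence $\varphi\in Hol(\boldsymbol\rho{\bf B_n})\iff\widetilde\varphi\in Hol({\bf B_n})$, where ${\bf B_n}$ is the polyball of polyradius $(1,\ldots,1)$.

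Next I would express membership $\widetilde\varphi\in Hol({\bf B_n})$ through the polyball radius of convergence $\widetilde\gamma$ of $\widetilde\varphi$ furnished by Theorem \ref{Hadamard}, the claim being $\widetilde\varphi\in Hol({\bf B_n})\iff\widetilde\gamma\ge1$. For the ``if'' part, Proposition \ref{reg-poly}(ii) gives ${\bf B_n}(\cH)=\bigcup_{0\le r<1}r{\bf B_n}(\cH)^-$, so each ${\bf Y}\in{\bf B_n}(\cH)$ lies in some $r{\bf B_n}(\cH)^-$ with $r<1\le\widetilde\gamma$, where Theorem \ref{Hadamard}(i) guarantees convergence. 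For the ``only if'' part, if $\widetilde\gamma<1$ I would pick $s\in(\widetilde\gamma,1)$; Theorem \ref{Hadamard}(ii) produces a point of $s{\bf B_n}(\cH)^-$ at which the series diverges, yet $s{\bf B_n}(\cH)^-\subseteq{\bf B_n}(\cH)$ by the scaling inclusion in Proposition \ref{reg-poly}, contradicting holomorphy. Finally, the preceding corollary, which records the radius of convergence as the supremum of the radii $r$ for which $\sum_{q}\sum_{|\alpha_1|+\cdots+|\alpha_k|=q}\widetilde A_{(\alpha)}\otimes r^q{\bf S}_{(\alpha)}$ converges in the operator norm, shows that $\widetilde\gamma\ge1$ is equivalent to the convergence of this series for every $r\in[0,1)$. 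Substituting $\widetilde A_{(\alpha)}\otimes r^q{\bf S}_{(\alpha)}=A_{(\alpha)}\otimes r^q\rho_1^{|\alpha_1|}\cdots\rho_k^{|\alpha_k|}{\bf S}_{(\alpha)}$ turns this into exactly the series in the statement, completing the characterization.

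The step needing the most care is the equivalence $\widetilde\varphi\in Hol({\bf B_n})\iff\widetilde\gamma\ge1$: the definition of free holomorphy groups the series by the multidegree $(p_1,\ldots,p_k)$ and demands convergence at every point of the open polyball, whereas the radius of convergence is defined through the coarser grouping by total degree $q=|\alpha_1|+\cdots+|\alpha_k|$ along the diagonal models ${\bf rS}$. Bridging the two groupings, and the ``pointwise on the polyball'' versus ``on the scaled models'' formulations, is precisely what Theorem \ref{Hadamard} and its corollary accomplish; so the remaining work is to invoke them together with the Reinhardt/scaling inclusions of Proposition \ref{reg-poly}, and I do not expect any genuinely new estimate beyond these.

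For the last assertion, that $Hol(\boldsymbol\rho{\bf B_n})$ is an algebra, closure under sums and scalar multiples is immediate from the characterization, so the point is closure under multiplication. Because across distinct blocks the indeterminates commute while within a block they concatenate, one has $Z_{(\alpha)}Z_{(\beta)}=Z_{(\alpha\beta)}$ with $(\alpha\beta)=(\alpha_1\beta_1,\ldots,\alpha_k\beta_k)$, so the formal product $\varphi\psi=\sum_{(\gamma)}\big(\sum_{(\alpha\beta)=(\gamma)}A_{(\alpha)}B_{(\beta)}\big)\otimes Z_{(\gamma)}$ has well-defined coefficients given by finite sums. For ${\bf X}\in\boldsymbol\rho{\bf B_n}(\cH)$ the Reinhardt structure places ${\bf X}$ in some ${\bf s}{\bf B_n}(\cH)^-$ with $s_i<\rho_i$, where Theorem \ref{Abel}(i) gives absolute convergence $\sum_{(\alpha)}\|A_{(\alpha)}\otimes X_{(\alpha)}\|<\infty$, and likewise for $\psi$. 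The Cauchy product of two absolutely convergent operator series converges absolutely to the product of their sums, whence $(\varphi\psi)({\bf X})=\varphi({\bf X})\psi({\bf X})$ converges in the operator norm; thus $\varphi\psi\in Hol(\boldsymbol\rho{\bf B_n})$ and, together with the constants, $Hol(\boldsymbol\rho{\bf B_n})$ is an algebra.
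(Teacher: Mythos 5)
Your proof is correct and follows the route the paper intends: the corollary is stated as an immediate consequence of Theorem \ref{Hadamard}, Corollary \ref{Hadamard2} and the Abel-type results, and your coefficient rescaling $A_{(\alpha)}\mapsto \rho_1^{|\alpha_1|}\cdots\rho_k^{|\alpha_k|}A_{(\alpha)}$ is exactly the right way to reduce the polyradius case to the single-radius Hadamard statement. The only cosmetic imprecision is that Theorem \ref{Abel} gives absolute convergence of the series grouped by multidegree rather than termwise over all $(\alpha)$, but that grouped version is all the Cauchy-product argument for the algebra property actually needs.
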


\smallskip

\section{Maximum principle and Schwarz type results}

In this section, we  present some results concening the composition of free holomorphic functions and study bounded free holomorphic functions on polyball.  We prove a Schwarz lemma, and  a maximum principle in this setting. The results play an important role in the next sections.

Let
$H^\infty({\bf B_n})$  denote the set of  all
elements $\varphi$ in $Hol({\bf B_n})$     such
that
$$\|\varphi\|_\infty:= \sup \|\varphi({\bf X})\|<\infty,
$$
where the supremum is taken over all   $ {\bf X}\in {\bf B_n}(\cH)$ and any Hilbert space
$\cH$. One can  show that $H^\infty({\bf B_n})$ is a Banach algebra under pointwise multiplication and the
norm $\|\cdot \|_\infty$.
For each $p\in \NN$, we define the norms $\|\cdot
\|_p:M_{p\times p}\left(H^\infty({\bf B_n})\right)\to
[0,\infty)$ by setting
$$
\|[\varphi_{st}]_{p\times p}\|_p:= \sup \|[\varphi_{st}({\bf X})]_{p\times p}\|,
$$
where the supremum is taken over all  $ {\bf X}\in {\bf B_n}(\cH)$ and any Hilbert space
$\cH$.
It is easy to see that the norms  $\|\cdot\|_p$, $p\in \NN$,
determine  an operator space structure  on $H^\infty({\bf B_n})$,
 in the sense of Ruan (\cite{Pa-book}, \cite{Pi-book}).

Given  $\varphi\in {\bf F}_{\bf n}^\infty$ and a Hilbert space $\cH$, the noncommutative Berezin
transform  associated with the abstract noncommutative polyball ${\bf B_n}$
generates a function whose representation on $\cH$ is
$$
\boldsymbol{\cB}[\varphi]:{\bf B_n}(\cH)\to B(\cH)
$$
defined by
$$
\boldsymbol{\cB}[\varphi]({\bf X}):=\boldsymbol{\cB}_{\bf X}[\varphi],\qquad
{\bf X}\in {\bf B_n}(\cH),
$$
where
 $ \boldsymbol{\cB_{\bf X}}: B(\otimes_{i=1}^k F^2(H_{n_i}))\to B(\cH)$ is the {\it Berezin transform at} $X$
 defined by

 \begin{equation*}
 {\boldsymbol\cB_{\bf X}}[g]:= {\bf K^*_{\bf X}} (g\otimes I_\cH) {\bf K_{\bf X}},
 \qquad g\in B(\otimes_{i=1}^k F^2(H_{n_i})),
 \end{equation*}
 where $F^2(H_{n_i})$ is the full Fock space on $n_i$ generators and
 $${\bf K_{\bf X}}: \cH \to F^2(H_{n_1})\otimes \cdots \otimes  F^2(H_{n_k})
  \otimes  \overline{{\bf \Delta_{X}}(I) (\cH)}$$
 is the {\it noncommutative Berezin kernel} associated with
   ${\bf X}$

We call ${\boldsymbol\cB}[\varphi]$  the {\it  Berezin transform of} $\varphi$. In \cite{Po-Berezin-poly},
we identified the  noncommutative algebra
${\bf F}_{\bf n}^\infty$ with the  Hardy subalgebra $H^\infty({\bf B_n})$ of   bounded free holomorphic functions on
${\bf B_n}$.
More precisely, we proved that  he map
$ \Phi:H^\infty({\bf B_n})\to {\bf F}_{n}^\infty $ defined by
$$
\Phi\left(\sum\limits_{(\alpha)} a_{(\alpha)} Z_{(\alpha)}\right):=\sum\limits_{(\alpha)} a_{(\alpha)} {\bf S}_{(\alpha)}
$$
is a completely isometric isomorphism of operator algebras.
Moreover, if  $g:=\sum\limits_{(\alpha) } a_{(\alpha)} Z_{(\alpha)}$
is  a free holomorphic function on the abstract  polyball ${\bf B_n}$, then the following statements are equivalent:
 \begin{enumerate}
 \item[(i)]$g\in H^\infty({\bf B_n})$;
\item[(ii)] $\sup\limits_{0\leq r<1}\|g(r{\bf S})\|<\infty$, where $g(r{\bf S}):=\sum_{q=0}^\infty \sum_{{(\alpha)\in \FF_{n_1}^+\times \cdots \times\FF_{n_k}^+ }\atop {|\alpha_1|+\cdots +|\alpha_k|=q}} r^q a_{(\alpha)} {\bf S}_{(\alpha)}$;
\item[(iii)]
there exists $\varphi\in {\bf F}_{\bf n}^\infty $ with $g={\boldsymbol\cB}[\varphi]$, where ${\boldsymbol\cB}$ is the  noncommutative Berezin
transform  associated with the abstract   polyball ${\bf B_n}$.
\end{enumerate}

In this case,
$$
\Phi(g)=\text{\rm SOT-}\lim_{r\to 1}g(r{\bf S}),   \quad
 \quad  \Phi^{-1}(\varphi)=\boldsymbol{\cB}[\varphi],\quad
\varphi\in {\bf F}_{\bf n}^\infty,
$$
and
\begin{equation*}
\|\Phi(g)\|=\sup_{0\leq r<1}\|g(r{\bf S})\|=
\lim_{r\to 1}\|g(r{\bf S})\|.
\end{equation*}
We use the notation $\hat g:=\Phi(g)$ and call $\hat g$ the ({\it model}) {\it boundary function} of $g$ with respect to the universal model ${\bf S}$.
 We  denote by  $A({\bf B_n})$   the set of all  elements $g$
  in $Hol({\bf B_n})$   such that the mapping
$${\bf B_n}(\cH)\ni {\bf X}\mapsto
g({\bf X})\in B(\cH)$$
 has a continuous extension to  $[{\bf B_n}(\cH)]^-$ for any Hilbert space $\cH$. One can show that  $A({\bf B_n})$ is a  Banach algebra under pointwise
multiplication and the norm $\|\cdot \|_\infty$, and it has an operator space structure under the norms $\|\cdot \|_p$, $p\in \NN$.  Moreover, we can
identify the polyball algebra $\boldsymbol\cA_{\bf n}$ with the subalgebra
 $A({\bf B_n})$.  We proved in \cite{Po-Berezin-poly} that  the map
$ \Phi:A({\bf B_n})\to \boldsymbol\cA_{\bf n} $
defined by
$$
\Phi\left(\sum\limits_{(\alpha)} a_{(\alpha)} Z_{(\alpha)}\right):=\sum\limits_{(\alpha)} a_{(\alpha)} {\bf S}_{(\alpha)}
$$
is a completely isometric isomorphism of operator algebras.
Moreover, if  $g:=\sum\limits_{(\alpha)} a_{(\alpha)} Z_{(\alpha)}$
is  a free holomorphic function on the abstract polyball ${\bf B_n}$,
then the following statements are equivalent:
 \begin{enumerate}
 \item[(i)]$g\in A({\bf B_n})$;
\item[(ii)] $g(r{\bf S}):=\sum_{q=0}^\infty \sum_{{(\alpha)\in \FF_{n_1}^+\times \cdots \times\FF_{n_k}^+ }\atop {|\alpha_1|+\cdots +|\alpha_k|=q}} r^q  a_{(\alpha)} {\bf S}_{(\alpha)}$ is convergent in the  norm topology  as $r\to 1$;
\item[(iii)]
there exists $\varphi\in \boldsymbol\cA_{n}$ with $g={\boldsymbol\cB}[\varphi]$, where ${\boldsymbol\cB}$ is the  noncommutative Berezin
transform  associated with the abstract  polyball ${\bf B_n}$.
\end{enumerate}

In this case,
$$
\Phi(g)=\lim_{r\to 1}g(r{\bf S})  \quad \text{ and } \quad  \Phi^{-1}(\varphi)=\boldsymbol{\cB}[\varphi],\quad \varphi\in \boldsymbol\cA_{\bf n},
$$
where the limit is in the operator norm topology.

\begin{lemma} \label{pure} Let $F:{\bf B_n}(\cH)^-\to B(\cH)^{m_1}\times\cdots \times B(\cH)^{m_q}$ be a free holomorphic function on ${\bf B_n}(\cH)$ and  continuous on ${\bf B_n}(\cH)^-$. If ${\bf X}\in {\bf B_n}(\cH)^-$  and $\hat F\in {\bf B_m}(\otimes_{i=1}^k F^2(H_{n_i}))^-$ are pure elements, then so is $F({\bf X})\in {\bf B_m}(\cH)^-$.
\end{lemma}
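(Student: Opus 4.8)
The plan is to realize $F({\bf X})^*$ as the restriction of the adjoint of the universal model of ${\bf B_m}$ to a coinvariant subspace, and then to invoke the pure dilation characterization quoted in the Introduction (from \cite{Po-Berezin-poly}): a tuple is a \emph{pure} element of a regular polyball if and only if its adjoint is the restriction of the adjoint of the universal model to a coinvariant subspace. Writing $F=(F_1,\ldots,F_q)$ with $F_s=(F_{s,1},\ldots,F_{s,m_s})$, each component $F_{s,j}$ lies in $A({\bf B_n})$, so its boundary function $\hat F_{s,j}\in\boldsymbol\cA_{\bf n}\subset{\bf F}_{\bf n}^\infty$ satisfies $F_{s,j}=\boldsymbol\cB[\hat F_{s,j}]$; hence
$$
F_{s,j}({\bf X})={\bf K_{\bf X}}^*(\hat F_{s,j}\otimes I){\bf K_{\bf X}},\qquad {\bf X}\in{\bf B_n}(\cH)^-.
$$

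First I would use the purity of ${\bf X}$. Since ${\bf X}$ is pure, the Berezin kernel ${\bf K_{\bf X}}$ is an isometry; set $\cN:=\ran {\bf K_{\bf X}}$ with orthogonal projection $P_\cN$. The intertwining ${\bf K_{\bf X}}X_{i,j}^*=({\bf S}_{i,j}^*\otimes I){\bf K_{\bf X}}$ shows that $\cN$ is invariant under each ${\bf S}_{i,j}^*\otimes I$, i.e. coinvariant for the universal model ${\bf S}$ of ${\bf B_n}$. As $\hat F_{s,j}\in{\bf F}_{\bf n}^\infty$ belongs to the weakly closed algebra generated by $\{{\bf S}_{i,j}\}$, the subspace $\cN$ is also invariant under each $\hat F_{s,j}^*\otimes I$. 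Identifying $\cH$ with $\cN$ through ${\bf K_{\bf X}}$, the Berezin identity collapses to
$$
F_{s,j}({\bf X})^*=(\hat F_{s,j}^*\otimes I)|_\cN,
$$
so $F({\bf X})^*$ is exactly the restriction of $(\hat F\otimes I)^*$ to the coinvariant subspace $\cN$.

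Next I would feed in the purity of $\hat F$. Applying the pure dilation result to $\hat F\in{\bf B_m}(\otimes_{i=1}^kF^2(H_{n_i}))^-$ yields a subspace $\cL$, invariant under each ${\bf S}^{(m)}_{s,j}\otimes I$ (where ${\bf S}^{(m)}$ denotes the universal model of ${\bf B_m}$), with $\hat F_{s,j}^*=({\bf S}^{(m)*}_{s,j}\otimes I)|_{\cL^\perp}$ under an identification of $\otimes_{i=1}^kF^2(H_{n_i})$ with $\cL^\perp$. Tensoring with $I_{\cD_{\bf X}}$, the operator $\hat F_{s,j}^*\otimes I_{\cD_{\bf X}}$ coincides on $\cL^\perp\otimes\cD_{\bf X}$ with the corresponding ampliation of ${\bf S}^{(m)*}_{s,j}$. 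Since $\cN\subseteq\cL^\perp\otimes\cD_{\bf X}$ is invariant under $\hat F_{s,j}^*\otimes I_{\cD_{\bf X}}$ and $\cL^\perp\otimes\cD_{\bf X}$ is coinvariant for ${\bf S}^{(m)}$, a short check shows $\cN$ is invariant under ${\bf S}^{(m)*}_{s,j}\otimes I$ as well, whence
$$
F_{s,j}({\bf X})^*=({\bf S}^{(m)*}_{s,j}\otimes I)|_\cN,\qquad \cN \text{ coinvariant for }{\bf S}^{(m)}.
$$
By the converse direction of the pure dilation characterization for ${\bf B_m}$, this exhibits $F({\bf X})$ as a pure element of ${\bf B_m}(\cH)^-$, proving both the membership and the purity at once.

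I expect the main obstacle to be the bookkeeping of the two successive dilations: one must verify that composing the coinvariance of $\cN=\ran{\bf K_{\bf X}}$ for ${\bf S}\otimes I$ with the invariant-subspace model of the pure tuple $\hat F$ genuinely produces a coinvariant subspace for the target universal model ${\bf S}^{(m)}$, and that the ampliations by the defect space $\cD_{\bf X}$ line up so that the model theorem applies verbatim. A secondary point to check carefully is the Berezin identity $F_{s,j}=\boldsymbol\cB[\hat F_{s,j}]$ at the boundary point ${\bf X}\in{\bf B_n}(\cH)^-$, which rests on the continuity of $F$ on ${\bf B_n}(\cH)^-$ together with the membership $\hat F_{s,j}\in\boldsymbol\cA_{\bf n}$ recorded earlier.
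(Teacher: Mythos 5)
Your proof is correct, but it takes a genuinely different route from the paper's. The paper argues directly from the definition of purity: it establishes the identity $\sum_{\alpha\in\FF_{m_i}^+,\,|\alpha|=p}F_{i,\alpha}({\bf X})F_{i,\alpha}({\bf X})^*=\boldsymbol\cB_{\bf X}\bigl[\sum_{\alpha\in\FF_{m_i}^+,\,|\alpha|=p}\hat F_{i,\alpha}\hat F_{i,\alpha}^*\bigr]$ (using the continuity of $F$ on ${\bf B_n}(\cH)^-$ and the norm continuity of $\boldsymbol\cB_{\bf X}$ -- exactly the boundary issue you flag at the end), and then passes the bounded strong convergence $\Phi_{\hat F_i}^p(I)\to 0$ through the Berezin transform at the pure point ${\bf X}$ to conclude $\Phi_{F_i({\bf X})}^p(I)\to 0$ strongly. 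You instead invoke the dilation characterization of purity twice: once to realize $\cN=\ran{\bf K_{\bf X}}$ as a coinvariant subspace on which $F({\bf X})^*$ is the restriction of $\hat F^*\otimes I$, and once to replace $\hat F$ by the universal model of ${\bf B_m}$, after which the converse direction of the characterization delivers membership in ${\bf B_m}(\cH)^-$ and purity in one stroke. Your route is a bit longer and requires the full model theorem for $\hat F$, but it yields the stronger structural conclusion that $F({\bf X})$ is a compression of ${\bf S}^{(m)}\otimes I$ to a coinvariant subspace; the paper's computation is shorter and needs only that ${\bf K_{\bf X}}$ is an isometry intertwining $F_{i,j}({\bf X})^*$ with $\hat F_{i,j}^*\otimes I$. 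Both arguments rest on the same two inputs -- the boundary identity $F_{i,j}({\bf X})=\boldsymbol\cB_{\bf X}[\hat F_{i,j}]$ for ${\bf X}$ in the closed polyball and the purity of $\hat F$ -- so the bookkeeping points you single out are the right ones to verify, and they do check out.
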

\begin{proof} Let $f:{\bf B_n}(\cH)^-\to [B(\cH)]_1^-$ be a free holomorphic function on ${\bf B_n}(\cH)$ and continuous on ${\bf B_n}(\cH)^-$.
 If ${\bf X}\in {\bf B_n}(\cH)^-$ is pure, we can apply the noncommutative Berezin transform and obtain
 $$
 f({\bf X})f({\bf X})^*=\lim_{r\to 1} \boldsymbol{\cB}_{r{\bf X}}[\hat f\hat f^*]=\lim_{r\to 1} \boldsymbol{\cB}_{{\bf X}}[\hat f_r\hat f_r^*].
 $$ Since $\lim_{r\to 1} \hat f_r=\hat f$ in norm  and $ \boldsymbol{\cB}_{{\bf X}}$ is continuous in norm, we deduce that
 $ f({\bf X})f({\bf X})^*= \boldsymbol{\cB}_{{\bf X}}[\hat f\hat f^*]$.
 In a similar manner, if $F=(F_1,\ldots, F_q)$ and
 $i\in \{1,\ldots, q\}$, we  obtain
$$
\sum_{\alpha\in \FF_{m_i}^+, |\alpha|=p} F_{i,\alpha}({\bf X})F_{i,\alpha}({\bf X})^*=\boldsymbol{\cB}_{\bf X}\left[\sum_{\alpha\in \FF_{m_i}^+, |\alpha|=p} \hat F_{i,\alpha} \hat F_{i,\alpha}^*\right].
$$
Since $$\left\|\sum_{\alpha\in \FF_{m_i}^+, |\alpha|=p} \hat F_{i,\alpha} \hat F_{i,\alpha}^*\right\|\leq 1\quad \text{ and } \quad \sum_{\alpha\in \FF_{m_i}^+, |\alpha|=p} \hat F_{i,\alpha} \hat F_{i,\alpha}^*\to 0 \  \text{ strongly as } p\to \infty,
$$
  we deduce that $F_i({\bf X})$ is pure and, therefore, so is $F({\bf X})$. The proof is complete.
\end{proof}

 \begin{proposition} \label{range}  Let $G:{\bf B_n}(\cH)\to B(\cH)^{m_1}\times\cdots \times B(\cH)^{m_q}$ be a free holomorphic function, where  ${\bf n}=(n_1,\ldots, n_k)\in \NN^k$ and  ${\bf m}=(m_1,\ldots, m_q)\in \NN^q$. Then  $\text{\rm range}\, G\subseteq {\bf B_m}(\cH)$ if and only if
 $$
 G(r{\bf S})\in {\bf B_m}(\otimes_{i=1}^k F^2(H_{n_i})),\qquad r\in [0,1),
 $$
 where ${\bf S}$ is the universal model of the regular  polyball ${\bf B_n}$.
 \end{proposition}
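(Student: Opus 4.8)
The statement is an equivalence, and the forward (``only if'') implication is immediate. By Proposition \ref{reg-poly} one has $r{\bf B_n}(\cH)^-\subseteq{\bf B_n}(\cH)$ for every $r\in[0,1)$, and since the universal model satisfies ${\bf S}\in{\bf B_n}(\otimes_{i=1}^k F^2(H_{n_i}))^-$, it follows that $r{\bf S}\in{\bf B_n}(\otimes_{i=1}^k F^2(H_{n_i}))$ for $r\in[0,1)$. Hence $G(r{\bf S})\in\text{\rm range}\,G\subseteq{\bf B_m}(\otimes_{i=1}^k F^2(H_{n_i}))$, which is exactly what is required. The whole content of the proposition is therefore the reverse (``if'') implication, and this is where the noncommutative Berezin transform does the work.

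For the reverse implication, fix ${\bf X}\in{\bf B_n}(\cH)$. Using Proposition \ref{reg-poly}(ii) I would write ${\bf X}=r{\bf Y}$ with $r\in(0,1)$ and ${\bf Y}\in{\bf B_n}(\cH)$. Every element of ${\bf B_n}(\cH)$ is pure: since $\|\Phi_{Y_i}(I)\|<1$ and $\Phi_{Y_i}$ is positive, $\Phi_{Y_i}^{q}(I)\leq\|\Phi_{Y_i}(I)\|^{q}I\to 0$ in norm. Consequently the Berezin kernel ${\bf K_Y}$ is an isometry, so ${\boldsymbol\cB_{\bf Y}}$ is a unital positive map. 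Writing $G=(G_1,\ldots,G_q)$ with $G_l=(G_{l,1},\ldots,G_{l,m_l})$ and $G_{l,j}=\sum_{(\alpha)}a^{(l,j)}_{(\alpha)}Z_{(\alpha)}$, and setting $|\alpha|=|\alpha_1|+\cdots+|\alpha_k|$, the norm convergence of $G_{l,j}(r{\bf S})$ (valid because $r{\bf S}\in{\bf B_n}$), the relation ${\boldsymbol\cB_{\bf Y}}[{\bf S}_{(\alpha)}]={\bf Y}_{(\alpha)}$, and the norm-continuity of ${\boldsymbol\cB_{\bf Y}}$ yield the transfer identity
$$
G_{l,j}({\bf X})=\sum_{(\alpha)}a^{(l,j)}_{(\alpha)}r^{|\alpha|}{\bf Y}_{(\alpha)}={\boldsymbol\cB_{\bf Y}}\bigl[G_{l,j}(r{\bf S})\bigr].
$$

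The crux is to promote this to the level of the defect operators. Here I would use that each $W_{l,j}:=G_{l,j}(r{\bf S})$ lies in the polyball algebra $\boldsymbol\cA_{\bf n}$ and that, for holomorphic $\varphi,\psi\in\boldsymbol\cA_{\bf n}$ and a pure ${\bf Y}$, the Berezin transform is multiplicative and ``holomorphic-to-coholomorphic'', namely ${\boldsymbol\cB_{\bf Y}}[\varphi\psi]={\boldsymbol\cB_{\bf Y}}[\varphi]{\boldsymbol\cB_{\bf Y}}[\psi]$ and ${\boldsymbol\cB_{\bf Y}}[\varphi\psi^*]={\boldsymbol\cB_{\bf Y}}[\varphi]{\boldsymbol\cB_{\bf Y}}[\psi]^*$; the second follows from the intertwining ${\bf K_Y}\,\psi({\bf Y})^*=(\psi({\bf S})^*\otimes I){\bf K_Y}$ together with ${\bf K_Y}^*{\bf K_Y}=I$. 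Since $G(r{\bf S})\in{\bf B_m}$, its entries commute across the groups, so the maps $\Phi_{W_l}$ commute and the defect factors as ${\bf \Delta}_{G(r{\bf S})}(I)=\prod_{l=1}^{q}(id-\Phi_{W_l})(I)$; expanding this product writes ${\bf \Delta}_{G(r{\bf S})}(I)$ as a finite signed sum of terms $\varphi\varphi^*$, where $\varphi$ ranges over products of the holomorphic $W_{l,j}$'s. Applying ${\boldsymbol\cB_{\bf Y}}$ term by term and using the two properties above reproduces the same signed sum with each $W_{l,j}$ replaced by $G_{l,j}({\bf X})$, giving the key identities
$$
\Phi_{G({\bf X})_l}(I)={\boldsymbol\cB_{\bf Y}}\bigl[\Phi_{W_l}(I)\bigr]\quad\text{and}\quad{\bf \Delta}_{G({\bf X})}(I)={\boldsymbol\cB_{\bf Y}}\bigl[{\bf \Delta}_{G(r{\bf S})}(I)\bigr],
$$
while multiplicativity of evaluation shows that the entries of $G({\bf X})_l$ and $G({\bf X})_t$ commute for $l\neq t$, so $G({\bf X})$ has the correct cross-commuting structure.

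To finish, I would invoke positivity and unitality of ${\boldsymbol\cB_{\bf Y}}$. Because $G(r{\bf S})\in{\bf B_m}$ lies in ${\bf P_m}$, there is $\varepsilon>0$ with $\Phi_{W_l}(I)\leq(1-\varepsilon)I$, whence $\Phi_{G({\bf X})_l}(I)\leq(1-\varepsilon)I$ and thus $G({\bf X})\in{\bf P_m}(\cH)$; and ${\bf \Delta}_{G(r{\bf S})}(I)\geq cI$ for some $c>0$ gives ${\bf \Delta}_{G({\bf X})}(I)\geq cI>0$. Hence $G({\bf X})\in{\bf B_m}(\cH)$, and since ${\bf X}$ was arbitrary, $\text{\rm range}\,G\subseteq{\bf B_m}(\cH)$. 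I expect the main obstacle to be the defect identity ${\bf \Delta}_{G({\bf X})}(I)={\boldsymbol\cB_{\bf Y}}[{\bf \Delta}_{G(r{\bf S})}(I)]$: it rests on the holomorphic-to-coholomorphic behaviour of the Berezin transform on pure elements and on the combinatorial bookkeeping of the expanded defect, and it is precisely here that purity of ${\bf Y}$ (hence ${\bf K_Y}^*{\bf K_Y}=I$) is genuinely used.
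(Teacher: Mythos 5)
Your proof is correct and follows essentially the same route as the paper: both arguments reduce to the boundary data $G(t{\bf S})$ and transfer the defining relations of ${\bf B_m}$ to $G({\bf X})$ by applying the noncommutative Berezin transform at a rescaled point, using its multiplicativity and the identity ${\boldsymbol\cB_{\bf Y}}[\varphi\psi^*]={\boldsymbol\cB_{\bf Y}}[\varphi]{\boldsymbol\cB_{\bf Y}}[\psi]^*$ on $\boldsymbol\cA_{\bf n}$. The one genuine divergence is the finishing step: the paper concludes membership in ${\bf B_m}(\cH)$ from the strict positivity of the transferred defect together with Lemma \ref{pure} (purity of each $G_i({\bf X})$), whereas you instead verify the condition $G({\bf X})\in{\bf P_m}(\cH)$ directly, by pushing the operator bound $\Phi_{G_l(t{\bf S})}(I)\leq(1-\varepsilon)I$ through the unital positive map ${\boldsymbol\cB_{\bf Y}}$ to get $\|\Phi_{G_l({\bf X})}(I)\|<1$. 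This buys you an explicit check of the strict norm inequality in the definition of $[B(\cH)^{m_l}]_1$ (purity alone, as in the example of the shift itself, does not force $\|\Phi(I)\|<1$), at the cost of not recording the purity of $G({\bf X})$, which the paper's version of the proof makes available for later use (e.g.\ in Theorem \ref{homo-compo}). Your identification of where purity of ${\bf Y}$, hence ${\bf K_Y^*}{\bf K_Y}=I$, is genuinely used is accurate.
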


 \begin{proof}  Since $r{\bf S}\in {\bf B_n}(\otimes_{i=1}^k F^2(H_{n_i}))$ for any $r\in [0,1)$, the direct implication is obvious. To prove the converse, assume that $G=(G_1,\ldots, G_q)$ has the property that $G(r{\bf S})\in {\bf B_m}(\otimes_{i=1}^k F^2(H_{n_i}))$. Consequently, if $i,s\in \{1,\ldots, q\}$, $i\neq s$, then each entry of $G_i(r{\bf S})=(G_{i,1}(r{\bf S}),\ldots, G_{i,m_i}(r{\bf S}))$ commutes with each entry of
 $G_s(r{\bf S})=(G_{s,1}(r{\bf S}),\ldots, G_{s,m_s}(r{\bf S}))$.  Moreover,
 $G(r{\bf S})$ is a pure element with entries $\{G_{i,j}(r{\bf S})\}$ in the noncommutative polyball algebra $\boldsymbol\cA_{\bf n}$ and, for each $r\in[0,1)$,
 \begin{equation*}
 (id-\Phi_{G_1(r{\bf S})})\circ\cdots\circ (id-\Phi_{G_q(r{\bf S})})(I)>d_rI,
 \end{equation*}
 for some $d_r>0$. If ${\bf X}=(X_1,\ldots, X_k)\in {\bf B_n}(\cH)$, then there is $t\in (0,1)$ such that ${\bf X}\in t{\bf B_n}(\cH)$. Since $G$ is a free holomorphic function, it is continuous and $G(t{\bf S})$ has the entries in $\boldsymbol\cA_{\bf n}$.
 Applying the noncommutative Berezin transform at $\frac{1}{t}{\bf X}$ to the relations mentioned above, when $r=t$, we deduce that  the entries
    of $G_i({\bf X}) $ commute with the entries of
 $G_s({\bf X})$, if $i,s\in \{1,\ldots, q\}$, $i\neq s$, and
 \begin{equation}\label{inG}
 (id-\Phi_{G_1({\bf X})})\circ\cdots\circ (id-\Phi_{G_q({\bf X})})(I)>0.
 \end{equation}
On the other hand, since $G_i(t{\bf S})$ is pure, Lemma \ref{pure} implies that $G_i({\bf X})$ is pure. Hence, and using relation \eqref{inG}, we conclude that $G({\bf X})\in {\bf B_n}(\cH)$ for any ${\bf X}\in {\bf B_n}(\cH)$. The proof is complete.
\end{proof}

 Using Proposition \ref{range} and  the properties of the noncommutative Berezin transform, one can easily deduce the follow result.

 \begin{corollary} \label{Gt} Let $G=(G_1,\ldots, G_q)$, with $G_i:{\bf B_n}(\cH)\to B(\cH)^{m_i}$, be a free holomorphic function  such that, for each $r\in [0,1)$,
 \begin{enumerate}
 \item[(i)]$\|G_t(r{\bf S})\|<1$, $t\in \{1,\ldots, q\}$;
 \item[(ii)] the entries of
 $G_t(r{\bf S})$ are commuting with the entries of $G_s(r{\bf S})$ for any $s,t\in \{1,\ldots, q\}$ with $s\neq t$.
 \end{enumerate}
 Then $\text{\rm range}\, G\subseteq {\bf B_m}(\cH)$ if either one of the following conditions holds:
 \begin{enumerate}
 \item[(a)] ${\bf \Delta}_{G(r{\bf S})}(I)>0$ for any $r\in [0,1)$;
 \item[(b)]  the entries of
 $G_t(r{\bf S})$ are  doubly commuting with the entries of $G_s(r{\bf S})$ for any $s,t\in \{1,\ldots, q\}$ with $s\neq t$.
 \end{enumerate}
 \end{corollary}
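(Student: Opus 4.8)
The plan is to apply Proposition \ref{range}, which reduces the inclusion $\text{\rm range}\, G\subseteq {\bf B_m}(\cH)$ to verifying that the single tuple $G(r{\bf S})$ belongs to ${\bf B_m}(\otimes_{i=1}^k F^2(H_{n_i}))$ for every $r\in[0,1)$. So I would fix $r\in[0,1)$ and check the two defining conditions of the regular polyball in turn: membership in the product domain ${\bf P_m}$, and strict positivity of the defect ${\bf \Delta}_{G(r{\bf S})}(I)$.

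First I would note that hypotheses (i) and (ii) already place $G(r{\bf S})$ in ${\bf P_m}(\otimes_{i=1}^k F^2(H_{n_i}))$. Indeed, (i) says each block $G_t(r{\bf S})$ is a strict row contraction, i.e. $\|\sum_{j} G_{t,j}(r{\bf S})G_{t,j}(r{\bf S})^*\|=\|G_t(r{\bf S})\|^2<1$, so $G_t(r{\bf S})\in[B(\otimes_{i=1}^k F^2(H_{n_i}))^{m_t}]_1$; and (ii) supplies precisely the cross-block commutation that defines ${\bf P_m}$. Thus the only remaining issue is the defect positivity, and this is exactly where conditions (a) and (b) enter.

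Under (a) the positivity ${\bf \Delta}_{G(r{\bf S})}(I)>0$ is assumed outright, so $G(r{\bf S})\in{\bf B_m}$ and we are done. Under (b) I would reproduce the factorization used in Lemma \ref{ineqS}: since the entries of $G_t(r{\bf S})$ doubly commute with those of $G_s(r{\bf S})$ for $s\neq t$, a short induction shows that $\Phi_{G_t(r{\bf S})}(I)$ and $\Phi_{G_s(r{\bf S})}(I)$ commute and that the defect mapping factors as
$$
{\bf \Delta}_{G(r{\bf S})}(I)=\prod_{t=1}^q\left(I-\Phi_{G_t(r{\bf S})}(I)\right),
$$
a product of mutually commuting operators. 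Each factor obeys $I-\Phi_{G_t(r{\bf S})}(I)\geq (1-\|G_t(r{\bf S})\|^2)I>0$ by (i), and for commuting positive operators the product dominates the product of the lower bounds, so ${\bf \Delta}_{G(r{\bf S})}(I)\geq \prod_{t=1}^q(1-\|G_t(r{\bf S})\|^2)I>0$. Hence $G(r{\bf S})\in{\bf B_m}$ in this case as well, and Proposition \ref{range} yields the conclusion.

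I do not anticipate a genuine obstacle: once Proposition \ref{range} is available, the argument is essentially bookkeeping. The only step needing a little care is the double-commutation computation behind the product factorization of the defect --- namely that $G_{t,a}(r{\bf S})G_{t,a}(r{\bf S})^*$ commutes with $G_{s,b}(r{\bf S})G_{s,b}(r{\bf S})^*$ for $s\neq t$ --- but this is the same manipulation already carried out for the universal model in Lemma \ref{ineqS}, so condition (b) is best viewed as a convenient sufficient condition forcing (a).
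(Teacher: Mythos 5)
Your proposal is correct and is exactly the argument the paper intends: the paper omits the proof, stating only that the corollary follows from Proposition \ref{range} together with properties of the Berezin transform, and your verification that (i)--(ii) place $G(r{\bf S})$ in ${\bf P_m}$ while (a), or the doubly-commuting factorization ${\bf \Delta}_{G(r{\bf S})}(I)=\prod_{t}(I-\Phi_{G_t(r{\bf S})}(I))\geq\prod_t(1-\|G_t(r{\bf S})\|^2)I>0$ under (b), supplies the defect condition is precisely the missing bookkeeping. No gaps.
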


 \begin{theorem} \label{homo-compo}  Let   ${\bf n}=(n_1,\ldots, n_k)\in \NN^k$ and  ${\bf m}=(m_1,\ldots, m_q)\in \NN^q$. If $G:{\bf B_n}(\cH)\to {\bf B_m}(\cH)$ and $F:{\bf B_m}(\cH)\to B(\cH)\bar\otimes_{min} B(\cE,\cG)$ are free holomorphic functions on regular polyballs, then
 $F\circ G$ is a free holomorphic function on  ${\bf B_n}(\cH)$.

 \end{theorem}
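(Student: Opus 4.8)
The plan is to realize $F\circ G$ as a uniformly convergent rearrangement of the power series of $F$ with $G$ substituted for its variables; the crux is that $G$ carries each sub-polyball $r{\bf B_n}(\cH)^-$, $r<1$, into a \emph{strictly smaller} sub-polyball $\rho{\bf B_m}(\cH)^-$ with $\rho<1$, on which the series of $F$ converges uniformly. Writing $F({\bf Y})=\sum_{(\beta)}B_{(\beta)}\otimes Y_{(\beta)}$ with $(\beta)\in\FF_{m_1}^+\times\cdots\times\FF_{m_q}^+$, and $G=(G_1,\ldots,G_q)$, $G_i=(G_{i,1},\ldots,G_{i,m_i})$, each $G_{i,j}$ free holomorphic on ${\bf B_n}(\cH)$, the formal composition is $F(G({\bf X}))=\sum_{(\beta)}B_{(\beta)}\otimes G({\bf X})_{(\beta)}$, where $G({\bf X})_{(\beta)}$ is the word in the operators $G_{i,j}({\bf X})$ indexed by $(\beta)$. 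Since $Hol({\bf B_n})$ is an algebra (established at the end of Section 1), each partial sum $\sum_{|\beta|\le N}B_{(\beta)}\otimes G({\bf X})_{(\beta)}$ is a free holomorphic function of ${\bf X}$.

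The main step is the containment claim. Fix $r\in[0,1)$ and choose $t\in(r,1)$; by Proposition \ref{reg-poly} (relation \eqref{subset1} applied with the scalar $r/t<1$) one has $r{\bf B_n}(\cH)^-\subseteq t{\bf B_n}(\cH)$. Since $G$ maps into ${\bf B_m}$, Proposition \ref{range} gives $G(t{\bf S})\in{\bf B_m}(\otimes_{i=1}^k F^2(H_{n_i}))$, and as this polyball is open and equals $\bigcup_{0\le\rho<1}\rho{\bf B_m}(\otimes_{i=1}^k F^2(H_{n_i}))^-$, there is $\rho=\rho_t\in[0,1)$ with $\rho^{-1}G(t{\bf S})\in{\bf B_m}(\otimes_{i=1}^k F^2(H_{n_i}))^-$, i.e. ${\bf \Delta}^{\bf p}_{\rho^{-1}G(t{\bf S})}(I)\ge0$ for every ${\bf p}\in\{0,1\}^q$. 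For ${\bf X}\in t{\bf B_n}(\cH)$ we have $\frac1t{\bf X}\in{\bf B_n}(\cH)$, and applying the Berezin transform $\boldsymbol\cB_{\frac1t{\bf X}}$ to these relations—exactly as in the proof of Proposition \ref{range}, using the intertwining ${\bf K}_{\frac1t{\bf X}}X^*=({\bf S}^*\otimes I){\bf K}_{\frac1t{\bf X}}$ to pull the analytic factors $G_{a,j}(t{\bf S})$ to the left and the co-analytic factors to the right—gives $\boldsymbol\cB_{\frac1t{\bf X}}[G_{i,j}(t{\bf S})]=G_{i,j}({\bf X})$, inheritance of the between-group commutation, and $\boldsymbol\cB_{\frac1t{\bf X}}[{\bf \Delta}^{\bf p}_{\rho^{-1}G(t{\bf S})}(I)]={\bf \Delta}^{\bf p}_{\rho^{-1}G({\bf X})}(I)$. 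Since $\boldsymbol\cB_{\frac1t{\bf X}}$ is completely positive, ${\bf \Delta}^{\bf p}_{\rho^{-1}G({\bf X})}(I)\ge0$ for all ${\bf p}$, whence $G({\bf X})\in\rho{\bf B_m}(\cH)^-$ for every ${\bf X}\in t{\bf B_n}(\cH)$, and in particular for every ${\bf X}\in r{\bf B_n}(\cH)^-$.

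It then remains to transfer convergence and identify the series. Because $F$ is free holomorphic on all of ${\bf B_m}$, its polyball radius of convergence is at least $1>\rho$, so by Theorem \ref{Hadamard}(i) (and Corollary \ref{Hadamard2}) the series $\sum_{(\beta)}\|B_{(\beta)}\otimes Y_{(\beta)}\|$ converges uniformly on $\rho{\bf B_m}(\cH)^-$. Combined with the containment $G(r{\bf B_n}(\cH)^-)\subseteq\rho{\bf B_m}(\cH)^-$, the series $\sum_{(\beta)}B_{(\beta)}\otimes G({\bf X})_{(\beta)}$ converges uniformly on $r{\bf B_n}(\cH)^-$, exhibiting $F\circ G$ as a uniform limit, on each $r{\bf B_n}(\cH)^-$, of the free holomorphic partial sums of the previous paragraph. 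Collecting the substituted series by total degree in ${\bf X}$ produces a formal series $\sum_{(\alpha)}D_{(\alpha)}\otimes Z_{(\alpha)}$ whose coefficients are the limits of the (uniformly Cauchy, by the estimates of Theorem \ref{Cauchy-ineq}) coefficients of the partial sums; by uniqueness of the power series representation this series represents $F\circ G$ and converges throughout ${\bf B_n}(\cH)$, so $F\circ G\in Hol({\bf B_n})$.

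I expect the containment claim to be the main obstacle: it is where the hypothesis that $G$ maps into the \emph{regular} polyball ${\bf B_m}$, Proposition \ref{range}, and the multiplicativity of the Berezin transform on analytic symbols must be combined, and where one must verify that the single radius $\rho_t$ furnished by the interior point $G(t{\bf S})$ serves simultaneously for all ${\bf X}$ in the sub-polyball; the remaining rearrangement and coefficient identification are routine applications of the Hadamard and Cauchy estimates already available.
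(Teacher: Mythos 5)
Your proof is correct, but it follows a genuinely different route from the paper's. The paper works at the level of the universal model: it forms $\Lambda_r:=F(G(r{\bf S}))$, observes that $\Lambda_r\in B(\cE,\cG)\bar\otimes {\bf F}_{\bf n}^\infty$ and therefore automatically possesses a Fourier representation $\sum_{(\alpha)}C_{(\alpha)}(r)\otimes r^{|\alpha_1|+\cdots+|\alpha_k|}{\bf S}_{(\alpha)}$, proves by an explicit inner-product computation against the vacuum that $C_{(\alpha)}(r)$ is independent of $r$, and then takes $Q({\bf X})=\sum C_{(\alpha)}\otimes X_{(\alpha)}$ as the candidate power series, identifying $F\circ G=Q$ pointwise via the Berezin transform. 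You instead prove a quantitative strengthening of Proposition \ref{range} — that $G$ carries $t{\bf B_n}(\cH)$ into $\rho_t{\bf B_m}(\cH)^-$ for some $\rho_t<1$, uniformly over ${\bf X}$ and over the Hilbert space, by pushing the defect inequalities of the single interior point $G(t{\bf S})$ through the Berezin transform — and then combine the Hadamard/Abel convergence on $\rho_t{\bf B_m}(\cH)^-$ with a Weierstrass-type passage to the limit controlled by the Cauchy estimates of Theorem \ref{Cauchy-ineq}. The paper's argument buys economy (the Hardy algebra supplies the power series for free, so no limit-of-coefficients argument is needed); yours buys a reusable intermediate fact (the strict containment $G(t{\bf B_n}(\cH))\subseteq\rho_t{\bf B_m}(\cH)^-$, which the paper never states) and stays entirely within the function theory of Sections 1--2. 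One small imprecision: at the end, "uniqueness of the power series representation" is not quite the point — what you actually need is that the series built from the limiting coefficients $D_{(\alpha)}=\lim_N D^{(N)}_{(\alpha)}$ converges in norm on each $s{\bf B_n}(\cH)^-$, $s<r$, and sums to $\lim_N P_N=F\circ G$; this does follow from the same Cauchy estimates (bound $\|\sum_{|\alpha|=p}(D_{(\alpha)}-D^{(N)}_{(\alpha)})^*(D_{(\alpha)}-D^{(N)}_{(\alpha)})\|^{1/2}$ by $r^{-p}\sup_{r{\bf B_n}^-}\|F\circ G-P_N\|$ and sum the geometric series), so it is a routine completion rather than a gap, but it should be said.
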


 \begin{proof}
 If $F$ has the Fourier representation
 $$
 F({\bf Y})=
 \sum\limits_{p=0}^\infty\sum\limits_{  {|\gamma_1|+\cdots +|\gamma_q|=p}\atop {\gamma_i\in \FF_{m_i}^+}}A_{(\gamma)}\otimes Y_{(\gamma)}, \qquad {\bf Y}\in {\bf B_m}(\cH),
 $$
 then we have
 $$
 (F\circ G)({\bf X})=
 \sum\limits_{p=0}^\infty\sum\limits_{  {|\gamma_1|+\cdots +|\gamma_q|=p}\atop {\gamma_i\in \FF_{m_i}^+}}A_{(\gamma)}\otimes G_{(\gamma)}({\bf X}), \qquad {\bf X}\in {\bf B_n}(\cH),
 $$
 where the convergence is in the operator norm topology.
 Due to Proposition \ref{range},
$$
 G(r{\bf S})=\{G_{s,t}({\bf S})\}\in {\bf B_m}(F^2(H_{n_1})\otimes \cdots \otimes F^2(H_{n_k})),\qquad r\in [0,1),
 $$
 where  $s\in\{1,\ldots, q\}$, $t\in \{1,\ldots, m_s\}$ and ${\bf S}$ is the universal model of the regular  polyball ${\bf B_n}$.
Since $F:{\bf B_m}(\cH)\to B(\cH)\bar\otimes_{min} B(\cE,\cG)$ is a free holomorphic function, for each $r\in[0,1)$,
\begin{equation}
\label{GAR}
\Lambda_r:= \sum\limits_{p=0}^\infty\sum\limits_{  {|\gamma_1|+\cdots +|\gamma_q|=p}\atop {\gamma_i\in \FF_{m_i}^+}}A_{(\gamma)}\otimes G_{(\gamma)}(r{\bf S}), \qquad {\bf X}\in {\bf B_n}(\cH),
 \end{equation}
 where the convergence is in the operator norm topology. Taking into account that
 $G_{s,t}({\bf S})$ is in the noncommutative polyball algebra $\boldsymbol\cA_{\bf n}$, we have
 $\Lambda_r\in B(\cE,\cG)\otimes \boldsymbol\cA_{\bf n}\subset B(\cE,\cG)\bar\otimes F_{\bf n}^\infty$. This implies that, for each $r\in [0,1)$, the operator $\Lambda_r$ has the Fourier representation
$
 \sum\limits_{q=0}^\infty\sum\limits_{  {|\alpha_1|+\cdots +|\alpha_k|=q}\atop {\alpha_i\in \FF_{n_i}^+}}C_{(\alpha)}(r)\otimes r^{|\alpha_1|+\cdots +|\alpha_k|} {\bf S}_{(\alpha)}
 $
 and
 \begin{equation}
 \label{La-SOT}
 \Lambda_r=\text{\rm SOT-}\lim_{\ell\to 1}\sum\limits_{q=0}^\infty\sum\limits_{  {|\alpha_1|+\cdots +|\alpha_k|=q}\atop {\alpha_i\in \FF_{n_i}^+}}C_{(\alpha)}(r)\otimes (r\ell)^{|\alpha_1|+\cdots +|\alpha_k|} {\bf S}_{(\alpha)}.
 \end{equation}
 where the series converges in the operator topology. The next step in our proof is to show that
 $C_{(\alpha)}(r)$ does not depend on $r\in [0,1)$. Using relations
 \eqref{GAR} and \eqref{La-SOT}, we have
 \begin{equation*}
 \begin{split}
 \left<C_{(\alpha)}(r)x,y\right>&=\left<(I\otimes  {\bf S}_{(\alpha)}^*)\frac{1}{r^{|\alpha_1|+\cdots +|\alpha_k|}} \Lambda_r(x\otimes 1),(y\otimes 1)\right>\\
 &=\lim_{d\to \infty} \sum\limits_{p=0}^d\sum\limits_{  {|\gamma_1|+\cdots +|\gamma_q|=p}\atop {\gamma_i\in \FF_{m_i}^+}}\left<A_{(\gamma)}x,y\right>\left<\frac{1}{r^{|\alpha_1|+\cdots +|\alpha_k|}}{\bf S}_{(\alpha)}^* G_{(\gamma)}(r{\bf S})1,1\right>
 \end{split}
 \end{equation*}
 for any   $(\alpha)\in \FF_{n_1}^+\times \cdots \times \FF_{n_k}^+$ and for any $x\in \cE$, $y\in \cG$.
 On the other hand, the product $G_{(\gamma)}$ is a free holomorphic function on ${\bf B_n}(\cH)$ and has a representation
 $$
 G_{(\gamma)}({\bf X})=\sum\limits_{p=0}^\infty\sum\limits_{  {|\beta_1|+\cdots +|\beta_q|=p}\atop {\beta_i\in \FF_{n_i}^+}}d^{(\gamma)}_{(\beta)} X_{(\beta)}, \qquad {\bf X}\in {\bf B_n}(\cH).
 $$
 Consequently,
 $$
 \left<\frac{1}{r^{|\alpha_1|+\cdots +|\alpha_k|}}{\bf S}_{(\alpha)}^* G_{(\gamma)}(r{\bf S})1,1\right>=d^{(\gamma)}_{(\alpha)}, \qquad r\in [0,1),
 $$
 for any  $(\alpha)\in \FF_{n_1}^+\times \cdots \times \FF_{n_k}^+$ and
 $(\gamma)\in \FF_{m_1}^+\times \cdots \times \FF_{m_k}^+$.
 Therefore, $C_{(\alpha)}(r)$ does not depend on $r\in[0,1)$. We set $C_{(\alpha)}(r)=C_{(\alpha)}$, and note that relation \eqref{La-SOT} implies that
 $$
 Q({\bf X}):=\sum\limits_{q=0}^\infty\sum\limits_{  {|\alpha_1|+\cdots +|\alpha_k|=q}\atop {\alpha_i\in \FF_{n_i}^+}}C_{(\alpha)}\otimes  { X}_{(\alpha)}, \qquad {\bf X}\in {\bf B_n}(\cH),
 $$
is a free holomorphic function on ${\bf B_n}(\cH)$. Moreover, since $Q$ is continuous in the operator norm  we deduce that
\begin{equation*}
\begin{split}
\Lambda_r:&= \sum\limits_{p=0}^\infty\sum\limits_{  {|\gamma_1|+\cdots +|\gamma_q|=p}\atop {\gamma_i\in \FF_{m_i}^+}}A_{(\gamma)}\otimes G_{(\gamma)}(r{\bf S}) =
\sum\limits_{q=0}^\infty\sum\limits_{  {|\alpha_1|+\cdots +|\alpha_k|=q}\atop {\alpha_i\in \FF_{n_i}^+}}C_{(\alpha)}\otimes r^{|\alpha_1|+\cdots +|\alpha_k|} {\bf S}_{(\alpha)}
\end{split}
\end{equation*}
for any $r\in [0,1)$.
Now, if ${\bf X}\in {\bf B_n}(\cH)$, then there is $r\in (0,1)$  such that ${\bf X}\in r{\bf B_n}(\cH)$. Applying the noncommutative Berezin transform at $\frac{1}{r}{\bf X}$ to the relation above, we deduce that
$$
(F\circ G)({\bf X})=
 \sum\limits_{p=0}^\infty\sum\limits_{  {|\gamma_1|+\cdots +|\gamma_q|=p}\atop {\gamma_i\in \FF_{m_i}^+}}A_{(\gamma)}\otimes G_{(\gamma)}({\bf X})=Q({\bf X})
$$
 for any ${\bf X}\in {\bf B_n}(\cH)$. The proof is complete.
\end{proof}

 \begin{proposition} \label{coef-ine}
 Let $F:{\bf B_n}(\cH)\to B(\cK)\bar \otimes_{min}B(\cH)$ be a bounded  free holomorphic function with coefficients in $B(\cK)$ and representation
 $$
 \sum\limits_{q=0}^\infty\sum\limits_{  {|\alpha_1|+\cdots +|\alpha_k|=q}\atop {\alpha_i\in \FF_{n_i}^+}}A_{(\alpha)}\otimes X_{(\alpha)}.
 $$
If  $\|F\|_\infty\leq 1$, then
 $$
 \sum\limits_{  {|\alpha_1|+\cdots +|\alpha_k|=q}\atop {\alpha_i\in \FF_{n_i}^+}}A_{(\alpha)}^*A_{(\alpha)}\leq I-A_{(0)}^*A_{(0)}
 $$
 for any $q\in \NN$, where $A_{(0)}:=F(0)$.
 \end{proposition}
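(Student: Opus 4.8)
The plan is to test the contraction $F(r{\bf S})$ against the vacuum vector and to read off the coefficient bounds from the orthonormality of the basis $\{e_{(\alpha)}\}$ of $\bigotimes_{i=1}^k F^2(H_{n_i})$. First I would note that, since $\|F\|_\infty\leq 1$ and $r{\bf S}\in {\bf B_n}(\bigotimes_{i=1}^k F^2(H_{n_i}))$ for every $r\in[0,1)$, we have $\|F(r{\bf S})\|\leq 1$; equivalently, by Theorem \ref{Cauchy-ineq}, $M({\bf r})=\|F(r{\bf S})\|\leq \|F\|_\infty\leq 1$. In particular $F(r{\bf S})$ is a contraction on $\cK\otimes\bigl(\bigotimes_{i=1}^k F^2(H_{n_i})\bigr)$, so $\|F(r{\bf S})(x\otimes 1)\|\leq \|x\|$ for every $x\in\cK$ and every $r\in[0,1)$.

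The key computation is the evaluation of $F(r{\bf S})$ on $x\otimes 1$. Using ${\bf S}_{(\alpha)}1=e_{(\alpha)}$, I would write
$$
F(r{\bf S})(x\otimes 1)=\sum_{(\alpha)} r^{|\alpha_1|+\cdots+|\alpha_k|}\,(A_{(\alpha)}x)\otimes e_{(\alpha)}.
$$
Since the vectors $e_{(\alpha)}$, with $(\alpha)\in\FF_{n_1}^+\times\cdots\times\FF_{n_k}^+$, are orthonormal (this is the same orthogonality used in Theorems \ref{Abel} and \ref{Cauchy-ineq}), the norm is an honest operator-valued Parseval sum:
$$
\|F(r{\bf S})(x\otimes 1)\|^2=\sum_{q=0}^\infty r^{2q}\langle Q_q x,x\rangle,\qquad Q_q:=\sum_{{|\alpha_1|+\cdots+|\alpha_k|=q}\atop{\alpha_i\in\FF_{n_i}^+}}A_{(\alpha)}^*A_{(\alpha)}.
$$
Combining this with the contraction bound $\|F(r{\bf S})(x\otimes 1)\|\leq \|x\|$ yields $\sum_{q=0}^\infty r^{2q}\langle Q_q x,x\rangle\leq \|x\|^2$ for all $r\in[0,1)$ and all $x\in\cK$.

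To finish I would let $r\to 1$. For each $N$ the partial sum $\sum_{q=0}^N r^{2q}\langle Q_q x,x\rangle$ is a sum of nonnegative terms, hence nondecreasing in $r$ and bounded above by $\|x\|^2$; letting $r\to 1$ and then $N\to\infty$ gives $\sum_{q=0}^\infty\langle Q_q x,x\rangle\leq\|x\|^2$, that is, $\sum_{q\geq 0}Q_q\leq I$ in the weak operator topology. Since every $Q_q\geq 0$, for each fixed $q\in\NN$ we obtain $Q_0+Q_q\leq\sum_{j\geq 0}Q_j\leq I$, and because $Q_0=A_{(0)}^*A_{(0)}$ this is precisely $Q_q\leq I-A_{(0)}^*A_{(0)}$, the asserted inequality.

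I do not expect a serious obstacle here: the proof is essentially the operator-valued Parseval identity for the universal model together with positivity. The only points needing (mild) care are the justification that $\|F(r{\bf S})\|\leq 1$ for every $r<1$ and the monotone passage to the limit $r\to 1$, both of which are routine once the orthonormality of $\{e_{(\alpha)}\}$ has been used to diagonalize the norm on the vacuum.
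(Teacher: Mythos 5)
Your proof is correct and is essentially the paper's argument: both rest on the contractivity of the (boundary) model operator together with the orthonormality of the vectors $e_{(\alpha)}$ — the paper packages this as the contractivity of the first column of the compression of $F({\bf S})$ to $\cK\otimes\cM$, while you package it as a Parseval identity for $F(r{\bf S})(x\otimes 1)$ followed by the monotone limit $r\to 1$. As a minor bonus, your version delivers the slightly stronger conclusion $\sum_{q\geq 0}Q_q\leq I$, of which the stated inequality $Q_q\leq I-A_{(0)}^*A_{(0)}$ is the special case obtained by discarding all but the $0$-th and $q$-th terms.
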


  \begin{proof} Let $\cM$ be the subspace of $F^2(H_{n_1})\otimes \cdots \otimes F^2(H_{n_k})$ spanned by the vectors $1, e_{\alpha_1}^1\otimes \cdots \otimes e_{\alpha_k}^k$, where $\alpha_i\in \FF_{n_i}$ and $|\alpha_1|+\cdots +|\alpha_k|=q\in \NN$. Note that the operator $C:=P_{\cK\otimes \cM} F({\bf S})|_{\cK\otimes \cM}$ is a contraction and,  with respect to the decomposition
  $$
  \cK\otimes \cM=\cH\oplus\bigoplus_{  {|\alpha_1|+\cdots +|\alpha_k|=q}\atop {\alpha_i\in \FF_{n_i}^+}}e_{\alpha_1}^1\otimes \cdots \otimes e_{\alpha_k}^k\otimes \cH,
  $$
  has the operator matrix representation

  $$
  \left(
  \begin{matrix}
  A_{(0)} &[\begin{matrix}0&\cdots&\cdots&\cdots&0\end{matrix}]\\
  \left[\begin{matrix}
  A_{(\alpha)}\\
  \vdots\\
  {  {|\alpha_1|+\cdots +|\alpha_k|=q}\atop {\alpha_i\in \FF_{n_i}^+}}
  \end{matrix}\right]
  &
  \left[\begin{matrix}
  A_{(0)}&0&\cdots&0&0\\
  0&0&\cdots&0&0\\
  \vdots &\ddots&\vdots\\
  0&0&\cdots &0&A_{(0)}
  \end{matrix}\right]
  \end{matrix}
  \right).
  $$
  Indeed, we have
  $$
  \left<C(x\otimes 1),y\otimes 1\right>=\left<F({\bf S})(x\otimes 1),y\otimes 1\right>=\left<A_{(0)}x,y\right>
  $$
  and
  $$
  \left<C(x\otimes 1),y\otimes e_{\alpha_1}^1\otimes \cdots \otimes e_{\alpha_k}^k\right>=\left< A_{(\alpha)}x,y\right>
  $$
  for any $(\alpha)=(\alpha_1,\ldots, \alpha_k)\in \FF_{n_1}^+\times \cdots \times \FF_{n_k}^+$ with
  $|\alpha_1|+\cdots +|\alpha_k|=q$.
  If  $|\alpha_1|+\cdots +|\alpha_k|=|\beta_1|+\cdots +|\beta_k|=q$, then we have

  $$
  \left<C(x\otimes e_{\alpha_1}^1\otimes \cdots \otimes e_{\alpha_k}^k),y\otimes e_{\beta_1}^1\otimes \cdots \otimes e_{\beta_k}^k\right>=\delta_{\alpha_1\beta_1}\cdots \delta_{\alpha_k\beta_k} A_{(0)}
  $$
  for any $x,y\in \cK$. This proves our assertion. Consequently, the column operator matrix

$$
  \left(
  \begin{matrix}
  A_{(0)}  \\
  \left[\begin{matrix}
  A_{(\alpha)}\\
  \vdots\\
  {  {|\alpha_1|+\cdots +|\alpha_k|=q}\atop {\alpha_i\in \FF_{n_i}^+}}
  \end{matrix}\right]
  \end{matrix}
  \right)
  $$
is a contraction, which completes the proof.
\end{proof}

 We  recall that  ${\bf B_n}(\cH)$ is a complete Reinhardt domain and
 $$
 B(\cH)^{n_1}\times_c\cdots \times_c B(\cH)^{n_k}=\bigcup_{\rho>0} \rho{\bf B_n}(\cH).
$$
We define the Minkovski functional associated with the regular polyball ${\bf B_n}(\cH)$   to be   the function
 $m_{\bf B_n}:B(\cH)^{n_1}\times_c\cdots \times_c B(\cH)^{n_k}\to [0,\infty)$  given by
 $$
 m_{\bf B_n}({\bf X}):=\inf \left\{r>0: \ {\bf X}\in r{\bf B_n}(\cH)\right\}.
 $$

 \begin{proposition} The  Minkovski functional associated with the regular polyball ${\bf B_n}(\cH)$ has the following properties:
 \begin{enumerate}
 \item[(i)] $m_{\bf B_n}(\lambda{\bf X})=|\lambda| m_{\bf B}({\bf X})$ for $\lambda\in \CC$;

 \item[(ii)] $m_{\bf B_n}$ is upper semicontinuous;
 \item[(iii)]  ${\bf B_n}(\cH)=\{{\bf X}\in B(\cH)^{n_1}\times_c\cdots \times_c B(\cH)^{n_k}: \ m_{\bf B_n}({\bf X})<1\}$;
     \item[(iv)] ${\bf B_n}(\cH)^-=\{{\bf X}\in B(\cH)^{n_1}\times_c\cdots \times_c B(\cH)^{n_k}: \ m_{\bf B_n}({\bf X})\leq 1\}$;
         \item[(v)] There is a polyball $r{\bf P_n}(\cH)\subset{\bf B_n}(\cH)$ for some $r\in (0,1)$, where $m_{\bf B_n}$ is continuous.
 \end{enumerate}

 \end{proposition}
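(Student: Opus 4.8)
The plan is to read everything off the complete Reinhardt structure of ${\bf B_n}(\cH)$ and the union representations established in Proposition \ref{reg-poly}, handling the items in the order (i), (iii), (ii), (iv), (v). First I would prove the homogeneity (i). For $\lambda=0$ both sides vanish, since $0\in r{\bf B_n}(\cH)$ for every $r>0$. For $\lambda\neq 0$ write $\lambda=|\lambda|\omega$ with $|\omega|=1$; applying the complete Reinhardt property to the unimodular tuple with all entries equal to $\omega$, and then to $\bar\omega$, shows that $\omega{\bf Y}\in{\bf B_n}(\cH)$ if and only if ${\bf Y}\in{\bf B_n}(\cH)$. Hence $\lambda{\bf X}\in r{\bf B_n}(\cH)$ if and only if ${\bf X}\in\frac{r}{|\lambda|}{\bf B_n}(\cH)$, and substituting $s=r/|\lambda|$ in the infimum defining $m_{\bf B_n}$ gives $m_{\bf B_n}(\lambda{\bf X})=|\lambda|\,m_{\bf B_n}({\bf X})$.

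Next, for (iii): if ${\bf X}\in{\bf B_n}(\cH)$, then by Proposition \ref{reg-poly}(ii) we have ${\bf B_n}(\cH)=\bigcup_{0\le r<1}r{\bf B_n}(\cH)$, so ${\bf X}\in r{\bf B_n}(\cH)$ for some $r<1$ and thus $m_{\bf B_n}({\bf X})\le r<1$; conversely $m_{\bf B_n}({\bf X})<1$ produces some $r<1$ with ${\bf X}\in r{\bf B_n}(\cH)\subseteq{\bf B_n}(\cH)$, the last inclusion being the complete Reinhardt property for the scalar $r\in\overline{\DD}$. This gives ${\bf B_n}(\cH)=\{m_{\bf B_n}<1\}$. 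Item (ii) is then immediate: by (i) and (iii), for $\alpha>0$ the set $\{m_{\bf B_n}<\alpha\}=\alpha{\bf B_n}(\cH)$ is open, since scaling by $\alpha>0$ is a homeomorphism of the ambient product space $B(\cH)^{n_1}\times_c\cdots\times_c B(\cH)^{n_k}$ and ${\bf B_n}(\cH)$ is open by Proposition \ref{reg-poly}(i); for $\alpha\le0$ the set is empty. Hence $m_{\bf B_n}$ is upper semicontinuous.

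For (iv) I would use a scaling and limiting passage. If ${\bf X}\in{\bf B_n}(\cH)^-$, the argument in the proof of Proposition \ref{reg-poly} shows $r{\bf X}\in{\bf B_n}(\cH)$ for each $r\in[0,1)$, so ${\bf X}\in\frac1r{\bf B_n}(\cH)$ and $m_{\bf B_n}({\bf X})\le\frac1r$; letting $r\to1^-$ yields $m_{\bf B_n}({\bf X})\le1$. Conversely, if $m_{\bf B_n}({\bf X})\le1$, then for each $s>1$ there is $r<s$ with ${\bf X}\in r{\bf B_n}(\cH)\subseteq s{\bf B_n}(\cH)$, hence $\frac1s{\bf X}\in{\bf B_n}(\cH)$; since $\frac1s{\bf X}\to{\bf X}$ as $s\to1^+$, we conclude ${\bf X}\in{\bf B_n}(\cH)^-$. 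Thus ${\bf B_n}(\cH)^-=\{m_{\bf B_n}\le1\}$, and by homogeneity $\{m_{\bf B_n}\le\alpha\}=\alpha{\bf B_n}(\cH)^-$ for every $\alpha\ge0$.

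Finally, for (v), the existence of some $r\in(0,1)$ with $r{\bf P_n}(\cH)\subset{\bf B_n}(\cH)$ was already recorded before the statement, so only continuity remains. Here the key observation is that (iv) upgrades upper semicontinuity to full continuity: each sublevel set $\{m_{\bf B_n}\le\alpha\}=\alpha{\bf B_n}(\cH)^-$ is closed (a homeomorphic image of the closed set ${\bf B_n}(\cH)^-$), so the superlevel sets $\{m_{\bf B_n}>\alpha\}$ are open, i.e.\ $m_{\bf B_n}$ is lower semicontinuous; together with (ii) this gives continuity on the whole space, in particular on $r{\bf P_n}(\cH)$. I expect the main obstacle to be exactly this lower-semicontinuity half: it rests entirely on the identification ${\bf B_n}(\cH)^-=\{m_{\bf B_n}\le1\}$, so the delicate point is the limit argument of (iv) showing that membership in the norm closure is captured precisely by the inequality $m_{\bf B_n}\le1$, rather than by some strictly larger sublevel condition.
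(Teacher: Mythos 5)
Your proof is correct, and items (i)--(iv) follow essentially the same route as the paper: homogeneity via rotation invariance of ${\bf B_n}(\cH)$, item (iii) from the union $\bigcup_{0\le r<1}r{\bf B_n}(\cH)$, and item (iv) from the inclusion ${\bf B_n}(\cH)^-\subseteq \frac1r{\bf B_n}(\cH)$ in one direction and the limit $\frac1s{\bf X}\to{\bf X}$ as $s\to 1^+$ in the other. Where you genuinely diverge is item (v). The paper asserts only continuity on some convex subpolyball $r{\bf P_n}(\cH)\subset {\bf B_n}(\cH)$ and attributes it, without detail, to the convexity of $r{\bf P_n}(\cH)$ (the classical fact that gauges of convex neighborhoods of $0$ are continuous); you instead observe that (i) and (iv) identify every sublevel set $\{m_{\bf B_n}\le\alpha\}$ with $\alpha{\bf B_n}(\cH)^-$, a homeomorphic image of a closed set, so $m_{\bf B_n}$ is lower semicontinuous on all of $B(\cH)^{n_1}\times_c\cdots\times_c B(\cH)^{n_k}$, and combined with (ii) is continuous everywhere. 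This is cleaner, avoids any appeal to convexity (which ${\bf B_n}(\cH)$ itself need not enjoy), and yields a strictly stronger conclusion than the statement; the only point worth flagging is the degenerate level $\alpha=0$, where $\{m_{\bf B_n}\le 0\}=\bigcap_{r>0}r{\bf B_n}(\cH)^-$ is still an intersection of closed sets, so lower semicontinuity survives there too. You also correctly isolate the crux: the whole lower-semicontinuity half rests on the exact identification ${\bf B_n}(\cH)^-=\{m_{\bf B_n}\le 1\}$ proved in (iv), which is precisely the delicate limiting step.
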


 \begin{proof} To prove (i), we may assume that ${\bf X}\neq 0$ and $\lambda\neq 0$. It is clear that
 $m_{\bf B_n}(\lambda{\bf X})=t>0$ if and only if
 $\lambda{\bf X}\in c{\bf B_n}(\cH)$ for any $c>t$, and
 $\lambda{\bf X}\notin d{\bf B_n}(\cH)$ if $0<d<t$. Taking into account that
 ${\bf B_n}(\cH)=e^{i\theta}{\bf B_n}(\cH)$ for any $\theta\in \RR$, we deduce that the latter conditions are equivalent to
 ${\bf X}\in \frac{c}{|\lambda|}{\bf B_n}(\cH)$ for any $c>t$ and
  ${\bf X}\notin \frac{d}{|\lambda|}{\bf B_n}(\cH)$  if $0<d<t$. Hence, we obtain that $m_{\bf B_n}({\bf X})=\frac{t}{|\lambda|}$, which shows that item (i). We skip the proof of item (ii), since it is due to (i) and  a  straightforward argument.

 According to Proposition \ref{reg-poly}, we have $ {\bf B_n}(\cH)=\bigcup_{0<r<1}r{\bf B_n}(\cH)$. Using this result, one can easily deduce item (iii). As we saw in the proof of the same proposition, for any $r\in (0,1)$, we have ${\bf B_n}(\cH)^-\subseteq \frac{1}{r}{\bf B_n}(\cH)$. Consequently, $m_{\bf B_n}({\bf X})\leq 1$ for any ${\bf X}\in  {\bf B_n}(\cH)^-$.  Now, assume that ${\bf X}\in B(\cH)^{n_1}\times_c\cdots \times_c B(\cH)^{n_k}$ is such that $m_{\bf B_n}({\bf X})= 1$. Then there is a sequence $\{t_m\}$ with $t_m>1$ and $t_m\to 1$ such that ${\bf X}\in t_m {\bf B_n}(\cH)$ for any $m\in \NN$. Taking $t_m\to 1$, we deduce that ${\bf X}\in {\bf B_n}(\cH)^-$. Hence, and using item (iii), one can see that item (iv) holds. To prove (v), note that the fact that $r{\bf P_n}(\cH)\subset{\bf B_n}(\cH)$ for some $r\in (0,1)$ is quite clear, while the continuity of
 $m_{\bf B_n}$ on $r{\bf P_n}(\cH)$ is due to the convexity of the latter polyball. The proof is complete.
\end{proof}

 Let $\CC\left<Z_{i,j}\right>$ be the algebra of all polynomials in indeterminates $Z_{i,j}$, where $i\in \{1,\ldots, k\}$ and $j\in \{1,\ldots, n_i\}$.
 We define the free  partial derivation  $\frac{\partial } {\partial Z_{i,j}}$  on $\CC\left<Z_{i,j}\right>$ as the unique linear operator  on this algebra, satisfying the conditions
$$
\frac{\partial I} {\partial Z_{i,j}}=0, \quad  \frac{\partial Z_{i,j}}
{\partial Z_{i,j}}=I, \quad  \frac{\partial Z_{i,j}} {\partial Z_{s,q}}=0\
\text{ if }  \ (i,j)\neq (s,q)
$$
and
$$
\frac{\partial (fg)} {\partial Z_{i,j}}=\frac{\partial f} {\partial Z_{i,j}}
g +f\frac{\partial g} {\partial Z_{i,j}}
$$
for any  $f,g\in \CC\left<Z_{i,j}\right>$.   The same definition extends to formal power series in the
noncommuting indeterminates $Z_{i,j}$.
  If $F:=\sum\limits_{(\alpha)\in \FF_{n_1}^+\times \cdots \times \FF_{n_k}^+}A_{(\alpha)}\otimes  Z_{(\alpha)}$ is a
  power series with  operator-valued coefficients, then   the    free partial derivative
    of $F$ with respect to $Z_{i,j} $ is the power series
$
\frac {\partial F}{\partial Z_{i,j}} := \sum_{\alpha\in \FF_n^+}A_{(\alpha)} \otimes
 \frac {\partial Z_{(\alpha)}}{\partial Z_{i,j} }.
$
One can prove  that if $F$ is a free
holomorphic function  on ${\bf B_n}(\cH)$ then so is
 $\frac {\partial F}{\partial Z_{i,j}}$. We leave the proof to the reader.

The next result is an analogue of Schwarz lemma from complex analysis.

 \begin{theorem} \label{Schwarz} Let $F:{\bf B_n}(\cH)\to B(\cH)^p$ be a bounded  free holomorphic function with $\|F\|_\infty\leq 1$. If $F(0)=0$, then
 $$
 \|F({\bf X})\|\leq m_{\bf B_n}({\bf X})<1 \quad \text{ and } \quad m_{\bf B_n}({\bf X})\leq \|{\bf X}\|,\qquad  {\bf X}\in {\bf B_n}(\cH),
 $$
 where $m_{\bf B_n}$ is the Minkovski functional associated with the regular polyball ${\bf B_n}(\cH)$.
In particular, if $p=1$, the free holomorphic function
 $$
 \psi({\bf X})=\sum_{i=1}^k\sum_{j=1}^{n_j}\frac {\partial F}{\partial Z_{i,j}}(0) X_{i,j},\qquad {\bf X}=(X_{i,j})\in {\bf B_n}(\cH),
 $$
  has the property that $\|\psi({\bf X})\|\leq m_{\bf B_n}({\bf X})<1$.
 \end{theorem}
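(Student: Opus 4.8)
The plan is to reduce everything to the classical one-variable Schwarz lemma by slicing the polyball along the complex line through the origin determined by $\bf X$. Fix ${\bf X}\in {\bf B_n}(\cH)$ and set $\rho:=m_{\bf B_n}({\bf X})$, which is $<1$ by part (iii) of the preceding proposition. First I would treat the case $\rho>0$. By the homogeneity $m_{\bf B_n}(\lambda{\bf X})=|\lambda|\rho$ and part (iii), the scaled tuple $\lambda{\bf X}$ lies in ${\bf B_n}(\cH)$ precisely when $|\lambda|<1/\rho$, so $G(\lambda):=F(\lambda{\bf X})$ is a well-defined $B(\cH)^p$-valued function on the disc $\{|\lambda|<1/\rho\}$. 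Since $F$ is free holomorphic and $F(0)=0$, we have $F(\lambda{\bf X})=\sum_{q\geq 1}\lambda^q H_q({\bf X})$, where $H_q$ denotes the part of $F$ homogeneous of degree $q$; this is a norm-convergent power series in $\lambda$, so $G$ is holomorphic, with $G(0)=0$ and $\|G(\lambda)\|\leq \|F\|_\infty\leq 1$ throughout the disc.

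Next I would apply the operator-valued Schwarz lemma to $G$. As $G(0)=0$, the quotient $H(\lambda):=G(\lambda)/\lambda$ extends to a holomorphic $B(\cH)^p$-valued function on $\{|\lambda|<1/\rho\}$. Writing $\|H(\lambda)\|=\sup_{\xi,\eta}|\langle H(\lambda)\xi,\eta\rangle|$ as a supremum of moduli of scalar holomorphic functions shows $\|H(\lambda)\|$ is subharmonic, so the maximum modulus principle gives $\|H(\lambda)\|\leq \sup_{|\mu|=r}\|G(\mu)\|/r\leq 1/r$ for $|\lambda|\leq r<1/\rho$; letting $r\uparrow 1/\rho$ yields $\|G(\lambda)\|\leq |\lambda|\rho$. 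Evaluating at $\lambda=1$ gives $\|F({\bf X})\|=\|G(1)\|\leq \rho=m_{\bf B_n}({\bf X})<1$. The degenerate case $\rho=0$ is similar: then $\lambda{\bf X}\in{\bf B_n}(\cH)$ for all $\lambda\in\CC$, so $G$ is a bounded entire function vanishing at the origin, forcing $G\equiv 0$ and $F({\bf X})=0$.

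For the second inequality I would invoke the inclusion $[B(\cH)^{n_1+\cdots+n_k}]_1\subseteq {\bf B_n}(\cH)$ recorded earlier via Proposition~1.3 of \cite{Po-Berezin-poly}: if $c>\|{\bf X}\|$, then $\tfrac1c{\bf X}$ still satisfies the cross-commuting relations and has $\|\sum_{i,j}\tfrac1{c^2}X_{i,j}X_{i,j}^*\|<1$, hence $\tfrac1c{\bf X}\in {\bf B_n}(\cH)$, so $m_{\bf B_n}({\bf X})\leq c$; letting $c\downarrow\|{\bf X}\|$ gives $m_{\bf B_n}({\bf X})\leq\|{\bf X}\|$. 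Finally, for the $p=1$ statement, $\psi$ is exactly the part of $F$ homogeneous of degree one, so $\psi({\bf X})=\lim_{\lambda\to 0}\tfrac1\lambda F(\lambda{\bf X})$ in the operator-norm topology; applying the bound $\|F(\lambda{\bf X})\|\leq |\lambda|\,m_{\bf B_n}({\bf X})$ just established, dividing by $|\lambda|$, and passing to the limit yields $\|\psi({\bf X})\|\leq m_{\bf B_n}({\bf X})<1$.

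I expect the main obstacle to be the clean justification of the operator-valued Schwarz/maximum-modulus step: confirming that $\lambda\mapsto F(\lambda{\bf X})$ is genuinely holomorphic with the correct norm bound on the full disc of radius $1/\rho$, and that the singularity of $G(\lambda)/\lambda$ at the origin is removable in the operator-norm topology. The remaining ingredients are routine bookkeeping with the homogeneity of $m_{\bf B_n}$ and the complete Reinhardt structure established in Proposition~\ref{reg-poly}.
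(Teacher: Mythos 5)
Your proof is correct and follows essentially the same route as the paper: both slice the polyball along the complex line $\lambda\mapsto\lambda{\bf X}$ through the origin and invoke the one-variable Schwarz lemma, the only cosmetic difference being that you run the operator-valued version via subharmonicity of $\|G(\lambda)/\lambda\|$ on the disc of radius $1/m_{\bf B_n}({\bf X})$, while the paper reduces to the scalar functions $\lambda\mapsto\langle F(\tfrac{\lambda}{t}{\bf X})x,y\rangle$ on the unit disc and lets $t\downarrow m_{\bf B_n}({\bf X})$. The treatment of $m_{\bf B_n}({\bf X})\le\|{\bf X}\|$ and of the linear term $\psi$ likewise matches the paper's argument in substance.
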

  \begin{proof} Fix ${\bf X}\in {\bf B_n}(\cH)$ and let $t\in (0,1)$ be such that
  $m_{\bf B_n}({\bf X})<t<1$.
  Since $\frac{1}{t}{\bf X}\in {\bf B_n}(\cH)$,   Proposition \ref{reg-poly} implies   $\frac{\lambda}{t}{\bf X}\in {\bf B_n}(\cH)$ for any $\lambda\in \DD:=\{z\in \CC: \ |z|<1\}$.
  For each $x,y\in \cH^{(p)}$ with $\|x\|\leq 1$ and $\|y\|\leq 1$, define the function  $\varphi_{x,y}:\DD\to \CC$ by setting
  $$
  \varphi_{x,y}(\lambda):=\left<F\left(\frac{\lambda}{t}{\bf X}\right)x,y\right>,\qquad \lambda\in \DD.
  $$
Taking into account that  $F$ is free holomorphic on ${\bf B_n}(\cH)$ and $\|F\|_\infty\leq 1$, we deduce that
$\varphi_{x,y}$ is a holomorphic function on the unit disc and $|\varphi_{x,y}(\lambda)|\leq 1$.
Since $\varphi_{x,y}(0)=0$, an application of the classical Schwarz lemma to $\varphi_{x,y}$ implies
$|\varphi_{x,y}(\lambda)|\leq |\lambda|$ for any $\lambda\in \DD$.
Taking $\lambda=m_{\bf B_n}({\bf X})$, we obtain
$$
  \varphi_{x,y}(\lambda):=\left<F\left(\frac{m_{\bf B_n}({\bf X})}{t}{\bf X}\right)x,y\right>\leq m_{\bf B_n}({\bf X}),\qquad \lambda\in \DD.
  $$
for any $t\in (0,1)$  with
  $m_{\bf B_n}({\bf X})<t<1$. Since $F$ is continuous on ${\bf B_n}(\cH)$ and taking $t\to m_{\bf B_n}({\bf X})$, we obtain
  $
  |\left<F({\bf X})x,y\right>|\leq m_{\bf B_n}({\bf X})
$
for any  $x,y\in \cH^{(p)}$ with $\|x\|\leq 1$ and $\|y\|\leq 1$. Consequently,
$$\|F({\bf X})\|\leq m_{\bf B_n}({\bf X})<1, \qquad {\bf X}\in {\bf B_n}(\cH).
 $$
 According to Proposition 1.9 from \cite{Po-Berezin-poly}, if
 $\|{\bf X}\|:=\Phi_{X_1}(I)+\cdots +\Phi_{X_k}(I)\leq I$ then ${\bf X}=(X_1,\ldots, X_k)\in {\bf B_n}(\cH)^-$. Consequently, if
 ${\bf X}\in {B_n}(\cH)$, then $\frac{\bf X}{\|{\bf X}\|}\in {\bf B_n}(\cH)^-$, which implies $m_{\bf B_n}({\bf X})\leq t \|{\bf X}\|$ for any $t>1$. taking $t\to 1$, we deduce that
   $m_{\bf B_n}({\bf X})\leq \|{\bf X}\|$.

   Now, we consider the particular case when $p=1$.
Due to the classical Schwarz lemma,  we also have $|\varphi_{x,y}'(0)|\leq 1$.
Since $\varphi_{x,y}'(0)=\left<\frac{1}{t}\psi({\bf X}) x,y\right>$, we deduce that $\|\psi({\bf X})\|\leq t<1$. Taking $t\to m_{\bf B_n}({\bf X})$, we obtain
 $\|\psi({\bf X})\|\leq m_{\bf B_n}({\bf X})<1$.
 The proof is complete.
 \end{proof}

We have all the ingredients to prove the following maximum principle.

 \begin{theorem}\label{Max} Let $F:{\bf B_n}(\cH)\to B(\cH)$ be a bounded  free holomorphic function. If  there exists ${\bf X}_0\in {\bf B_n}(\cH)$  such that
 $$
 \|F({\bf X})\|\leq \|F({\bf X}_0)\|, \qquad {\bf X}\in {\bf B_n}(\cH),
 $$
  then $F$ must be a constant.
\end{theorem}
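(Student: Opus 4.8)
The plan is to transfer the operator-norm maximum from the interior point ${\bf X}_0$ to the origin, and then reduce everything to the classical maximum modulus principle for scalar holomorphic functions of one variable, exploiting the complete Reinhardt structure of ${\bf B_n}(\cH)$ rather than any convexity. Write $F=\sum_{(\alpha)}a_{(\alpha)}Z_{(\alpha)}$ with scalar coefficients, so $F(0)=a_{(0)}I$, and set $M:=\|F({\bf X}_0)\|$. The hypothesis gives $M=\sup_{{\bf X}\in{\bf B_n}(\cH)}\|F({\bf X})\|$; if $M=0$ then $F\equiv 0$ and we are done, so assume $M>0$.

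First I would show that the maximum is already attained at the origin, i.e. $\|F(0)\|=M$. Let $t_0:=m_{\bf B_n}({\bf X}_0)\in[0,1)$; the case ${\bf X}_0=0$ is trivial, so assume $t_0\in(0,1)$. Then $\tfrac1{t_0}{\bf X}_0\in{\bf B_n}(\cH)^-$, and by Proposition~\ref{reg-poly}(ii) the map $\lambda\mapsto \tfrac{\lambda}{t_0}{\bf X}_0$ sends $\DD$ into ${\bf B_n}(\cH)$, with $\lambda=t_0$ an interior point mapping to ${\bf X}_0$. Choosing unit vectors $\xi_m,\eta_m\in\cH$ with $\langle F({\bf X}_0)\xi_m,\eta_m\rangle\to M$ (after adjusting the phase of $\eta_m$), the functions $\varphi_m(\lambda):=\langle F(\tfrac{\lambda}{t_0}{\bf X}_0)\xi_m,\eta_m\rangle$ are holomorphic on $\DD$ and bounded by $M$, hence form a normal family. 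A locally uniform limit $\varphi$ satisfies $|\varphi|\le M$ and $\varphi(t_0)=M$; since $t_0$ is interior, the classical maximum modulus principle forces $\varphi\equiv M$. Evaluating at $0$ gives $a_{(0)}\lim_m\langle\xi_m,\eta_m\rangle=M$, whence $|a_{(0)}|=\|F(0)\|\ge M$; as $\|F(0)\|\le M$ always, I conclude $\|F(0)\|=|a_{(0)}|=M$.

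Having reached $|a_{(0)}|=M>0$, I would normalize by setting $G:=a_{(0)}^{-1}F$, so that $G(0)=I$ and $\|G({\bf X})\|\le 1$ for every ${\bf X}\in{\bf B_n}(\cH)$. For each unit vector $\xi\in\cH$, the scalar free holomorphic function $\psi_\xi({\bf X}):=\langle G({\bf X})\xi,\xi\rangle$ satisfies $|\psi_\xi|\le 1$ and $\psi_\xi(0)=1$. Fixing ${\bf X}\in{\bf B_n}(\cH)$ and using that ${\bf B_n}(\cH)$ is a complete Reinhardt domain, the function $\lambda\mapsto\psi_\xi(\lambda{\bf X})$ is holomorphic on a disc of radius $1/m_{\bf B_n}({\bf X})>1$, is bounded by $1$, and equals $1$ at the interior point $\lambda=0$; the classical maximum modulus principle makes it constant, so $\psi_\xi({\bf X})=\psi_\xi(0)=1=\langle\xi,\xi\rangle$. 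Thus $\langle(G({\bf X})-I)\xi,\xi\rangle=0$ for every unit $\xi$, and polarization yields $G({\bf X})=I$, i.e. $F({\bf X})=a_{(0)}I$ for all ${\bf X}$. Hence $F$ is constant.

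The step I expect to be the main obstacle is the transfer of the maximum to the origin: because the operator norm $\|F({\bf X}_0)\|$ need not be attained by any fixed pair of vectors, one cannot apply the scalar maximum modulus principle to a single $\varphi_{x,y}$ (as was possible in the Schwarz lemma above), and the normal-families extraction is precisely what repairs this. A secondary point is that ${\bf B_n}(\cH)$ is not convex, so one cannot join ${\bf X}_0$ to other points by straight segments; the complete Reinhardt structure, through the dilation discs $\lambda\mapsto\lambda{\bf X}$, is what allows the one-variable reduction to proceed.
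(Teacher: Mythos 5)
Your proof is correct, but it follows a genuinely different route from the paper's. The paper first normalizes $\|F\|_\infty=1$ and splits on $|F(0)|$: if $|F(0)|=1$ the coefficient inequality of Proposition \ref{coef-ine} kills all higher-order coefficients, while if $|F(0)|<1$ it composes with the involutive ball automorphism $\Psi_{F(0)}$ and invokes the noncommutative Schwarz lemma (Theorem \ref{Schwarz}) to reach a contradiction. You avoid both the coefficient estimates and the automorphism/Schwarz machinery entirely: you first transfer the maximum to the origin by a Montel/normal-families extraction along the disc $\lambda\mapsto\tfrac{\lambda}{t_0}{\bf X}_0$ (which correctly addresses the one genuine new difficulty here, namely that $\|F({\bf X}_0)\|$ need not be attained by a fixed pair of vectors), and then show $F\equiv F(0)$ by applying the one-variable maximum modulus principle to the diagonal entries $\lambda\mapsto\langle G(\lambda{\bf X})\xi,\xi\rangle$ on each complex line through $0$, finishing with polarization. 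The trade-off: the paper's argument is short because the Schwarz lemma and the structure of $\text{\rm Aut}([B(\cH)]_1)$ are already in place and are needed later anyway; yours is more elementary and self-contained, relying only on the complete Reinhardt structure of ${\bf B_n}(\cH)$ and classical one-variable complex analysis, and it yields the extra information that $\|F(0)\|=\|F\|_\infty$. Two small points, neither a gap: your division by $t_0=m_{\bf B_n}({\bf X}_0)$ is safe because $m_{\bf B_n}({\bf X})=0$ forces ${\bf X}=0$ (the polyball sits inside ${\bf P_n}(\cH)$), and in the second step the case $m_{\bf B_n}({\bf X})=0$ is covered by Liouville rather than a finite-radius disc; also, the holomorphy of $\lambda\mapsto\langle F(\lambda{\bf X})x,y\rangle$ that you use implicitly is exactly the fact the paper itself uses in the proof of Theorem \ref{Schwarz}, so you are on the same footing of rigor.
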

  \begin{proof} Assume that $\|F\|_\infty=1$  and there exists ${\bf X}_0\in {\bf B_n}(\cH)$  such that  $\|F({\bf X}_0)\|=1$. Let $F$ have the representation
   $$
 \sum\limits_{q=0}^\infty\sum\limits_{  {|\alpha_1|+\cdots +|\alpha_k|=q}\atop {\alpha_i\in \FF_{n_i}^+}}a_{(\alpha)} X_{(\alpha)}.
 $$
According to Theorem  \ref{coef-ine}, we have
 $$
 \sum\limits_{  {|\alpha_1|+\cdots +|\alpha_k|=q}\atop {\alpha_i\in \FF_{n_i}^+}}|a_{(\alpha)}|^2\leq 1-|F(0)|^2
 $$
 for any $q\in \NN$.  Hence,  if $|F(0)|=1$, then $a_{(\alpha)}=0$ for any $(\alpha)=(\alpha_1,\ldots, \alpha_k)\in \FF_{n_1}^+\times \cdots \times \FF_{n_k}^+$ with $|\alpha_1|+\cdots +|\alpha_k|\geq 1$, which implies $F=F(0)$.

 Now, we assume that $|F(0)|<1$ and set $\lambda:=F(0)$. Note that if $\Psi_\lambda$ is the corresponding automorphism of the open unit ball $[B(\cH)]_1$ (see the remarks preceding Theorem \ref{structure}), then, due to Theorem \ref{homo-compo}, $G:=\Psi_\lambda\circ F$ is a free holomorphic function on ${\bf B_n}(\cH)$ with the property that
 $G(0)=0$ and $\|G\|_\infty\leq 1$. Using  Theorem \ref{Schwarz}, we have $\|G({\bf X})\|<1$ for any
  ${\bf X}\in {\bf B_n}(\cH)$. Hence,
  $\|\Psi_\lambda(F({\bf X}_0))\|<1$. Since $\Psi_\lambda$ is an involutive automorphism of the open unit ball $[B(\cH)]_1$, we deduce that
  $$
  \|F({\bf X})\|=\|\Psi_\lambda(\Psi_\lambda(F({\bf X})))\|<1
  $$
  for any
  ${\bf X}\in {\bf B_n}(\cH)$, which contradicts our assumption that $\|F({\bf X}_0)\|=1$.
  The proof is complete.
\end{proof}

 \begin{corollary} Let $F:{\bf B_n}(\cH)\to B(\cH)$ be a nonconstant  bounded free holomorphic function. Then the following statements hold:
 \begin{enumerate}
 \item[(i)] $\|F({\bf X})\|<\|F\|_\infty$ for any ${\bf X}\in{\bf B_n}(\cH)$;

 \item[(ii)] the map $$[0,1)^k\ni {\bf r} \mapsto \|F_{\bf r}\|_\infty, \qquad {\bf r}=(r_1,\ldots, r_k)
     $$ is strictly increasing with respect to each $r_i$, where
     $$F_{\bf r}(X_1,\ldots, X_k):=F(r_1X_1,\ldots, r_k X_k), \qquad (X_1,\ldots, X_k) \in {\bf B_n}(\cH).
     $$
 \end{enumerate}

 \end{corollary}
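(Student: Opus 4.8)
The plan is to deduce both parts from the maximum principle (Theorem~\ref{Max}) together with the identity $M({\bf r})=\|F({\bf r}{\bf S})\|$ recorded in Theorem~\ref{Cauchy-ineq}.

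Part (i) is essentially the contrapositive of the maximum principle. By definition of $\|\cdot\|_\infty$ one has $\|F({\bf X})\|\le \|F\|_\infty$ for every ${\bf X}\in {\bf B_n}(\cH)$, so it suffices to exclude equality. If $\|F({\bf X}_0)\|=\|F\|_\infty$ for some ${\bf X}_0\in {\bf B_n}(\cH)$, then $\|F({\bf X})\|\le \|F({\bf X}_0)\|$ for all ${\bf X}$, and Theorem~\ref{Max} forces $F$ to be constant, contradicting the hypothesis. Hence $\|F({\bf X})\|<\|F\|_\infty$ throughout ${\bf B_n}(\cH)$.

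For part (ii) I would first record the structural facts. Since ${\bf r}\in[0,1)^k$, the scaling ${\bf X}\mapsto {\bf r}{\bf X}$ maps ${\bf B_n}(\cH)$ into itself by the complete Reinhardt property (Proposition~\ref{reg-poly}); thus $F_{\bf r}$ is free holomorphic (its Fourier coefficients are those of $F$ scaled by $r_1^{|\alpha_1|}\cdots r_k^{|\alpha_k|}$, cf.\ Theorem~\ref{homo-compo}) and, as ${\bf X}$ runs over ${\bf B_n}(\cH)$, the point ${\bf r}{\bf X}$ runs over ${\bf r}{\bf B_n}(\cH)$. By continuity of $F$ and density of ${\bf r}{\bf B_n}(\cH)$ in its closure,
$$
\|F_{\bf r}\|_\infty=\sup_{{\bf Y}\in {\bf r}{\bf B_n}(\cH)^-}\|F({\bf Y})\|=M({\bf r})=\|F({\bf r}{\bf S})\|,
$$
the last two equalities being Theorem~\ref{Cauchy-ineq}. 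Non-strict monotonicity is then immediate: if $0\le s\le t<1$ and ${\bf r}^{(s)},{\bf r}^{(t)}$ denote ${\bf r}$ with its $i$-th entry replaced by $s,t$, then rescaling the $i$-th block by $s/t\le 1$ shows ${\bf r}^{(s)}{\bf B_n}(\cH)\subseteq {\bf r}^{(t)}{\bf B_n}(\cH)$, whence $\|F_{{\bf r}^{(s)}}\|_\infty\le \|F_{{\bf r}^{(t)}}\|_\infty$.

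The strict monotonicity is the crux, and it is where I expect the difficulty. Fixing $i$ and all $r_j$ with $j\ne i$, I would consider the operator-valued function
$$
g(\lambda):=F(r_1{\bf S}_1,\ldots,\lambda{\bf S}_i,\ldots,r_k{\bf S}_k),\qquad \lambda\in \DD,
$$
which is holomorphic on $\DD$ since $(r_1{\bf S}_1,\ldots,\lambda{\bf S}_i,\ldots,r_k{\bf S}_k)\in {\bf B_n}$ for $|\lambda|<1$; by the displayed identity $\|g(s)\|=\|F_{{\bf r}^{(s)}}\|_\infty$. The key observation is a gauge symmetry: letting $U_\theta$ act on $F^2(H_{n_i})$ by $U_\theta e^i_\alpha:=e^{i\theta|\alpha|}e^i_\alpha$ and $V_\theta:=I\otimes\cdots\otimes U_\theta\otimes\cdots\otimes I$ in the $i$-th slot, one has $V_\theta{\bf S}_{i,j}V_\theta^*=e^{i\theta}{\bf S}_{i,j}$ and $V_\theta{\bf S}_{l,j}V_\theta^*={\bf S}_{l,j}$ for $l\ne i$, so that $V_\theta g(\lambda)V_\theta^*=g(e^{i\theta}\lambda)$. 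As $V_\theta$ is unitary, $\|g(\lambda)\|$ depends only on $|\lambda|$, and therefore $\|g(s)\|=\max_{|\lambda|\le s}\|g(\lambda)\|$ is nondecreasing and, by the maximum modulus principle for Banach-space valued holomorphic functions, strictly increasing unless $g$ is constant. The delicate point — and the main obstacle — is exactly this radiality reduction: along the positive real axis $s\mapsto\|g(s)\|$ need not be monotone for a general holomorphic $g$, and it is the gauge invariance that makes the maximum-modulus argument applicable. Strictness then follows once $g$ is nonconstant, i.e.\ once $F$ genuinely depends on the $i$-th block, which is the content of the nonconstancy hypothesis; this completes the proof.
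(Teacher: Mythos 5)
Part (i) of your argument is exactly the paper's (an immediate contrapositive of Theorem \ref{Max}), and your identity $\|F_{\bf r}\|_\infty=M({\bf r})=\|F({\bf r}{\bf S})\|$ together with the gauge symmetry $V_\theta g(\lambda)V_\theta^*=g(e^{i\theta}\lambda)$ are correct; they do show that $\|g(\lambda)\|$ is radial and that $s\mapsto\|g(s)\|$ is nondecreasing. The gap is in the final step, precisely where you say the crux lies. The maximum modulus principle for Banach-space-valued holomorphic functions asserts only that $\|g(\cdot)\|$ is subharmonic, so an interior maximum forces the \emph{norm} to be locally constant; it does not force $g$ itself to be constant. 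The general principle you invoke is simply false: $g(\lambda)=\left(\begin{matrix}1&0\\0&\lambda\end{matrix}\right)\in B(\CC^2)$ is holomorphic, nonconstant, and has $\|g(\lambda)\|\equiv 1$ on $\overline{\DD}$. Thus from $\|g(s)\|=\|g(t)\|$ with $s<t$ you may conclude only that $\|g\|$ is constant on the disc of radius $t$, which does not contradict nonconstancy of $g$ or of $F$. The statement that, for \emph{these particular} operator-valued functions, constancy of the norm forces constancy of the function is exactly the content of the paper's operator-valued maximum principle (Theorem \ref{Max}), which is proved not by one-variable complex analysis but by composing with the involutive automorphisms $\Psi_\lambda$ of the operator ball and applying the Schwarz-type Theorem \ref{Schwarz}. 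Your reduction therefore lands you back at the theorem you were trying to avoid, without a proof of it.

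The paper closes the loop by routing strictness through part (i): for $0\le r_1<t_1<1$ it writes $\|F_{(r_1,r_2,\ldots,r_k)}\|_\infty=\bigl\|F_{(t_1,r_2,\ldots,r_k)}\bigl(\tfrac{r_1}{t_1}{\bf S}_1,{\bf S}_2,\ldots,{\bf S}_k\bigr)\bigr\|$ and applies the already-established strict inequality of part (i) to the rescaled function $F_{(t_1,r_2,\ldots,r_k)}$. If you wish to keep your one-variable setup, the repair is to invoke Theorem \ref{Max} (or part (i)) for the free holomorphic function $F_{{\bf r}^{(t)}}$ at the moment your subharmonicity argument yields that $\|g\|$ is constant on a disc; the scalar/Banach maximum modulus principle alone cannot supply the strictness. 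Finally, the caveat you flag is real and is in fact slightly stronger than you state: strict increase in $r_i$ requires that $F_{(r_1,\ldots,t,\ldots,r_k)}$ be nonconstant as a free holomorphic function, which can fail not only when $F$ does not involve the $i$-th block but also when some other $r_j=0$ annihilates all monomials containing $Z_{i,\cdot}$ (e.g.\ $F=Z_{1,1}Z_{2,1}$ with $r_2=0$); the paper's own proof passes over this point silently.
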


  \begin{proof} Without loss of generality, we may assume that $\|F\|_\infty=1$. Part (i) is a consequence of Theorem \ref{Max}. To prove part (ii), let $0\leq r_1<t_1<1$ and set $r:=\frac{r_1}{t_1}\in [0,1)$. Since $F$ is a free holomorphic function on ${\bf B_n}(\cH)$, the operator
  $F(r{\bf S}_1, r_2{\bf S}_2,\ldots, r_k{\bf S}_k)$ is in the polyball algebra $\boldsymbol\cA_{\bf n}$ and
  $\|F_{(r,r_2,\ldots, r_k)}\|_\infty=\| F(r{\bf S}_1, r_2{\bf S}_2,\ldots, r_k{\bf S}_k)\|$. Applying part (i) to the bounded free holomorphic function $F_{(r,r_2,\ldots, r_k)}$ on ${\bf B_n}(\cH)$ and ${\bf X}=(r{\bf S}_1, r_2{\bf S}_2,\ldots, r_k{\bf S}_k)$, we obtain
  \begin{equation*}
  \begin{split}
  \|F_{(r_1,r_2,\ldots, r_k)}\|_\infty
  &=\|F_{(r_1,r_2,\ldots, r_k)}({\bf S})\|_\infty
  =\left\|F_{(t_1,r_2,\ldots, r_k)}\left(\frac{r_1}{t_1}{\bf S}_1, {\bf S}_2,\ldots, {\bf S}_k\right)\right\|\\&
  <
\|F_{(t_1,r_2,\ldots, r_k)}({\bf S}_1, {\bf S}_2,\ldots, {\bf S}_k)\|
=
\|F_{(t_1,r_2,\ldots, r_k)}\|_\infty.
\end{split}
\end{equation*}
The proof is complete.
 \end{proof}

The next version of the maximum principle is needed in the next sections.

 \begin{theorem} \label{max-mod} Let $F:{\bf B_n}(\cH)\to B(\cH)^p $ be a bounded  free holomorphic function with $\|F(0)\|<\|F\|_\infty$.  Then   there is  no  ${\bf X}_0\in {\bf B_n}(\cH)$  such that
 $
 \|F({\bf X}_0)\|=\|F\|_\infty.
 $
 \end{theorem}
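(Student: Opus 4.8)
The plan is to run exactly the argument of Theorem \ref{Max}, but with the scalar Möbius transformation of $[B(\cH)]_1$ replaced by the involutive free holomorphic automorphism of the noncommutative ball $[B(\cH)^p]_1={\bf B}_{(p)}(\cH)$ centered at the interior point $F(0)$. After normalizing $\|F\|_\infty=1$, the hypothesis $\|F(0)\|<\|F\|_\infty$ reads $\|F(0)\|<1$, so $F(0)$ is an interior point of $[B(\cH)^p]_1$. I would argue by contradiction, assuming there is ${\bf X}_0\in {\bf B_n}(\cH)$ with $\|F({\bf X}_0)\|=1$, and aim to deduce $\|F({\bf X}_0)\|<1$.

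First I would introduce the involutive Möbius-type automorphism $\Psi:=\Psi_{F(0)}$ of $[B(\cH)^p]_1$ attached to $F(0)$ (as in the remarks preceding Theorem \ref{structure}), with $\Psi(F(0))=0$, $\Psi(0)=F(0)$, $\Psi\circ\Psi=\mathrm{id}$, and carrying the open (resp.\ closed) ball onto itself, hence the boundary onto itself. Since $\|F(0)\|<1$, this $\Psi$ is free holomorphic on the larger ball $\{T:\|T\|<1/\|F(0)\|\}$, whose radius exceeds $1$. Because $F$ maps ${\bf B_n}(\cH)$ into the closed ball $[B(\cH)^p]_1^-$, which sits inside this larger ball, a rescaled application of Theorem \ref{homo-compo}, applied to $\tfrac{1}{\rho}F$ with $\rho=1/\|F(0)\|$ and to $T\mapsto\Psi(\rho T)$, shows that $G:=\Psi\circ F$ is again a free holomorphic function on ${\bf B_n}(\cH)$ with values in $B(\cH)^p$.

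Next I would record the two normalizations of $G$ that feed the Schwarz lemma: on one hand $G(0)=\Psi(F(0))=0$, and on the other hand $\|G\|_\infty\le 1$, since $\Psi$ carries the closed ball into the closed ball and $\|F({\bf X})\|\le 1$ on ${\bf B_n}(\cH)$. These are exactly the hypotheses of Theorem \ref{Schwarz} (which is stated precisely for $B(\cH)^p$-valued maps), and it yields the strict estimate $\|G({\bf X})\|\le m_{\bf B_n}({\bf X})<1$ for every ${\bf X}\in {\bf B_n}(\cH)$, in particular $\|G({\bf X}_0)\|<1$. Using that $\Psi$ is involutive and preserves the open ball, I would then write $F({\bf X}_0)=\Psi(\Psi(F({\bf X}_0)))=\Psi(G({\bf X}_0))$; since $G({\bf X}_0)$ lies in the open ball, this forces $\|F({\bf X}_0)\|<1$, contradicting $\|F({\bf X}_0)\|=\|F\|_\infty=1$. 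Hence no such ${\bf X}_0$ exists.

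The step requiring the most care is the handling of the automorphism $\Psi_{F(0)}$: that it is free holomorphic on a ball of radius strictly larger than $1$ (so that $G=\Psi\circ F$ is defined and free holomorphic through Theorem \ref{homo-compo}), that it is involutive, and that it maps the open ball onto the open ball and the closed ball onto the closed ball. These are the standard properties of the operatorial Möbius transformations of the noncommutative ball recorded before Theorem \ref{structure}; beyond invoking them, the only genuinely technical point is the elementary rescaling by $\rho$ that brings the composition into the exact form demanded by Theorem \ref{homo-compo}.
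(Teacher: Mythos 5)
Your proposal is correct and follows essentially the same route as the paper's own proof: normalize $\|F\|_\infty=1$, compose with the involutive automorphism $\Psi_{F(0)}$ of $[B(\cH)^p]_1$ (free holomorphic on the larger ball of radius $1/\|F(0)\|_2$) via Theorem \ref{homo-compo}, apply the Schwarz lemma (Theorem \ref{Schwarz}) to $G=\Psi_{F(0)}\circ F$, and use involutivity to contradict $\|F({\bf X}_0)\|=1$. Your explicit rescaling remark only spells out a step the paper leaves implicit, and the case $F(0)=0$, which the paper treats separately by a direct appeal to the Schwarz lemma, is absorbed into your argument without difficulty.
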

  \begin{proof} Without loss of generality we may assume that $\|F\|_\infty=1$. If $F(0)=0$, Theorem \ref{Schwarz} implies $\|F({\bf X})\|<1$ for any ${\bf X}\in {\bf B_n}(\cH)$, which completes the proof.

   Now we consider that case when $0\neq \|F(0)\|<1$.
   Suppose that there is ${\bf X}_0\in {\bf B_n}(\cH)$ such that $\|F({\bf X}_0)\|=1$. Since $\|F(0)\|<\|F\|_\infty=1$, we have $\lambda:=F(0)\in (\CC^p)_1$. Let $\Psi_\lambda$ be the   automorphism of the open unit ball $[B(\cH)^p]_1$ (see the remarks preceding Theorem \ref{structure}). We recall that  $\Psi_\lambda$ is a free holomorphic function
on $[B(\cH)^p]_\gamma$, where $\gamma:=\frac{1}{\|\lambda\|_2}$,  and $\Psi_\lambda(\Psi_\lambda(X))=X$
for any $X\in [B(\cH)^p]_\gamma$, where $\gamma:=\frac{1}{\|\lambda\|_2}$.
  Using Theorem \ref{homo-compo}, we deduce that
  $G:=\Psi_\lambda \circ F:{\bf B_n}(\cH)\to B(\cH)^p$ is a free holomorphic function on ${\bf B_n}(\cH)$  such that $G(0)=0$ and $\|G\|_\infty\leq 1$. Due to the Schwarz type result  of Theorem \ref{Schwarz}, we have $\|G({\bf X})\|<1$ for any
  ${\bf X}\in {\bf B_n}(\cH)$. In particular, we have
  $\|\Psi_\lambda(F({\bf X}_0))\|<1$. Since $\Psi_\lambda$ is an involutive automorphism of the open unit ball $[B(\cH)^p]_1$, we deduce that
  $$
  \|F({\bf X}_0)\|=\|\Psi_\lambda(\Psi_\lambda(F({\bf X}_0)))\|<1,
  $$
  which is a contradiction.
  The proof is complete.
  \end{proof}

\smallskip

 \section{Holomorphic automorphisms of noncommutative polyballs}

In this section,   we use noncommutative Berezin transforms to obtain a complete description of the group $\text{\rm Aut}({\bf B_n})$ of all free holomorphic automorphisms of the polyball ${\bf B_n}$, which is an analogue of Rudin's characterization of the holomorphic automorphisms of the polydisc, and prove   some of  their  basic properties. We show that
$
\text{\rm Aut}({\bf B_n}) \simeq\text{\rm Aut}((\CC^{n_1})_1\times\cdots \times  (\CC^{n_1})_1)
$
and obtain an analogue   of  Poincar\' e's classical result that  the open unit ball of $\CC^n$ is not biholomorphic  equivalent to the polydisk $\DD^n$, for noncommutative regular polyballs.

 \begin{proposition}  \label{CSH} If  ${\bf n}=(n_1,\ldots, n_k)\in \NN^k$, then the following statements hold.
 \begin{enumerate}
 \item[(i)] If $C_i\in B(\CC^{n_i})$, $i\in \{1,\ldots, k\}$,  are contractions, then $g:{\bf B_n}(\cH)^-\to {\bf B_n}(\cH)^-$, defined by
 $$g({\bf X} )=(X_1C_1,\ldots, X_kC_k), \qquad {\bf X}:=(X_1,\ldots, X_k)\in {\bf B_n}(\cH)^-,$$
 is a free holomorphic function on ${\bf B_n}(\cH)$. In particular, if each $C_i$ is a unitary operator, then $g|_{{\bf B_n}(\cH)}\in Aut({\bf B_n})$ and $g$ is a homeomorphism of ${\bf B_n}(\cH)^-$.
 \item[(ii)] If $\sigma$ is a permutation of the set  $\{1,\ldots, k\}$ such that $n_{\sigma(i)}=n_i$, then $p_\sigma:{\bf B_n}(\cH)^-\to {\bf B_n}(\cH)^-$, defined by
 $$p_\sigma({\bf X} )=(X_{\sigma (1)},\ldots, X_{\sigma(k)}), \qquad {\bf X}:=(X_1,\ldots, X_k)\in {\bf B_n}(\cH)^-,$$
 is a homeomorphism of ${\bf B_n}(\cH)^-$ and $p_\sigma|_{{\bf B_n}(\cH)}$ a free holomorphic automorphism of  ${\bf B_n}(\cH)$.
 \item[(iii)] If  $\varphi_i:[B(\cH)^{n_i}]_1\to [B(\cH)^{n_i}]_1^-$, $i\in \{1,\ldots, k\}$,  is a free holomorphic function, then $G:{\bf B_n}(\cH)\to B(\cH)^{n_1}\times \cdots \times B(\cH)^{n_k}$ defined by
     $$
 G({\bf X}):=(\varphi_1(X_1),\ldots, \varphi_k(X_k)),\qquad {\bf X}=(X_1,\ldots, X_k)\in {\bf B_n}(\cH),
     $$
     is a free holomorphic function on the regular polyball and $\text{\rm range}\, G\subseteq {\bf B_n}(\cH)$. In particular, if each $\varphi_i$ is a free holomorphic automorphism of the unit ball $[B(\cH)^{n_i}]_1$, then $G\in Aut({\bf B_n})$.
 \end{enumerate}
  \end{proposition}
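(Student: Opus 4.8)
The plan is to reduce each assertion to a statement about the defect mapping ${\bf \Delta}$ and then transfer it from the universal model ${\bf S}$ to an arbitrary ${\bf X}$ by means of Proposition \ref{range} and Corollary \ref{Gt}. Two elementary facts drive everything. First, writing the row operator $R_{X_i}:=[X_{i,1}\ \cdots\ X_{i,n_i}]:\cH^{n_i}\to\cH$, one has $\Phi_{X_i}(Y)=R_{X_i}(I_{n_i}\otimes Y)R_{X_i}^*$ and $R_{X_iC_i}=R_{X_i}(C_i\otimes I_\cH)$, so that
$$
\Phi_{X_iC_i}(Y)=R_{X_i}(C_iC_i^*\otimes Y)R_{X_i}^*,\qquad Y\in B(\cH).
$$
Second, since the entries of $X_s$ commute with those of $X_t$ for $s\neq t$, their adjoints commute as well, and a direct computation then shows that the completely positive maps $\Phi_{X_1},\ldots,\Phi_{X_k}$ pairwise commute; the same holds after replacing each $X_i$ by $X_iC_i$ or by $\varphi_i(X_i)$.

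For part (i), $g$ is a degree-one polynomial, hence free holomorphic. If each $C_i$ is unitary then $C_iC_i^*=I$, so the displayed identity gives $\Phi_{X_iC_i}=\Phi_{X_i}$ and therefore ${\bf \Delta}_{g({\bf X})}={\bf \Delta_X}$; thus $g$ preserves both ${\bf B_n}(\cH)$ and ${\bf B_n}(\cH)^-$, and being linear with inverse ${\bf X}\mapsto(X_1C_1^*,\ldots,X_kC_k^*)$ (of the same form) it is a homeomorphism of ${\bf B_n}(\cH)^-$ and $g|_{{\bf B_n}(\cH)}\in Aut({\bf B_n})$. If each $C_i$ is merely a contraction, then $C_iC_i^*\leq I$ forces $\Phi_{X_iC_i}(Y)\leq\Phi_{X_i}(Y)$ for $Y\geq0$; to conclude $\text{\rm range}\,g\subseteq{\bf B_n}(\cH)$ I would evaluate on the universal model, where the entries of ${\bf S}_iC_i$ doubly commute with those of ${\bf S}_tC_t$ for $i\neq t$, so that (as in Lemma \ref{ineqS})
$$
{\bf \Delta}_{g(r{\bf S})}(I)=\prod_{i=1}^k\bigl(I-\Phi_{r{\bf S}_iC_i}(I)\bigr),\qquad I-\Phi_{r{\bf S}_iC_i}(I)\geq I-r^2\Phi_{{\bf S}_i}(I)\geq(1-r^2)I>0,
$$
giving $g(r{\bf S})\in{\bf B_n}(\otimes_{i=1}^kF^2(H_{n_i}))$ for $r\in[0,1)$; Proposition \ref{range} then yields $\text{\rm range}\,g\subseteq{\bf B_n}(\cH)$, and continuity of $g$ gives $g({\bf B_n}(\cH)^-)\subseteq{\bf B_n}(\cH)^-$. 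For part (ii), the hypothesis $n_{\sigma(i)}=n_i$ ensures $p_\sigma({\bf X})$ again lies in $B(\cH)^{n_1}\times\cdots\times B(\cH)^{n_k}$; because the factors of the defect commute, reordering them leaves the composition unchanged, whence ${\bf \Delta}_{p_\sigma({\bf X})}={\bf \Delta_X}$, and together with the evident preservation of the $\times_c$-commutation relations and of the conditions $\|\Phi_{X_i}(I)\|<1$ this shows $p_\sigma$ maps ${\bf B_n}(\cH)$ and ${\bf B_n}(\cH)^-$ onto themselves. Since $p_\sigma$ is linear with inverse $p_{\sigma^{-1}}$ of the same type, it is a homeomorphism of ${\bf B_n}(\cH)^-$ and $p_\sigma|_{{\bf B_n}(\cH)}\in Aut({\bf B_n})$.

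For part (iii), each component $\varphi_i(X_i)$ depends only on the block $X_i\in[B(\cH)^{n_i}]_1$ and is free holomorphic there, so $G$ is free holomorphic on ${\bf B_n}(\cH)$. To prove $\text{\rm range}\,G\subseteq{\bf B_n}(\cH)$ I would apply Corollary \ref{Gt}: the operators $\varphi_i(r{\bf S}_i)$ and $\varphi_t(r{\bf S}_t)$ act on different tensor factors of $\otimes_{i=1}^kF^2(H_{n_i})$, hence their entries doubly commute, verifying hypothesis (b); and since $r{\bf S}_i\in[B]_1$ while $\varphi_i$ takes values in $[B]_1^-$, the maximum principle (Theorem \ref{max-mod}, applied to the single ball) upgrades $\|\varphi_i\|_\infty\leq1$ to the strict bound $\|\varphi_i(r{\bf S}_i)\|<1$ whenever $\varphi_i$ is nonconstant, verifying hypothesis (i). Thus $G(r{\bf S})\in{\bf B_n}(\otimes_{i=1}^kF^2(H_{n_i}))$ for $r\in[0,1)$ and $\text{\rm range}\,G\subseteq{\bf B_n}(\cH)$. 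When each $\varphi_i$ is an automorphism of $[B(\cH)^{n_i}]_1$, the map $(Y_1,\ldots,Y_k)\mapsto(\varphi_1^{-1}(Y_1),\ldots,\varphi_k^{-1}(Y_k))$ is a free holomorphic inverse of $G$ of the same form, so $G\in Aut({\bf B_n})$.

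The main obstacle lies in the contraction case of (i) and the range claim in (iii): for a \emph{general} ${\bf X}$ the entries across distinct blocks only commute, not doubly, so the defect does \emph{not} split as a product of commuting positive operators and one cannot argue positivity termwise. The device that removes this difficulty is to test the inequalities on the universal model ${\bf S}$, where the double commutation of the ${\bf S}_{i,j}$ across blocks makes the defect factor, and then to pull the conclusion back to every ${\bf X}$ through Proposition \ref{range} and Corollary \ref{Gt}. In (iii) the extra subtlety is that a free holomorphic map into the \emph{closed} ball need not land in the open ball on the boundary; this is precisely where the maximum principle is indispensable, converting the bound $\|\varphi_i\|_\infty\le1$ into the strict interior estimate required by Corollary \ref{Gt}.
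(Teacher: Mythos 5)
Your proof is correct and follows the same route the paper intends: the paper's own proof is a one-line appeal to Theorem \ref{homo-compo} and Corollary \ref{Gt}, and your argument fills in the details of exactly that reduction — testing positivity of the defect on the universal model, where double commutation across tensor factors makes it factor into commuting positive terms, and pulling the conclusion back to arbitrary ${\bf X}$ via Proposition \ref{range} and Corollary \ref{Gt}. The only point worth recording is the caveat you already flag in (iii): the strict bound $\|\varphi_i(r{\bf S}_i)\|<1$ obtained from Theorem \ref{max-mod} needs $\|\varphi_i(0)\|<\|\varphi_i\|_\infty$, which fails only when $\varphi_i$ is constant with value on the boundary of the ball — a degenerate case the proposition implicitly excludes and which the paper's own one-line proof does not address either.
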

  \begin{proof} The results are immediate consequences of Theorem \ref{homo-compo} and Corollary \ref{Gt}.
\end{proof}

 Let  $F:{\bf B_n}(\cH)\to B(\cH)^{n_1+\cdots n_k}$  be a free holomorphic function  with $F:=(F_1,\ldots,
 F_k)$ and $F_i=(F_{i,1},\ldots, F_{i,n_i})$, where each $F_{i,j}$ is a free holomorphic function on ${\bf B_n}(\cH)$ with scalar coefficients.   We define $F'(0)$ as the
 linear operator on $\CC^{n_1+\cdots +n_k}$ having the matrix
 \begin{equation*}
 \left[
 \begin{matrix}
 \frac {\partial F_{1,1}}{\partial Z_{1,1}}(0)&\cdots \frac {\partial F_{1,1}}{\partial Z_{1,n_1}}(0)&\cdots &\frac {\partial F_{1,1}}{\partial Z_{k,1}}(0)&\cdots \frac {\partial F_{1,1}}{\partial Z_{k,n_k}}(0)\\
 \vdots&\cdots &\vdots &\cdots &\vdots\\
 \frac {\partial F_{1,n_1}}{\partial Z_{1,1}}(0)&\cdots \frac {\partial F_{1,n_1}}{\partial Z_{1,n_1}}(0)&\cdots &\frac {\partial F_{1,n_1}}{\partial Z_{k,1}}(0)&\cdots \frac {\partial F_{1,n_1}}{\partial Z_{k,n_k}}(0)\\
 \vdots&\cdots &\vdots &\cdots &\vdots\\
 \frac {\partial F_{k,1}}{\partial Z_{1,1}}(0)&\cdots \frac {\partial F_{k,1}}{\partial Z_{1,n_1}}(0)&\cdots &\frac {\partial F_{k,1}}{\partial Z_{k,1}}(0)&\cdots \frac {\partial F_{k,1}}{\partial Z_{k,n_k}}(0)\\
 \vdots&\cdots &\vdots &\cdots &\vdots\\
 \frac {\partial F_{k,n_k}}{\partial Z_{1,1}}(0)&\cdots \frac {\partial F_{k,n_k}}{\partial Z_{1,n_1}}(0)&\cdots &\frac {\partial F_{k,n_k}}{\partial Z_{k,1}}(0)&\cdots \frac {\partial F_{k,n_k}}{\partial Z_{k,n_k}}(0)
 \end{matrix}
 \right].
 \end{equation*}

 Now, we can prove the following    noncommutative version of
 Cartan's  uniqueness theorem \cite{Ca}, for free holomorphic functions on regular polyballs.

 \begin{theorem} \label{cartan1}
  Let $F:{\bf B_n}(\cH)\to {\bf B_n}(\cH)$ be a free
 holomorphic function such that $F(0)=0$ and $F'(0)=I$. Then
 $$F({\bf X})={\bf X},\qquad   {\bf X}\in {\bf B_n}(\cH).
 $$
\end{theorem}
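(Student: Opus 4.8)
The plan is to transplant Cartan's classical iteration argument to the graded noncommutative setting, using Theorem \ref{homo-compo} to form the iterates of $F$ and Proposition \ref{coef-ine} to control their coefficients. Write each scalar component in its homogeneous expansion $F_{i,j}=\sum_{q\ge 0}P^{(i,j)}_q$, where $P^{(i,j)}_q$ collects the monomials $a^{(i,j)}_{(\alpha)} X_{(\alpha)}$ with $|\alpha_1|+\cdots+|\alpha_k|=q$. The hypothesis $F(0)=0$ kills the degree-zero parts, while $F'(0)=I$ forces $P^{(i,j)}_1({\bf X})=X_{i,j}$ for every $i,j$ (the degree-one monomials are the single letters $Z_{s,t}$, and the Jacobian matrix being $I$ says precisely that the coefficient of $Z_{s,t}$ in $F_{i,j}$ is $\delta_{(i,j),(s,t)}$). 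Suppose, for contradiction, that $F\neq\mathrm{id}$, and let $m\ge 2$ be the smallest total degree for which some homogeneous component $Q_{i,j}:=P^{(i,j)}_m$ is nonzero; thus $F_{i,j}({\bf X})=X_{i,j}+Q_{i,j}({\bf X})+(\text{terms of degree}>m)$ for all $i,j$, with at least one $Q_{i,j}\neq 0$.

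Next I would form the iterates $F^{[n]}:=F\circ\cdots\circ F$ ($n$ factors). Since $F$ maps ${\bf B_n}(\cH)$ into itself, Theorem \ref{homo-compo} guarantees that each $F^{[n]}$ is a free holomorphic self-map of ${\bf B_n}(\cH)$ with $F^{[n]}(0)=0$. The core computation is the claim that the degree-$m$ homogeneous part of $F^{[n]}_{i,j}$ equals $n\,Q_{i,j}$, proved by induction on $n$: writing $F^{[n]}_{i,j}=X_{i,j}+n\,Q_{i,j}+(\deg>m)$ and substituting into $F_{i,j}=Y_{i,j}+Q_{i,j}(Y)+(\deg>m)$, the identity part contributes $n\,Q_{i,j}$, while $Q_{i,j}(F^{[n]})$ contributes its leading value $Q_{i,j}({\bf X})$, the correction terms having degree $\ge m+(m-1)>m$ because $m\ge 2$ and $Q_{i,j}$ is homogeneous of degree $m$. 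Hence the degree-$m$ parts add, giving $(n+1)Q_{i,j}$ at the next step. This bookkeeping is purely formal and is legitimized by evaluating on $r{\bf S}$ and matching coefficients, since the coefficients of a free holomorphic function on a polyball are uniquely determined by its values on the universal model.

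Finally I would invoke the coefficient bound. Because $\mathrm{range}\,F^{[n]}\subseteq{\bf B_n}(\cH)\subseteq{\bf P_n}(\cH)$, every scalar component satisfies $\|F^{[n]}_{i,j}({\bf X})\|<1$ on ${\bf B_n}(\cH)$, so $\|F^{[n]}_{i,j}\|_\infty\le 1$. Applying Proposition \ref{coef-ine} to $F^{[n]}_{i,j}$ at level $q=m$, and using that its degree-$m$ coefficients are $n$ times those of $F_{i,j}$, yields
\[
n^2\sum_{|\alpha_1|+\cdots+|\alpha_k|=m}\bigl|a^{(i,j)}_{(\alpha)}\bigr|^2
=\sum_{|\alpha_1|+\cdots+|\alpha_k|=m}\bigl|n\,a^{(i,j)}_{(\alpha)}\bigr|^2
\le 1-\bigl|F^{[n]}_{i,j}(0)\bigr|^2=1
\]
for every $n\in\NN$. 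Letting $n\to\infty$ forces all degree-$m$ coefficients of every $F_{i,j}$ to vanish, so $Q_{i,j}=0$ for all $i,j$, contradicting the minimality of $m$. Therefore $F_{i,j}({\bf X})=X_{i,j}$ for all $i,j$, i.e. $F({\bf X})={\bf X}$ on ${\bf B_n}(\cH)$.

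I expect the main obstacle to be the grading bookkeeping in the composition step, namely verifying rigorously that iterating multiplies the lowest nontrivial homogeneous term by $n$, since one must justify extracting homogeneous components of a composition of noncommuting free holomorphic functions and controlling the order of the correction terms; once this is in place, the argument closes immediately via the uniform bound $\|F^{[n]}_{i,j}\|_\infty\le 1$ and the scalar coefficient inequality of Proposition \ref{coef-ine}.
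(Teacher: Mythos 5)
Your proof is correct, and its core — the Cartan iteration showing that the lowest nontrivial homogeneous part of $F^{[n]}$ is $n$ times that of $F$ — is exactly the paper's argument; the grading bookkeeping you flag as the main obstacle is handled the same way there (the cross terms in $Q_{i,j}(F^{[n]})$ have degree at least $2m-1>m$, and uniqueness of coefficients on the universal model legitimizes matching homogeneous parts). Where you diverge is in how the contradiction is extracted from the uniform bound on the iterates: the paper evaluates $F^{[n]}(r{\bf S})^*$ on the specific basis vector $e^1_{\alpha_1^0}\otimes\cdots\otimes e^k_{\alpha_k^0}$ and uses the boundedness of elements of the polyball to get $n r^m |a^{(i_0 j_0)}_{(\alpha^0)}|\le C$, whereas you apply the coefficient inequality of Proposition \ref{coef-ine} to each scalar component $F^{[n]}_{i,j}$ (using $\|F^{[n]}_{i,j}\|_\infty\le 1$ and $F^{[n]}_{i,j}(0)=0$) to get $n^2\sum_{|\alpha|=m}|a^{(i,j)}_{(\alpha)}|^2\le 1$. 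Your closing step is slightly cleaner in that it reuses a lemma already established in Section 2 and kills all degree-$m$ coefficients of all components at once, rather than a single distinguished one; the paper's version is more self-contained at that point, needing only that isometries with orthogonal ranges separate the coefficients. Both are valid, and the quantitative outcome (linear or quadratic growth in $n$ against a fixed bound) forces the same conclusion.
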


\begin{proof} Let ${\bf X}=(X_{1,1},\ldots, X_{1,n_1},\ldots, X_{k,1},\ldots, X_{k,n_k})\in {\bf B_n}(\cH)$ and  let
$$
F({\bf X})=(F_{1,1}({\bf X}),\ldots, F_{1,n_1}({\bf X}),\ldots, F_{k,1}({\bf X}),\ldots, F_{k,n_k}({\bf X})),
$$
where
$F_{i,j}$ are free holomorphic functions on the regular polyball ${\bf B_n}(\cH)$, for any  $i\in \{1,\ldots, k\}$ and  $j\in \{1,\ldots, n_i\}$. We will also use the row matrix notation ${\bf X}=[X_{i,j}; \ i,j]$, where the indices $i,j$ are as above.
Since $F(0)=0$ and $F'(0)=I$, we must have
$$
F_{i,j}({\bf X})=X_{i,j}+\sum_{q=2}^\infty \sum\limits_{  {|\alpha_1|+\cdots +|\alpha_k|=q}\atop {\alpha_s\in \FF_{n_s}^+}}a_{\alpha_1,\ldots, \alpha_k}^{(ij)} X_{1,\alpha_1}\cdots X_{k,\alpha_k}, \qquad a_{\alpha_1,\ldots, \alpha_k}^{(ij)}\in \CC,
$$
for any  $i\in \{1,\ldots, k\}$ and  $j\in \{1,\ldots, n_i\}$.
Assume that at least one of the coefficients $a_{\alpha_1,\ldots, \alpha_k}^{(ij)}$ is different from zero. Let $m\geq 2$ be the smallest natural number  such that there exist $i_0\in \{1,\ldots, k\}$, $j_0\in\{1,\ldots, n_{i_0}\}$, and $\alpha^0_s\in \FF_{n_s}^+$ such that $|\alpha^0_1|+\cdots +|\alpha_k^0|=m$ and $a_{\alpha_1^0,\ldots, \alpha_k^0}^{(i_0j_0)}\neq 0$. Then we have
$F_{i,j}({\bf X})=X_{i,j}+\sum_{p=m}^\infty G_p^{(ij)}({\bf X}),
$
where
\begin{equation}
\label{Gp}
G_p^{(ij)}({\bf X}):=
\sum\limits_{  {|\alpha_1|+\cdots +|\alpha_k|=p}\atop {\alpha_s\in \FF_{n_s}^+}}a_{\alpha_1,\ldots, \alpha_k}^{(ij)} X_{1,\alpha_1}\cdots X_{k,\alpha_k}
\end{equation}
for any $p\geq m$, $i\in \{1,\ldots, k\}$, and  $j\in \{1,\ldots, n_i\}$.
Due to Theorem \ref{homo-compo},  $G_p^{(ij)}\circ F$, $p\geq m$,  is a free holomorphic function and
\begin{equation*}
\begin{split}
(G_m^{(ij)}\circ F)({\bf X})&=\sum\limits_{  {|\alpha_1|+\cdots +|\alpha_k|=m}\atop {\alpha_s\in \FF_{n_s}^+}}a_{\alpha_1,\ldots, \alpha_k}^{(ij)} F_{1,\alpha_1}({\bf X})\cdots F_{k,\alpha_k}({\bf X})=G_m^{(ij)}({\bf X})+K_{m+1}^{(ij)}({\bf X}),
\end{split}
\end{equation*}
where $K_{m+1}^{(ij)}$ is a free holomorphic function containing only monomials of degree greater  than or equal to $m+1$. Using now Theorem, we deduce that $F^{[2]}:=F\circ F$ is a free holomorphic function on the regular polyball ${\bf B_n}(\cH)$. Note that
$$\
F({\bf X})=[X_{i,j}:\ i, j]+ \left[\sum_{p=m}^\infty G_p^{(ij)}({\bf X}): \ i,j\right]
$$
and
\begin{equation*}
\begin{split}
F^{[2]}({\bf X})&=[F_{i,j}({\bf X}):\ i, j]+ \left[G_m^{(ij)}(F({\bf X}))+\sum_{p=m+1}^\infty G_p^{(ij)}(F({\bf X})): \ i,j\right]\\
&=\left[X_{i,j}+G_m^{(ij)}({\bf X})+\sum_{p=m+1}^\infty G_p^{(ij)}({\bf X}):\ i,j\right] + \left[G_m^{(ij)}({\bf X})+ \Omega_{m+1}^{(ij)}({\bf X}):\ i,j\right]\\
&=[X_{i,j}:\ i, j]+[2G_m^{(i,j)}({\bf X}): \ i,j]+ [\Gamma_{m+1}^{(ij)}({\bf X}):\ i,j],
\end{split}
\end{equation*}
where $\Omega_{m+1}^{(ij)}$ and $\Gamma_{m+1}^{(ij)}$ are free holomorphic functions containing only monomials of degree greater than or equal to $m+1$.
Continuing this process, we obtain

\begin{equation}\label{Fn}
F^{[n]}({\bf X})=[X_{i,j}:\ i, j]+[nG_m^{(i,j)}({\bf X}): \ i,j]+ [\Lambda_{m+1}^{(ij)}({\bf X}):\ i,j]\qquad n\in \NN,
\end{equation}
where $\Lambda_{m+1}^{(ij)}$ are free holomorphic functions containing only monomials of degree greater than or equal to $m+1$.

 Recall that $\alpha^0_s\in \FF_{n_s}^+$ and  $|\alpha^0_1|+\cdots +|\alpha_k^0|=m$. Consequently,
  if $\beta_i\in \FF_{n_i}^+$ with $|\beta_1|+\cdots + |\beta_k|=p\geq m$, then
  $S_{1,\beta_1}^*\otimes \cdots \otimes S_{k,\beta_k}^* (e^1_{\alpha_1^0}\otimes \cdots \otimes  e^k_{\alpha_k^0})\neq 0$ if and only if $p=m$ and  $\beta_i=\alpha_i^0$ for any $i\in \{1,\ldots, k\}$. In this case, we have
$S_{1,\alpha_1^0}^*\otimes \cdots \otimes S_{k,\alpha_k^0}^* (e^1_{\alpha_1^0}\otimes\cdots \otimes  e^k_{\alpha_k^0})=1$. Hence, and  using relation \eqref{Fn} when ${\bf X}={\bf S}$, we obtain
\begin{equation*}
F^{[n]}(r{\bf S})^*(e^1_{\alpha_1^0}\otimes \cdots \otimes  e^k_{\alpha_k^0})= r\left[{\bf S}_{i,j}(e^1_{\alpha_1^0}\otimes \cdots \otimes  e^k_{\alpha_k^0}):\ i,j\right]
+nr^m\left[G_m^{(ij)}({\bf S})^*(e^1_{\alpha_1^0}\otimes \cdots \otimes  e^k_{\alpha_k^0}):\ i,j\right],
\end{equation*}
where $G_m^{(ij)}$ are homogeneous polynomials of degree $m$ (see relation \eqref{Gp}). Taking into account the latter relation and the fact that
$$
\left\|\left[G_m^{(ij)}({\bf S})^*(e^1_{\alpha_1^0}\otimes \cdots \otimes  e^k_{\alpha_k^0}):\ i,j\right]\right\|\geq \left|a_{\alpha_1^0,\ldots, \alpha_k^0}^{(i_0j_0)}\right|>0,
$$
 we deduce that
$$
nr^m \left|a_{\alpha_1^0,\ldots, \alpha_k^0}^{(i_0j_0)}\right| \leq \|F^{[n]}(r{\bf S})^*(e^1_{\alpha_1^0}\otimes \cdots \otimes  e^k_{\alpha_k^0})\|+\left\| r\left[{\bf S}_{i,j}(e^1_{\alpha_1^0}\otimes \cdots \otimes  e^k_{\alpha_k^0}):\ i,j\right]\right\|
$$
for any $n\in \NN$.  Since $ F^{[n]}(r{\bf S})\in
{\bf B_n}(F^2(H_{n_1})\otimes \cdots\otimes F^2(H_{n_k}))$, taking $n\to \infty$ in the inequality above, we obtain a contradiction. Therefore, we must have $F({\bf X})={\bf X}$. The proof is complete.
\end{proof}

If $L:=[a_{ij}]_{n\times n}$ is a  bounded linear operator on
$\CC^n$, it generates a  function $\boldsymbol\Phi_L: B(\cH)^n\to B(\cH)^n$
 by setting
$$
\boldsymbol\Phi_{L}(X_1,\ldots, X_n):=[X_1,\ldots, X_n]{\bf
L}=\left[\sum_{i=1}^n a_{i1} X_i,\cdots,\sum_{i=1}^n a_{in}
X_i\right]
$$
where ${\bf L}:=[a_{ij}I_\cH]_{n\times n}$. By abuse of notation, we
also write  $\boldsymbol\Phi_L(X)=XL$.

A map $F:{\bf B_n}(\cH)\to {\bf B_n}(\cH)$ is called
free biholomorphic  if $F$  is  free homolorphic, one-to-one and
onto, and  has  free holomorphic inverse. The automorphism group of
${\bf B_n}(\cH)$, denoted by $Aut({\bf B_n}(\cH))$, consists
of all free biholomorphic functions  of ${\bf B_n}(\cH)$. It is
clear that $Aut({\bf B_n}(\cH))$ is  a group with respect to the
composition of free holomorphic functions.

In what follows, we characterize the free biholomorphic functions
with $F(0)=0$.

\begin{theorem}
\label{Cartan2} Let $F:{\bf B_n}(\cH)\to
{\bf B_n}(\cH)$  be a free biholomorphic function with
$F(0)=0$. Then there is an invertible bounded linear operator $L$ on
$\CC^{n_1+\cdots n_k}$ such that
$$ F({\bf X})=\boldsymbol\Phi_L ({\bf X}), \qquad {\bf X}\in {\bf B_n}(\cH).
$$
\end{theorem}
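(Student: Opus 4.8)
The plan is to reduce the statement to the Cartan uniqueness theorem (Theorem \ref{cartan1}) by exploiting the circular (complete Reinhardt) symmetry of the polyball. For each $\theta\in\RR$, consider the rotation $\rho_\theta:{\bf B_n}(\cH)\to {\bf B_n}(\cH)$ given by $\rho_\theta({\bf X}):=e^{i\theta}{\bf X}$. Since ${\bf B_n}(\cH)$ is a complete Reinhardt domain (Proposition \ref{reg-poly}), taking ${\bf z}=(e^{i\theta},\ldots,e^{i\theta})\in \overline{\DD}^{n_1+\cdots+n_k}$ shows that each $\rho_\theta$ is a free holomorphic automorphism of ${\bf B_n}(\cH)$, with inverse $\rho_{-\theta}$, fixing the origin.

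First I would form, for each fixed $\theta$, the composition
$$
G_\theta:=F^{-1}\circ \rho_{-\theta}\circ F\circ \rho_\theta.
$$
Each factor is a free holomorphic self-map of ${\bf B_n}(\cH)$ fixing $0$, so by Theorem \ref{homo-compo} the map $G_\theta$ is a free holomorphic self-map of ${\bf B_n}(\cH)$ with $G_\theta(0)=0$. To compute $G_\theta'(0)$ I would use that the degree-one part of a composition is the composition of the degree-one parts, read off from the explicit power-series formula for $F\circ G$ established in the proof of Theorem \ref{homo-compo}; writing $D_H$ for the linear operator on $\CC^{n_1+\cdots+n_k}$ induced by the degree-one part of a map $H$, one has $D_{\rho_\theta}=e^{i\theta}\,\text{id}$, so
$$
D_{G_\theta}=D_{F^{-1}}\circ (e^{-i\theta}\,\text{id})\circ D_F\circ (e^{i\theta}\,\text{id})=D_{F^{-1}}\circ D_F=D_{F^{-1}\circ F}=\text{id},
$$
the scalars $e^{\pm i\theta}$ cancelling because they are central. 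Thus $G_\theta(0)=0$ and $G_\theta'(0)=I$, and Theorem \ref{cartan1} forces $G_\theta=\text{id}$ for every $\theta$.

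The identity $G_\theta=\text{id}$ rearranges to the intertwining relation $F\circ \rho_\theta=\rho_\theta\circ F$, that is, $F(e^{i\theta}{\bf X})=e^{i\theta}F({\bf X})$ for all $\theta\in\RR$ and all ${\bf X}\in {\bf B_n}(\cH)$. Writing $F$ in its component-wise homogeneous expansion $F=\sum_{q\geq 0}P_q$, where $P_q$ collects the monomials of total degree $q$ and hence satisfies $P_q(e^{i\theta}{\bf X})=e^{iq\theta}P_q({\bf X})$, this homogeneity relation gives $\sum_{q\geq 0}(e^{iq\theta}-e^{i\theta})P_q=0$ for every $\theta$. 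By the uniqueness of the coefficients of a free holomorphic function (the uniqueness remark following Theorem \ref{Abel}, applied on an infinite-dimensional space), every $P_q$ with $q\neq 1$ vanishes; together with $F(0)=0$ this yields $F=P_1$. Since $P_1$ is precisely the degree-one part, its coefficient matrix is $L:=F'(0)$, whence $F=\boldsymbol\Phi_L$; invertibility of $L$ is immediate, as $D_{F^{-1}}$ is a two-sided inverse of $D_F=L$ (equivalently, run the same argument on $F^{-1}$).

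The only delicate point is the first-order chain rule in the noncommutative setting, which I expect to be the main technical obstacle; it is handled cleanly here because the intermediate maps $\rho_{\pm\theta}$ are scalar multiplications, so their central degree-one parts simply cancel and no careful bookkeeping of the order of the noncommutative products is required.
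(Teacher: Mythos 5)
Your proposal is correct and follows essentially the same route as the paper: the map $G_\theta=F^{-1}\circ\rho_{-\theta}\circ F\circ\rho_\theta$ is exactly the map $H({\bf X})=G(e^{-i\theta}F(e^{i\theta}{\bf X}))$ used in the paper's proof, and both arguments apply Theorem \ref{cartan1} to it, derive the equivariance $F(e^{i\theta}{\bf X})=e^{i\theta}F({\bf X})$, and kill the higher-degree coefficients by uniqueness of the power-series representation. The only (immaterial) difference is bookkeeping: the paper first computes the composition $G\circ F$ explicitly to get $LM=I$ and the invertibility of $L$ before invoking Cartan, whereas you package this as a first-order chain rule and defer invertibility to the end.
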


\begin{proof}
Consider the set $\Lambda_{\bf n}:=\{(i,j):\ i\in\{1,\ldots, k\}, j\in \{1,\ldots, n_i\}\}$ with the lexicographic order. Since $F({\bf X})=0$, we have
$F({\bf X})=[F_{s,t}({\bf X}):\ (s,t)\in \Lambda_{\bf n}]$
with
\begin{equation}
\label{Fst}
F_{s,t}({\bf X})=\sum_{(i,j)\in \Lambda_{\bf n}} a_{(s,t)}^{(i,j)} X_{i,j} +\Psi_{s,t}({\bf X}),
\end{equation}
where $\Psi_{s,t}$ is a free holomorphic function which contains only monomials of degree $\geq 2$.
Therefore, we have
\begin{equation}
\label{Fst2}
\Psi_{s,t}({\bf X})=\sum_{m=2}^\infty \sum\limits_{  {|\alpha_1|+\cdots +|\alpha_k|=m}\atop {\alpha_s\in \FF_{n_s}^+}}c_{\alpha_1,\ldots, \alpha_k}^{(s,t)} X_{1,\alpha_1}\cdots X_{k,\alpha_k}
\end{equation}
for some coefficients $c_{\alpha_1,\ldots, \alpha_k}^{(s,t)}\in \CC$.
Consider the matrix $L:=\left[a_{(s,t)}^{(i,j)}\right]_{((i,j),(s,t))\in \Lambda_{\bf n}\times \Lambda_{\bf n}}$ and note that
$$
F({\bf X})=[X_{i,j}:\ (i,j)\in \Lambda_{\bf n}]L+ [\Psi_{s,t}({\bf X}):\ (s,t)\in \Lambda_{\bf n}].
$$
Since $F$ is free biholomorphic function with
$F(0)=0$, its inverse $G:{\bf B_n}(\cH)\to
{\bf B_n}(\cH)$  is also a free holomorphic function with
$G(0)=0$.  As above, one can see that $G$ must have  a representation of the form
$$
G({\bf X})=[X_{s,t}:\ (s,t)\in \Lambda_{\bf n}]M+ [\Gamma_{i,j}({\bf X}):\ (i,j)\in \Lambda_{\bf n}],
$$
where  $M:=\left[b_{(i,j)}^{(s,t)}\right]_{((s,t),(i,j))\in \Lambda_{\bf n}\times \Lambda_{\bf n}}$
is a square matrix with complex coefficients and $\Gamma_{i,j}$ is  a free holomorphic function which contains only monomials of degree $\geq 2$. Now, one can easily see that
\begin{equation*}
\begin{split}
(G\circ F)({\bf X})&=[F_{s,t}({\bf X}):\ (s,t)\in \Lambda_{\bf n}]M+ [\Gamma_{i,j}(F({\bf X})):\ (i,j)\in \Lambda_{\bf n}]\\
&=[X_{i,j}:\ (i,j)\in \Lambda_{\bf n}]LM+ [\Psi_{s,t}({\bf X}):\ (s,t)\in \Lambda_{\bf n}]M+ [\Gamma_{i,j}(F({\bf X})):\ (i,j)\in \Lambda_{\bf n}]\\
&=[X_{i,j}:\ (i,j)\in \Lambda_{\bf n}]LM + [Q_{i,j}({\bf X}):\ (i,j)\in \Lambda_{\bf n}],
\end{split}
\end{equation*}
where  each $Q_{i,j}$  is a free holomorphic function which contains only monomials of degree $\geq 2$.
Since $(G\circ F)({\bf X})={\bf X}$ and
due to  the uniqueness of the representation of  free holomorphic functions, we deduce that $Q_{i,j}=0$ for any $ (i,j)\in \Lambda_{\bf n}$ and $LM=I_{n_1+\cdots +n_k}$. In a similar manner, one can prove that $LM=I_{n_1+\cdots +n_k}$. Therefore, $L$ is an invertible operator.

Since ${\bf B_n}(\cH)$ is a noncommutative Reinhardt domain (see Proposition \ref{reg-poly}), for each $\theta\in \RR$, the map
${\bf X}\mapsto e^{-i\theta}F(e^{i\theta}{\bf X})$ is a free holomorphic function on the regular polyball ${\bf B_n}(\cH)$. Consequently, Theorem \ref{homo-compo} implies that
$$H({\bf X}):=G(e^{-i\theta}F(e^{i\theta}{\bf X})),\qquad {\bf X}\in {\bf B_n}(\cH).
$$
is a free holomorphic function with $H(0)=0$ and
$$
H({\bf X})=[X_{i,j}:\ (i,j)\in \Lambda_{\bf n}]LM + [P_{i,j}({\bf X}):\ (i,j)\in \Lambda_{\bf n}],
$$
where  each $P_{i,j}$  is a free holomorphic function which contains only monomials of degree $\geq 2$.
Since $LM=I_{n_1+\cdots +n_k}$, we can apply Theorem \ref{cartan1} and deduce that $H({\bf X})={\bf X}$.
Due to the definition of $H$ and using the fact that $F\circ G=id$, we obtain
$
e^{i\theta}F({\bf X})=F(e^{i\theta}{\bf X})$ for  any ${\bf X}\in {\bf B_n}(\cH),
$
and $\theta \in \RR$. Using  relations \eqref{Fst}, \eqref{Fst2} and due to the uniqueness of the coefficients in the representation of free holomorphic functions, we deduce that
$$
c_{\alpha_1,\ldots, \alpha_k}^{(s,t)} e^{i\theta(|\alpha_1|+\cdots +|\alpha_k|)}=e^{i\theta}c_{\alpha_1,\ldots, \alpha_k}^{(s,t)},\qquad \theta\in \RR,
$$
for any $\alpha_i\in \FF_{n_i}^+$ with $|\alpha_1|+\cdots +|\alpha_k|\geq 2$, and $(s,t)\in \Lambda_{\bf n}$.
Hence, $c_{\alpha_1,\ldots, \alpha_k}^{(s,t)}=0$ and, therefore, $\Psi_{s,t}=0$. Now, relation \eqref{Fst} implies $F({\bf X})={\bf X}L$, and he proof is complete.
\end{proof}

\begin{theorem} \label{Cartan3}
 Let  ${\bf n}=(n_1,\ldots, n_k)\in \NN^k$ and  let $F:{\bf B_n}(\cH)\to
{\bf B_n}(\cH)$  be a free biholomorphic function with
$F(0)=0$. Then there  are unitary operators $U_i\in B(\CC^{n_i})$, $i\in \{1,\ldots, k\}$, and a permutation $\sigma\in \cS_k$ with the property that $n_{\sigma^{-1}(i)}=n_i$ for $i\in \{1,\ldots, k\}$   such that
$$ (p_{\sigma^{-1}}\circ F)({\bf X})= [X_1U_1, \ldots,X_kU_k] \qquad    {\bf X}=(X_1,\ldots, X_k)\in {\bf B_n}(\cH).
$$
Moreover, the converse is also true.
\end{theorem}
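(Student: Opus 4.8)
The plan is to combine Theorem~\ref{Cartan2} with a classification of the linear maps preserving the scalar polyball. By Theorem~\ref{Cartan2} there is an invertible $L\in B(\CC^{N})$, $N:=n_1+\cdots+n_k$, with $F=\boldsymbol\Phi_L$; applying the same theorem to the inverse biholomorphism gives $F^{-1}=\boldsymbol\Phi_{L^{-1}}$, so $L$ is honestly invertible and both $\boldsymbol\Phi_L,\boldsymbol\Phi_{L^{-1}}$ preserve every representation of the polyball. Restricting to the scalar representation $\cH=\CC$ and using Corollary~\ref{scalar} together with ${\bf B_n}(\CC)=(\CC^{n_1})_1\times\cdots\times(\CC^{n_k})_1$, I would first record that $\boldsymbol\Phi_L$ is a \emph{linear} automorphism of a product of Euclidean balls fixing the origin. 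Writing $L$ in block form $L=[L_{i\to s}]$ with $L_{i\to s}\in B(\CC^{n_i},\CC^{n_s})$, the $s$-th block of $F({\bf X})$ is $\sum_i X_iL_{i\to s}$.

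The heart of the matter, and the step I expect to be the main obstacle, is to show that $L$ is, up to a permutation of the blocks, block diagonal with unitary diagonal blocks. For the blocks with $n_i\ge 2$ there is a clean noncommutative argument: evaluate $F$ at the universal model $r{\bf S}$, and use that the entries of ${\bf S}_i$ doubly commute with those of ${\bf S}_t$ for $i\neq t$ while $S_{i,1},\dots,S_{i,n_i}$ generate a free family. The requirement that cross-block entries of $F(r{\bf S})$ commute then forces, for $s\neq s'$, the commutator $[\,Y_{s,t},Y_{s',t'}\,]$ to split as a sum of operators supported on distinct tensor factors; a slice-map argument shows each summand is scalar, and a commutator of creation-operator combinations has zero vacuum expectation, so each summand vanishes. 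This yields that any column of $L_{i\to s}$ is proportional to any column of $L_{i\to s'}$. Consequently, if a block with $n_i\ge 2$ fed two distinct output blocks, all $n_i$ rows of $L$ indexed by that block would be proportional to a single row vector, contradicting invertibility of $L$; hence each such block feeds exactly one output block. For the $n_i=1$ directions the commutation constraint is vacuous (single shifts in different tensor factors commute), and here the genuine input is the positivity of the defect $\Delta_{F({\bf X})}(I)$ — equivalently, on the scalar side, Rudin's theorem that a linear automorphism of a polydisc is a monomial map \cite{Ru2}. Uniformly, this is exactly the statement that a linear automorphism of a product of balls permutes the ball factors among those of equal dimension and restricts to a norm-preserving, hence unitary, map on each factor (the route of Rudin \cite{Ru2}, Ligocka \cite{L} and Tsyganov \cite{T}).

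From this structure I obtain a permutation $\sigma\in\cS_k$ with $n_{\sigma^{-1}(i)}=n_i$ and unitaries $U_i\in B(\CC^{n_i})$ such that the $\sigma^{-1}(i)$-th output block of $F$ equals $X_iU_i$, the equal-dimension condition being forced because each surviving block $L_{i\to\sigma^{-1}(i)}$ is a bijection $\CC^{n_i}\to\CC^{n_{\sigma^{-1}(i)}}$. Precomposing with $p_{\sigma^{-1}}$, which is a free holomorphic automorphism of ${\bf B_n}$ by Proposition~\ref{CSH}(ii) (using the dimension condition), gives $(p_{\sigma^{-1}}\circ F)({\bf X})=[X_1U_1,\ldots,X_kU_k]$, as required.

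For the converse, given any $\sigma\in\cS_k$ with $n_{\sigma^{-1}(i)}=n_i$ and unitaries $U_i$, the map ${\bf X}\mapsto[X_1U_1,\ldots,X_kU_k]$ lies in $\text{\rm Aut}({\bf B_n})$ by Proposition~\ref{CSH}(i), and $p_\sigma\in\text{\rm Aut}({\bf B_n})$ by Proposition~\ref{CSH}(ii); their composition is therefore a free biholomorphic automorphism of ${\bf B_n}$ fixing the origin, which recovers $F$. The only routine points I would still need to verify are the slice-map computation in the commutation step and the bookkeeping of block indices relating $\sigma$ and $\sigma^{-1}$, both of which I would relegate to short lemmas or direct calculation.
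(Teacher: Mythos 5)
Your proof is correct and follows essentially the same route as the paper: reduce to a linear map via Theorem \ref{Cartan2}, pass to the scalar representation, invoke the Rudin--Ligocka--Tsyganov classification of linear automorphisms of a product of balls to obtain the permutation and the blockwise unitaries, and lift back to operators by linearity of each component (the converse via Proposition \ref{CSH} is also exactly the paper's argument). The commutator computation at the universal model $r{\bf S}$ is a correct supplementary observation that forces the block structure for the factors with $n_i\ge 2$, but since you ultimately defer to the same classical scalar result both for the remaining factors and for the unitarity of the surviving blocks, it does not change the logical skeleton of the proof.
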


\begin{proof} According to Theorem \ref{Cartan2}, there  is an invertible bounded linear operator $L$ on
$\CC^{n_1+\cdots n_k}$ such that
$$ F({\bf X})=[X_1,\ldots, X_k]{\bf L}, \qquad {\bf X}\in {\bf B_n}(\cH).
$$
Since $F\in Aut({\bf B_n})$, its scalar representation
$f(\lambda_1,\ldots, \lambda_k):=[\lambda_1,\ldots, \lambda_k]{\bf L}$
is an automorphism of the scalar polyball $(\CC^{n_1})_1\times \cdots \times  (\CC^{n_k})_1$. Due to the classical result (see \cite{Ru1}, \cite{L},\cite{T}), there is a permutation $\sigma\in \cS_k$ such that $n_{{\sigma^{-1}}(i)}=n_i$ for $i\in \{1,\ldots, k\}$, such that
$$
(p_{\sigma^{-1}}\circ f)(\lambda_1,\ldots, \lambda_k)=(g_1(\lambda_1),\ldots, g_k(\lambda_k)),\qquad (\lambda_1,\ldots, \lambda_k)\in (\CC^{n_1})_1\times \cdots \times  (\CC^{n_k})_1,
$$
where $g_i\in Aut((\CC^{n_i})_1)$ with $g_i(0)=0$ for any $i\in \{1,\ldots, k\}$.  According to \cite{Ru2}, each $g_i\in Aut((\CC^{n_i})_1)$ with $g_i(0)=0$ has the form   $g_i(\lambda_i)=\lambda_i U_i$, where  $U_i\in B(\CC^{n_i})$  is a unitary operator. Consequently, we obtain
$$
(p_{\sigma^{-1}}\circ f)(\lambda_1,\ldots, \lambda_k)= [\lambda_1,\ldots, \lambda_k] {\bf U},\qquad (\lambda_1,\ldots, \lambda_k)\in (\CC^{n_1})_1\times \cdots \times  (\CC^{n_k})_1,
$$
where the unitary operator ${\bf U}\in B(C^{n_1+\cdots +n_k})$ is the direct sum ${\bf U}=U_1\oplus \cdots \oplus U_k$. Hence, we deduce that
$(p_{\sigma^{-1}}\circ F)(\lambda_1,\ldots, \lambda_k)= [\lambda_1,\ldots, \lambda_k] {\bf U}$, which, due to the linearity of each component of $F$, implies
$$(p_{\sigma^{-1}}\circ F)(X_1,\ldots, X_k)= [X_1,\ldots, X_k] {\bf U}
$$
for any $(X_1,\ldots, X_k)\in {\bf B_n}(\cH)$.

To prove the converse, let $U_i\in B(\CC^{n_i})$, $i\in \{1,\ldots, k\}$, be unitary operators.  Note that the map $g_i$ defined by
$
g_i(X_i):=X_iU_i$, $X_i\in [B(\cH)^{n_i}]_1$,  is a free   holomorphic automorphism of the noncommutative ball
$[B(\cH)^{n_i}]_1$. Hence, and using Proposition  \ref{CSH}, we deduce that
$g:=(g_1,\ldots, g_k)$ and $p_{\sigma}$ are  holomorphic automorphisms of the regular polyball ${\bf B_n}$. Consequently, $F:=p_{\sigma}\circ g\in Aut({\bf B_n})$ with $F(0)=0$. The proof is complete.
\end{proof}

Under the conditions of Theorem \ref{Cartan2}, we consider the unitary operator ${\bf U}\in B(C^{n_1+\cdots +n_k})$ defined as the direct sum ${\bf U}=U_1\oplus \cdots \oplus U_k$ and let $\boldsymbol\Phi_{\bf U}:{\bf B_n}(\cH)\to
{\bf B_n}(\cH)$  be the  free biholomorphic function defined by
$\boldsymbol\Phi_{\bf U}({\bf X}):={\bf X} {\bf U}$.  Then, we have $F=p_{\sigma}\circ \boldsymbol\Phi_{\bf U}$.

 \begin{theorem} Let $F:{\bf B_n}(\cH)\to {\bf B_n}(\cH)$ be a  free holomorphic function such that
 $F'(0)$ is a unitary operator on $\CC^{n_1+\cdots +{n_k}}$. Then  $F$ is a free holomorphic automorphism of ${\bf B_n}$ and
 $$
 F({\bf X})={\bf X} [F'(0)]^t,\qquad {\bf X}\in {\bf B_n}(\cH),
 $$
 where ${}^\tau$ denotes the transpose.
 \end{theorem}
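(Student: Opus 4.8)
The plan is to establish three things in order: that $F(0)=0$, that $F$ is a free biholomorphic automorphism of ${\bf B_n}$, and finally that it has the asserted linear form, the last step being an immediate application of Theorem \ref{Cartan2}. Write $U:=F'(0)$ and $N:=n_1+\cdots+n_k$, and decompose $F=(F_1,\ldots,F_k)$ with $F_i=(F_{i,1},\ldots,F_{i,n_i})$. First I would show $F(0)=0$. Since $F$ maps ${\bf B_n}(\cH)$ into ${\bf B_n}(\cH)$, each block $F_i({\bf X})$ is a strict row contraction, so every scalar component $F_{i,j}$ is a bounded free holomorphic function with $\|F_{i,j}\|_\infty\le 1$. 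Applying the coefficient inequality of Proposition \ref{coef-ine} to $F_{i,j}$ at level $q=1$ yields $\sum_{(s,t)}\left|\frac{\partial F_{i,j}}{\partial Z_{s,t}}(0)\right|^2\le 1-|F_{i,j}(0)|^2$; but the left-hand side is exactly the squared norm of the row of $U$ indexed by $(i,j)$, which equals $1$ because $U$ is unitary. Hence $|F_{i,j}(0)|^2\le 0$ for every $(i,j)$, that is $F(0)=0$.

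With $0$ now a fixed point, the chain rule gives $(F^{[n]})'(0)=U^{n}$ for all iterates, and by Theorem \ref{homo-compo} each $F^{[n]}$ is a free holomorphic self-map of ${\bf B_n}$. The plan is to run the classical Cartan iteration argument in this noncommutative setting. Since the unitary group of $\CC^N$ is compact, there is a sequence $n_j\to\infty$ with $U^{n_j}\to I$, hence also $U^{n_j-1}\to U^{-1}$. Because $F^{[n]}({\bf B_n})\subseteq{\bf B_n}$ is a bounded set, the Cauchy-type estimates of Theorem \ref{Cauchy-ineq} bound all Taylor coefficients of all the $F^{[n]}$ uniformly; a Montel/diagonal extraction then produces a subsequence along which $F^{[n_j]}\to G$ and $F^{[n_j-1]}\to H$ coefficientwise, with $G,H$ free holomorphic maps of ${\bf B_n}(\cH)$ into ${\bf B_n}(\cH)^-$ and convergence uniform on each $r{\bf B_n}(\cH)^-$. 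Passing to the limit in $F^{[n_j]}=F\circ F^{[n_j-1]}=F^{[n_j-1]}\circ F$ gives $G=F\circ H=H\circ F$. Since $G(0)=0$ and $G'(0)=\lim_j U^{n_j}=I$, the Cartan uniqueness theorem \ref{cartan1} forces $G=\mathrm{id}$, so $H$ is a two-sided free holomorphic inverse of $F$ and $F\in Aut({\bf B_n})$. I expect the technical heart of the argument to lie precisely here: making the noncommutative ``normal family'' rigorous, namely justifying the coefficientwise compactness, checking that the limits $G,H$ are genuine free holomorphic self-maps whose ranges lie in the \emph{open} polyball so that the compositions are defined (this should follow from the maximum principle of Theorem \ref{Max}), and verifying that composition is continuous under this mode of convergence.

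Finally, once $F$ is known to be a free biholomorphic automorphism with $F(0)=0$, Theorem \ref{Cartan2} applies and produces an invertible $L\in B(\CC^{N})$ with $F({\bf X})=\boldsymbol\Phi_L({\bf X})={\bf X}L$. Differentiating, the $(b,a)$ entry of $F'(0)$ is the coefficient of $X_a$ in the $b$-th component $\sum_a X_a L_{ab}$, namely $L_{ab}$; thus $F'(0)=L^{\tau}$, and transposing gives $L=[F'(0)]^{\tau}=U^{\tau}$. Therefore $F({\bf X})={\bf X}[F'(0)]^{\tau}$, which simultaneously exhibits $F$ as the automorphism $\boldsymbol\Phi_{U^{\tau}}$ with free holomorphic inverse $\boldsymbol\Phi_{(U^{\tau})^{-1}}$, completing the proof. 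This last step is routine; the substance of the theorem is concentrated in the first (the vanishing of $F(0)$, which I would extract cleanly from Proposition \ref{coef-ine}) and the second (the iteration/compactness reduction to Theorem \ref{cartan1}).
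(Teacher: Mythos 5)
Your first step is fine and essentially matches the paper: the vanishing of $F(0)$ is exactly extracted from Proposition \ref{coef-ine} at level $q=1$ together with the fact that the rows of the unitary $F'(0)$ have norm one (the paper does the same computation in vectorized form, using that $F'(0)$ is a co-isometry to get $\sum_{|\alpha|=1}A_{(\alpha)}^*A_{(\alpha)}=I$). Your last step, invoking Theorem \ref{Cartan2} once automorphy is known, is also correct. The problem is the middle step, which is where you yourself locate ``the technical heart'' and which you do not carry out. A noncommutative Montel/normal-families argument is not available off the shelf here: you would have to prove (i) that coefficientwise limits of the iterates $F^{[n_j]}$ define free holomorphic functions on all of ${\bf B_n}(\cH)$ with convergence uniform on each $r{\bf B_n}(\cH)^-$, (ii) that the limits $G,H$ take values in the \emph{open} polyball (membership in ${\bf B_n}(\cH)$ requires positivity of the defect ${\bf \Delta}(I)$, not merely each block being a strict row contraction, so Theorem \ref{Max}/\ref{max-mod} alone does not settle it), and (iii) that composition passes to the limit in this mode of convergence. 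None of this is established in the paper or in your sketch, so as written the argument has a genuine gap rather than a routine verification left to the reader.

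The paper avoids all of this with a short algebraic trick that you should notice: once $F(0)=0$, the degree-one part of $F$ is ${\bf X}\mapsto {\bf X}[F'(0)]^\tau$, and since $F'(0)$ is unitary one may set $H({\bf X}):=F({\bf X})([F'(0)]^\tau)^*$. Then $H$ is free holomorphic with $H(0)=0$ and $H'(0)=I$, so the Cartan uniqueness theorem \ref{cartan1} applies \emph{directly} to $H$ and yields $H({\bf X})={\bf X}$; multiplying back by $[F'(0)]^\tau$ gives $F({\bf X})={\bf X}[F'(0)]^\tau$, and automorphy of $F$ then falls out for free (the inverse is right multiplication by $([F'(0)]^\tau)^{-1}$, composed with the observation that this linear map is $p_\sigma\circ\boldsymbol\Phi_{\bf U}$ by Theorem \ref{Cartan3}). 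In other words, you inverted the logical order: you try to prove $F$ is an automorphism first (hard, needs compactness machinery) in order to get the formula, whereas the intended route gets the formula first from \ref{cartan1} and reads off automorphy afterwards. I recommend replacing your iteration argument by this one-line reduction.
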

  \begin{proof} Assume that $F$ has the representation
  $$
 F({\bf X}):=A_{(0)}+\sum\limits_{q=1}^\infty\sum\limits_{  {|\alpha_1|+\cdots +|\alpha_k|=q}\atop {\alpha_i\in \FF_{n_i}^+}}A_{(\alpha)}\otimes  { X}_{(\alpha)}, \qquad {\bf X}\in {\bf B_n}(\cH),
 $$
where $A_{(\alpha)}\in {\bf P_n}(\CC)$ is written as a row operator with entries in $\CC$. Note that
$$
F'(0)=\left[A_{(\alpha)}^\tau:\  |\alpha_1|+\cdots +|\alpha_k|=1, \alpha_i\in \FF_{n_i}^+\right].
$$
 Taking into account that $F'(0)$ is a co-isometry, we deduce that
$\sum\limits_{  {|\alpha_1|+\cdots +|\alpha_k|=1}\atop {\alpha_i\in \FF_{n_i}^+}}A_{(\alpha)}^*A_{(\alpha)}=I.
  $
  Since $F$ is a free holomorphic function with $\|F\|_\infty=1$, we can apply Proposition \ref{coef-ine}. Consequently, we have
$$
\sum\limits_{  {|\alpha_1|+\cdots +|\alpha_k|=1}\atop {\alpha_i\in \FF_{n_i}^+}}A_{(\alpha)}^*A_{(\alpha)}\leq I-F(0)^* F(0),
$$
which implies $F(0)=0$. Therefore,  since $[F'(0)]^\tau=\left[\begin{matrix}
A_{(\alpha)}\\
\vdots\\
|\alpha_1|+\cdots +|\alpha_k|=1\end{matrix}\right]$, we have
\begin{equation}
\label{FXX}
F({\bf X})={\bf X}[F'(0)]^\tau
+\sum\limits_{q=2}^\infty\sum\limits_{  {|\alpha_1|+\cdots +|\alpha_k|=q}\atop {\alpha_i\in \FF_{n_i}^+}}A_{(\alpha)}\otimes  { X}_{(\alpha)}.
\end{equation}
On the other hand, since $F'(0)$ is an isometry, we have $F'(0)^\tau [F'(0)^\tau]^*=I$. Multiplying relation \eqref{FXX} to the right by $([F'(0)]^\tau)^*$, we obtain
$$
H({\bf X}):=F({\bf X})([F'(0)]^\tau)^*={\bf X}+[G_{i,j}({\bf X}):\ (i,j)\in \Lambda_{\bf n}],
$$
where $\Lambda_{\bf n}:=\{(i,j):\ i\in\{1,\ldots, k\}, j\in \{1,\ldots, n_i\}\}$ and each $G_{i,j}$ is a free holomorphic function containing only monomials of degree $\geq 2$.  Since $H$ is a free holomorphic function on ${\bf B_n}(\cH)$ with $H(0)=0$ and $H'(0)=I_{n_1+\cdots + n_k}$, Theorem
\ref{cartan1} implies $H({\bf X})={\bf X}$.  Consequently, we have
$F({\bf X})([F'(0)]^\tau)^*={\bf X}$. Multiplying this relation to the right by $[F'(0)]^\tau$ and taking into account that $F'(0)$ is a co-isometry, we
deduce that
 $F({\bf X})={\bf X} [F'(0)]^\tau$ for any $ {\bf X}\in {\bf B_n}(\cH)$.
 This completes the proof.
\end{proof}

In \cite{Po-automorphism}, the  theory of noncommutative characteristic functions for row
contractions (see \cite{Po-charact}) was  used
to find all the involutive free holomorphic automorphisms of
$[B(\cH)^n]_1$. They  turned out to be of the form
\begin{equation*}
 \Psi_\lambda(Y_1,\ldots, Y_n)=- \Theta_\lambda(Y_1,\ldots, Y_n):={
\lambda}-\Delta_{ \lambda}\left(I_\cK-\sum_{i=1}^n \bar{{
\lambda}}_i Y_i\right)^{-1} [Y_1\cdots Y_n] \Delta_{{\lambda}^*},
\end{equation*}
for some $\lambda=(\lambda_1,\ldots, \lambda_n)\in \BB_n$, where
$\Theta_\lambda$ is the characteristic function  of the row
contraction $\lambda$, and $\Delta_{ \lambda}$,
$\Delta_{{\lambda}^*}$ are the defect operators defined by $\Delta_\lambda=(1-\|\lambda\|_2^2)^{1/2}$ and
$\Delta_{\lambda^*}=(I_{\CC^n}-\lambda^*\lambda)^{1/2}$.  Moreover,  we
determined the group $\text{\rm Aut}([B(\cH)^n]_1)$  of all the free
holomorphic automorphisms of the noncommutative ball $[B(\cH)^n]_1$
and showed that if $\Psi\in \text{\rm Aut}([B(\cH)^n]_1)$ and
$\lambda:=\Psi^{-1}(0)$, then there is a unitary operator $U$ on
$\CC^n$ such that
$$
\Psi=\Psi_U\circ \Psi_\lambda,
$$
where $\Psi_U (Y):= YU $ for any   $Y\in  [B(\cH)^n]_1$.
Let  $\lambda:=(\lambda_1,\ldots, \lambda_n)\in
\BB_n\backslash \{0\}$ and let $\gamma:=\frac{1}{\|\lambda\|_2}$.
Then $\Psi_\lambda:=-\Theta_\lambda$ is a free holomorphic function
on $[B(\cH)^n]_\gamma$ which has the following properties:
\begin{enumerate}
\item[(i)]
$\Psi_\lambda (0)=\lambda$ and $\Psi_\lambda(\lambda)=0$;
\item[(ii)] The identity
\begin{equation*}
I_{\cH}-\Psi_\lambda(X)\Psi_\lambda(X)^*= \Delta_{\lambda}(I- X
\lambda^*)^{-1}(I-  X   X^*)(I-  \lambda  X^*)^{-1} \Delta_{\lambda}
\end{equation*}

holds  for all  $X\in [B(\cH)^n]_\gamma$;

\item[(iii)] $\Psi_\lambda$ is an involution, i.e., $\Psi_\lambda(\Psi_\lambda(X))=X$
for any $X\in [B(\cH)^n]_\gamma$;
\item[(iv)] $\Psi_\lambda$ is a free holomorphic automorphism of the
noncommutative unit ball $[B(\cH)^n]_1$;
\item[(v)] $\Psi_\lambda$ is a homeomorphism of $[B(\cH)^n]_1^-$
onto $[B(\cH)^n]_1^-$.
\end{enumerate}

Now, we can prove a structure theorem for  holomorphic automorphisms
of regular polyballs.

\begin{theorem} \label{structure}
 Let  ${\bf n}=(n_1,\ldots, n_k)\in \NN^k$ and let $\boldsymbol\Psi\in \text{\rm Aut}({\bf B_n}(\cH))$. If $\boldsymbol\lambda=(\lambda_1,\ldots, \lambda_k)=\boldsymbol\Psi^{-1}(0)$, then there are  unique unitary operators $U_i\in B(\CC^{n_i})$, $i\in \{1,\ldots, k\}$, and a unique permutation $\sigma\in \cS_k$ with $n_{\sigma(i)}=n_i$  such that
 $$
 \boldsymbol\Psi=p_\sigma\circ \boldsymbol\Phi_{\bf U}\circ \boldsymbol\Psi_{\boldsymbol\lambda},
 $$
 where ${\bf U}:=U_1\oplus\cdots \oplus U_k$ and $\boldsymbol\Psi_{\boldsymbol\lambda}:=(\Psi_{\lambda_1},\ldots, \Psi_{\lambda_k})$.
\end{theorem}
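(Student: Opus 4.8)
The plan is to reduce the statement to the Cartan-type factorization already established in Theorem \ref{Cartan3} by first composing $\boldsymbol\Psi$ with the involution $\boldsymbol\Psi_{\boldsymbol\lambda}$ in order to move the center of the automorphism to the origin. First I would record the relevant properties of $\boldsymbol\Psi_{\boldsymbol\lambda}:=(\Psi_{\lambda_1},\ldots,\Psi_{\lambda_k})$. Since $\boldsymbol\lambda=\boldsymbol\Psi^{-1}(0)$ is the value at $0$ of the free holomorphic inverse of $\boldsymbol\Psi$, and free holomorphic components have scalar coefficients, $\boldsymbol\lambda$ is a scalar point of ${\bf B_n}(\cH)$; hence each $\lambda_i$ lies in the open ball $\BB_{n_i}$ and $\Psi_{\lambda_i}$ is a well-defined involutive free holomorphic automorphism of $[B(\cH)^{n_i}]_1$ with $\Psi_{\lambda_i}(0)=\lambda_i$ and $\Psi_{\lambda_i}(\lambda_i)=0$. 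By Proposition \ref{CSH}(iii), $\boldsymbol\Psi_{\boldsymbol\lambda}\in\text{\rm Aut}({\bf B_n}(\cH))$, it is an involution, and coordinatewise it satisfies $\boldsymbol\Psi_{\boldsymbol\lambda}(0)=\boldsymbol\lambda$ and $\boldsymbol\Psi_{\boldsymbol\lambda}(\boldsymbol\lambda)=0$.

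Next I would set $F:=\boldsymbol\Psi\circ\boldsymbol\Psi_{\boldsymbol\lambda}$. As a composition of automorphisms it is again a free holomorphic automorphism of ${\bf B_n}(\cH)$, and $F(0)=\boldsymbol\Psi(\boldsymbol\Psi_{\boldsymbol\lambda}(0))=\boldsymbol\Psi(\boldsymbol\lambda)=\boldsymbol\Psi(\boldsymbol\Psi^{-1}(0))=0$. Applying Theorem \ref{Cartan3} to $F$ furnishes unitaries $U_i\in B(\CC^{n_i})$ and a permutation $\tau\in\cS_k$ with $n_{\tau^{-1}(i)}=n_i$ such that $(p_{\tau^{-1}}\circ F)({\bf X})=\boldsymbol\Phi_{\bf U}({\bf X})$, where ${\bf U}=U_1\oplus\cdots\oplus U_k$. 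Composing on the left by $p_\tau$ and using $p_\tau\circ p_{\tau^{-1}}=\mathrm{id}$ gives $F=p_\tau\circ\boldsymbol\Phi_{\bf U}$; moreover, reindexing shows that $n_{\tau^{-1}(i)}=n_i$ for all $i$ is equivalent to $n_{\tau(i)}=n_i$ for all $i$, so setting $\sigma:=\tau$ yields the required constraint $n_{\sigma(i)}=n_i$. Composing on the right with the involution $\boldsymbol\Psi_{\boldsymbol\lambda}$ then produces $\boldsymbol\Psi=F\circ\boldsymbol\Psi_{\boldsymbol\lambda}=p_\sigma\circ\boldsymbol\Phi_{\bf U}\circ\boldsymbol\Psi_{\boldsymbol\lambda}$, which is the asserted factorization.

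For uniqueness, suppose $p_\sigma\circ\boldsymbol\Phi_{\bf U}\circ\boldsymbol\Psi_{\boldsymbol\lambda}=p_{\sigma'}\circ\boldsymbol\Phi_{{\bf U}'}\circ\boldsymbol\Psi_{\boldsymbol\lambda}$. Cancelling the involution $\boldsymbol\Psi_{\boldsymbol\lambda}$ on the right reduces this to $p_\sigma\circ\boldsymbol\Phi_{\bf U}=p_{\sigma'}\circ\boldsymbol\Phi_{{\bf U}'}$, that is, $(X_{\sigma(1)}U_{\sigma(1)},\ldots,X_{\sigma(k)}U_{\sigma(k)})=(X_{\sigma'(1)}U'_{\sigma'(1)},\ldots,X_{\sigma'(k)}U'_{\sigma'(k)})$ for all ${\bf X}\in{\bf B_n}(\cH)$. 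Testing on tuples ${\bf X}$ supported in a single block $m$ shows that the only nonzero coordinate of the left side sits in position $\sigma^{-1}(m)$ and that of the right side in position $(\sigma')^{-1}(m)$; hence $\sigma^{-1}=(\sigma')^{-1}$, so $\sigma=\sigma'$. Then the coordinatewise identity $X_mU_m=X_mU'_m$ holds for every $X_m$ in the $m$-th ball, and taking $X_m$ with a single scalar entry forces $U_m=U'_m$. This establishes the uniqueness of both $\sigma$ and the $U_i$.

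I expect the only genuinely delicate point to be the bookkeeping surrounding the permutation and the scalar nature of $\boldsymbol\lambda$: one must justify that $\boldsymbol\lambda=\boldsymbol\Psi^{-1}(0)$ may be taken scalar (so that $\boldsymbol\Psi_{\boldsymbol\lambda}$ is meaningful), and then reconcile the permutation produced by Theorem \ref{Cartan3} (indexed through $\tau^{-1}$) with the constraint $n_{\sigma(i)}=n_i$ after reindexing. Everything else is a direct assembly of the previously proved results, together with the elementary coordinate computation in the uniqueness step.
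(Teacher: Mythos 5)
Your proof is correct and follows essentially the same route as the paper: compose with the involution $\boldsymbol\Psi_{\boldsymbol\lambda}$ to reduce to an automorphism fixing the origin, then invoke the Cartan-type linear factorization and undo the involution. You additionally spell out the uniqueness of $\sigma$ and of the $U_i$, which the paper's proof leaves implicit; that coordinatewise argument is sound.
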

\begin{proof}
Let $\boldsymbol\Psi\in \text{\rm Aut}({\bf B_n}(\cH))$ and let  $\boldsymbol\lambda=(\lambda_1,\ldots, \lambda_k)=\boldsymbol\Psi^{-1}(0)$.
For each $i\in \{1,\ldots, k\}$, $\lambda_i\in (\CC^{n_i})_1$, and $\Psi_{\lambda_i}$ is  a free holomorphic automorphism of the
noncommutative unit ball $[B(\cH)^{n_1}]_1$. Moreover, $\Psi_{\lambda_i}(\Psi_{\lambda_i}(X))=X$
for any $X\in [B(\cH)^{n_i}]_1$,
$\Psi_{\lambda_i} (0)=\lambda_i$. Consequently, using Proposition  \ref{CSH} and Theorem \ref{homo-compo}, we deduce that $\boldsymbol\Psi_{\boldsymbol\lambda}:=(\Psi_{\lambda_1},\ldots, \Psi_{\lambda_k})$ is a holomorphic automorphism of the regular polyball ${\bf B_n}$ with the property that
$$
\boldsymbol\Psi_{\boldsymbol\lambda}(\boldsymbol\Psi_{\boldsymbol\lambda}({\bf X}))={\bf X}, \qquad {\bf X}\in {\bf B_n}(\cH),
$$
and $\boldsymbol\Psi_{\boldsymbol\lambda}(0)=\boldsymbol\lambda$.
Hence, $\boldsymbol\Psi\circ \boldsymbol\Psi_{\boldsymbol\lambda}\in \text{\rm Aut}({\bf B_n}(\cH))$ and  $(\boldsymbol\Psi\circ \boldsymbol\Psi_{\boldsymbol\lambda})(0)=0$.
Applying Theorem \ref{Cartan2}, there  are unitary operators $U_i\in B(\CC^{n_i})$, $i\in \{1,\ldots, k\}$, and a permutation $\sigma\in \cS_k$ with the property that $n_{\sigma^{-1}(i)}=n_i$ for $i\in \{1,\ldots, k\}$   such that
$$ (p_{\sigma^{-1}}\circ (\boldsymbol\Psi\circ \boldsymbol\Psi_{\boldsymbol\lambda}))({\bf X})= [X_1U_1, \ldots,X_kU_k] \qquad    {\bf X}=(X_1,\ldots, X_k)\in {\bf B_n}(\cH).
$$
Hence, taking into account that $\boldsymbol\Psi_{\boldsymbol\lambda}(\boldsymbol\Psi_{\boldsymbol\lambda}({\bf X}))={\bf X}$, we obtain
$
 \boldsymbol\Psi=p_\sigma\circ \boldsymbol\Phi_{\bf U}\circ \boldsymbol\Psi_{\boldsymbol\lambda},
 $
which completes the proof.
\end{proof}

\begin{corollary} Let $F:{\bf B_n}(\cH)\to {\bf B_m}(\cH)$ be a bounded  free holomorphic function and  ${\bf a}\in {\bf B_n}(\CC)$.  Then
$$\|{\bf \Psi}_{F({\bf a})}(F({\bf X}))\|\leq m_{\bf B_n}({\bf \Psi}_{\bf a}({\bf X}))\leq
\|{\bf \Psi}_{\bf a}({\bf X})\|
$$
for any ${\bf X}\in {\bf B_n}(\cH)$, where $m_{\bf B_n}$  is the Minkovski functional.
\end{corollary}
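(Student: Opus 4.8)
The plan is to absorb the two automorphisms into a single free holomorphic map that fixes the origin, and then invoke the Schwarz lemma (Theorem \ref{Schwarz}). First I would set ${\bf Y}:={\bf \Psi}_{\bf a}({\bf X})$; since ${\bf \Psi}_{\bf a}$ is an involutive automorphism of ${\bf B_n}(\cH)$ with ${\bf \Psi}_{\bf a}(0)={\bf a}$ (properties recorded before Theorem \ref{structure}), we have ${\bf X}={\bf \Psi}_{\bf a}({\bf Y})$ and hence
$$
{\bf \Psi}_{F({\bf a})}(F({\bf X}))={\bf \Psi}_{F({\bf a})}(F({\bf \Psi}_{\bf a}({\bf Y})))=:G({\bf Y}),
$$
where $G:={\bf \Psi}_{F({\bf a})}\circ F\circ {\bf \Psi}_{\bf a}$. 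By Theorem \ref{homo-compo}, $G$ is a free holomorphic function carrying ${\bf B_n}(\cH)$ into ${\bf B_m}(\cH)$, and using ${\bf \Psi}_{\bf a}(0)={\bf a}$ together with ${\bf \Psi}_{F({\bf a})}(F({\bf a}))=0$ one gets $G(0)=0$. Thus the whole statement reduces to the single estimate $\|G({\bf Y})\|\le m_{\bf B_n}({\bf Y})$, after which the second inequality $m_{\bf B_n}({\bf Y})\le \|{\bf Y}\|$ is exactly the second conclusion of Theorem \ref{Schwarz} applied to ${\bf Y}={\bf \Psi}_{\bf a}({\bf X})$.

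To establish $\|G({\bf Y})\|\le m_{\bf B_n}({\bf Y})$ I would pass to the blocks of $G$. Writing $F({\bf a})=(\mu_1,\ldots,\mu_q)$ with $\mu_i\in (\CC^{m_i})_1$ and ${\bf \Psi}_{F({\bf a})}=(\Psi_{\mu_1},\ldots,\Psi_{\mu_q})$, the $i$-th block of $G$ is $G_i=\Psi_{\mu_i}\circ F_i\circ {\bf \Psi}_{\bf a}$, where $F_i$ is the $i$-th block of $F$. Since $F$ maps into ${\bf B_m}(\cH)\subset {\bf P_m}(\cH)$, each $F_i$ takes values in the open ball $[B(\cH)^{m_i}]_1$, and $\Psi_{\mu_i}$ is an automorphism of that ball; hence each $G_i$ is a free holomorphic function from ${\bf B_n}(\cH)$ into $[B(\cH)^{m_i}]_1$, so $\|G_i\|_\infty\le 1$, and $G_i(0)=\Psi_{\mu_i}(F_i({\bf a}))=\Psi_{\mu_i}(\mu_i)=0$. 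Applying Theorem \ref{Schwarz} to each $G_i$ yields $\|G_i({\bf Y})\|\le m_{\bf B_n}({\bf Y})$ for every ${\bf Y}\in {\bf B_n}(\cH)$, and taking the maximum over $i\in\{1,\ldots,q\}$ gives $\|G({\bf Y})\|\le m_{\bf B_n}({\bf Y})$, completing the chain of inequalities.

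The main obstacle is that the target of $G$ is a polyball rather than a single ball, so Theorem \ref{Schwarz} cannot be invoked for $G$ as a whole: one must decompose into blocks and apply the lemma to each $G_i$ separately. The delicate point is that $\|G({\bf Y})\|$ must be read as the maximum of the block row-norms $\max_i\|G_i({\bf Y})\|$; this is the norm under which the Minkowski functional $m_{\bf B_n}$ is compatible with $\|\cdot\|$ on a complete Reinhardt polyball, and it is exactly what lets the blockwise Schwarz estimates assemble into the desired bound. The remaining verifications are routine: that each $G_i$ lands in $[B(\cH)^{m_i}]_1$ (a strict row contraction, since $F$ maps into ${\bf P_m}(\cH)$), and that the involutivity and origin-fixing properties of the ball automorphisms $\Psi_{\mu_i}$ guarantee both $G_i(0)=0$ and $\|G_i\|_\infty\le 1$.
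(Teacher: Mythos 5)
Your proof is correct and follows essentially the same route as the paper: form $G:={\bf \Psi}_{F({\bf a})}\circ F\circ {\bf \Psi}_{\bf a}$, note $G(0)=0$ via Theorem \ref{homo-compo} and the properties of the ball automorphisms, apply the Schwarz lemma, and then substitute ${\bf Y}={\bf \Psi}_{\bf a}({\bf X})$ using involutivity. The one place you go beyond the paper is the blockwise application of Theorem \ref{Schwarz} to each $G_i=\Psi_{\mu_i}\circ F_i\circ {\bf \Psi}_{\bf a}$: the paper simply invokes the Schwarz lemma for $G$ as a whole, which tacitly requires $\|G\|_\infty\le 1$ for the norm on tuples in ${\bf B_m}(\cH)$, whereas your decomposition makes explicit that each block lands in $[B(\cH)^{m_i}]_1$ and that the estimate assembles by taking the maximum over blocks; this is a harmless but genuinely more careful verification of the hypothesis. (Incidentally, your statement of the conclusion with $m_{\bf B_n}$ and ${\bf Y}\in{\bf B_n}(\cH)$ corrects what appear to be typographical slips in the paper's displayed inequality, which writes $m_{\bf B_m}$ and ${\bf Y}\in{\bf B_m}(\cH)$ even though $G$ is defined on ${\bf B_n}(\cH)$.)
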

\begin{proof}
Consider the automorphisms ${\bf \Psi_a}\in \text{\rm Aut}({\bf B_n})$  and
${\bf \Psi}_{F({\bf a})}\in \text{\rm Aut}({\bf B_m})$.   Due to Theorem \ref{homo-compo} and using the fact that ${\bf \Psi_a}(0)={\bf a}$ and ${\bf \Psi}_{F({\bf a})}(F({\bf a}))=0$, we deduce that
$G:={\bf \Psi}_{F({\bf a})}\circ F\circ {\bf \Psi_a}$ is a free holomorphic function from ${\bf B_n}(\cH)$ to ${\bf B_m}(\cH)$, and $G(0)=0$.
Applying Theorem \ref{Schwarz} to $G$, we obtain
$$
\|{\bf \Psi}_{F({\bf a})}\circ F\circ {\bf \Psi_a}({\bf Y})\| \leq m_{\bf B_m}({\bf Y})\leq \|{\bf Y}\|,\qquad {\bf Y}\in {\bf B_m}(\cH).
$$
Setting ${\bf Y}={\bf \Psi_a}({\bf Y})$ and using the fact that ${\bf \Psi_a}\circ {\bf \Psi_a}=id$, we  complete the proof.
\end{proof}

In what follows, we present an analogue of   Poincar\' e  result that  the open unit ball of $\CC^n$ is not biholomorphic  equivalent to the polydisk $\DD^n$, for noncommutative regular polyballs.

\begin{theorem}
\label{Poincare} Let  ${\bf n}=(n_1,\ldots, n_k)\in \NN^k$ and  ${\bf m}=(m_1,\ldots, m_q)\in \NN^q$. Then
$$
Bih({\bf B_n}(\cH),
{\bf B_m}(\cH))\neq \emptyset
$$
if and only if    $k=q$ and there is a permutation $\sigma\in \cS_k$ such that $m_{\sigma(i)}=n_i$ for any $i\in \{1,\ldots, k\}$.  Moreover, any free biholomorphic function $F:{\bf B_n}(\cH)\to
{\bf B_m}(\cH))$ is up to a permutation of $(m_1,\ldots, m_k)$  an automorphism of the noncommutative regular polyball ${\bf B_n}$.
\end{theorem}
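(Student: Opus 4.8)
The plan is to reduce the whole statement to the classical rigidity theorem for biholomorphic equivalence of products of balls by passing to the scalar representation, and then to read off the ``moreover'' assertion directly from the resulting matching of the two polyballs. First I would dispose of the ``if'' direction. Suppose $k=q$ and there is a permutation $\sigma\in\cS_k$ with $m_{\sigma(i)}=n_i$ for every $i$. The reshuffling map sending $(Y_1,\ldots,Y_q)$ to $(Y_{\sigma(1)},\ldots,Y_{\sigma(k)})$ then carries ${\bf B_m}(\cH)$ into ${\bf B_n}(\cH)$: each block $Y_{\sigma(i)}$ is an $m_{\sigma(i)}=n_i$-tuple, and since ${\bf \Delta}$ is a composition of the pairwise commuting operations $(id-\Phi_{\cdot})$ on doubly commuting blocks, it is invariant under reordering of the blocks, so the defect condition is preserved. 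Exactly as in Proposition \ref{CSH}(ii), this map is a free biholomorphism ${\bf B_m}(\cH)\to{\bf B_n}(\cH)$; its inverse lies in $Bih({\bf B_n}(\cH),{\bf B_m}(\cH))$, which is therefore nonempty.

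For the ``only if'' direction, let $F:{\bf B_n}(\cH)\to{\bf B_m}(\cH)$ be free biholomorphic with free holomorphic inverse $F^{-1}$. Specializing to the one dimensional Hilbert space $\cH=\CC$ and using Corollary \ref{scalar}, the scalar representations $f$ of $F$ and $g$ of $F^{-1}$ are holomorphic maps between the scalar polyballs $(\CC^{n_1})_1\times\cdots\times(\CC^{n_k})_1$ and $(\CC^{m_1})_1\times\cdots\times(\CC^{m_q})_1$. Since $F\circ F^{-1}=id$ and $F^{-1}\circ F=id$ as free holomorphic functions, these identities hold on every Hilbert space representation, in particular on $\CC$; hence $f\circ g=id$ and $g\circ f=id$, so $f$ is a biholomorphism of the two scalar product-of-balls domains.

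The classical rigidity theorem for biholomorphic equivalence of products of balls (see \cite{Ru1}, \cite{Ru2}, \cite{L}, \cite{T}) then forces $k=q$ and the existence of a permutation $\sigma\in\cS_k$ with $m_{\sigma(i)}=n_i$ for all $i$, because two balls are biholomorphic only when they have equal dimension. For the ``moreover'' assertion, once $k=q$ and $m_{\sigma(i)}=n_i$ the reshuffling map $p_\sigma:{\bf B_m}(\cH)\to{\bf B_n}(\cH)$ of the first paragraph is available, and by Theorem \ref{homo-compo} the composition $p_\sigma\circ F:{\bf B_n}(\cH)\to{\bf B_n}(\cH)$ is free holomorphic; it is bijective with free holomorphic inverse $F^{-1}\circ p_{\sigma^{-1}}$, so $p_\sigma\circ F\in\text{\rm Aut}({\bf B_n})$. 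Thus $F=p_{\sigma^{-1}}\circ(p_\sigma\circ F)$ is, up to the factor permutation $p_{\sigma^{-1}}$, an automorphism of ${\bf B_n}$.

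The one genuine point requiring care is the reduction to the scalar picture: one must verify that a free biholomorphism really induces an honest biholomorphism on scalar points, which is precisely where Corollary \ref{scalar} and the uniqueness of the power series representation enter. After this reduction, the entire dimension and permutation conclusion is inherited from the commutative theorem, and no Cartan-type linearization is needed for the present statement. The only remaining verification, that the reshuffling map genuinely carries one polyball onto the other, rests solely on the permutation invariance of ${\bf \Delta}$ and is routine.
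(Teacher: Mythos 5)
Your proposal is correct and follows essentially the same route as the paper: pass to the scalar representation, invoke the classical Ligocka--Tsyganov rigidity theorem for products of balls to obtain $k=q$ and the permutation $\sigma$, and then use Proposition \ref{CSH} together with Theorem \ref{homo-compo} to conclude that $p_\sigma\circ F$ is an automorphism of ${\bf B_n}$. The paper additionally cites Brouwer's invariance of domain to note $n_1+\cdots+n_k=m_1+\cdots+m_q$, but this is subsumed by the rigidity theorem, and your extra care with the ``if'' direction and the scalar reduction is consistent with the paper's argument.
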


\begin{proof} Let $F:{\bf B_n}(\cH)\to
{\bf B_m}(\cH))$  be a free biholomorphic  function. Then its scalar representation
$$f:(\CC^{n_1})_1\times \cdots \times  (\CC^{n_k})_1\to
(\CC^{m_1})_1\times \cdots \times  (\CC^{m_q})_1,
$$
defined by  $f({\bf z}):=F({\bf z})$, ${\bf z}=\{z_{i,j}\}\in {\bf B_n}(\CC)=(\CC^{n_1})_1\times\cdots \times (\CC^{n_k})_1$, is a scalar  biholomorphic function.
 Using Browder's invariance of domain theorem, we deduce that  $n_1+\cdots +n_k=m_1+\cdots + m_q$.  On the other hand, according to the classical result of Ligocka and Tsyganov (which is a generalization of Rudin's characterization of the  holomorphic automorphisms of  the polydisc \cite{Ru1}), we must have $k=q$ and  there is a permutation $\sigma\in \cS_k$ such that $m_{\sigma(i)}=n_i$ for any $i\in \{1,\ldots, k\}$.
Using Proposition   \ref{CSH} and Theorem \ref{homo-compo}, we deduce that
$p_\sigma\circ F\in \text{\rm Aut}({\bf B_{n}})$, which completes the proof.
\end{proof}

 Let $\lambda:=(\lambda_1,\ldots,\lambda_n)\in
\BB_n$, $\lambda\neq 0$,  and let  $\tilde \Theta_\lambda$ be the boundary function of the
characteristic function with respect to the right creation operators $R_1,\ldots, R_n$ on the Fock space $F^2(H_n)$, i.e., $\tilde\Theta_\lambda:=\text{\rm SOT-}\lim_{r\to 1}\Theta_\lambda(rR_1,\ldots, rR_n)$.
 We recall from \cite{Po-automorphism}, the following properties.
\begin{enumerate}
\item[(i)] the map $\Theta_\lambda$ is a free holomorphic function on the open
ball $[B(\cH)^n]_\gamma$, where $\gamma:=\frac{1}{\|\lambda\|_2}$;
\item[(ii)]
$\tilde \Theta_\lambda=\Theta_\lambda(R_1,\ldots, R_n)
 =-{ \lambda}+\Delta_{
\lambda}\left(I_{F^2(H_n)}-\sum_{i=1}^n \bar{{ \lambda}}_i
R_i\right)^{-1} [R_1,\ldots, R_n] \Delta_{{\lambda}^*}$;
\item[(iii)] $\tilde \Theta_\lambda$ is a pure row isometry
 with entries in the noncommutative disc algebra generated by $R_1,\ldots, R_n$ and the identity;

\item[(iv)] $\text{\rm rank}\,(I-\tilde \Theta_\lambda\tilde \Theta_\lambda^*)=1$
and $\tilde \Theta_\lambda$ is unitarily equivalent  to
$[R_1,\ldots, R_n]$.
\end{enumerate}

We define the right creation operators $R_{i,j}$ acting on  the  Fock space $F^2(H_{n_i})$ and the ampliations ${\bf R}_{i,j}$ acting on the tensor  product
$F^2(H_{n_1})\otimes\cdots\otimes F^2(H_{n_k})$.

\begin{theorem} \label{structure2}
 Let ${\bf \Psi}=(\Psi_1,\ldots,\Psi_k)\in \text{\rm Aut}({\bf B_n}(\cH))$, where   ${\bf n}=(n_1,\ldots, n_k)\in \NN^k$, and let $\hat{\bf \Psi}=(\hat{\Psi}_1,\ldots, \hat{\Psi}_k)$ be
the boundary function with respect to the universal model ${\bf S}=\{{\bf S}_{i,j}\}$. The following statements hold.
\begin{enumerate}
\item[(i)] ${\bf \Psi}$ is
 a free holomorphic function on the regular polyball $\gamma {\bf B_n}$ for some $\gamma>1$.
\item[(ii)] The boundary function $\hat{\bf \Psi}$ with respect to ${\bf S}$ is a pure element in the polyball $ {\bf B_n}(\otimes_{i=1}^k F^2(H_{n_i}))^-$ and $\hat{\bf \Psi}:=\lim_{r\to 1}\hat{\bf \Psi}(r{\bf S})=\hat{\bf \Psi}({\bf S})$. Each $\hat{\Psi}_i=(\hat{\Psi}_{i,1},\ldots, \hat{\Psi}_{i,n_i})$  is an  isometry with entries in the noncommutative disk algebra generated by ${\bf S}_{i,1},\ldots, {\bf S}_{i,n_i}$ and the identity.

\item[(iii)]${\bf \Psi}$ is a homeomorphism of ${\bf B_n}(\cH)^-$ onto ${\bf B_n}(\cH)^-$.
    \item[(iv)]  If  $\Psi\in Aut({\bf B_n}(\cH))$ and $\boldsymbol\lambda=(\lambda_1,\ldots, \lambda_k)=\boldsymbol\Psi^{-1}(0)$, then  the identity
 $$
 {\bf \Delta}_{\boldsymbol\Psi({\bf X})}(I)=\boldsymbol\Delta_{\boldsymbol\lambda}\left[\prod_{i=1}^k \left(I_\cH-\sum_{j=1}^k \bar{\lambda}_{i,j} X_{i,j}\right)^{-1}\right]{\bf \Delta_X}(I)
 \left[\prod_{i=1}^k \left(I_\cH-\sum_{j=1}^k {\lambda}_{i,j} X_{i,j}^*\right)^{-1}\right]
 $$
holds for any ${\bf X}=\{X_{i,j}\}\in {\bf B_n}(\cH)^-$, where  $\boldsymbol\Delta_{\boldsymbol\lambda}=\prod_{i=1}^k(1-\|\lambda_i\|^2_2)$.
\item[(v)] The defect of   the boundary function of ${\bf \Psi}$ with respect to the universal model  ${\bf R}=\{{\bf R}_{i,j}\}$ satisfies the relation
    $$
    {\bf \Delta_{\Psi(R)}}(I)={\bf K}_{{\bf \Psi}^{-1}(0)}{\bf K}_{{\bf \Psi}^{-1}(0)}^*,
    $$
    where ${\bf K}_{{\bf \Psi}^{-1}(0)}$ is the noncommutative Berezin kernel at ${\bf \Psi}^{-1}(0)\in {\bf B_n}(\CC)$.
\item[(vi)] $\text{\rm rank}\ {\bf \Delta_{\hat\Psi}}=1$ and $\hat{\bf \Psi}$ is unitarily equivalent to the universal model ${\bf S}$.

 \end{enumerate}

\end{theorem}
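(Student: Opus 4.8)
The plan is to deduce all six statements from the structure theorem, Theorem~\ref{structure}, which expresses $\boldsymbol\Psi=p_\sigma\circ\boldsymbol\Phi_{\bf U}\circ\boldsymbol\Psi_{\boldsymbol\lambda}$, where $\boldsymbol\lambda=(\lambda_1,\ldots,\lambda_k)=\boldsymbol\Psi^{-1}(0)$, ${\bf U}=U_1\oplus\cdots\oplus U_k$ is unitary, and $\boldsymbol\Psi_{\boldsymbol\lambda}=(\Psi_{\lambda_1},\ldots,\Psi_{\lambda_k})$ has blocks equal to the single-ball automorphisms $\Psi_{\lambda_i}=-\Theta_{\lambda_i}$. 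The two outer factors are entire: $p_\sigma$ merely permutes coordinates and $\boldsymbol\Phi_{\bf U}({\bf X})={\bf X}{\bf U}$ is linear. Moreover both leave the defect map unchanged, since $\Phi_{X_iU_i}=\Phi_{X_i}$ for unitary $U_i$ and, the blocks of $\boldsymbol\Psi_{\boldsymbol\lambda}({\bf X})$ being mutually commuting, the factors of ${\bf \Delta}$ commute so that $p_\sigma$ only reorders them. Hence every analytic, topological, and defect-theoretic property of $\boldsymbol\Psi$ reduces to the corresponding one for $\boldsymbol\Psi_{\boldsymbol\lambda}$, and ultimately to the listed properties of the $\Psi_{\lambda_i}$ and of their characteristic functions $\tilde\Theta_{\lambda_i}$.

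For (i), I would set $\gamma:=\min_i 1/\|\lambda_i\|_2>1$. If ${\bf X}\in\gamma{\bf B_n}(\cH)$ then $X_i\in[B(\cH)^{n_i}]_\gamma\subseteq[B(\cH)^{n_i}]_{1/\|\lambda_i\|_2}$, where $\Psi_{\lambda_i}$ is free holomorphic; composing with the entire maps $\boldsymbol\Phi_{\bf U}$ and $p_\sigma$ via Theorem~\ref{homo-compo} keeps $\boldsymbol\Psi$ free holomorphic on $\gamma{\bf B_n}$. Since $\gamma>1$, $\boldsymbol\Psi$ is norm-continuous across the boundary, so the boundary function satisfies $\hat{\boldsymbol\Psi}=\lim_{r\to1}\boldsymbol\Psi(r{\bf S})=\boldsymbol\Psi({\bf S})$, the first half of (ii); that each $\hat\Psi_i$ is an isometry with entries in the disc algebra generated by ${\bf S}_{i,1},\ldots,{\bf S}_{i,n_i}$ and that $\hat{\boldsymbol\Psi}$ is a pure element follows from the recalled facts that $\tilde\Theta_{\lambda_i}$ is a pure row isometry with analytic entries, a property preserved by the unitary $U_i$ and the permutation. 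For (iii), $\boldsymbol\Psi$ is a composition of homeomorphisms of ${\bf B_n}(\cH)^-$: the unitary and permutation maps by Proposition~\ref{CSH}, and each $\Psi_{\lambda_i}$ by its recalled homeomorphy of $[B(\cH)^{n_i}]_1^-$.

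The crux is the defect identity (iv). I would first establish it on the universal model, i.e. for ${\bf X}=r{\bf S}$, where the entries of distinct blocks doubly commute. There the single-ball identity $I-\Psi_{\lambda_i}(X_i)\Psi_{\lambda_i}(X_i)^*=\Delta_{\lambda_i}^2P_i(I-\Phi_{X_i}(I))P_i^*$, with $P_i=(I-\sum_j\bar\lambda_{i,j}X_{i,j})^{-1}$, iterates cleanly through the composition defining ${\bf \Delta}$: double commutativity lets each correction factor $P_i$ be pulled past $\Phi_{X_t}$ and past the block-$t$ data for $t\neq i$, collapsing the defect into $\prod_i(I-\Phi_{\Psi_{\lambda_i}(X_i)}(I))$ and giving ${\bf \Delta}_{\boldsymbol\Psi(r{\bf S})}(I)=\boldsymbol\Delta_{\boldsymbol\lambda}\big[\prod_iP_i\big]{\bf \Delta}_{r{\bf S}}(I)\big[\prod_iP_i\big]^*$ with $\boldsymbol\Delta_{\boldsymbol\lambda}=\prod_i\Delta_{\lambda_i}^2=\prod_i(1-\|\lambda_i\|_2^2)$. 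The main obstacle is the passage to a general ${\bf X}\in{\bf B_n}(\cH)$, where the blocks only commute and the naive pull-through fails. I would resolve it with the noncommutative Berezin transform: pick $t<1$ with ${\bf X}\in t{\bf B_n}(\cH)$ and apply $\boldsymbol\cB_{\frac1t{\bf X}}$ to the identity at $t{\bf S}$. Since ${\bf \Delta}_{\boldsymbol\Psi(t{\bf S})}(I)$, ${\bf \Delta}_{t{\bf S}}(I)$, and the factors $P_i(t{\bf S})$ are all finite or convergent sums of terms $a\,b^*$ with $a,b$ analytic in ${\bf S}$, the reproducing identity $\boldsymbol\cB_{\frac1t{\bf X}}[a b^*]=a({\bf X})b({\bf X})^*$, itself a direct consequence of $\boldsymbol\cB_{\bf X}[{\bf S}_{(\alpha)}{\bf S}_{(\beta)}^*]={\bf X}_{(\alpha)}{\bf X}_{(\beta)}^*$, transports the identity verbatim to ${\bf X}$; a limit $t\to1$ extends it to ${\bf B_n}(\cH)^-$.

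Statement (v) is then the specialization of (iv) to the right universal model ${\bf X}={\bf R}$: here ${\bf \Delta}_{\bf R}(I)$ is the rank-one projection onto the vacuum, and $\prod_i(I-\sum_j\bar\lambda_{i,j}R_{i,j})^{-1}$ sends the vacuum to $\boldsymbol\Delta_{\boldsymbol\lambda}^{-1/2}{\bf K}_{\boldsymbol\lambda}(1)$, so the right-hand side collapses to ${\bf K}_{\boldsymbol\lambda}{\bf K}_{\boldsymbol\lambda}^*={\bf K}_{\boldsymbol\Psi^{-1}(0)}{\bf K}_{\boldsymbol\Psi^{-1}(0)}^*$. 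For (vi), the left analogue of (v) gives ${\bf \Delta}_{\hat{\boldsymbol\Psi}}(I)={\bf K}_{\boldsymbol\lambda}{\bf K}_{\boldsymbol\lambda}^*$, and since ${\bf K}_{\boldsymbol\lambda}$ has one-dimensional domain $\CC$ its range projection has rank one, whence $\text{\rm rank}\,{\bf \Delta}_{\hat{\boldsymbol\Psi}}=1$; combined with the purity and the isometry property from (ii), the Berezin kernel ${\bf K}_{\hat{\boldsymbol\Psi}}$ becomes a unitary intertwiner, yielding the unitary equivalence of $\hat{\boldsymbol\Psi}$ with ${\bf S}$ — the polyball counterpart of the recalled fact that $\tilde\Theta_{\lambda_i}$ is unitarily equivalent to $[R_{i,1},\ldots,R_{i,n_i}]$.
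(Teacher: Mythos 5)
Your proposal is correct and follows essentially the same route as the paper: decompose $\boldsymbol\Psi=p_\sigma\circ\boldsymbol\Phi_{\bf U}\circ\boldsymbol\Psi_{\boldsymbol\lambda}$ via Theorem~\ref{structure}, reduce everything to the single-ball automorphisms $\Psi_{\lambda_i}$, prove the defect identity on the universal model by tensoring the one-variable identities and then transport it to general ${\bf X}$ with the noncommutative Berezin transform, and specialize to ${\bf R}$ for (v). The only divergence is in (vi): the paper concludes unitary equivalence of $\hat{\boldsymbol\Psi}$ with ${\bf S}$ by invoking the Wold-type decomposition from \cite{Po-Berezin-poly}, whereas you assert that the (isometric, since $\hat{\boldsymbol\Psi}$ is pure) Berezin kernel ${\bf K}_{\hat{\boldsymbol\Psi}}$ is automatically unitary --- that surjectivity does hold, but it needs a word of justification (e.g.\ the range of ${\bf K}_{\hat{\boldsymbol\Psi}}$ is co-invariant and, because the $\hat\Psi_{i,j}$ are isometries, also invariant, hence reducing for the irreducible $C^*({\bf S})$), rather than following formally from rank-one defect plus purity.
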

\begin{proof}  According to Theorem \ref{structure}, if $\boldsymbol\Psi\in \text{\rm Aut}({\bf B_n}(\cH))$ and  $\boldsymbol\lambda=(\lambda_1,\ldots, \lambda_k)=\boldsymbol\Psi^{-1}(0)$, then there are  unique unitary operators $U_i\in B(\CC^{n_i})$, $i\in \{1,\ldots, k\}$, and a unique permutation $\sigma\in \cS_k$ with $n_{\sigma(i)}=n_i$  such that
 $$
 \boldsymbol\Psi=p_\sigma\circ \boldsymbol\Phi_{\bf U}\circ \boldsymbol\Psi_{\boldsymbol\lambda},
 $$
 where ${\bf U}:=U_1\oplus\cdots \oplus U_k$ and $\boldsymbol\Psi_{\boldsymbol\lambda}:=(\Psi_{\lambda_1},\ldots, \Psi_{\lambda_k})$. Since $\Psi_{\lambda_i}:=-\Theta_{\lambda_i}$ is a free holomorphic function on the open ball $[B(\cH)^{n_i}]_{\gamma_i}$, where
 $\gamma_i:=\frac{1}{\|\lambda_i\|_2}$ if $\lambda_i\neq 0$ and $\gamma_i=\infty$, otherwise, Poposition \ref{CSH} part (iii) implies that
 $\boldsymbol\Psi_{\boldsymbol\lambda}$ is a free holomorphic function on the regular polyball $\gamma{\bf B_n}$ for $\gamma:=\min\{\gamma_i:\ i\in \{1,\ldots, k\}\}$. Using Theorem \ref{homo-compo} and Proposition \ref{CSH}, one can complete the proof of item (i).

 The first part of item (ii) follows from (i) and the continuity of the ${\bf \Psi}$ on $\gamma {\bf B_n}$. On the other hand, due to the remarks preceding the theorem, we know that
 $\hat\Psi_{\lambda_i}:=\lim_{r\to 1} \Psi_{\lambda_i}({ S}_i)=\Psi_{\lambda_i}({ S}_i)$ is a pure row isometry with entries in the noncommutative disc algebra generated by    ${ S}_{i,1},\ldots, { S}_{i,n_i}$ and the identity,  on the full Fock space $F^2(H_{n_i})$. If $U_i\in B(\CC^{n_i})$ are unitary operators, it is clear that
 the components of the boundary function
  $$\widehat {\boldsymbol\Phi_{\bf U}\circ \boldsymbol\Psi_{\boldsymbol\lambda}}
  =(\Psi_{\lambda_1}({\bf S})U_1,\ldots \Psi_{\lambda_k}({\bf S})U_k)
 $$
 are isometries. On the other hand,
 set $(\xi_{i,1},\ldots, \xi_{i,n_i}):= {\bf S}_iU_i$ and note that each $\xi_{i,j}$ is a linear combination of ${\bf S}_{i,1},\ldots, {\bf S}_{i, n_i}$. Note that
 $
 \sum_{\alpha\in \FF_{n_i}^+, |\alpha|=p}\xi_{i,\alpha}\xi_{i,\alpha}^* (e_\beta^i)=0
 $ for any $\beta\in \FF_{n_i}^+$ and $p>|\beta|$.
 Since $\sum_{\alpha\in \FF_{n_i}^+, |\alpha|=p}\xi_{i,\alpha}\xi_{i,\alpha}^*\leq I$, we deduce that
 $$
 \lim_{p\to\infty} \sum_{\alpha\in \FF_{n_i}^+, |\alpha|=p}\xi_{i,\alpha}\xi_{i,\alpha}^* x=0,\qquad x\in F^2(H_{n_i}),
 $$
 which proves that $\widehat {\boldsymbol\Phi_{\bf U}\circ \boldsymbol\Psi_{\boldsymbol\lambda}}$ is a pure element in $ {\bf B_n}(\otimes_{i=1}^k F^2(H_{n_i}))^-$.
 For any  permutation $\sigma\in \cS_k$ with $n_{\sigma(i)}=n_i$, the boundary function
 $\hat p_\sigma=({\bf S}_{\sigma(1)},\ldots, {\bf S}_{\sigma(k)})$ has the entries pure row isometries. Now, using Lemma \ref{pure}, we deduce that the boundary function of the composition
  $
 \boldsymbol\Psi=p_\sigma\circ \boldsymbol\Phi_{\bf U}\circ \boldsymbol\Psi_{\boldsymbol\lambda}
 $
satisfies the required properties of item (ii).

According to the remarks preceding Theorem \ref{structure}, each $\Psi_\lambda$ is a homeomorphism of $[B(\cH)^{n_i}]_1^-$ and
$\Psi_{\lambda_i}(\Psi_{\lambda_i}(X_i))=X_i$
for any $X_i\in [B(\cH)^{n_i}]_1^-$.   This implies that
$$
\boldsymbol\Psi_{\boldsymbol\lambda}(\boldsymbol\Psi_{\boldsymbol\lambda}({\bf X}))={\bf X}, \qquad {\bf X}\in {\bf B_n}(\cH)^-,
$$
which proves that $\boldsymbol\Psi_{\boldsymbol\lambda}$ is a homeomorphism of ${\bf B_n}(\cH)^-$.
 According to Proposition \ref{CSH}, $\boldsymbol{\Phi_\lambda}$ and $p_\sigma$ are also homeomorphisms of ${\bf B_n}(\cH)^-$. Since, due to Theorem \ref{structure}, each $\boldsymbol\Psi\in \text{\rm Aut}({\bf B_n}(\cH))$ has the representation
 $
 \boldsymbol\Psi=p_\sigma\circ \boldsymbol\Phi_{\bf U}\circ \boldsymbol\Psi_{\boldsymbol\lambda},
 $
we conclude that ${\bf \Psi}$ is a homeomorphism of ${\bf B_n}(\cH)^-$, which proves item (iii).

For each $i\in \{1,\ldots, k\}$, let $S_i=(S_{i,1},\ldots, S_{i,n_i})$ be the $n_i$-tuple of left creation operators on the full Fock space $F^2(H_{n_i})$.
According to the remarks preceding  Theorem \ref{structure}, we have
\begin{equation*}
\begin{split}
&\left(id-\Phi_{\psi_{\lambda_i}(S_i)}\right)(I_{F^2(H_{n_i})})\\
&=(1-\|\lambda_i\|_1^2)\left(I_{F^2(H_{n_i})}-\sum_{j=1}^{n_i} \bar\lambda_{i,j}S_{i,j}\right)^{-1}
\left(id- \Phi_{S_i}\right)(I_{F^2(H_{n_i})})
\left(I_{F^2(H_{n_i})}-\sum_{j=1}^{n_i} \lambda_{i,j}S_{i,j}^*\right)^{-1}.
\end{split}
\end{equation*}
Taking the tensor product of these relations when $i\in \{1,\ldots, k\}$, and using the definition of the universal model ${\bf S}$, we obtain
\begin{equation*}
\begin{split}
\left(id-\Phi_{\psi_{\lambda_1}({\bf S}_i)}\right)&\circ\cdots \circ\left(id-\Phi_{\psi_{\lambda_k}({\bf S}_k)}\right)(I_{\otimes_{i=1}^kF^2(H_{n_i})})\\
&=
\prod_{i=1}^k
(1-\|\lambda_i\|_1^2)\prod_{i=1}^k\left(I_{\otimes_{i=1}^kF^2(H_{n_i})}-\sum_{j=1}^{n_i} \bar\lambda_{i,j}{\bf S}_{i,j}\right)^{-1}\\
&\times
 \left(id- \Phi_{{\bf S}_1}\right)\circ\cdots \circ\left(id- \Phi_{{\bf S}_k}\right)(I_{\otimes_{i=1}^k F^2(H_{n_i})})
\prod_{i=1}^k\left(I_{\otimes_{i=1}^kF^2(H_{n_i})}-\sum_{j=1}^{n_i} \lambda_{i,j}{\bf S}_{i,j}^*\right)^{-1}.
\end{split}
\end{equation*}
Note that both side of the relation above,  as well as the factors involved, are in the noncommutative polyball algebra ${\boldsymbol\cA_n}$. Applying the Berezin transform at any element  ${\bf X}=(X_1,\ldots, X_k)\in {\bf B_n}(\cH)^-$, we obtain
\begin{equation}\label{DePsi}
 {\bf \Delta}_{\boldsymbol{\Psi_\lambda}({\bf X})}(I)=\boldsymbol\Delta_{\boldsymbol\lambda}\left[\prod_{i=1}^k \left(I_\cH-\sum_{j=1}^k \bar{\lambda}_{i,j} X_{i,j}\right)^{-1}\right]{\bf \Delta_X}(I)
 \left[\prod_{i=1}^k \left(I_\cH-\sum_{j=1}^k {\lambda}_{i,j} X_{i,j}^*\right)^{-1}\right],
 \end{equation}
where $\boldsymbol\Delta_{\boldsymbol\lambda}=\prod_{i=1}^k(1-\|\lambda_i\|^2_2)$.
Since each $\boldsymbol\Psi\in \text{\rm Aut}({\bf B_n}(\cH))$ has the representation
 $
 \boldsymbol\Psi=p_\sigma\circ \boldsymbol\Phi_{\bf U}\circ \boldsymbol\Psi_{\boldsymbol\lambda},
 $
one can easily see that $ {\bf \Delta}_{\boldsymbol{\Psi}({\bf X})}(I)= {\bf \Delta}_{\boldsymbol{\Psi_\lambda}({\bf X})}(I)$, which shows that item (iv) holds.

Now, we prove item (v). If $\boldsymbol\lambda=(\lambda_1,\ldots, \lambda_k)=\boldsymbol\Psi^{-1}(0)$, the Berezin kernel
${\bf K}_{\boldsymbol\lambda}:\CC\to \otimes _{i=1}^k F^2(H_{n_i})$ is defined by
$$
{\bf K}_{\boldsymbol\lambda}(1)=\sum_{(\alpha)\in F_{n_1}^+\times \cdots \times \FF_{n_k}^+}
\boldsymbol\Delta_{\boldsymbol\lambda}^{1/2} \bar{\boldsymbol\lambda}_{(\alpha)}\otimes e_{(\alpha)}.
$$
It is easy to see that ${\bf K}_{\boldsymbol\lambda}^*(e_{(\alpha)})
=\boldsymbol\Delta_{\boldsymbol\lambda}^{1/2}\boldsymbol\lambda_{(\alpha)}$
and
$$
{\bf K}_{\boldsymbol\lambda}{\bf K}_{\boldsymbol\lambda}^*={\bf K}_{\boldsymbol\lambda}(\boldsymbol\Delta_{\boldsymbol\lambda}^{1/2}\boldsymbol\lambda_{(\alpha)})
=\boldsymbol\Delta_{\boldsymbol\lambda}^{1/2}\boldsymbol\lambda_{(\alpha)}
\sum_{(\beta)\in  F_{n_1}^+\times \cdots \times \FF_{n_k}^+}
\bar{\boldsymbol\lambda}_{(\beta)}\otimes e_{(\beta)}.
$$
On the other hand, relation \eqref{DePsi} written for the universal model  ${\bf R}=\{{\bf R}_{i,j}\}$ implies
\begin{equation*}
\begin{split}
{\bf \Delta_{\Psi(R)}}(I)(e_{(\alpha)})
&=\boldsymbol\Delta_{\boldsymbol\lambda}\left[\prod_{i=1}^k \left(I_\cH-\sum_{j=1}^k \bar{\lambda}_{i,j} {\bf R}_{i,j}\right)^{-1}\right] P_\CC
 \left[\prod_{i=1}^k \left(I_\cH-\sum_{j=1}^k {\lambda}_{i,j} {\bf R}_{i,j}^*\right)^{-1}\right](e_{(\alpha)})\\
 &=\boldsymbol\Delta_{\boldsymbol\lambda}\left[\prod_{i=1}^k \left(I_\cH-\sum_{j=1}^k \bar{\lambda}_{i,j} {\bf R}_{i,j}\right)^{-1}\right](\boldsymbol\lambda_{(\alpha)})=\boldsymbol\Delta_{\boldsymbol\lambda}^{1/2}\boldsymbol\lambda_{(\alpha)}
\sum_{(\beta)\in  F_{n_1}^+\times \cdots \times \FF_{n_k}^+}
\bar{\boldsymbol\lambda}_{(\beta)}\otimes e_{(\beta)}.
\end{split}
\end{equation*}
 Therefore, item (v) follows. The fact that $\text{\rm rank}\ {\bf \Delta_{\hat\Psi}}=1$  is a simple consequence of item (iv) or (v).
 Since the boundary function $\hat{\bf \Psi}=(\hat{\Psi}_1,\ldots, \hat{\Psi}_k)$, with respect to the universal model ${\bf S}=\{{\bf S}_{i,j}\}$, is a pure element in the polyball $ {\bf B_n}(\otimes_{i=1}^k F^2(H_{n_i}))^-$ and  each $\hat{\Psi}_i=(\hat{\Psi}_{i,1},\ldots, \hat{\Psi}_{i,n_i})$  is an  isometry with entries in the noncommutative disk algebra generated by ${\bf S}_{i,1},\ldots, {\bf S}_{i,n_i}$ and the identity, we deduce that   $\hat{\bf \Psi}=(\hat{\Psi}_{i,1},\ldots, \hat{\Psi}_{i,n_i})$
 is a pure  doubly commuting tuple of isometries with  $\text{\rm rank}\ {\bf \Delta_{\hat\Psi}}=1$. Now, using the Wold type decomposition for nondegenerate $*$-representations of the $C^*$-algebra $C^*({\bf S})$ from \cite{Po-Berezin-poly} (see Corollary 7.3 and its consequences), we conclude that  $\hat{\bf \Psi}$ is unitarily equivalent to the universal model ${\bf S}$. The proof is complete.
 \end{proof}

\begin{theorem} The map
$
\Lambda:{\text\rm Aut}({\bf B_n})\to \text{\rm Aut}((\CC^{n_1})_1\times\cdots \times (\CC^{n_k})_1)
$
defined by
$$\Lambda({\bf \Psi})({\bf z}):=({\bf B_z} [\hat {\bf \Psi}_1],\ldots,{\bf B_z} [\hat {\bf \Psi}_k]) \qquad {\bf z}\in (\CC^{n_1})_1\times\cdots \times (\CC^{n_k})_1,
$$
is a group isomorphism, where $\hat {\bf \Psi}$ is the boundary function of ${\bf \Psi}=(\Psi_1,\ldots, \Psi_k)\in Aut({\bf B_n})$ with respect to the universal model ${\bf S}$ and ${\bf B_z}$ is the noncommutative Berezin transform at ${\bf z}$.

\end{theorem}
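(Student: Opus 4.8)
The plan is to recognize that $\Lambda$ is simply the passage to the \emph{scalar representation}, and then to transport the structure theorem for $\text{\rm Aut}({\bf B_n})$ (Theorem \ref{structure}) onto the classical Rudin--Ligocka--Tsyganov description of the automorphisms of the scalar polyball. First I would show that for every ${\bf z}\in (\CC^{n_1})_1\times\cdots\times(\CC^{n_k})_1$,
$$
{\boldsymbol\cB}_{\bf z}[\hat\Psi_{i,j}]=\Psi_{i,j}({\bf z}),\qquad i\in\{1,\ldots,k\},\ j\in\{1,\ldots,n_i\}.
$$
By Theorem \ref{structure2}(i) each ${\bf\Psi}$ is free holomorphic on $\gamma{\bf B_n}$ for some $\gamma>1$, so the power series of $\Psi_{i,j}$ converges in norm on ${\bf B_n}^-$ and $\hat\Psi_{i,j}=\Psi_{i,j}({\bf S})$ is a norm-convergent series in the ${\bf S}_{i,j}$ lying in the noncommutative disk algebra (Theorem \ref{structure2}(ii)). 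Since ${\boldsymbol\cB}_{\bf z}$ is norm-continuous and ${\boldsymbol\cB}_{\bf z}[{\bf S}_{(\alpha)}]={\bf z}_{(\alpha)}$, applying it termwise yields the scalar value $\Psi_{i,j}({\bf z})$. Thus $\Lambda({\bf\Psi})({\bf z})={\bf\Psi}({\bf z})$ is exactly the scalar representation of ${\bf\Psi}$, which, as in the proof of Theorem \ref{Poincare}, is a biholomorphic self-map of the scalar polyball.

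Next I would check the homomorphism property. By Theorem \ref{homo-compo} the composition ${\bf\Phi}\circ{\bf\Psi}$ is again free holomorphic, and for a scalar ${\bf z}$ the point ${\bf\Psi}({\bf z})$ again lies in the scalar polyball; evaluating the norm-convergent expansion of ${\bf\Phi}\circ{\bf\Psi}$ at ${\bf z}$ gives $({\bf\Phi}\circ{\bf\Psi})({\bf z})={\bf\Phi}({\bf\Psi}({\bf z}))$, which by the previous step equals $\Lambda({\bf\Phi})(\Lambda({\bf\Psi})({\bf z}))$. Hence $\Lambda({\bf\Phi}\circ{\bf\Psi})=\Lambda({\bf\Phi})\circ\Lambda({\bf\Psi})$. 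Together with the obvious identity $\Lambda(\text{\rm id})=\text{\rm id}$, this shows $\Lambda({\bf\Psi})$ is invertible with inverse $\Lambda({\bf\Psi}^{-1})$, so $\Lambda$ is a well-defined group homomorphism into $\text{\rm Aut}((\CC^{n_1})_1\times\cdots\times(\CC^{n_k})_1)$.

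Finally I would establish bijectivity by matching the two structure theorems. By Theorem \ref{structure} every ${\bf\Psi}\in\text{\rm Aut}({\bf B_n})$ factors uniquely as $p_\sigma\circ\boldsymbol\Phi_{\bf U}\circ\boldsymbol\Psi_{\boldsymbol\lambda}$, with $\boldsymbol\lambda={\bf\Psi}^{-1}(0)$, ${\bf U}=U_1\oplus\cdots\oplus U_k$ unitary, and $\sigma\in\cS_k$ with $n_{\sigma(i)}=n_i$; its scalar representation is the corresponding scalar automorphism built from the same data, and by Rudin's theorem together with the Ligocka--Tsyganov classification (\cite{Ru1}, \cite{L}, \cite{T}) \emph{every} element of $\text{\rm Aut}((\CC^{n_1})_1\times\cdots\times(\CC^{n_k})_1)$ arises uniquely in this way. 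For injectivity, if $\Lambda({\bf\Psi})=\text{\rm id}$ then $\boldsymbol\lambda=\Lambda({\bf\Psi})^{-1}(0)=0$, so ${\bf\Psi}=p_\sigma\circ\boldsymbol\Phi_{\bf U}$ is linear and its scalar map ${\bf z}\mapsto p_\sigma({\bf z}{\bf U})$ being the identity forces $\sigma=\text{\rm id}$ and ${\bf U}=I$, whence ${\bf\Psi}=\text{\rm id}$. For surjectivity, given a scalar automorphism with data $(\sigma,{\bf U},\boldsymbol\lambda)$, Proposition \ref{CSH} and Theorem \ref{homo-compo} let me lift it to ${\bf\Psi}:=p_\sigma\circ\boldsymbol\Phi_{\bf U}\circ\boldsymbol\Psi_{\boldsymbol\lambda}\in\text{\rm Aut}({\bf B_n})$, and the first step gives $\Lambda({\bf\Psi})$ equal to the prescribed automorphism.

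The step I expect to be most delicate is the identification $\Lambda({\bf\Psi})={\bf\Psi}|_{\text{scalar}}$ of the first paragraph --- in particular, justifying rigorously that ${\boldsymbol\cB}_{\bf z}$ commutes with the infinite series defining each $\Psi_{i,j}$, for which the extra analyticity $\gamma>1$ of Theorem \ref{structure2}(i) is exactly what guarantees the norm convergence needed. Once this dictionary between the boundary function and the scalar representation is in place, the homomorphism and bijectivity statements reduce cleanly to the two parallel structure theorems.
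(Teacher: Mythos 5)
Your proposal is correct and follows essentially the same route as the paper: identify $\Lambda({\bf\Psi})$ with the scalar representation of ${\bf\Psi}$ (the paper does this via the factorization $p_\sigma\circ\boldsymbol\Phi_{\bf U}\circ\boldsymbol\Psi_{\boldsymbol\lambda}$ and norm-continuity of $\boldsymbol\cB_{\bf z}$, which is equivalent to your termwise argument), then read off surjectivity from the Rudin--Ligocka--Tsyganov classification and injectivity from the uniqueness of the structure data. The only slip is cosmetic: since $\boldsymbol\Psi_{\boldsymbol 0}=-\mathrm{id}$, the injectivity step forces ${\bf U}=-I$ (as in the paper) rather than ${\bf U}=I$, but the conclusion ${\bf\Psi}=\mathrm{id}$ is unaffected.
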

\begin{proof}
  Fix ${\bf \Psi}=(\Psi_1,\ldots, \Psi_k)\in \text{\rm Aut}({\bf B_n})$ and  $\boldsymbol\lambda=\{\lambda_{i,j}\}={\bf \Psi}^{-1}(0)\in {\bf B_n}(\CC)=(\CC^{n_1})_1\times\cdots \times (\CC^{n_k})_1$.
 Then, due to Theorem \ref{structure},   there are  unique unitary operators $U_i\in B(\CC^{n_i})$, $i\in \{1,\ldots, k\}$, and a unique permutation $\sigma\in \cS_k$ with $n_{\sigma(i)}=n_i$  such that
 \begin{equation}
 \label{PPP}
 \boldsymbol\Psi=p_\sigma\circ \boldsymbol\Phi_{\bf U}\circ \boldsymbol\Psi_{\boldsymbol\lambda},
 \end{equation}
 where ${\bf U}:=U_1\oplus\cdots \oplus U_k$. According  to  Theorem \ref{structure2}, Each $\hat{\Psi}_i=(\hat{\Psi}_{i,1},\ldots, \hat{\Psi}_{i,n_i})$  is a pure row isometry with entries in the noncommutative disk algebra generated by ${\bf S}_{i,1},\ldots, {\bf S}_{i,n_i}$ and the identity.
  Note
that if ${\bf z}=\{z_{i,j}\}\in {\bf B_n}(\CC)=(\CC^{n_1})_1\times\cdots \times (\CC^{n_k})_1$, then the Berezin kernel
${\bf K_z}:\CC\to F^2(H_{n_1})\otimes \cdots \otimes F^2(H_{n_k})$ is an isometry and $z_{i, j}={\bf K_z}^* {\bf S}_{i,j}{\bf  K_z}$ for
$i\in \{1,\ldots, k\}$ and $j\in \{1,\ldots, n_i\}$. Hence, using the continuity of the noncommutative
Berezin transform in the operator norm topology and relation \eqref{PPP}, we deduce that
\begin{equation*}
\begin{split}
[\Lambda({\bf \Psi})]({\bf z}):&= ({\bf B_z} [\hat {\bf \Psi}_1],\ldots,{\bf B_z} [\hat {\bf \Psi}_k]) = (p_\sigma\circ \boldsymbol\Phi_{\bf U}\circ \boldsymbol\Psi_{\boldsymbol\lambda})({\bf z})
\end{split}
\end{equation*}
for any ${\bf z}\in {\bf B_n}(\CC)$.  Due to \cite{Ru1}, \cite{L}, \cite{T}, each automorphism of the scalar polyball $(\CC^{n_1})_1\times\cdots \times (\CC^{n_k})_1$  has the form  ${\bf z}\mapsto  (p_\sigma\circ \boldsymbol\Phi_{\bf U}\circ \boldsymbol\Psi_{\boldsymbol\lambda})({\bf z})$.
 Therefore, $\Lambda({\bf \Psi})\in \text{\rm Aut}({\bf B_n}(\CC))$, which proves the surjectivity of $\Lambda$. Moreover, we
have $[\Lambda({\bf \Psi})]({\bf z})={\bf \Psi}({\bf z})$, ${\bf z}\in {\bf B_n}(\CC)$, which clearly
implies that $\Lambda$ is a homomorphism.  To prove injectivity of
$\Lambda$,  assume that $\Lambda({\bf \Psi})=\text{\rm
id}$, where ${\bf \Psi}=p_\sigma\circ \boldsymbol\Phi_{\bf U}\circ \boldsymbol\Psi_{\boldsymbol\lambda}$. Using the calculations
above, we have $p_\sigma\circ \boldsymbol\Phi_{\bf U}\circ \boldsymbol\Psi_{\boldsymbol\lambda}({\bf z})={\bf z}$ for any ${\bf z}\in {\bf B_n}(\CC)$. Hence,
 one can easily deduce that $\boldsymbol\lambda=0$, $U=-I$, and $\sigma=id$,  which implies $\Psi=\text{\rm
id}$. Therefore, $\Lambda$ is a group isomorphism. This completes the proof.
\end{proof}

\smallskip

\section{Automorphisms of   Cuntz-Toeplitz algebras}

In this section,   we determine the group of automorphisms of  the Cuntz-Toeplitz  $C^*$-algebra $C^*({\bf S})$  which leaves invariant the   noncommutative polyball algebra $\boldsymbol\cA_{\bf n}$, and
  the group of unitarily implemented  automorphisms of  the noncommutative polyball   algebra $\boldsymbol\cA_{\bf n}$ (resp. Hardy algebra \,${\bf F}_{\bf n}^\infty)$).   As a consequence, we obtain a concrete description for the group of automorphisms of  the tensor product $\cT_{n_1}\otimes\cdots \otimes\cT_{n_k}$ of Cuntz-Toeplitz algebras which leave invariant the tensor product $\cA_{n_1}\otimes_{min}\cdots \otimes_{min}\cA_{n_k}$ of noncommutative disc algebras, which extends Voiculescu's result when $k=1$.

\begin{proposition}
\label{Berezin-comp}  A free holomorphic function $F:{\bf B_n}(\cH)\to {\bf B_n}(\cH)^-$   has a  continuous extension (also denoted by F) to the closed polyball ${\bf B_n}(\cH)^-$  if and only if  the boundary function $\hat F$ has the entries  in the noncommutative polyball algebra $\boldsymbol\cA_{\bf n}$ and $\hat F\in {\bf B_n}(\otimes_{i=1}^k F^2(H_{n_i}))^-$.
Moreover, the noncommutative Berezin  transform has
the   property that
$$ \boldsymbol\cB_{F({\bf X})}[g]=\boldsymbol\cB_{\bf X}[\boldsymbol\cB_{\hat F}[g] ]
$$
for any
   ${\bf X} \in {\bf B_n}(\cH)^- $ and $g\in C^*({\bf S})$. If, in addition, $\hat F$ is a pure element of the polyball ${\bf B_n}(\otimes_{i=1}^k F^2(H_{n_i}))^-$, then the same relation holds
for any pure element
   ${\bf X} \in {\bf B_n}(\cH)^- $ and $g\in {\bf F^\infty_n}$.
\end{proposition}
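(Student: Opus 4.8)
The plan is to treat the three assertions in turn, with the identity
$\boldsymbol\cB_{\bf X}[\hat F_{(\alpha)}\hat F_{(\beta)}^*]=F({\bf X})_{(\alpha)}F({\bf X})_{(\beta)}^*$
serving as the computational engine for all of them. For the equivalence, recall that the paper has identified $A({\bf B_n})$ with $\boldsymbol\cA_{\bf n}$ (and $H^\infty({\bf B_n})$ with ${\bf F^\infty_n}$) via $\Phi$, with $\hat g=\Phi(g)=\lim_{r\to1}g(r{\bf S})$ and $\boldsymbol\cB[\varphi]=\Phi^{-1}(\varphi)$. For the forward direction, a continuous extension of $F$ to ${\bf B_n}(\cH)^-$ forces each component $F_{i,j}$ to lie in $A({\bf B_n})$, hence $\hat F_{i,j}=\Phi(F_{i,j})\in\boldsymbol\cA_{\bf n}$; moreover $F(r{\bf S})\in{\bf B_n}(\otimes_{i=1}^k F^2(H_{n_i}))^-$ for each $r<1$, and since $r{\bf S}\to{\bf S}$ in norm while $F$ is continuous on the closed ball, $\hat F=\lim_{r\to1}F(r{\bf S})=F({\bf S})$ lies in the norm-closed polyball $ {\bf B_n}(\otimes_{i=1}^k F^2(H_{n_i}))^-$. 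Conversely, if the entries of $\hat F$ lie in $\boldsymbol\cA_{\bf n}$ then each $F_{i,j}\in A({\bf B_n})$ extends continuously, so $F$ extends componentwise; that the extension maps ${\bf B_n}(\cH)^-$ into itself then follows by transporting the defining inequalities. Using Proposition \ref{reg-poly}(i), one checks that $\boldsymbol\cB_{\bf X}$ applied to each defect ${\bf \Delta}_{\hat F}^{\bf p}(I)$ (a finite linear combination of terms $\hat F_{(\alpha)}\hat F_{(\beta)}^*$) yields ${\bf \Delta}_{F({\bf X})}^{\bf p}(I)$, and since $\boldsymbol\cB_{\bf X}$ is completely positive the inequalities $ {\bf \Delta}_{\hat F}^{\bf p}(I)\geq0$ pass to $F({\bf X})$; the cross-commutation of the blocks $F_i({\bf X})$ and $F_s({\bf X})$ is transported the same way (cf. the argument in Proposition \ref{range}).

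For the composition formula on $C^*({\bf S})$, I would use that $\boldsymbol\cB_{\bf X}$ at a point of ${\bf B_n}(\cH)^-$ is a norm-continuous unital completely positive map and that the linear span of the monomials $\{{\bf S}_{(\alpha)}{\bf S}_{(\beta)}^*\}$ is norm dense in $C^*({\bf S})$, so it suffices to verify the identity on $g={\bf S}_{(\alpha)}{\bf S}_{(\beta)}^*$. On the left, $\boldsymbol\cB_{F({\bf X})}[{\bf S}_{(\alpha)}{\bf S}_{(\beta)}^*]=F({\bf X})_{(\alpha)}F({\bf X})_{(\beta)}^*$. On the right, $\boldsymbol\cB_{\hat F}[{\bf S}_{(\alpha)}{\bf S}_{(\beta)}^*]=\hat F_{(\alpha)}\hat F_{(\beta)}^*$, which lies in $C^*({\bf S})$ because the entries of $\hat F$ lie in $\boldsymbol\cA_{\bf n}$; applying the engine identity gives $\boldsymbol\cB_{\bf X}[\hat F_{(\alpha)}\hat F_{(\beta)}^*]=F({\bf X})_{(\alpha)}F({\bf X})_{(\beta)}^*$ as well. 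The engine identity itself comes from writing $\hat F_{(\alpha)}=\sum_{(\gamma)}c_{(\gamma)}{\bf S}_{(\gamma)}$ and $\hat F_{(\beta)}=\sum_{(\delta)}d_{(\delta)}{\bf S}_{(\delta)}$ in $\boldsymbol\cA_{\bf n}$, expanding the product as a norm-convergent combination of ${\bf S}_{(\gamma)}{\bf S}_{(\delta)}^*$, and using $\boldsymbol\cB_{\bf X}[{\bf S}_{(\gamma)}{\bf S}_{(\delta)}^*]={\bf X}_{(\gamma)}{\bf X}_{(\delta)}^*$ together with $F_{(\alpha)}({\bf X})=\sum_{(\gamma)}c_{(\gamma)}{\bf X}_{(\gamma)}=F({\bf X})_{(\alpha)}$, valid on the closed ball because $F_{(\alpha)}\in A({\bf B_n})$. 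Equality on monomials and norm continuity then yield the formula for all $g\in C^*({\bf S})$.

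Finally, suppose $\hat F$ and ${\bf X}$ are pure. Then ${\bf K}_{\bf X}$ and ${\bf K}_{\hat F}$ are isometries, so the two Berezin transforms are defined directly by $g\mapsto{\bf K}^*(g\otimes I){\bf K}$ on all of $B(\otimes_{i=1}^k F^2(H_{n_i}))$, in particular on ${\bf F^\infty_n}$, and each is WOT-continuous on bounded sets. First I would show $\boldsymbol\cB_{\hat F}$ carries ${\bf F^\infty_n}$ into ${\bf F^\infty_n}$: it is multiplicative on analytic monomials (${\bf S}_{(\alpha)}{\bf S}_{(\gamma)}={\bf S}_{(\alpha\gamma)}$ and $\hat F_{(\alpha)}\hat F_{(\gamma)}=\hat F_{(\alpha\gamma)}$ by the doubly-commuting structure), so $\boldsymbol\cB_{\hat F}[p({\bf S})]=p(\hat F)\in\boldsymbol\cA_{\bf n}$ for polynomials $p$; approximating a general $g\in{\bf F^\infty_n}$ by the bounded net $\{g(r{\bf S})\}_{r<1}\subset\boldsymbol\cA_{\bf n}$ converging WOT to $g$, WOT-continuity makes $\boldsymbol\cB_{\hat F}[g]$ a WOT-limit of elements of ${\bf F^\infty_n}$, hence an element of the WOT-closed ${\bf F^\infty_n}$. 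Since $F({\bf X})$ is itself pure (Lemma \ref{pure}, as both ${\bf X}$ and $\hat F$ are pure), the map $g\mapsto\boldsymbol\cB_{F({\bf X})}[g]$ is defined and WOT-continuous on ${\bf F^\infty_n}$, as is $g\mapsto\boldsymbol\cB_{\bf X}[\boldsymbol\cB_{\hat F}[g]]$; the two sides agree on $\boldsymbol\cA_{\bf n}$ by the $C^*$ case, and the bounded WOT-convergent net $g(r{\bf S})$ then forces agreement on all of ${\bf F^\infty_n}$. The main obstacle is exactly this passage to the weakly closed Hardy algebra: one must ensure that $\boldsymbol\cB_{\hat F}$ stays inside ${\bf F^\infty_n}$ (so that the outer transform $\boldsymbol\cB_{\bf X}$ is applicable), that purity of ${\bf X}$ and $\hat F$ yields purity of $F({\bf X})$ and hence a well-defined $\boldsymbol\cB_{F({\bf X})}$ on ${\bf F^\infty_n}$, and that the density and WOT-continuity arguments are compatible with the intertwining ${\bf K}_{\bf X}X_{i,j}^*=({\bf S}_{i,j}^*\otimes I){\bf K}_{\bf X}$; the $C^*({\bf S})$ computations are by comparison routine.
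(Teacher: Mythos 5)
Your proposal is correct and follows essentially the same route as the paper: the composition formula on $C^*({\bf S})$ is verified on the monomials ${\bf S}_{(\alpha)}{\bf S}_{(\beta)}^*$ using the multiplicativity $\boldsymbol\cB_{\bf X}[fg^*]=\boldsymbol\cB_{\bf X}[f]\boldsymbol\cB_{\bf X}[g]^*$ on $\boldsymbol\cA_{\bf n}$ and then extended by norm density, and the ${\bf F^\infty_n}$ case is obtained by approximating $g$ with the bounded net $g(r{\bf S})\subset\boldsymbol\cA_{\bf n}$ and using purity of ${\bf X}$, $\hat F$, and $F({\bf X})$ to pass to the limit (the paper phrases this via SOT-limits and the ${\bf F_n^\infty}$-functional calculus rather than your WOT-continuity on bounded sets, but the mechanism is the same). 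The only other difference is that the paper simply cites the first equivalence from an external reference, whereas you sketch a proof of it; your sketch is consistent with the arguments the paper uses in Proposition \ref{range}.
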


\begin{proof}
 The first part of the proposition follows from \cite{Po-Berezin-poly} (Corollary 4.3).
 To prove the second part, let $F=(F_1,\ldots, F_k)$, with $F_i=(F_{i,1},\ldots, F_{i,n_i})$.  Note that the boundary function
 $\hat F=(\hat F_1,\ldots, \hat F_k)$, with $\hat F_i=(\hat F_{i,1},\ldots, \hat F_{i,n_i})$, is an element of the polyball ${\bf B_n}(\otimes_{i=1}^k F^2(H_{n_i}))^-$ and the entries $\hat F_{i,j}:=\lim_{r\to 1} F_{i,j}(r{\bf S})$ are in the noncommutative polyball algebra $\boldsymbol\cA_{\bf n}$.
 Let ${\bf X}\in {\bf B_n}(\cH)^-$ and set ${\bf A}:=(A_1,\ldots, A_k)$, with $A_i=(A_{i,1},\ldots, A_{i,n_i})$, where
 $$
 A_{i,j}:=F_{i,j}({\bf X})=\boldsymbol\cB_{\bf X}[\hat F_{i,j}]:=\lim_{r\to 1}\boldsymbol\cB_{r{\bf X}}[\hat F_{i,j}].
 $$
 We recall that the noncommutative Berezin transform
 $\boldsymbol\cB_{\bf X}:C^*({\bf S})\to B(\cH)$, which is defined by
 $\boldsymbol\cB_{\bf X}(f):=\lim_{r\to 1} \boldsymbol\cB_{r\bf X}[g]$, is a completely contractive  linear map such that
 $$
 \boldsymbol\cB_{\bf X}[fg^*]=\boldsymbol\cB_{\bf X}[f]\boldsymbol\cB_{\bf X}[g]^*,\qquad f,g\in \cA_{\bf n},
 $$
 and the restriction $\boldsymbol\cB_{\bf X}|_{\boldsymbol\cA_{\bf n}}$ is a unital contractive homomorphism from $\boldsymbol\cA_{\bf n}$  to $B(\cH)$.
 Now, note that
 $A_{(\alpha)}=F_{(\alpha)}({\bf X})=\boldsymbol\cB_{\bf X}[\hat F_{(\alpha)}]
 $
 and
 \begin{equation*}
 \begin{split}
 \boldsymbol\cB_{\bf X}[\boldsymbol\cB_{\hat  F}[{\bf S}_{(\alpha)} {\bf S}_{(\beta)}^*]&=\boldsymbol\cB_{\bf X}[\hat F_{(\alpha)} \hat F_{(\beta)}^*] =\boldsymbol\cB_{\bf X}[\hat F_{(\alpha)}] \boldsymbol\cB_{\bf X}[\hat F_{(\beta)}^*]\\
 &=F_{(\alpha)}({\bf X}) F_{(\beta)}({\bf X})^*=A_{(\alpha)} A_{(\beta)}^* =\boldsymbol\cB_{F(\bf X)}[{\bf S}_{(\alpha)} {\bf S}_{(\beta)}^*]
 \end{split}
 \end{equation*}
for any $(\alpha), (\beta)\in \FF_{n_1}^+\times \cdots \times \FF_{n_k}^+$.
Since the linear span of the monomials ${\bf S}_{(\alpha)} {\bf S}_{(\beta)}^*$ is dense in the $C^*$-algebra $C^*({\bf S})$ and the Berezin transform is continuous in the operator norm topology, we deduce that
$\boldsymbol\cB_{\hat F}[g]$ is in $C^*({\bf S})$ for any $g\in C^*({\bf S})$, and
$ \boldsymbol\cB_{F({\bf X})}[g]=\boldsymbol\cB_{\bf X}[\boldsymbol\cB_{\hat F}[g] ]
$
for any
   ${\bf X} \in {\bf B_n}(\cH)^- $ and $g\in C^*({\bf S})$.

    Now, we assume, in addition, that $\hat F$ is a pure element of the polyball ${\bf B_n}(\otimes_{i=1}^k F^2(H_{n_i}))^-$.
   Let $f\in {\bf F^\infty_n}$ have the Fourier representation
      $\sum_{(\alpha)} a_{(\alpha)}  {\bf S}_{(\alpha)}$  and set
$$
f_r({\bf S})= \sum_{q=0}^\infty \sum_{{(\alpha)\in \FF_{n_1}^+\times \cdots \times\FF_{n_k}^+ }\atop {|\alpha_1|+\cdots +|\alpha_k|=q}} r^q a_{(\alpha)}  {\bf S}_{(\alpha)},\qquad r\in [0,1),
$$
where the convergence is in the operator norm topology.
  Since $F({\bf X})$  is pure for any pure element
   ${\bf X} \in {\bf B_n}(\cH)^- $,  we can use the
${\bf F_n^\infty}$-functional calculus for pure elements in the regular polyball to deduce that
\begin{equation*}
\begin{split}
\boldsymbol{\cB}_{F({\bf X})}[f]&= \text{\rm SOT-}\lim_{r\to 1}\boldsymbol{\cB}_{rF({\bf X})}[f] =\text{\rm SOT-}\lim_{r\to 1}\sum_{q=0}^\infty \sum_{{(\alpha)\in \FF_{n_1}^+\times \cdots \times\FF_{n_k}^+ }\atop {|\alpha_1|+\cdots +|\alpha_k|=q}} r^q a_{(\alpha)}  {F}_{(\alpha)}({\bf X}).
\end{split}
\end{equation*}
 On the other hand, since the
boundary function $\hat F=(\hat F_1,\ldots, \hat F_n)$ is a
pure element in the polyball, we have
\begin{equation*}
\begin{split}
\boldsymbol{\cB}_{\hat F}[f]&=
\text{\rm SOT-}\lim_{r\to 1}\boldsymbol{\cB}_{\hat F}[f_r] =\text{\rm SOT-}\lim_{r\to 1}\sum_{q=0}^\infty \sum_{{(\alpha)\in \FF_{n_1}^+\times \cdots \times\FF_{n_k}^+ }\atop {|\alpha_1|+\cdots +|\alpha_k|=q}} r^q a_{(\alpha)}  \hat{F}_{(\alpha)}.
\end{split}
\end{equation*}
Now, since ${\bf X}$ is pure, the Berezin transform $\boldsymbol{\cB}_{\bf X}:{\bf F_n^\infty}\to
B(\cH)$ is SOT-continuous on bounded sets, and it coincides with the
${\bf F_n^\infty}$-functional calculus. Hence,   using the calculations
above and the fact that $\boldsymbol{\cB}_{\bf X}[\hat F_{(\alpha)}]=F_{(\alpha)}({\bf X})$ for any
$(\alpha)\in \FF_{n_1}^+\times \cdots \times\FF_{n_k}^+$,  we deduce that
\begin{equation*}
\begin{split}
\boldsymbol{\cB}_{\bf X}[\boldsymbol{\cB}_{\hat F}[f]]&=\text{\rm SOT-}\lim_{r\to
1}\boldsymbol{\cB}_{\bf X}\left[\sum_{q=0}^\infty \sum_{{(\alpha)\in \FF_{n_1}^+\times \cdots \times\FF_{n_k}^+ }\atop {|\alpha_1|+\cdots +|\alpha_k|=q}} r^q a_{(\alpha)}  \hat{F}_{(\alpha)}\right]\\
&=\text{\rm SOT-}\lim_{r\to 1}\sum_{q=0}^\infty \sum_{{(\alpha)\in \FF_{n_1}^+\times \cdots \times\FF_{n_k}^+ }\atop {|\alpha_1|+\cdots +|\alpha_k|=q}} r^q a_{(\alpha)}  \hat{F}_{(\alpha)}=\boldsymbol{\cB}_{F({\bf X})}[f]
\end{split}
\end{equation*}
for any $f\in {\bf F_n^\infty}$.
 This completes the proof.
 \end{proof}

A  consequence of Proposition \ref{Berezin-comp} is the
following.

\begin{corollary}\label{Berezin-comp2}
 If $\Psi, \Phi\in \text{\rm Aut}({\bf B_n})$, then
$
\boldsymbol\cB_{\widehat{\Psi\circ \Phi}}[g]=(\boldsymbol{\cB}_{\hat \Phi} \boldsymbol {\cB}_{\hat \Psi})[g]
$
for any $g$ in the Cuntz-Toeplitz algebra $ C^*({\bf S})$, or any  $g\in {\bf F^\infty_n}$.
\end{corollary}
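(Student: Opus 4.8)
The plan is to obtain the corollary as a direct application of Proposition \ref{Berezin-comp}, feeding it the structural information about boundary functions of automorphisms supplied by Theorem \ref{structure2}. First I would record that both automorphisms behave well at the boundary: by Theorem \ref{structure2}(ii) the boundary functions $\hat\Psi$ and $\hat\Phi$ are \emph{pure} elements of the polyball ${\bf B_n}(\otimes_{i=1}^k F^2(H_{n_i}))^-$ whose entries lie in the noncommutative polyball algebra $\boldsymbol\cA_{\bf n}$, while by Theorem \ref{structure2}(iii) every $\Psi\in \text{\rm Aut}({\bf B_n})$ is a homeomorphism of ${\bf B_n}(\cH)^-$ and hence extends continuously to the closed polyball. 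Thus the pair $(\Psi,\hat\Psi)$ satisfies exactly the hypotheses of Proposition \ref{Berezin-comp}.

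The key step is then to apply Proposition \ref{Berezin-comp} to $F:=\Psi$ at the specific point ${\bf X}:=\hat\Phi$. Since $\hat\Phi\in {\bf B_n}(\otimes_{i=1}^k F^2(H_{n_i}))^-$, the proposition yields
$$
\boldsymbol\cB_{\Psi(\hat\Phi)}[g]=\boldsymbol\cB_{\hat\Phi}[\boldsymbol\cB_{\hat\Psi}[g]]=(\boldsymbol\cB_{\hat\Phi}\boldsymbol\cB_{\hat\Psi})[g]
$$
for every $g\in C^*({\bf S})$. To finish I would identify $\Psi(\hat\Phi)$ with the boundary function $\widehat{\Psi\circ\Phi}$: since $\hat\Phi=\Phi({\bf S})$ is the boundary function of $\Phi$, and $\Psi$ extends continuously to ${\bf B_n}(\cH)^-$ while $\Psi\circ\Phi$ is free holomorphic by Theorem \ref{homo-compo}, one has $\Psi(\hat\Phi)=\Psi(\Phi({\bf S}))=(\Psi\circ\Phi)({\bf S})=\widehat{\Psi\circ\Phi}$, which gives the asserted identity on $C^*({\bf S})$.

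For the statement on the Hardy algebra ${\bf F^\infty_n}$, I would invoke the ``in addition'' clause of Proposition \ref{Berezin-comp}: since $\hat\Psi$ (in the role of $\hat F$) and $\hat\Phi$ (in the role of the point ${\bf X}$) are both pure elements of the polyball, the composition identity remains valid for every $g\in {\bf F^\infty_n}$, and the identification $\Psi(\hat\Phi)=\widehat{\Psi\circ\Phi}$ carries over verbatim.

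The main obstacle I anticipate is justifying the identification $\Psi(\hat\Phi)=\widehat{\Psi\circ\Phi}$ rigorously, i.e. that passing to the boundary function commutes with composition: evaluating $\Psi\circ\Phi$ at the universal model ${\bf S}$ must amount to first forming the boundary value $\hat\Phi=\Phi({\bf S})$ and then applying the continuous extension of $\Psi$. Here the continuity of $\Psi$ on the closed polyball (Theorem \ref{structure2}(iii)) together with the convergence $\Phi(r{\bf S})\to\hat\Phi$ as $r\to 1$ does the work; everything else reduces to a mechanical application of the already-established Berezin composition formula.
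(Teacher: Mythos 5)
Your proposal is correct and follows exactly the paper's own argument: the paper's proof likewise notes $\widehat{\Psi\circ\Phi}=(\Psi\circ\Phi)({\bf S})=\Psi(\hat\Phi)$ and then takes ${\bf X}=\hat\Phi$ in Proposition \ref{Berezin-comp}. Your version merely spells out the hypotheses verified via Theorem \ref{structure2} (purity of $\hat\Phi$ and $\hat\Psi$, continuity on the closed polyball) that the paper leaves implicit.
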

\begin{proof} Note that $\widehat{\Psi\circ \Phi}=(\Psi\circ \Phi )({\bf S})=\Psi(\hat \Phi)$. Taking ${\bf X}=\hat \Phi$ in  Proposition
\ref{Berezin-comp}, the result follows.
\end{proof}

\begin{theorem} \label{unitar-shift} Let ${\bf T}=\{T_{i,j}\}\in {\bf B_n}(\cH)^-$  and let ${\bf S}=\{{\bf S}_{i,j}\}\in {\bf B_n}(\otimes_{i=1}^k F^2(H_{n_i}))^-$ be the universal model of the regular polyball. Then  ${\bf T}$ is unitarily equivalent to ${\bf S}\otimes I_\cK$, where $\cK$ is a Hilbert space,  if and only if $\dim \cD_{\bf T}=\dim \cK$, where $\cD_{\bf T}=\overline{{\bf \Delta_T}(I)(\cH)}$,  and  the noncommutative Berezin kernel ${\bf K_T}$ is a unitary operator.
Moreover, in this case,
$$
T_{i,j}={\bf K^*_T} ( {\bf S}_{i,j}\otimes I_{\cD_{\bf T}}){\bf K_T}={\bf K^*_T}(I\otimes W) ({\bf S}_{i,j}\otimes I_\cK) (I\otimes W^*) {\bf K_T}
$$
for any $i\in \{1,\ldots, k\}$ and $j\in \{1,\ldots, n_i\}$,
where $W:\cK\to \cD_{\bf T}$ is a unitary operator.
\end{theorem}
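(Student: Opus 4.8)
The plan is to reduce everything to the intertwining relation ${\bf K_T} T_{i,j}^* = ({\bf S}_{i,j}^*\otimes I_{\cD_{\bf T}}){\bf K_T}$ recorded earlier, supplemented by two explicit computations for the universal model. First I would establish that ${\bf \Delta_S}(I)=P_{\CC 1}$, the rank-one projection onto the vacuum $\CC 1\subseteq \otimes_{i=1}^k F^2(H_{n_i})$: since the entries of ${\bf S}_i$ doubly commute with those of ${\bf S}_t$ for $i\neq t$, we have ${\bf \Delta_S}(I)=\prod_{i=1}^k\left(I-\Phi_{{\bf S}_i}(I)\right)$, and each factor $I-\sum_j{\bf S}_{i,j}{\bf S}_{i,j}^*$ is the projection onto the constants of the $i$-th Fock factor (the Cuntz--Toeplitz relation), so the product is $P_{\CC 1}$. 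Hence ${\bf \Delta_{S\otimes I_\cK}}(I)=P_{\CC 1}\otimes I_\cK$, with range $\CC 1\otimes\cK$, giving $\dim\cD_{{\bf S}\otimes I_\cK}=\dim\cK$. Next I would compute ${\bf K_S}$ directly: for $\eta\in\otimes_{i=1}^k F^2(H_{n_i})$ one has $P_{\CC 1}({\bf S}_{1,\beta_1}^*\cdots{\bf S}_{k,\beta_k}^*\eta)=\langle\eta,e_{(\beta)}\rangle\,1$, so ${\bf K_S}\eta=\eta\otimes 1$. Thus ${\bf K_S}$ is unitary under the identification $(\otimes_i F^2)\otimes\CC 1\cong\otimes_i F^2$, and the identical computation shows ${\bf K_{{\bf S}\otimes I_\cK}}$ is unitary.

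For the reverse implication, assume ${\bf K_T}$ is unitary and $\dim\cD_{\bf T}=\dim\cK$. Multiplying the intertwining relation on the left by ${\bf K_T}^*$ and using ${\bf K_T}^*{\bf K_T}=I_\cH$ gives $T_{i,j}^*={\bf K_T}^*({\bf S}_{i,j}^*\otimes I_{\cD_{\bf T}}){\bf K_T}$, and taking adjoints yields the first displayed formula $T_{i,j}={\bf K_T}^*({\bf S}_{i,j}\otimes I_{\cD_{\bf T}}){\bf K_T}$. As ${\bf K_T}$ is unitary, this exhibits ${\bf T}$ as unitarily equivalent to ${\bf S}\otimes I_{\cD_{\bf T}}$. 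Choosing a unitary $W:\cK\to\cD_{\bf T}$ (available since the dimensions agree) and observing that $I\otimes W$ intertwines ${\bf S}\otimes I_\cK$ and ${\bf S}\otimes I_{\cD_{\bf T}}$ produces the second displayed formula and the unitary equivalence of ${\bf T}$ with ${\bf S}\otimes I_\cK$ implemented by $(I\otimes W^*){\bf K_T}$. This simultaneously establishes the ``moreover'' statement.

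For the forward implication, suppose $T_{i,j}=U^*({\bf S}_{i,j}\otimes I_\cK)U$ with $U:\cH\to(\otimes_i F^2)\otimes\cK$ unitary. Then ${\bf \Delta_T}(I)=U^*(P_{\CC 1}\otimes I_\cK)U$ has rank $\dim\cK$, so $\dim\cD_{\bf T}=\dim\cK$. To see that ${\bf K_T}$ is unitary I would invoke the naturality of the Berezin kernel under unitary equivalence: the map $V:\cD_{\bf T}\to\cD_{{\bf S}\otimes I_\cK}$ defined by $V\!\left({\bf \Delta_T}(I)^{1/2}h\right)={\bf \Delta_{{\bf S}\otimes I_\cK}}(I)^{1/2}Uh$ is a well-defined unitary, because ${\bf \Delta_T}(I)^{1/2}=U^*{\bf \Delta_{{\bf S}\otimes I_\cK}}(I)^{1/2}U$. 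Comparing the defining series term by term, using $T_{(\beta)}^*=U^*({\bf S}\otimes I_\cK)_{(\beta)}^*U$, gives $(I\otimes V){\bf K_T}={\bf K_{{\bf S}\otimes I_\cK}}U$, so that ${\bf K_T}=(I\otimes V^*){\bf K_{{\bf S}\otimes I_\cK}}U$ is a composition of unitaries and hence unitary.

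The main obstacle is not conceptual but lies in the bookkeeping of the forward implication: verifying that $V$ is well defined and that the naturality identity $(I\otimes V){\bf K_T}={\bf K_{{\bf S}\otimes I_\cK}}U$ holds on each summand of the Berezin kernel. The whole argument rests on the two explicit computations ${\bf \Delta_S}(I)=P_{\CC 1}$ and ${\bf K_S}\eta=\eta\otimes 1$, which are routine but must be carried out carefully, since the unitarity of ${\bf K_{{\bf S}\otimes I_\cK}}$ is the only genuinely computational input feeding both directions.
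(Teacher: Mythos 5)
Your proposal is correct and follows essentially the same route as the paper: both directions hinge on the intertwining relation ${\bf K_T}T_{i,j}^*=({\bf S}_{i,j}^*\otimes I_{\cD_{\bf T}}){\bf K_T}$, the identity ${\bf \Delta}_{{\bf S}\otimes I_\cK}(I)=P_{\CC 1}\otimes I_\cK$, and a term-by-term comparison of the defining series of the Berezin kernel under the unitary $U$ (the paper organizes this as the single identity ${\bf K_T}U=I\otimes W$ rather than your factored naturality identity $(I\otimes V){\bf K_T}={\bf K}_{{\bf S}\otimes I_\cK}U$, but the computation is the same). No gaps.
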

\begin{proof}
 First, we assume that ${\bf T}$ is unitarily equivalent to ${\bf S}\otimes I_\cK:=\{{\bf S}_{i,j}\otimes I_\cK\}$, i.e., there is a unitary operator $U:(\otimes_{i=1}^k F^2(H_{n_i}))\otimes \cK\to \cH$ such that
$$
T_{i,j}=U({\bf S}_{i,j}\otimes I_\cK)U^*, \qquad i\in \{1,\ldots, k\}, j\in \{1,\ldots, n_i\}.
$$
We show that the noncommutative Berezin kernel ${\bf K_T}$ satisfies the relation
$$
{\bf K_T}=(I\otimes W)U^*,
$$
where $W:\cK\to \cD_{\bf T}$ is a unitary operator.   Indeed, note that
${\bf \Delta}_{\bf T}(I)=U{\bf \Delta}_{{\bf S}\otimes I_\cK}(I)U^*=U(P_{\CC\otimes \cK})U^*$.
and
${\bf \Delta}_{\bf T}(I)^{1/2}=U{\bf \Delta}_{{\bf S}\otimes I_\cK}(I)^{1/2}U^*$. Consequently,
we have $\dim \overline{{\bf \Delta}_{\bf T}(I)(\cH)}=\dim \cK$ and
$U(1\otimes \cK)=\overline{{\bf \Delta}_{\bf T}(I)(\cH)}$. Using the definition of the noncommutative Berezin kernel, we deduce that
\begin{equation*}
\begin{split}
{\bf K_{T}}h:&=\sum_{\beta_i\in \FF_{n_i}^+, i=1,\ldots,k}
   e^1_{\beta_1}\otimes \cdots \otimes  e^k_{\beta_k}\otimes {\bf \Delta_{T}}(I)^{1/2} T_{1,\beta_1}^*\cdots T_{k,\beta_k}^*h\\
   &=\sum_{\beta_i\in \FF_{n_i}^+, i=1,\ldots,k}
   e^1_{\beta_1}\otimes \cdots \otimes  e^k_{\beta_k}\otimes U{\bf \Delta}_{{\bf S}\otimes I_\cK}(I)^{1/2}U^* U ({\bf S}_{1,\beta_1}^*\cdots {\bf S}_{k,\beta_k}^*\otimes I_\cK)U^*h\\
   &=\sum_{\beta_i\in \FF_{n_i}^+, i=1,\ldots,k}
   e^1_{\beta_1}\otimes \cdots \otimes  e^k_{\beta_k}\otimes U(P_{\CC\otimes \cK}) ({\bf S}_{1,\beta_1}^*\cdots {\bf S}_{k,\beta_k}^*\otimes I_\cK)U^*h
\end{split}
\end{equation*}
Consider the unitary operator $W:\cK\to \cD_{\bf T}$ defined by $Wy:=U(1\otimes y)$, $y\in \cK$.
For any vector $g=\sum_{\beta_i\in \FF_{n_i}^+, i=1,\ldots,k}
   e^1_{\beta_1}\otimes \cdots \otimes  e^k_{\beta_k}\otimes y_{(\beta)}$ in $(\otimes_{i=1}^k F^2(H_{n_i}))\otimes \cK$, the computations above imply
 \begin{equation*}
\begin{split}
{\bf K_{T}}Ug&=  \sum_{\beta_i\in \FF_{n_i}^+, i=1,\ldots,k}
   e^1_{\beta_1}\otimes \cdots \otimes  e^k_{\beta_k}\otimes U(P_{\CC\otimes \cK}) ({\bf S}_{1,\beta_1}^*\cdots {\bf S}_{k,\beta_k}^*\otimes I_\cK)g\\
   &=\sum_{\beta_i\in \FF_{n_i}^+, i=1,\ldots,k}
   e^1_{\beta_1}\otimes \cdots \otimes  e^k_{\beta_k}\otimes Wy_{(\beta)} =(I\otimes W)g.
\end{split}
\end{equation*}
Hence, ${\bf K_T}=(I\otimes W)U^*$ is a unitary operator. On the other hand, we have
$$
{\bf S}_{i,j}\otimes I_{\cD_{\bf T}}=(I\otimes W)({\bf S}_{i,j}\otimes I_\cK) (I\otimes W^*)
$$
for any $i\in \{1,\ldots, k\}$ and $ j\in \{1,\ldots, n_i\}$. Due to the properties of the noncommutative Berezin kernel, we have ${\bf K_T}T_{i,j}^*= ( {\bf S}^*_{i,j}\otimes I_{\cD_{\bf T}}){\bf K_T}$. Since  ${\bf K_T}$ is a unitary operator, we deduce that
$$
T_{i,j}= {\bf K^*_T}(I\otimes W) ({\bf S}_{i,j}\otimes I_\cK) (I\otimes W^*) {\bf K_T}.
$$

Conversely, if the noncommutative Berezin kernel ${\bf K_T}$ is a unitary operator, then,
 due to the fact that  ${\bf T}$ is a pure element in ${\bf B_n}(\cH)^-$  and
$
T_{i,j}={\bf K^*_T} ( {\bf S}_{i,j}\otimes I_{\cD_{\bf T}}){\bf K_T}
$
for any $i\in \{1,\ldots, k\}$ and $j\in \{1,\ldots, n_i\}$, we complete the proof.
\end{proof}

\begin{corollary} \label{unit-implement} Let ${\bf T}=\{T_{i,j}\}\in {\bf B_n}(\cH)^-$  and let ${\bf S}=\{{\bf S}_{i,j}\}\in {\bf B_n}(\otimes_{i=1}^k F^2(H_{n_i}))^-$ be the universal model of the regular polyball. Then  ${\bf T}$ is unitarily equivalent to ${\bf S}$ if and only if $\dim \cD_{\bf T}=1$ and  the noncommutative Berezin kernel ${\bf K_T}$ is a unitary operator. Moreover, in this case,   the   defect space $\cD_{\bf T}=\CC v_0$ for some vector $v_0\in \cH$ with $\|v_0\|=1$, and
$$
T_{i,j}={\bf K^*_T} ( {\bf S}_{i,j}\otimes I_{\cD_{\bf T}}){\bf K_T}={\bf K^*_T}\tilde W {\bf S}_{i,j} \tilde W^* {\bf K_T}, \qquad i\in \{1,\ldots, k\}, j\in \{1,\ldots, n_i\},
$$
where $\tilde W:\otimes_{i=1}^k F^2(H_{n_i})\to (\otimes_{i=1}^k F^2(H_{n_i}))\otimes \CC v_0$ is the unitary operator defined by
$$
\tilde Wg:= g\otimes v_0,\qquad g\in \otimes_{i=1}^k F^2(H_{n_i}).
$$
\end{corollary}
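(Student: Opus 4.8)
The plan is to obtain this statement as the specialization of Theorem \ref{unitar-shift} to a one-dimensional multiplicity space. First I would observe that, under the canonical identification $\left(\otimes_{i=1}^k F^2(H_{n_i})\right)\otimes \CC\cong \otimes_{i=1}^k F^2(H_{n_i})$, the ampliation ${\bf S}\otimes I_\CC$ is unitarily equivalent to the universal model ${\bf S}$ itself. Consequently, saying that ${\bf T}$ is unitarily equivalent to ${\bf S}$ is the same as saying that ${\bf T}$ is unitarily equivalent to ${\bf S}\otimes I_\cK$ with $\cK$ one-dimensional.

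With this reduction in place, the equivalence is immediate: by Theorem \ref{unitar-shift}, ${\bf T}$ is unitarily equivalent to ${\bf S}\otimes I_\cK$ if and only if $\dim\cD_{\bf T}=\dim\cK$ and the Berezin kernel ${\bf K_T}$ is a unitary operator. Taking $\dim\cK=1$ yields precisely the stated criterion $\dim\cD_{\bf T}=1$ together with the unitarity of ${\bf K_T}$. The first displayed formula $T_{i,j}={\bf K^*_T}({\bf S}_{i,j}\otimes I_{\cD_{\bf T}}){\bf K_T}$ is then nothing but the corresponding formula in Theorem \ref{unitar-shift}, which holds for any ${\bf T}$ whose Berezin kernel is unitary.

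It remains to record the ``moreover'' part. Since $\cD_{\bf T}=\overline{{\bf \Delta_T}(I)(\cH)}$ is now one-dimensional, I would fix a unit vector $v_0\in\cH$ with $\cD_{\bf T}=\CC v_0$. The unitary $W\colon\cK\to\cD_{\bf T}$ supplied by Theorem \ref{unitar-shift} becomes, in the case $\cK=\CC$, the map sending $1\mapsto v_0$, and under the identification above the operator $I\otimes W$ is exactly $\tilde W\colon g\mapsto g\otimes v_0$. I would then check the two routine facts that $\tilde W$ is unitary and that $\tilde W\,{\bf S}_{i,j}\,\tilde W^*={\bf S}_{i,j}\otimes I_{\cD_{\bf T}}$; substituting the latter into the first displayed formula produces the second equality $T_{i,j}={\bf K^*_T}\tilde W{\bf S}_{i,j}\tilde W^*{\bf K_T}$.

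There is no genuine obstacle here, since the entire content is already carried by Theorem \ref{unitar-shift}; the only point requiring care is the bookkeeping of the several unitary identifications---collapsing the tensor factor $\CC$, matching the abstract $W$ with the concrete $\tilde W$, and confirming that $v_0$ may be taken to be exactly the normalized generator of $\cD_{\bf T}$ so that $\tilde W$ has the stated simple form.
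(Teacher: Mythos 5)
Your proposal is correct and follows exactly the route the paper intends: the corollary is stated without proof precisely because it is the specialization of Theorem \ref{unitar-shift} to $\cK=\CC$, with the canonical identification $(\otimes_{i=1}^k F^2(H_{n_i}))\otimes\CC\cong\otimes_{i=1}^k F^2(H_{n_i})$ turning $I\otimes W$ into $\tilde W\colon g\mapsto g\otimes v_0$. Nothing is missing.
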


We denote by $\text{\rm Aut}_{\boldsymbol\cA_{\bf n}}(C^*({\bf S}))$ the group of automorphisms of the Cuntz-Toeplitz algebra  $C^*({\bf S})$ such that $\Gamma(\boldsymbol\cA_{\bf n})= \boldsymbol\cA_{\bf n}$.

\begin{theorem} \label{auto-C*} Any automorphism  $\Gamma$ of the   Cuntz-Toeplitz $C^*$-algebra $C^*({\bf S})$ which leaves invariant the   noncommutative polyball algebra $\boldsymbol\cA_{\bf n}$, i.e. $\Gamma(\boldsymbol\cA_{\bf n})=\boldsymbol\cA_{\bf n}$,  has the form

$$
\Gamma(g):=\boldsymbol\cB_{\hat \Psi}[g]={\bf K}_{\hat {\Psi}}[g\otimes I_{\cD_{\hat {\Psi}}}]{\bf K}_{\hat {\Psi}}^*,\qquad
g\in  C^*({\bf S}),
$$
where  $\Psi\in \text{\rm Aut}({\bf B_n})$ and $\boldsymbol\cB_{\hat \Psi}$ is the noncommutative Berezin transform at   the boundary function $\hat{\Psi}$.  In this case,
the noncommutative Berezin kernel ${\bf K}_{\hat {\Psi}}$ is a unitary operator and $\Gamma$ is a unitary implemented automorphism of
  $C^*({\bf S})$.
Moreover, we have
$$
\text{\rm Aut}_{\boldsymbol\cA_{\bf n}}(C^*({\bf S}))\simeq  \text{\rm Aut}({\bf B_n}).
$$
\end{theorem}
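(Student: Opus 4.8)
The plan is to exhibit a group isomorphism $\Theta\colon\text{\rm Aut}({\bf B_n})\to\text{\rm Aut}_{\boldsymbol\cA_{\bf n}}(C^*({\bf S}))$ given by $\Theta(\Psi):=\boldsymbol\cB_{\widehat{\Psi^{-1}}}$, the substantive part being surjectivity, i.e. that every $\Gamma$ in the target group is a Berezin transform at the boundary function of an automorphism.

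First I would check that $\Theta$ is well defined and lands in the right group. For $\Psi\in\text{\rm Aut}({\bf B_n})$, Theorem \ref{structure2}(vi) gives that $\hat\Psi$ is unitarily equivalent to ${\bf S}$ with $\text{\rm rank}\,{\bf\Delta}_{\hat\Psi}=1$, so by Corollary \ref{unit-implement} the Berezin kernel ${\bf K}_{\hat\Psi}$ is unitary and $\dim\cD_{\hat\Psi}=1$; consequently $\boldsymbol\cB_{\hat\Psi}[g]={\bf K}_{\hat\Psi}^*(g\otimes I_{\cD_{\hat\Psi}}){\bf K}_{\hat\Psi}$ is a spatially implemented $*$-isomorphism. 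Proposition \ref{Berezin-comp} shows $\boldsymbol\cB_{\hat\Psi}$ maps $C^*({\bf S})$ into itself, and Corollary \ref{Berezin-comp2} applied to the pair $(\Psi,\Psi^{-1})$ produces the two-sided inverse $\boldsymbol\cB_{\widehat{\Psi^{-1}}}$, so $\boldsymbol\cB_{\hat\Psi}\in\text{\rm Aut}(C^*({\bf S}))$; on $\boldsymbol\cA_{\bf n}$ it is the functional calculus $f\mapsto f(\hat\Psi)$, which leaves $\boldsymbol\cA_{\bf n}$ invariant because the entries of $\hat\Psi$ lie in $\boldsymbol\cA_{\bf n}$. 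That $\Theta$ is a homomorphism is exactly Corollary \ref{Berezin-comp2}, and it is injective since $\boldsymbol\cB_{\hat\Psi}=\text{\rm id}$ forces $\hat\Psi_{i,j}=\boldsymbol\cB_{\hat\Psi}[{\bf S}_{i,j}]={\bf S}_{i,j}$, hence $\hat\Psi={\bf S}$ and $\Psi=\text{\rm id}$.

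The crux is surjectivity. Given $\Gamma\in\text{\rm Aut}_{\boldsymbol\cA_{\bf n}}(C^*({\bf S}))$, set $T_{i,j}:=\Gamma({\bf S}_{i,j})$; invariance of $\boldsymbol\cA_{\bf n}$ gives $T_{i,j}\in\boldsymbol\cA_{\bf n}$, and since $\Gamma$ is a $*$-homomorphism it preserves the Cuntz--Toeplitz relations within each block and the cross-block doubly commuting relations, so ${\bf T}=\{T_{i,j}\}$ is a doubly commuting tuple of row isometries generating $C^*({\bf S})$. I would then prove the decisive claim that ${\bf T}$ is unitarily equivalent to ${\bf S}$. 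The defining representation of $C^*({\bf S})=\cT_{n_1}\otimes\cdots\otimes\cT_{n_k}$ on $\otimes_{i=1}^k F^2(H_{n_i})$ is irreducible, being the tensor product of the irreducible Fock representations of the Cuntz--Toeplitz algebras, so its commutant is $\CC I$; because $\Gamma$ is onto, $\Gamma(C^*({\bf S}))=C^*({\bf S})$ has the same commutant, i.e. the representation $g\mapsto\Gamma(g)$ is irreducible, and it is faithful. By the Wold-type classification of nondegenerate representations of $C^*({\bf S})$ (Corollary~7.3 of \cite{Po-Berezin-poly}) an irreducible representation is either unitarily equivalent to the universal model ${\bf S}$ or factors through the Cuntz quotient and therefore annihilates the nonzero ideal $\fI$ of compact operators generated by the minimal projection $P_\CC={\bf\Delta}_{\bf S}(I)$; faithfulness excludes the second alternative, so ${\bf T}\cong{\bf S}$. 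Corollary \ref{unit-implement} now yields $\dim\cD_{\bf T}=1$ and that ${\bf K}_{\bf T}$ is unitary. This irreducibility/no-Cuntz-part step is the main obstacle, and it is precisely where the hypothesis $\Gamma(\boldsymbol\cA_{\bf n})=\boldsymbol\cA_{\bf n}$ and the faithfulness of $\Gamma$ do the work.

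To finish, I would set $\hat\Psi:={\bf T}$; its entries lie in $\boldsymbol\cA_{\bf n}$ and it is a pure element of ${\bf B_n}(\otimes_{i=1}^k F^2(H_{n_i}))^-$, so Proposition \ref{Berezin-comp} produces a free holomorphic $\Psi\colon{\bf B_n}\to{\bf B_n}(\cH)^-$ with boundary function ${\bf T}$, and Proposition \ref{range} together with purity and $\text{\rm rank}\,{\bf\Delta}_{\bf T}=1$ (which force ${\bf\Delta}_{\Psi(r{\bf S})}(I)>0$ for $r\in[0,1)$) shows $\text{\rm range}\,\Psi\subseteq{\bf B_n}$. Running the same construction on $\Gamma^{-1}$ gives $\Psi'$ with $\hat\Psi'=\Gamma^{-1}({\bf S})$; Theorem \ref{homo-compo} makes $\Psi\circ\Psi'$ and $\Psi'\circ\Psi$ free holomorphic self-maps, and Corollary \ref{Berezin-comp2} gives $\widehat{\Psi\circ\Psi'}=\widehat{\Psi'\circ\Psi}={\bf S}$, whence both compositions equal the identity by uniqueness of the boundary function, so $\Psi\in\text{\rm Aut}({\bf B_n})$. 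Finally, since ${\bf K}_{\hat\Psi}$ is unitary, $\boldsymbol\cB_{\hat\Psi}[g]={\bf K}_{\hat\Psi}^*(g\otimes I_{\cD_{\hat\Psi}}){\bf K}_{\hat\Psi}$ is a $*$-homomorphism that agrees with $\Gamma$ on the generators ${\bf S}_{i,j}$ and ${\bf S}_{i,j}^*$, hence $\Gamma=\boldsymbol\cB_{\hat\Psi}$ on all of $C^*({\bf S})$; this delivers the displayed formula, the unitary implementation, and the surjectivity of $\Theta$, completing the isomorphism $\text{\rm Aut}_{\boldsymbol\cA_{\bf n}}(C^*({\bf S}))\simeq\text{\rm Aut}({\bf B_n})$.
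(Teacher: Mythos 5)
Your proposal is correct in outline but reverses the order of the paper's argument, and the paper itself endorses your route: in the remark immediately following Theorem \ref{auto-C*} the author notes that, since $C^*({\bf S})$ is irreducible and contains the compacts, every automorphism is unitarily implemented, and that this gives an alternative proof avoiding the open mapping theorem. The paper's actual proof goes the other way around: it first shows that $\varphi={\boldsymbol\cB}_{(\cdot)}[\Gamma({\bf S})]$ and $\xi={\boldsymbol\cB}_{(\cdot)}[\Gamma^{-1}({\bf S})]$ are mutually inverse free holomorphic self-maps of the \emph{closed} polyball (via a Fourier-series computation using norm continuity of $\Gamma$), then passes to scalar representations and uses the open mapping theorem plus the maximum principle (Theorem \ref{max-mod}) to get $\varphi\in\text{\rm Aut}({\bf B_n})$, and only \emph{then} invokes Theorem \ref{structure2} to conclude $\text{\rm rank}\,{\bf \Delta}_{\Gamma({\bf S})}=1$ and unitary equivalence with ${\bf S}$. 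You instead get ${\bf T}=\Gamma({\bf S})\simeq{\bf S}$ \emph{first}, from irreducibility, faithfulness, and the Wold dichotomy of Corollary 7.3 of \cite{Po-Berezin-poly}, and derive the function-theoretic statement afterwards. What your route buys is exactly what the remark advertises: no open mapping theorem, and the unitary implementation comes for free from $C^*$-representation theory. What it costs is that you still have to convert the operator-theoretic fact ${\bf T}\simeq{\bf S}$ into the statement $\Psi\in\text{\rm Aut}({\bf B_n})$, which requires some function theory anyway. The well-definedness, injectivity, and homomorphism parts of $\Theta$ via Proposition \ref{Berezin-comp} and Corollary \ref{Berezin-comp2} match the paper.

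The one step you should repair is the parenthetical claim that purity and $\text{\rm rank}\,{\bf\Delta}_{\bf T}=1$ ``force ${\bf\Delta}_{\Psi(r{\bf S})}(I)>0$ for $r\in[0,1)$,'' which you use with Proposition \ref{range} to get $\text{\rm range}\,\Psi\subseteq{\bf B_n}(\cH)$. Unitary equivalence ${\bf T}=W^*{\bf S}W$ does not transport to the $r$-dilations, since $\Psi(r{\bf S})={\boldsymbol\cB}_{r{\bf S}}[{\bf T}]$ is computed from the Fourier coefficients of ${\bf T}$ relative to ${\bf S}$, not relative to $W^*{\bf S}W$; nor does the cross-block doubly commuting property of ${\bf T}$ obviously survive compression by ${\bf K}_{r{\bf S}}$, so neither hypothesis of Corollary \ref{Gt} is available. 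The paper closes this gap differently (in both Theorem \ref{auto-C*} and Theorem \ref{auto-F}): purity of each row isometry $T_i$ gives $\|\Psi_i(0)\|_2<1$, the maximum principle of Theorem \ref{max-mod} then gives $\|\Psi_i({\bf X})\|<1$ for every ${\bf X}\in{\bf B_n}(\cH)$, and Proposition 1.3 of \cite{Po-Berezin-poly} upgrades this to $\Psi({\bf X})\in{\bf B_n}(\cH)$. Substituting that chain for your parenthetical makes the range step sound, after which the composition argument via Theorem \ref{homo-compo}, Corollary \ref{Berezin-comp2}, and uniqueness of boundary functions goes through as you describe.
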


\begin{proof}    Let $\Gamma\in \text{\rm Aut}_{\boldsymbol\cA_{\bf n}}(C^*({\bf S}))$, i.e., $\Psi$ is an automorphism of the Cuntz-Toeplitz algebra  $C^*({\bf S})$ such that $\Gamma(\boldsymbol\cA_{\bf n})= \boldsymbol\cA_{\bf n}$.
 For each $i\in\{1,\ldots,k\}$ and  $j\in\{1,\ldots,n_i\}$, set $\tilde\varphi_{i,j}:=\Gamma({\bf S}_{i,j})$.  If $\tilde\varphi_i:=(\tilde\varphi_{i,1},\ldots, \tilde\varphi_{i,n_i})$, then, using the fact that $\Gamma$ is a morphism of $C^*$-algebras, we have
$$
(id-\Phi_{\tilde\varphi_i})(I)=\Gamma\left(I-\sum_{j=1}^{n_i} {\bf S}_{i,j} {\bf S}_{i,j}^*\right)\geq 0
$$
and, similarly,
$$
(id-\Phi_{\tilde\varphi_1})^{p_1}\circ \cdots \circ (id-\Phi_{\tilde\varphi_k})^{p_k}
=\Gamma\left[(id-\Phi_{{\bf S}_i})^{p_1}\circ \cdots \circ (id-\Phi_{{\bf S}_i})^{p_k} (I)\right]\geq 0
$$
for any $p_i\in \{0,1\}$.
On the other hand, if $s,t\in \{1,\ldots, k\}$, $s\neq t$, then
$$
\tilde\varphi_{s,j}\tilde \varphi_{t,p}=\Gamma({\bf S}_{s,j}{\bf S}_{t,p})=\Gamma({\bf S}_{t,p}{\bf S}_{s,j})=\tilde \varphi_{t,p}\tilde\varphi_{s,j}
$$
for any $j\in \{1,\ldots, n_s\}$ and $p\in \{1,\ldots, n_t\}$. Consequently, the $k$-tuple $\tilde\varphi:=(\tilde\varphi_1, \ldots, \tilde \varphi_k)$ is in the   closed  regular polyball
${\bf B_n}(\otimes_{i=1}^k F^2(H_{n_i}))^-$.
Now, using the noncommutative  Berezin transform, we define
$
\varphi_{i,j}({\bf X}):=\boldsymbol\cB_{\bf X}[\tilde \varphi_{i,j}]$ for  ${\bf X}\in {\bf B_n}(\cH),
$
and remark that, due to Proposition \ref{Berezin-comp}, the mapping  $\varphi: {\bf B_n}(\cH)\to {\bf B_n}(\cH)^-$ defined by
$\varphi({\bf X}):=(\varphi_1({\bf X}), \ldots, \varphi_k({\bf X}))$ and
$\varphi_i({\bf X}):=(\varphi_{i,1}({\bf X}), \ldots, \varphi_{i,n_i}({\bf X}))$  is a free holomorphic function on ${\bf B_n}(\cH)$ which has a continuous extension to the closed polyball ${\bf B_n}(\cH)^-$. This extension is also denoted by $\varphi$.

Now, note that $\Gamma^{-1}(\boldsymbol\cA_{\bf n})= \boldsymbol\cA_{\bf n}$. For each $i\in\{1,\ldots,k\}$ and  $j\in\{1,\ldots,n_i\}$, let $\tilde\xi_{i,j}:=\Gamma^{-1}({\bf S}_{i,j})$. As in the first part of the proof, one can show that
the $k$-tuple $\tilde\xi:=(\tilde\xi_1, \ldots, \tilde \xi_k)$, with $\tilde\xi_i:=(\tilde\xi_{i,1},\ldots, \tilde\xi_{i,n_i})$,  is in the   closed  regular polyball
${\bf B_n}(\otimes_{i=1}^k F^2(H_{n_i}))^-$.
Using the noncommutative  Berezin transform, we define
$
\xi_{i,j}({\bf X}):=\boldsymbol\cB_{\bf X}[\tilde \xi_{i,j}]$ for ${\bf X}\in {\bf B_n}(\cH),
$
and using again  Proposition \ref{Berezin-comp} we deduce that the map $\xi: {\bf B_n}(\cH)\to {\bf B_n}(\cH)^-$ defined by
$\xi({\bf X}):=(\xi_1({\bf X}), \ldots, \xi_k({\bf X}))$ and
$\xi_i({\bf X}):=(\xi_{i,1}({\bf X}), \ldots, \xi_{i,n_i}({\bf X}))$  is a free holomorphic function on ${\bf B_n}(\cH)$ which has a continuous extension to ${\bf B_n}(\cH)^-$, which is also denoted by $\xi$.

According to the results preceding Lemma \ref{pure}, each $\tilde\xi_{i,j}\in \boldsymbol\cA_{\bf n}$ has a unique formal  Fourier type representation  $\sum_{(\alpha)} a_{(\alpha)}^{(i,j)} {\bf S}_{(\alpha)}$ such that
$$
\tilde\xi_{i,j}=\lim_{r\to 1}\sum_{q=0}^\infty \sum_{{(\alpha)\in \FF_{n_1}^+\times \cdots \times\FF_{n_k}^+ }\atop {|\alpha_1|+\cdots +|\alpha_k|=q}} r^q a_{(\alpha)}^{(i,j)} {\bf S}_{(\alpha)},
$$
where the limit is in the operator norm topology. Using the continuity of $\Gamma$ in the norm topology,  we deduce that
\begin{equation*}
\begin{split}
{\bf S}_{i,j}=\Gamma(\tilde\xi_{i,j})&=
\lim_{r\to 1}\sum_{q=0}^\infty \sum_{{(\alpha)\in \FF_{n_1}^+\times \cdots \times\FF_{n_k}^+ }\atop {|\alpha_1|+\cdots +|\alpha_k|=q}} r^q a_{(\alpha)}^{(i,j)} \Gamma({\bf S}_{(\alpha)})=
\lim_{r\to 1}\sum_{q=0}^\infty \sum_{{(\alpha)\in \FF_{n_1}^+\times \cdots \times\FF_{n_k}^+ }\atop {|\alpha_1|+\cdots +|\alpha_k|=q}} r^q a_{(\alpha)}^{(i,j)} \tilde\varphi_{1,\alpha_1}\cdots \tilde\varphi_{k,\alpha_k}.
\end{split}
\end{equation*}
Due to the the continuity in norm of the Berezin transform  $\boldsymbol \cB_{\bf X}$, where ${\bf X}\in {\bf B_n}(\cH)$, we have
\begin{equation*}
\begin{split}
X_{i,j}&=\boldsymbol \cB_{\bf X}[{\bf S}_{i,j}] =
\lim_{r\to 1}\sum_{q=0}^\infty \sum_{{(\alpha)\in \FF_{n_1}^+\times \cdots \times\FF_{n_k}^+ }\atop {|\alpha_1|+\cdots +|\alpha_k|=q}} r^q a_{(\alpha)}^{(i,j)} \boldsymbol \cB_{\bf X}[\tilde\varphi_{1,\alpha_1}\cdots \tilde\varphi_{k,\alpha_k}]\\
&=\lim_{r\to 1}\sum_{q=0}^\infty \sum_{{(\alpha)\in \FF_{n_1}^+\times \cdots \times\FF_{n_k}^+ }\atop {|\alpha_1|+\cdots +|\alpha_k|=q}} r^q a_{(\alpha)}^{(i,j)}  \varphi_{1,\alpha_1}({\bf X})\cdots \varphi_{k,\alpha_k}({\bf X})\\
&=\lim_{r\to 1} \boldsymbol\cB_{\varphi({\bf X})}\left[\sum_{q=0}^\infty \sum_{{(\alpha)\in \FF_{n_1}^+\times \cdots \times\FF_{n_k}^+ }\atop {|\alpha_1|+\cdots +|\alpha_k|=q}} r^q a_{(\alpha)}^{(i,j)} {\bf S}_{(\alpha)}\right] =\boldsymbol\cB_{\varphi({\bf X})}\left[\tilde\xi_{i,j}\right]=\xi_{i,j}(\varphi({\bf X}))
\end{split}
\end{equation*}
for any ${\bf X}\in {\bf B_n}(\cH)$, $i\in \{1,\ldots, k\}$, and $j\in \{1,\ldots, n_i\}$. Consequently, using the continuity in norm of $\varphi$ and $\xi$ on the closed polyball ${\bf B_n}(\cH)^-$, we deduce that
 $(\xi\circ \varphi)({\bf X})={\bf X}$ for any ${\bf X}\in {\bf B_n}(\cH)^-$. Similarly, one can prove that $(\varphi\circ \xi)({\bf X})={\bf X}$ for any ${\bf X}\in {\bf B_n}(\cH)^-$.
Therefore, $\varphi:{\bf B_n}(\cH)^-\to {\bf B_n}(\cH)^-$ is a homeomorphism such that $\varphi$ and $\varphi^{-1}=\xi$  are free holomorphic functions on ${\bf B_n}(\cH)$.

    The next step is to prove that
$\varphi({\bf X})\in {\bf B_n}(\cH)$ for any ${\bf X}\in {\bf B_n}(\cH)$.
Indeed,  due to Corollary \ref{scalar}, the scalar representations of $\varphi$ and $\xi$ are holomorphic functions on
 ${\bf B_n}(\CC)$ with values in the closed polyball ${\bf B_n}(\CC)^-$. Applying the open mapping theorem from complex analysis to the scalar representations of $\varphi$ and $\xi$, we deduce that $\varphi({\bf B_n}(\CC))={\bf B_n}(\CC)$ and $\xi({\bf B_n}(\CC))={\bf B_n}(\CC)$. In particular, for each $i\in \{1,\ldots, k\}$, $\varphi_i:{\bf B_n}(\cH)\to B(\cH)^{n_i}$  is a free holomorphic function with the properties: $\|\varphi_i\|_\infty=1$ and $\|\varphi_i(0)\|<1$. Applying the maximum principle of  Theorem \ref{max-mod}, we conclude that $\|\varphi_i({\bf X})\|<1$ for any ${\bf X}\in {\bf B_n}(\cH)$.  Hence, and using Proposition 1.3 from \cite{Po-Berezin-poly}, we deduce that $\varphi({\bf X})\in {\bf B_n}(\cH)$, which proves our assertion. Similarly, one proves that
 $\xi({\bf X})\in {\bf B_n}(\cH)$ for any ${\bf X}\in {\bf B_n}(\cH)$. Therefore, $\varphi\in \text{\rm Aut} ({\bf B_n})$.

Now, we apply Theorem \ref{structure2} and deduce that  $\text{\rm rank}\ {\bf \Delta_{\tilde \varphi}}=1$ and $\tilde\varphi$ is unitarily equivalent to the universal model ${\bf S}$. Combining this with Theorem \ref{unitar-shift} and Corollary \ref{unit-implement}, we deduce that the noncommutative Berezin transform ${\bf K}_{\tilde \varphi}$ is a unitary operator and
\begin{equation*}
\begin{split}
\Gamma({\bf S}_{i,j})&=\tilde\varphi_{i,j}={\bf K}^*_{\tilde \varphi}({\bf S}_{i,j}\otimes I_{\cD_{\tilde\varphi}}){\bf K}_{\tilde \varphi}
={\bf K}^*_{\tilde \varphi}\tilde W{\bf S}_{i,j}\tilde W^*{\bf K}_{\tilde \varphi},
\end{split}
\end{equation*}
where $\tilde W:\otimes_{i=1}^k F^2(H_{n_i})\to (\otimes_{i=1}^k F^2(H_{n_i}))\otimes \CC v_0$ is the unitary operator defined by
$$
\tilde Wg:= g\otimes v_0,\qquad g\in \otimes_{i=1}^k F^2(H_{n_i}),
$$
where $\cD_{\tilde\varphi}=\CC v_0$ for some vector $v_0\in \otimes_{i=1}^k F^2(H_{n_i})$ with $\|v_0\|=1$.
Hence, we  also have
$$
\Gamma(g)= {\bf K}^*_{\tilde \varphi}(g\otimes I_{\cD_{\tilde\varphi}}){\bf K}_{\tilde \varphi}, \qquad g\in C^*({\bf S}).
$$

Conversely, assume that $\Gamma:C^*({\bf S})\to C^*({\bf S})$ is defined by

\begin{equation}
\label {GaK}
\Gamma(g):=\boldsymbol{\cB}_{\hat {\Psi}}[g]:={\bf K}_{\hat {\Psi}}[g\otimes I_{\cD_{\hat {\Psi}}}]{\bf K}_{\hat {\Psi}}^*,\qquad
g\in  C^*({\bf S}),
\end{equation}
where  $\Psi\in \text{\rm Aut}({\bf B_n})$ and $\boldsymbol{\cB}_{\hat {\Psi}}$ is the Berezin transform at    the boundary function $\hat{\Psi}$.  As above, due to Theorem \ref{structure2}, Theorem \ref{unitar-shift},  and Corollary \ref{unit-implement}, the noncommutative Berezin transform ${\bf K}_{\hat \Psi}$ is a unitary operator and $\Gamma$ is a unitarily implemented automorphism of $C^*({\bf S})$.

Now, note that each $\Gamma\in \text{\rm Aut}_{\boldsymbol\cA_{\bf n}}(C^*({\bf S}))$ corresponds to a unique $\Psi\in \text{\rm Aut}({\bf B_n})$ such that  relation \eqref{GaK} holds.
Indeed, if $\Psi_1, \Psi_2\in \text{\rm Aut}({\bf B_n})$ and $\boldsymbol{\cB}_{\hat {\Psi}_1}=\boldsymbol{\cB}_{\hat {\Psi}_2}$, then $\boldsymbol{\cB}_{\hat {\Psi}_1}[{\bf S}_{i,j}]=\boldsymbol{\cB}_{\hat {\Psi}_2}[{\bf S}_{i,j}]$, which shows that
$(\hat\Psi_1)_{i,j}=(\hat\Psi_2)_{i,j}$. Applying the Berezin transform at ${\bf X}\in {\bf B_n}(\cH)$, we obtain $(\Psi_1)_{i,j}({\bf X})=(\hat\Psi_2)_{i,j}({\bf X})$, which implies $\Psi_1=\Psi_2$.

 Define $\Lambda:\text{\rm Aut}_{\boldsymbol\cA_{\bf n}}(C^*({\bf S}))\to \text{\rm Aut}({\bf B_n})$ by setting $\Lambda(\Gamma)=\Psi$. As we have seen above, $\Lambda$ is a bijection. Let $ \Gamma_1,\Gamma_2\in \text{\rm Aut}_{\boldsymbol\cA_{\bf n}}(C^*({\bf S}))$ and $\Psi_1, \Psi_2\in \text{\rm Aut}({\bf B_n})$ be such that $\Lambda(\Gamma_j)=\Psi_j$, $j=1,2$.
Using Proposition \ref{Berezin-comp} and Corollary \ref{Berezin-comp2}, we deduce that
\begin{equation*}\begin{split}
\Gamma_1 [\Gamma_2(g))]&=\boldsymbol{\cB}_{\hat {\Psi}_1}[\Gamma_2(g)]
=\boldsymbol{\cB}_{\hat {\Psi}_1}[\boldsymbol{\cB}_{\hat {\Psi}_2}[g]]\\
&= \boldsymbol{\cB}_{\Psi_2(\hat {\Psi})}[g]=\boldsymbol{\cB}_{\widehat {\Psi_2\circ \Psi_1}}[g]=\Lambda^{-1}(\Psi_2\circ \Psi_1)(g)
\end{split}
\end{equation*}
for any $g\in C^*({\bf S})$. Hence, we obtain
$
\Lambda(\Gamma_1 \Gamma_2)=\Psi_2\circ \Psi_1=\Lambda(\Gamma_2)\circ \Lambda(\Gamma_1).
$
The proof is complete.
\end{proof}

 In \cite{Po-Berezin-poly}, we proved that the $C^*$-algebra $C^*({\bf S})$ is irreducible  and contains the compact operators in $B(\otimes_{i=1}^k F^2(H_{n_i}))$. Standard results in  representation  theory of $C^*$-algebras   (see e.g. \cite{Arv-book}), imply that any automorphism  of $C^*({\bf S})$ is a unitarily implemented automorphism. Having this result at hand, we remark that an alternative proof of the fact that $\varphi \in \text{\rm Aut}({\bf B_n})$ in Theorem \ref{auto-C*} can be obtained  using some ideas from the proof of Theorem \ref{auto-F} and avoiding the use of the open mapping theorem from complex analysis.

The Cuntz-Toeplitz algebra $\cT_n$ is the unique unital $C^*$-algebra generated  by $n\in \NN$ isometries $s_1,\ldots, s_n$ satisfying relations
$s_i^*s_j=\delta_{ij}1$ and $s_1s_1^*+\cdots +s_ns_n^*<1$. The noncommutative disc algebra $\cA_n$  (see \cite{Po-von}, \cite{Po-disc}) is the unique non-self-adjoint closed algebra generated $s_1,\ldots, s_n$ and the identity.
We also recall  \cite{Cu} that the Cuntz algebra $\cO_n$ is uniquely defined as the $C^*$-algebra generated by $n\geq 2$ isometries satisfying relations
$\sigma_i^*\sigma_j=\delta_{ij}1$ and $\sigma_1\sigma_1^*+\cdots +\sigma_n\sigma_n^*=1$.
In \cite{Cu}, Cuntz showed that if $\boldsymbol\cK\subset \cT_n$ denotes the algebra of compact operators, then
$$
0\to \boldsymbol \cK\to \cT_n\to \cO_n\to 0
$$
is a short exact sequence of $C^*$-algebras. Since the Cuntz algebra $\cO_n$ and the algebra of compact operators $\boldsymbol\cK$ are nuclear nuclear, so is the Cuntz-Toeplitz algebra $\cT_n$. This implies that the tensor products of  $C^*$-algebras $\cT_{n_1}\otimes \cdots \otimes  \cT_{n_k}$ and $\cO_{n_1}\otimes \cdots \otimes  \cO_{n_k}$ have a unique $C^*$-norm. The $C^*$-algebra $C^*({\bf S})$ generated by the universal model ${\bf S}=\{{\bf S}_{i,j}\}$ is $*$-isomorphic to $\cT_{n_1}\otimes \cdots \otimes  \cT_{n_k}$ (see  \cite{Po-poisson}). According to the definition of the min norm on tensor products of operator algebras \cite{Pa-book} and since $\cA_{n_i}$ can be seen as a subalgebra of $\cT_{n_i}$ (see \cite{Po-disc}), we also have  that
$\boldsymbol\cA_{\bf n}\simeq \cA_{n_1}\otimes_{min}\cdots \otimes_{min} \cA_{n_k}$.

Using the short exact sequence obtained by Cuntz \cite{Cu}, one can deduce that there is a a surjective $*$-representation
$\chi:C^*({\bf S})\to \cO_{n_1}\otimes \cdots \otimes  \cO_{n_k}$
such that $\chi({\bf S}_{i,j})=\sigma_{i,j}$, where
$$\boldsymbol\sigma_{i,j}:=\underbrace{I\otimes\cdots\otimes I}_{\text{${i-1}$
times}}\otimes \,\sigma_{i,j}\otimes \underbrace{I\otimes\cdots\otimes
I}_{\text{${k-i}$ times}},
$$
for  $i\in\{1,\ldots,k\}$ and  $j\in\{1,\ldots,n_i\}$, where $\{\sigma_{i,j}\}_{j=1}^{n_i}$ is a set of generators of the Cuntz algebra $\cO_{n_i}$.
We also remark (see \cite{Po-disc}) that  the closed non-seladjoint algebra $Alg(1,\sigma_i)$  generated by $\{\sigma_{i,j}\}_{j=1}^{n_i}$ and the identity is completely isometric isomorphic to the noncommutative disc algebra $\cA_{n_i}$. Consequently, one can see  $\boldsymbol\cA_{\bf n}\simeq \cA_{n_1}\otimes_{min}\cdots \otimes_{min} \cA_{n_k}$ as a subalgebra of $\cO_{n_1}\otimes \cdots \otimes  \cO_{n_k}$.

\begin{corollary} Let  ${\bf n}=(n_1,\ldots n_k)\in \NN^k$. Each holomorphic automorphism of the regular polyball ${\bf B_n}$  induces an automorphism of the $C^*$-algebra $\cO_{n_1}\otimes \cdots \otimes  \cO_{n_k}$ which leaves invariant the non-self-adjoint  subalgebra $\cA_{n_1}\otimes_{min}\cdots \otimes_{min} \cA_{n_k}$.
\end{corollary}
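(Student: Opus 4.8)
The plan is to descend the $C^*$-automorphism produced in Theorem \ref{auto-C*} to the quotient $\cO_{n_1}\otimes\cdots\otimes\cO_{n_k}$. Given $\Psi\in\text{\rm Aut}({\bf B_n})$, Theorem \ref{auto-C*} supplies the unitarily implemented automorphism $\Gamma=\boldsymbol\cB_{\hat\Psi}\in\text{\rm Aut}_{\boldsymbol\cA_{\bf n}}(C^*({\bf S}))$, which satisfies $\Gamma({\bf S}_{i,j})=\hat\Psi_{i,j}$ and $\Gamma(\boldsymbol\cA_{\bf n})=\boldsymbol\cA_{\bf n}$. I would then invoke the surjective $*$-representation $\chi:C^*({\bf S})\to\cO_{n_1}\otimes\cdots\otimes\cO_{n_k}$ with $\chi({\bf S}_{i,j})=\boldsymbol\sigma_{i,j}$, so that $\cO_{n_1}\otimes\cdots\otimes\cO_{n_k}\simeq C^*({\bf S})/\fI$, where $\fI:=\ker\chi$. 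The entire corollary reduces to the single assertion that $\Gamma(\fI)=\fI$: once this holds, $\Gamma$ passes to a well-defined automorphism $\bar\Gamma$ of the quotient via $\bar\Gamma\circ\chi=\chi\circ\Gamma$.

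First I would identify the ideal $\fI$ concretely. Since $\sum_{j=1}^{n_i}\boldsymbol\sigma_{i,j}\boldsymbol\sigma_{i,j}^*=I$ in the $i$-th Cuntz factor, we have $\chi(I-\Phi_{{\bf S}_i}(I))=0$, so each defect $I-\Phi_{{\bf S}_i}(I)$ lies in $\fI$. Conversely, because $1-\sum_j s_{i,j}s_{i,j}^*$ generates the compact ideal $\boldsymbol\cK_i=\ker(\cT_{n_i}\to\cO_{n_i})$ and the Cuntz--Toeplitz algebras are nuclear (so the minimal tensor product is exact in each variable), one checks that $\fI=\sum_{i=1}^{k}\cT_{n_1}\otimes\cdots\otimes\boldsymbol\cK_i\otimes\cdots\otimes\cT_{n_k}$ coincides with the closed two-sided ideal of $C^*({\bf S})$ generated by $\{I-\Phi_{{\bf S}_i}(I):i=1,\ldots,k\}$.

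The main step, and the principal obstacle, is to verify that $\Gamma$ annihilates these generators modulo $\fI$. Applying $\Gamma$ gives $\Gamma(I-\Phi_{{\bf S}_i}(I))=I-\Phi_{\hat\Psi_i}(I)$. By Theorem \ref{structure2}(ii) together with the single-ball facts recalled before Theorem \ref{structure2}, each block $\hat\Psi_i$ is a pure row isometry whose entries lie in the noncommutative disc algebra of a single tensor factor and whose defect in that factor is one-dimensional; hence $I-\Phi_{\hat\Psi_i}(I)$ has the form $I\otimes\cdots\otimes e\otimes\cdots\otimes I$ with $e$ a rank-one projection in one factor, so it is compact in that variable and therefore lies in $\fI$. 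Consequently $\chi\circ\Gamma$ kills every generator of $\fI$, whence $\Gamma(\fI)\subseteq\fI$; applying the same reasoning to $\Gamma^{-1}$, which by Theorem \ref{auto-C*} corresponds to $\Psi^{-1}\in\text{\rm Aut}({\bf B_n})$, yields the reverse inclusion, so $\Gamma(\fI)=\fI$ and $\bar\Gamma$ is a genuine $*$-automorphism of $\cO_{n_1}\otimes\cdots\otimes\cO_{n_k}$ with inverse $\overline{\Gamma^{-1}}$. The care needed here is exactly the permutation hidden in $\Psi=p_\sigma\circ\boldsymbol\Phi_{\bf U}\circ\boldsymbol\Psi_{\boldsymbol\lambda}$: the factor supporting $\hat\Psi_i$ may be permuted, but since $n_{\sigma(i)}=n_i$ the resulting rank-one defect still falls into one of the summands of $\fI$, which is all that is required.

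Finally I would record the invariance of the non-selfadjoint subalgebra. As noted before the corollary, $\chi$ carries $\boldsymbol\cA_{\bf n}$ onto $\cA_{n_1}\otimes_{min}\cdots\otimes_{min}\cA_{n_k}$ inside $\cO_{n_1}\otimes\cdots\otimes\cO_{n_k}$. Using $\Gamma(\boldsymbol\cA_{\bf n})=\boldsymbol\cA_{\bf n}$ and $\bar\Gamma\circ\chi=\chi\circ\Gamma$, we obtain $\bar\Gamma(\cA_{n_1}\otimes_{min}\cdots\otimes_{min}\cA_{n_k})=\chi(\Gamma(\boldsymbol\cA_{\bf n}))=\chi(\boldsymbol\cA_{\bf n})=\cA_{n_1}\otimes_{min}\cdots\otimes_{min}\cA_{n_k}$, which is precisely the asserted invariance, completing the proof.
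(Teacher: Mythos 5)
Your proof is correct and follows exactly the route the paper's preceding discussion sets up (the paper states this corollary without an explicit argument): you pass the automorphism $\boldsymbol\cB_{\hat\Psi}$ of $C^*({\bf S})$ from Theorem \ref{auto-C*} through the surjection $\chi$ by checking that the kernel ideal --- generated by the defects $I-\Phi_{{\bf S}_i}(I)$ --- is preserved, which reduces to the rank-one defect property of the boundary functions from Theorem \ref{structure2}. The details you supply (the identification of $\ker\chi$ via nuclearity, the permutation caveat, and the two-sided inclusion via $\Gamma^{-1}$) are all sound and are precisely what the paper leaves to the reader.
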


\smallskip

\section{Automorphisms of the   polyball algebra
    $\cA ({\bf B_n})$ and  the Hardy algebra $H^\infty({\bf B_n})$}

In this section,   we determine the group   of unitarily implemented  automorphisms of  the noncommutative polyball   algebra $\boldsymbol\cA_{\bf n}$  and Hardy algebra \,${\bf F}_{\bf n}^\infty$ and show that they are isomorphic to the group $ \text{\rm Aut}({\bf B_n})$. We also present the corresponding results for
  the Hardy algebra of all  bounded free holomorphic functions on the regular polyball $H^\infty({\bf B_n})$  and the   polyball algebra
    $\cA ({\bf B_n})$.

\begin{proposition} Let $f: {\bf B_m}(\cH)\to B(\cH)$ and $g:{\bf B_n}(\cH)\to {\bf B_m}(\cH)$ be free holomorphic functions. Then the following statements hold.
\begin{enumerate}
\item[(i)] If $f$ and $g$ have continuous extensions to the closed polyballs ${\bf B_m}(\cH)^-$ and ${\bf B_n}(\cH)^-$, respectively, then $f\circ g\in A({\bf B_n})$.
    \item[(ii)] If $f\in H^\infty({\bf B_m})$ then $f\circ g\in H^\infty({\bf B_n})$ and
    $\|f\circ g\|_\infty\leq \|f\|_\infty.$
    \item[(iii)] If $f\in H^\infty({\bf B_m})$  and $\hat g=(\hat g_1,\ldots, \hat g_m)$ is a pure element of the polyball ${\bf B_m}(\otimes_{i=1}^k F^2(H_{n_i}))$ with entries $\hat g_j\in \boldsymbol\cA_{\bf n}$, then
        $
        (f\circ g)({\bf X})=\boldsymbol \cB_{\bf X}[\boldsymbol \cB_{\hat g}[\hat f]]$ for  ${\bf X}\in {\bf B_n}(\cH).$

\end{enumerate}

\end{proposition}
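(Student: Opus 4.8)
The plan is to handle the three items in increasing order of difficulty, the substantive one being (iii). In all three cases the free holomorphy of $f\circ g$ is already furnished by Theorem \ref{homo-compo}, since $g$ maps ${\bf B_n}(\cH)$ into ${\bf B_m}(\cH)$ and $f$ is free holomorphic on ${\bf B_m}(\cH)$. For (ii) I would simply note that $(f\circ g)({\bf X})=f(g({\bf X}))$ with $g({\bf X})\in {\bf B_m}(\cH)$, whence $\|(f\circ g)({\bf X})\|\le \sup_{{\bf Y}\in {\bf B_m}(\cH)}\|f({\bf Y})\|=\|f\|_\infty$; taking the supremum over ${\bf X}\in {\bf B_n}(\cH)$ and over all Hilbert spaces gives $f\circ g\in H^\infty({\bf B_n})$ together with $\|f\circ g\|_\infty\le \|f\|_\infty$.

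For (i) the only remaining point is the continuous extension. I would argue that the continuous extension $\overline{g}$ of $g$ to ${\bf B_n}(\cH)^-$ necessarily takes values in ${\bf B_m}(\cH)^-$: every boundary point of ${\bf B_n}(\cH)$ is a limit of points of ${\bf B_n}(\cH)$, whose $g$-images lie in ${\bf B_m}(\cH)$, and ${\bf B_m}(\cH)^-$ is closed. Composing $\overline{g}$ with the continuous extension $\overline{f}$ of $f$ to ${\bf B_m}(\cH)^-$ then produces a continuous map on ${\bf B_n}(\cH)^-$ which restricts to $f\circ g$ on the open polyball; hence $f\circ g\in A({\bf B_n})$.

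The core is (iii). The key observation is that every element ${\bf X}\in {\bf B_n}(\cH)$ of the open polyball is pure: for each $i$ the completely positive map $\Phi_{X_i}$ satisfies $\|\Phi_{X_i}\|=\|\Phi_{X_i}(I)\|=\|\sum_j X_{i,j}X_{i,j}^*\|<1$, so $\Phi_{X_i}^{q_i}(I)\to 0$ in norm, hence in the weak operator topology. Consequently $g({\bf X})\in {\bf B_m}(\cH)$ is again pure, and the ${\bf F}_{\bf m}^\infty$-functional calculus identifies $(f\circ g)({\bf X})=f(g({\bf X}))=\boldsymbol\cB_{g({\bf X})}[\hat f]$, where $\hat f$ is the boundary function of $f$. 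Since by hypothesis $\hat g=(\hat g_1,\ldots,\hat g_m)$ is a pure element of the polyball with entries $\hat g_j\in\boldsymbol\cA_{\bf n}$, I would then invoke the composition identity $\boldsymbol\cB_{g({\bf X})}[\hat f]=\boldsymbol\cB_{\bf X}[\boldsymbol\cB_{\hat g}[\hat f]]$, which is exactly the assertion $(f\circ g)({\bf X})=\boldsymbol\cB_{\bf X}[\boldsymbol\cB_{\hat g}[\hat f]]$.

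The main obstacle is that this last composition identity is the pure version of Proposition \ref{Berezin-comp}, which as stated concerns a self-map ${\bf B_n}\to {\bf B_n}^-$, whereas here $g$ goes from ${\bf B_n}$ to ${\bf B_m}$. I expect its proof to transfer verbatim: one expands $\hat f$ in its Fourier series over $\FF_{m_1}^+\times\cdots\times\FF_{m_q}^+$, uses the fact that the Berezin transform at a pure element is SOT-continuous on bounded sets and coincides with the functional calculus, together with the relation $\boldsymbol\cB_{\bf X}[\hat g_{(\gamma)}]=g_{(\gamma)}({\bf X})$, and lets the radial parameter $r\to 1$. The purity of both ${\bf X}$ and $\hat g$, established above and assumed respectively, is precisely what licenses these SOT-limits, so the care goes into checking those hypotheses rather than into any new computation.
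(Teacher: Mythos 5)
Your items (i) and (ii) coincide with the paper's treatment, which dismisses them as immediate consequences of Theorem \ref{homo-compo}; your explicit norm estimate for (ii) and the closure argument for (i) are correct. For (iii) you take a genuinely different, pointwise route. The paper works at the level of boundary functions: it uses Proposition \ref{range} to place $g(r{\bf S})$ in ${\bf B_m}(\otimes_{i=1}^k F^2(H_{n_i}))$, applies the pure case of Proposition \ref{Berezin-comp} at ${\bf X}=r{\bf S}$ to get $f(g(r{\bf S}))=\boldsymbol\cB_{r{\bf S}}[\boldsymbol\cB_{\hat g}[\hat f]]$, and then lets $r\to 1$ (using that $\psi=\text{\rm SOT-}\lim_{r\to 1}\boldsymbol\cB_{r{\bf S}}[\psi]$ for $\psi\in {\bf F_n^\infty}$) to obtain the stronger identity $\widehat{f\circ g}=\boldsymbol\cB_{\hat g}[\hat f]$ of boundary functions, from which the pointwise claim follows by one more Berezin transform. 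You instead observe that every ${\bf X}$ in the \emph{open} polyball is automatically pure (since $\|\Phi_{X_i}(I)\|<1$ forces $\Phi_{X_i}^{q}(I)\to 0$ in norm), hence so is $g({\bf X})\in {\bf B_m}(\cH)$, and you apply the pure case of Proposition \ref{Berezin-comp} directly at ${\bf X}$. Both arguments rest on the same mixed-index extension of Proposition \ref{Berezin-comp} (from self-maps of ${\bf B_n}$ to maps ${\bf B_n}\to {\bf B_m}$) — an extension the paper itself invokes without comment and whose proof, as you note, transfers verbatim — so this is not a gap. What the paper's route buys is the boundary-function identity $\widehat{f\circ g}=\boldsymbol\cB_{\hat g}[\hat f]$ in ${\bf F_n^\infty}$, which is reused later (e.g., in the proof of Theorem \ref{auto-A}); your route buys a shorter derivation of the stated pointwise claim and makes explicit the purity of interior points, which the paper leaves implicit.
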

\begin{proof}
Using Theorem \ref{homo-compo}, part (i) and (ii) are obvious.
Since $\text{\rm range}\,g\subset {\bf B_m}(\cH)$, Proposition \ref{range} implies  $g(r{\bf S})\in {\bf B_m}(\otimes_{i=1}^k F^2(H_{n_i}))$, $r\in [0,1)$, where
${\bf S}$  is the universal model of ${\bf B_n}$. Since $f\circ g\in H^\infty({\bf B_n})$ its boundary function  $\widehat{f\circ g}$ exists and
\begin{equation}\label{fcircg}
\widehat{f\circ g}=\text{\rm SOT-}\lim_{r\to 1}f(g(r{\bf S})).
\end{equation}
According to the second part of Proposition \ref{Berezin-comp}, we  have
\begin{equation}
\label{fBB}
f(g(r{\bf S}))=\boldsymbol{\cB}_{g(r{\bf S})}[\hat f)=\boldsymbol\cB_{r\bf{ S}}[\boldsymbol\cB_{\hat g}[\hat f]].
\end{equation}
Due to Theorem 3.5 and Lemma 3.3 from \cite{Po-Berezin-poly}, if $\Psi\in {\bf F_n^\infty}$, then $\psi=\text{\rm SOT-}\lim_{r\to 1}\boldsymbol\cB_{r\bf{S}}[\psi]$. Applying this result to $\boldsymbol\cB_{\hat g}[\hat f]\in  {\bf F_n^\infty}$ and using relations
\eqref{fcircg} and \eqref{fBB}, we deduce that $\widehat{f\circ g}=\boldsymbol\cB_{\hat g}[\hat f]$.
Since $(f\circ g)({\bf X})=\boldsymbol \cB_{\bf X}[\widehat{f\circ g} ]$  for ${\bf X}\in {\bf B_n}(\cH)$, we complete the proof.
\end{proof}

\begin{corollary} Let $f\in Hol({\bf B_n})$ and $\Psi\in Aut({\bf B_n})$. Then the following statements hold.
\begin{enumerate}
\item[(i)] $f\circ \Psi\in A({\bf B_n})$ for any $f \in A({\bf B_n})$.
\item[(ii)]  $f\circ \Psi\in H^\infty({\bf B_n})$ for any $f \in H^\infty({\bf B_n})$.
\item[(iii)] If $f \in H^\infty({\bf B_n})$, then $\|f\circ \Psi\|_\infty=\|f\|_\infty$ and
$$
        (f\circ \Psi)({\bf X})=\boldsymbol \cB_{\bf X}[\boldsymbol \cB_{\hat \Psi}[\hat f]],\qquad {\bf X}\in {\bf B_n}(\cH).$$
\end{enumerate}

\end{corollary}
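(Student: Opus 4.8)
The plan is to read off all three parts from the preceding Proposition by taking $g=\Psi$ and $m=n$; the entire content is to verify that an arbitrary $\Psi\in\text{\rm Aut}({\bf B_n})$ satisfies the hypotheses demanded in each part, and both facts I need are already recorded in Theorem \ref{structure2}. First I would note that, by Theorem \ref{structure2}(iii), every $\Psi\in\text{\rm Aut}({\bf B_n})$ is a homeomorphism of ${\bf B_n}(\cH)^-$ onto itself, so in particular it admits a continuous extension to the closed polyball. Second, by Theorem \ref{structure2}(ii), the boundary function $\hat\Psi=(\hat\Psi_1,\ldots,\hat\Psi_k)$ is a pure element of the regular polyball ${\bf B_n}(\otimes_{i=1}^k F^2(H_{n_i}))^-$ whose entries $\hat\Psi_{i,j}$ lie in the disc algebra generated by ${\bf S}_{i,1},\ldots,{\bf S}_{i,n_i}$ and the identity, hence in $\boldsymbol\cA_{\bf n}$.

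Given these two observations, part (i) is immediate from part (i) of the Proposition: $f\in A({\bf B_n})$ extends continuously to ${\bf B_n}(\cH)^-$ and so does $\Psi$, whence $f\circ\Psi\in A({\bf B_n})$. Part (ii) is part (ii) of the Proposition applied with $g=\Psi$, yielding $f\circ\Psi\in H^\infty({\bf B_n})$ together with $\|f\circ\Psi\|_\infty\le\|f\|_\infty$.

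For part (iii), I would upgrade this last inequality to an equality using the invertibility of $\Psi$. Since $\Psi^{-1}\in\text{\rm Aut}({\bf B_n})$ as well, writing $f=(f\circ\Psi)\circ\Psi^{-1}$ and applying the inequality of part (ii) to the pair $(f\circ\Psi,\,\Psi^{-1})$ gives $\|f\|_\infty\le\|f\circ\Psi\|_\infty$, which combined with $\|f\circ\Psi\|_\infty\le\|f\|_\infty$ forces $\|f\circ\Psi\|_\infty=\|f\|_\infty$. The remaining Berezin formula
$$
(f\circ\Psi)({\bf X})=\boldsymbol\cB_{\bf X}[\boldsymbol\cB_{\hat\Psi}[\hat f]],\qquad {\bf X}\in{\bf B_n}(\cH),
$$
is precisely part (iii) of the Proposition with $g=\Psi$, whose standing purity hypothesis on $\hat\Psi$ has already been checked from Theorem \ref{structure2}(ii).

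I expect no genuine obstacle: the statement is a specialization of the Proposition, and its whole substance is the transcription of the automorphism property of $\Psi$ into the two hypotheses of the Proposition by means of Theorem \ref{structure2}. The only step that is not purely mechanical is the symmetry argument invoking $\Psi^{-1}$ to promote the norm bound to an equality; there the point to be careful about is that $\Psi^{-1}$ is again an automorphism, so that part (ii) genuinely applies to it.
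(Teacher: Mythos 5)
Your proposal is correct and follows exactly the route the paper intends: the corollary is stated without proof as a direct specialization of the preceding Proposition, with the two hypotheses (continuous extension of $\Psi$ to ${\bf B_n}(\cH)^-$, and purity of $\hat\Psi$ with entries in $\boldsymbol\cA_{\bf n}$) supplied by Theorem \ref{structure2}(iii) and (ii) respectively. Your symmetry argument via $\Psi^{-1}$ for upgrading $\|f\circ\Psi\|_\infty\le\|f\|_\infty$ to an equality is the standard and correct way to obtain the norm identity in part (iii).
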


We remark that there are operator-valued coefficient versions of the previous two results and the proofs are similar.

\begin{theorem} \label{auto-polyball} Any unitarily implemented  automorphism of  the noncommutative polyball algebra  $\boldsymbol\cA_{\bf n} $   is the Berezin transform $\boldsymbol\cB_{\hat \Psi}|_{\boldsymbol\cA_{\bf n}}$ of a boundary function   $\hat\Psi$,  where  $\Psi\in \text{\rm Aut}({\bf B_n})$. Moreover, we have
$$
\text{\rm Aut}_u(\boldsymbol\cA_{\bf n})\simeq \text{\rm Aut}({\bf B_n}).
$$
\end{theorem}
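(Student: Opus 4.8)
The plan is to deduce everything from Theorem \ref{auto-C*} by exploiting the fact that the Cuntz--Toeplitz algebra $C^*({\bf S})$ is exactly the $C^*$-algebra generated by the polyball algebra $\boldsymbol\cA_{\bf n}$. Indeed, since $\boldsymbol\cA_{\bf n}$ contains each ${\bf S}_{i,j}$ and the identity, the $C^*$-algebra it generates contains all the ${\bf S}_{i,j}$ together with their adjoints, and hence equals $C^*({\bf S})$. First I would fix $\alpha\in \text{\rm Aut}_u(\boldsymbol\cA_{\bf n})$ implemented by a unitary $U$ on $\otimes_{i=1}^k F^2(H_{n_i})$, so that $\alpha(a)=UaU^*$ for $a\in\boldsymbol\cA_{\bf n}$; since $\alpha$ is onto, $U\boldsymbol\cA_{\bf n}U^*=\boldsymbol\cA_{\bf n}$, and applying $*$ gives $U\boldsymbol\cA_{\bf n}^*U^*=\boldsymbol\cA_{\bf n}^*$. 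Therefore $\text{\rm Ad}_U$ leaves the dense $*$-subalgebra generated by $\boldsymbol\cA_{\bf n}$ invariant, and by continuity $\Gamma:=\text{\rm Ad}_U$ is a $*$-automorphism of $C^*({\bf S})$ with $\Gamma(\boldsymbol\cA_{\bf n})=\boldsymbol\cA_{\bf n}$, i.e. $\Gamma\in \text{\rm Aut}_{\boldsymbol\cA_{\bf n}}(C^*({\bf S}))$ and $\Gamma|_{\boldsymbol\cA_{\bf n}}=\alpha$.

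Next I would invoke Theorem \ref{auto-C*}: there is a unique $\Psi\in \text{\rm Aut}({\bf B_n})$ with $\Gamma(g)=\boldsymbol\cB_{\hat\Psi}[g]={\bf K}_{\hat\Psi}[g\otimes I_{\cD_{\hat\Psi}}]{\bf K}_{\hat\Psi}^*$ for all $g\in C^*({\bf S})$, the Berezin kernel ${\bf K}_{\hat\Psi}$ being unitary. Restricting to $\boldsymbol\cA_{\bf n}$ yields $\alpha=\boldsymbol\cB_{\hat\Psi}|_{\boldsymbol\cA_{\bf n}}$, which is the first assertion. Conversely, given any $\Psi\in \text{\rm Aut}({\bf B_n})$, Theorem \ref{auto-C*} guarantees that $\boldsymbol\cB_{\hat\Psi}$ is a unitarily implemented automorphism of $C^*({\bf S})$ fixing $\boldsymbol\cA_{\bf n}$ setwise, so its restriction is a unitarily implemented automorphism of $\boldsymbol\cA_{\bf n}$.

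For the isomorphism statement I would show that restriction $\rho\colon \text{\rm Aut}_{\boldsymbol\cA_{\bf n}}(C^*({\bf S}))\to \text{\rm Aut}_u(\boldsymbol\cA_{\bf n})$, $\Gamma\mapsto \Gamma|_{\boldsymbol\cA_{\bf n}}$, is a group isomorphism. It is well defined because every $\Gamma$ in the source is unitarily implemented (Theorem \ref{auto-C*}), hence so is its restriction; it is a homomorphism because composition commutes with restriction; and it is surjective by the extension argument of the first paragraph. For injectivity, if $\Gamma_1|_{\boldsymbol\cA_{\bf n}}=\Gamma_2|_{\boldsymbol\cA_{\bf n}}$ then the two $*$-homomorphisms agree on $\boldsymbol\cA_{\bf n}$ and, after applying $*$, on $\boldsymbol\cA_{\bf n}^*$, hence on the dense $*$-subalgebra they generate and so on all of $C^*({\bf S})$; thus $\Gamma_1=\Gamma_2$. (This simultaneously shows that the unitary chosen to implement $\alpha$ is immaterial, since any two choices induce the same extension.) Composing $\rho^{-1}$ with the isomorphism $\text{\rm Aut}_{\boldsymbol\cA_{\bf n}}(C^*({\bf S}))\simeq \text{\rm Aut}({\bf B_n})$ supplied by Theorem \ref{auto-C*} then gives $\text{\rm Aut}_u(\boldsymbol\cA_{\bf n})\simeq \text{\rm Aut}({\bf B_n})$.

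The single delicate point — and the one I would regard as the main obstacle — is the passage from the non-self-adjoint algebra to the $C^*$-algebra: one must confirm that a unitary normalizing $\boldsymbol\cA_{\bf n}$ automatically normalizes $C^*({\bf S})$. This hinges entirely on the identity $C^*(\boldsymbol\cA_{\bf n})=C^*({\bf S})$ together with the $*$-invariance argument above; once this is in place, Theorem \ref{auto-C*} carries all the analytic content and the remaining verifications are purely formal.
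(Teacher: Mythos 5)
Your proposal is correct, and it reaches the conclusion by a slightly different organization than the paper. The paper handles the converse direction by saying that, given a unitarily implemented automorphism $\Gamma(Y)=U^*YU$ of $\boldsymbol\cA_{\bf n}$, one argues ``as in the proof of Theorem \ref{auto-C*}'': that is, one sets $\tilde\varphi_{i,j}:=U^*{\bf S}_{i,j}U$, verifies directly (using the unitary implementation) that $\tilde\varphi$ lies in the closed polyball, and re-traces the construction of the automorphism $\varphi\in\text{\rm Aut}({\bf B_n})$ from that theorem's proof. You instead observe that $C^*(\boldsymbol\cA_{\bf n})=C^*({\bf S})$ and that $\mathrm{Ad}_U$, being a $*$-automorphism of $B(\otimes_{i=1}^k F^2(H_{n_i}))$ normalizing $\boldsymbol\cA_{\bf n}$, automatically restricts to an element of $\text{\rm Aut}_{\boldsymbol\cA_{\bf n}}(C^*({\bf S}))$, so that Theorem \ref{auto-C*} can be invoked as a black box. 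Both routes rest on exactly the same analytic content (Theorem \ref{auto-C*} plus the unitary implementation), but your reduction is cleaner: it isolates the extension step explicitly, and your injectivity argument for the restriction map (agreement on $\boldsymbol\cA_{\bf n}$ and $\boldsymbol\cA_{\bf n}^*$ forces agreement on the dense $*$-subalgebra they generate) gives at no extra cost the uniqueness of the extension, which the paper only records afterwards as a remark following the theorem. The forward direction and the group-isomorphism bookkeeping are the same in both treatments.
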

\begin{proof} First, assume that $\Psi\in \text{\rm Aut}({\bf B_n})$. Due to Theorem \ref{auto-C*}, the noncommutative Berezin transform $\boldsymbol\cB_{\hat \Psi}$ is a unitarily implemented automorphism of the Cuntz-Toeplitz  algebra $C^*({\bf S})$ such that  $\boldsymbol\cB_{\hat \Psi}(\boldsymbol\cA_{\bf n})=\boldsymbol\cA_{\bf n}$. Consequently, $\boldsymbol\cB_{\hat \Psi}|_{\boldsymbol\cA_{\bf n}}$ is a unitarily implemented automorphism of the noncommutative polyball algebra $\boldsymbol\cA_{\bf n}$.

Now, we assume that $\Gamma$ is a unitarily implemented automorphism of $\boldsymbol\cA_{\bf n}$, i.e.,
there exists a unitary operator $U\in B(\otimes_{i=1}^k F^2(H_{n_i}))$ such that
$\Gamma(Y)=U^* YU$ for any $Y\in \boldsymbol\cA_{\bf n}$.
  As in the proof of  Theorem \ref{auto-C*}, we deduce that there is $\Psi\in \text{\rm Aut}({\bf B_n})$ such that
$\Gamma=\boldsymbol{\cB_{\hat\Psi}}|_{\boldsymbol\cA_{\bf n}}$ and
$
\text{\rm Aut}({\bf B_n}) \simeq \text{\rm Aut}_u(\boldsymbol\cA_{\bf n}).
$
The proof is complete.
\end{proof}

 We remark that  Theorem \ref{auto-C*}  and Theorem \ref{auto-polyball} reveal that each unitarily implemented automorphism of $\boldsymbol \cA_{\bf n}$ has a unique extension  to an automorphism of the $C^*$-algebra $C^*({\bf S})$. Moreover,   the mappings
$\boldsymbol\cB_{\hat \Psi}|_{\boldsymbol\cA_{\bf n}}\mapsto \boldsymbol\cB_{\hat \Psi}\mapsto \Psi $ are group isomorphisms, showing that
$$
\text{\rm Aut}_u(\boldsymbol\cA_{\bf n})\simeq \text{\rm Aut}_{\boldsymbol\cA_{\bf n}}(C^*({\bf S}))\simeq \text{\rm Aut}({\bf B_n}).
$$

If $\Lambda: A({\bf B_n})\to A({\bf B_n})$
is an algebraic  homomorphism, it induces a unique homomorphism $\tilde
\Lambda:\boldsymbol{\cA_n}\to \boldsymbol{\cA_n}$ such that the  diagram
\begin{equation*}
\begin{matrix}
\boldsymbol{\cA_n}& \mapright{\tilde\Lambda}&
\boldsymbol{\cA_n}\\
 \mapdown{\boldsymbol\cB}& & \mapdown{\boldsymbol\cB}\\
A({\bf B_n})&  \mapright{\Lambda}& A({\bf B_n})
\end{matrix}
\end{equation*}
is commutative, i.e., $\Lambda \boldsymbol\cB=\boldsymbol\cB\tilde \Lambda$. The homomorphisms $\Lambda$
and $\tilde \Lambda$ uniquely determine each other by the formulas:
\begin{equation*}
\begin{split}
(\Lambda f)(X)&=\boldsymbol\cB_X[\tilde \Lambda(\hat f)], \qquad  f\in A({\bf B_n}),\
X\in {\bf B_n}(\cH), \quad \text{ and }\\
\tilde \Lambda(\hat f)&=\widehat{\Lambda(f)}, \qquad \hat f\in \boldsymbol\cA_{\bf n}.
\end{split}
\end{equation*}

We say that  a unital completely contractive  homomorphism $\tilde\Lambda:\boldsymbol\cA_{\bf n}\to \boldsymbol\cA_{\bf n}$ has  a completely contractive hereditary linear extension  to $C^*({\bf S})$ if  the linear maps defined by
$$
{\bf S}_{(\alpha)} {\bf S}_{(\beta)}^* \mapsto \tilde\Lambda({\bf S}_{(\alpha)}) \tilde\Lambda({\bf S}_{(\beta)})^*, \qquad (\alpha), (\beta)\in \FF_{n_1}^+\times \cdots \times \FF_{n_k}^+,
$$
and
$$
{\bf S}_{(\alpha)} {\bf S}_{(\beta)}^* \mapsto \tilde\Lambda^{-1}({\bf S}_{(\alpha)}) \tilde\Lambda^{-1}({\bf S}_{(\beta)})^*, \qquad (\alpha), (\beta)\in \FF_{n_1}^+\times \cdots \times \FF_{n_k}^+,
$$
are completely contractive.

\begin{theorem} \label{auto-A} Let $\Lambda:A({\bf B_n})\to A({\bf B_n})$ be a unital algebraic automorphism. Then the following statements are equivalent.
\begin{enumerate}
 \item[(i)]
    $\tilde \Lambda$ is a unitarily implemented automorphism of $\boldsymbol\cA_{\bf n}$.

    \item[(ii)] There is $\varphi\in \text{\rm Aut}({\bf B_n})$ such that
    $$\Lambda(f)=f\circ \varphi,\qquad f\in A({\bf B_n}).$$
   \item[(iii)] $\tilde\Lambda$ is a completely contractive automorphism   of $ \boldsymbol\cA_{\bf n}$ with completely contractive hereditary linear extension to $C^*({\bf S})$.
    \item[(iv)] $\tilde\Lambda$ is continuous and $\{\tilde\Lambda({\bf S}_{i,j})\}$ and $\{\tilde\Lambda^{-1}({\bf S}_{i,j})\}$ are  in the polyball ${\bf B_n}(\otimes_{i=1}^k F^2(H_{n_i}))^-$, where ${\bf S}=\{{\bf S}_{i,j}\}$ is the universal model of the regular polyball ${\bf B_n}$.
\end{enumerate}

\end{theorem}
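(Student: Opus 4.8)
The plan is to establish the cyclic chain of implications $(i)\Rightarrow(ii)\Rightarrow(iii)\Rightarrow(iv)\Rightarrow(i)$, exploiting the fact recorded before the statement that $\Lambda$ and $\tilde\Lambda$ determine each other through $(\Lambda f)({\bf X})=\boldsymbol\cB_{\bf X}[\tilde\Lambda(\hat f)]$ and $\tilde\Lambda(\hat f)=\widehat{\Lambda(f)}$, and that $\tilde\Lambda$ is automatically an algebra automorphism of $\boldsymbol\cA_{\bf n}$ (being conjugate to $\Lambda$ by the completely isometric isomorphism of this section). The substantive input is carried by Theorem \ref{auto-C*} and Theorem \ref{auto-polyball}, which identify the unitarily implemented automorphisms of $\boldsymbol\cA_{\bf n}$ with the restrictions $\boldsymbol\cB_{\hat\Psi}|_{\boldsymbol\cA_{\bf n}}$, $\Psi\in\text{\rm Aut}({\bf B_n})$.

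For $(i)\Rightarrow(ii)$, Theorem \ref{auto-polyball} supplies $\varphi\in\text{\rm Aut}({\bf B_n})$ with $\tilde\Lambda=\boldsymbol\cB_{\hat\varphi}|_{\boldsymbol\cA_{\bf n}}$, whence $(\Lambda f)({\bf X})=\boldsymbol\cB_{\bf X}[\boldsymbol\cB_{\hat\varphi}[\hat f]]$; applying the composition property of the Berezin transform (Proposition \ref{Berezin-comp} with $F=\varphi$) gives $\boldsymbol\cB_{\bf X}[\boldsymbol\cB_{\hat\varphi}[\hat f]]=\boldsymbol\cB_{\varphi({\bf X})}[\hat f]=f(\varphi({\bf X}))$, so $\Lambda(f)=f\circ\varphi$. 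For $(ii)\Rightarrow(iii)$, the composition formula $\widehat{f\circ\varphi}=\boldsymbol\cB_{\hat\varphi}[\hat f]$ established at the start of this section shows $\tilde\Lambda=\boldsymbol\cB_{\hat\varphi}|_{\boldsymbol\cA_{\bf n}}$; by Theorem \ref{auto-C*} this is the restriction of the $\ast$-automorphism $\boldsymbol\cB_{\hat\varphi}$ of $C^*({\bf S})$, which is completely isometric, and the two hereditary maps appearing in $(iii)$ are precisely the restrictions of $\boldsymbol\cB_{\hat\varphi}$ and of $\boldsymbol\cB_{\hat\varphi}^{-1}=\boldsymbol\cB_{\widehat{\varphi^{-1}}}$ (Corollary \ref{Berezin-comp2}) to the hereditary span, hence completely contractive.

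The key step is $(iii)\Rightarrow(iv)$. Continuity is immediate from complete contractivity. Writing $\tilde\varphi_{i,j}:=\tilde\Lambda({\bf S}_{i,j})$, the hereditary extension $\Theta:{\bf S}_{(\alpha)}{\bf S}_{(\beta)}^*\mapsto\tilde\varphi_{(\alpha)}\tilde\varphi_{(\beta)}^*$ is unital, since it fixes $I$, and completely contractive on the operator system $\text{\rm span}\{{\bf S}_{(\alpha)}{\bf S}_{(\beta)}^*\}$; by the standard fact that a unital completely contractive map between operator systems is completely positive (\cite{Pa-book}), $\Theta$ is positive. Because ${\bf S}$ lies in the closed polyball, ${\bf \Delta}^{\bf p}_{\bf S}(I)\geq 0$ for every ${\bf p}=(p_1,\dots,p_k)$ with $p_i\in\{0,1\}$; expanding through the doubly commuting relations across blocks (Lemma \ref{ineqS}) exhibits ${\bf \Delta}^{\bf p}_{\bf S}(I)$ as a hereditary polynomial for which $\Theta\big({\bf \Delta}^{\bf p}_{\bf S}(I)\big)={\bf \Delta}^{\bf p}_{\tilde\varphi}(I)$. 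Positivity of $\Theta$ then yields ${\bf \Delta}^{\bf p}_{\tilde\varphi}(I)\geq 0$, while the commutation of entries from distinct blocks follows from $\tilde\Lambda$ being multiplicative, so Proposition \ref{reg-poly}(i) places $\tilde\varphi$ in ${\bf B_n}(\otimes_{i=1}^k F^2(H_{n_i}))^-$. The same reasoning applied to the second hereditary map of $(iii)$ puts $\{\tilde\Lambda^{-1}({\bf S}_{i,j})\}$ in the closed polyball.

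Finally $(iv)\Rightarrow(i)$ re-runs the core of Theorem \ref{auto-C*}. Setting $\varphi_{i,j}({\bf X}):=\boldsymbol\cB_{\bf X}[\tilde\varphi_{i,j}]$ and $\xi_{i,j}({\bf X}):=\boldsymbol\cB_{\bf X}[\tilde\Lambda^{-1}({\bf S}_{i,j})]$, Proposition \ref{Berezin-comp} produces free holomorphic maps $\varphi,\xi:{\bf B_n}(\cH)\to{\bf B_n}(\cH)^-$ continuous up to the boundary; continuity of $\tilde\Lambda$ and the Fourier representation of the generators force $\xi\circ\varphi=\varphi\circ\xi=\text{\rm id}$ on ${\bf B_n}(\cH)^-$, and the maximum principle (Theorem \ref{max-mod}) together with the open mapping theorem applied to the scalar representations (Corollary \ref{scalar}) upgrade this to $\varphi({\bf B_n}(\cH))\subseteq{\bf B_n}(\cH)$, so $\varphi\in\text{\rm Aut}({\bf B_n})$. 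Since $\tilde\Lambda({\bf S}_{i,j})=\hat\varphi_{i,j}$ and $\tilde\Lambda$ is an automorphism, Theorem \ref{auto-C*} identifies $\tilde\Lambda$ with $\boldsymbol\cB_{\hat\varphi}|_{\boldsymbol\cA_{\bf n}}$, which is unitarily implemented. I expect this last implication to be the main obstacle: the delicate points are the inverse estimate and the passage from ``range in ${\bf B_n}(\cH)^-$'' to ``range in ${\bf B_n}(\cH)$'', which are exactly the technical heart of Theorem \ref{auto-C*}, so the write-up will largely defer to that argument.
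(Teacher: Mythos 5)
Your proposal is correct and follows essentially the same cyclic chain $(i)\Rightarrow(ii)\Rightarrow(iii)\Rightarrow(iv)\Rightarrow(i)$ as the paper, resting on the same inputs (Theorem \ref{auto-polyball}, Proposition \ref{Berezin-comp}, and the core of Theorem \ref{auto-C*}). The only cosmetic difference is in $(iii)\Rightarrow(iv)$, where you invoke ``unital completely contractive $\Rightarrow$ completely positive'' to get ${\bf \Delta}^{\bf p}_{\tilde\varphi}(I)\geq 0$, while the paper deduces the same inequality directly from the norm bound on the self-adjoint hereditary expression; the two arguments are interchangeable.
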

\begin{proof}
Assume that $(i)$ holds. According to Theorem \ref{auto-polyball}, there is $\varphi\in \text{\rm Aut}({\bf B_n})$ such that $\tilde\Lambda = \boldsymbol\cB_{\hat \varphi}|_{\boldsymbol\cA_{\bf n}}$. Consequently,  using Proposition \ref{Berezin-comp} we obtain
\begin{equation*}
\begin{split}
\Lambda(f)({\bf X})&=\boldsymbol\cB_{\bf X} [\tilde\Lambda(\hat f)]=\boldsymbol\cB_{\bf X}[\boldsymbol\cB_{\hat \varphi}[\hat f)]]\\
&=\boldsymbol\cB_{\varphi({\bf X})}[\hat f]=f(\varphi({\bf X}))=(f\circ \varphi)({\bf X})
\end{split}
\end{equation*}
for any  $f\in A({\bf B_n})$, therefore item $(ii)$ holds.  Now, we prove that $(ii)\implies (iii)$.
Note that we have
$$
\tilde\Lambda(\hat f)=\widehat{\Lambda(f)}=\widehat{f\circ \varphi}=\boldsymbol\cB_{\hat \varphi}[\hat f]$$
for any $f\in A({\bf B_n})$. Hence $\tilde\Lambda=\boldsymbol\cB_{\hat \Psi}|_{\boldsymbol\cA_{\bf n}}$, which is a completely contractive  automorphism  and $\boldsymbol\cB_{\hat \Psi}$ is a completely contractive hereditary linear extension to $C^*({\bf S})$ (see Theorem \ref{auto-C*}).
Let us prove that $(iii)\implies (iv)$. Assume that $(iii)$ holds.
For each $i\in\{1,\ldots,k\}$ and  $j\in\{1,\ldots,n_i\}$, set $\hat\varphi_{i,j}:=\tilde\Lambda({\bf S}_{i,j})\in \boldsymbol\cA_{\bf n}$.
We need to show that
$\hat\varphi:=(\hat\varphi_1,\ldots, \hat\varphi_k)$, with $\hat\varphi_{i,1},\ldots, \hat\varphi_{i,n_i})$, is in the noncommutative polyball ${\bf B_n}(\otimes_{i=1}^k F^2(H_{n_i})^-$.
Since $\Phi_{{\bf S}_i}(I)\leq I$ and $\tilde \Lambda$ is completely contractive, we deduce that
$\Phi_{{\hat\varphi}_i}(I)\leq I$   for $i\in \{1,\ldots, k\}$. Let $1\leq p\leq k$ and $i_1<\cdots < i_p$ with $i_1,\ldots, i_p\in \{1,\ldots, k\}$. We have
$$
0\leq (id-\Phi_{{\bf S}_{i_1}})\circ \cdots \circ  (id-\Phi_{{\bf S}_{i_p}})(I)
=I-\sum_{q_j\in \{0,1\},\, q_1+\cdots +q_p>0} (-1)^{q_1+\cdots q_k+1} \Phi_{{\bf S}_{i_1}}\cdots \Phi_{{\bf S}_{i_p}},
$$
which is equivalent to
$$
\left\|\sum_{q_j\in \{0,1\},\, q_1+\cdots +q_p>0} (-1)^{q_1+\cdots q_k+1} \Phi_{{\bf S}_{i_1}}\cdots \Phi_{{\bf S}_{i_p}}\right\|\leq 1.
$$
Since  $\tilde \Lambda$ has completely contractive hereditary linear extension, we deduce that
$$
\left\|\sum_{q_j\in \{0,1\},\, q_1+\cdots +q_p>0} (-1)^{q_1+\cdots +q_k+1} \Phi_{{\hat \varphi}_{i_1}}\cdots \Phi_{{\hat \varphi}_{i_p}}\right\|\leq 1.
$$
Taking into account that the operator under the norm is self-adjoint, we deduce that
$$\sum_{q_j\in \{0,1\},\, q_1+\cdots +q_p>0} (-1)^{q_1+\cdots +q_k+1} \Phi_{{\hat \varphi}_{i_1}}\cdots \Phi_{{\hat \varphi}_{i_p}}\leq I,
$$
 which is equivalent  to
$$
(id-\Phi_{{\hat\varphi}_{i_1}})\circ \cdots \circ  (id-\Phi_{{\hat\varphi}_{i_p}})(I)\geq 0.
$$
This shows that $\hat\varphi:=(\hat\varphi_1,\ldots, \hat\varphi_k)$ is in the noncommutative polyball ${\bf B_n}(\otimes_{i=1}^k F^2(H_{n_i})^-$. Similarly, we can show that
$\{\tilde\Lambda^{-1}({\bf S}_{i,j})\}$ is  in the polyball ${\bf B_n}(\otimes_{i=1}^k F^2(H_{n_i}))^-$. Therefore, item $(iv)$ holds.

It remains to prove that $(iv)\implies (i)$. Assume that $\tilde\Lambda({\bf S}):=\{\tilde\Lambda({\bf S}_{i,j})\}\in {\bf B_n}(\otimes_{i=1}^k F^2(H_{n_i}))^-$.
Due to the noncommutative von Neumann type inequality \cite{Po-Berezin-poly}, we have
$$
\left\|\left[\tilde\Lambda(p_{i,j}({\bf S}))\right]_{m\times m}\right\|
=
\left\|\left[p_{i,j}(\tilde\Lambda({\bf S}))\right]_{m\times m}\right\|
\leq
\left\|\left[p_{i,j}({\bf S})\right]_{m\times m}\right\|
$$
for any operator matrix $\left[p_{i,j}({\bf S})\right]_{m\times m}\in \boldsymbol\cA_{\bf n}\otimes M_{m\times m}(\CC)$. Since $\tilde\Lambda $ is continuous  and $\boldsymbol\cA_{\bf n}$ is the norm closed self-adjoint  algebra generated by $\{{\bf S}_{i,j}\}$ and the identity, we deduce that
$\tilde\Lambda:\boldsymbol\cA_{\bf n}\to \boldsymbol\cA_{\bf n}$ is a completely contractive homomorphism. Similarly, using the fact that
$\{\tilde\Lambda^{-1}({\bf S}_{i,j})\}$ is  in the polyball ${\bf B_n}(\otimes_{i=1}^k F^2(H_{n_i}))^-$, one can prove that $\tilde\Lambda^{-1}:\boldsymbol\cA_{\bf n}\to \boldsymbol\cA_{\bf n}$ is  also a completely contractive homomorphism.
Now, as in the proof of Theorem \ref{auto-C*}, one can show that $\tilde \Lambda$ is  a unitarily implemented automorphism of  $\boldsymbol\cA_{\bf n}$.
This completes the proof.
\end{proof}

We remark that if $\Lambda:A({\bf B_n})\to A({\bf B_n})$ is a unital algebraic homomorphism and at least one of $n_1,\ldots, n_k$ is greater than  or equal 2, then $\tilde \Lambda$ is automatically continuous. Indeed, assume that there is $i_0\in \{1,\ldots, k\}$ such that $n_{i_0}\geq 2$ and $\tilde \Lambda$ is not continuous in the operator norm. Then there is a sequence  $\{g_p\}_{p=1}^\infty$ of elements in the polyball algebra $\boldsymbol\cA_{\bf n}$ such that
$\tilde\Lambda(g_p)\geq p$  and $\|g_p\|\leq \frac{1}{M^{p+2}}$ for any $p\in \NN$, for some constant $M>1$ with $M> \|\tilde\Lambda^{-1}({\bf S}_{i,j})\|$ for any $i\in \{1,\ldots, k\}$ and $j\in \{1,\ldots, n_i\}$. Note that $g:=\sum_{p=1}^\infty \tilde\Lambda^{-1}({\bf S}_{i_0,1})^p\tilde\Lambda^{-1}({\bf S}_{i_0,2})g_p$ is convergent in  norm and, consequently, it is in the polyball algebra $\boldsymbol\cA_{\bf n}$. For each $q\in \NN$, we have
$
\tilde\Lambda (g)=\sum_{p=1}^q {\bf S}_{i_0,1}^p {\bf S}_{i_0,2}\tilde \Lambda(g_p)+{\bf S}_{i_0,1}^{q+1} \tilde \Lambda (\xi_q)
$
for some $\xi_q\in \boldsymbol\cA_{\bf n}$. Since ${\bf S}_{i_0,1}$ and $ {\bf S}_{i_0,2}$ are isometries with orthogonal ranges, we have
${\bf S}_{i_0,2}^* ({\bf S}_{i_0,1}^*)^q\tilde\Lambda(g)=\tilde \Lambda(g_q)$ and, consequently,
$
\|\tilde \Lambda(g)\|\geq \|\tilde \Lambda(g_q)\|\geq q$ for $q\in \NN,
$
which is a contradiction. Therefore, $\tilde \Lambda$ is continuous.

\begin{theorem} \label{auto-F} Any unitarily implemented  automorphism of  the noncommutative Hardy algebra     ${\bf F}_{\bf n}^\infty $ is the Berezin transform $\boldsymbol\cB_{\hat \Psi} $ of a boundary function   $\hat\Psi$,  where  $\Psi\in \text{\rm Aut}({\bf B_n})$. Moreover, we have
$$
\text{\rm Aut}_u({\bf F}_{\bf n}^\infty)\simeq  \text{\rm Aut}({\bf B_n}).
$$
\end{theorem}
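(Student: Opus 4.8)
The plan is to follow the template of Theorem~\ref{auto-C*} and Theorem~\ref{auto-polyball}, the extra ingredient being that a unitarily implemented automorphism is normal, which lets me reduce questions about the weakly closed algebra ${\bf F}_{\bf n}^\infty$ to questions about its WOT-generating set $\{{\bf S}_{i,j}\}$. For the easy inclusion I would start with $\Psi\in\text{\rm Aut}({\bf B_n})$ and invoke Theorem~\ref{auto-C*}: the Berezin kernel ${\bf K}_{\hat\Psi}$ is unitary and $\boldsymbol\cB_{\hat\Psi}=\text{Ad}$ of a unitary, a $*$-automorphism of $B(\otimes_{i=1}^k F^2(H_{n_i}))$ carrying $\boldsymbol\cA_{\bf n}$ onto $\boldsymbol\cA_{\bf n}$. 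Since conjugation by a unitary is WOT-continuous and ${\bf F}_{\bf n}^\infty$ is the WOT-closure of $\boldsymbol\cA_{\bf n}$, it follows that $\boldsymbol\cB_{\hat\Psi}({\bf F}_{\bf n}^\infty)={\bf F}_{\bf n}^\infty$, so $\boldsymbol\cB_{\hat\Psi}|_{{\bf F}_{\bf n}^\infty}$ is a unitarily implemented automorphism.

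For the substantive direction, let $\Gamma=\text{Ad}_{U^*}$ be a unitarily implemented automorphism of ${\bf F}_{\bf n}^\infty$ and put $\tilde\varphi_{i,j}:=\Gamma({\bf S}_{i,j})$ and $\tilde\xi_{i,j}:=\Gamma^{-1}({\bf S}_{i,j})$. Because $\text{Ad}_{U^*}$ is a positive, normal $*$-automorphism of the ambient $B(\otimes_{i=1}^k F^2(H_{n_i}))$, applying it to the defect expressions $(id-\Phi_{{\bf S}_1})^{p_1}\circ\cdots\circ(id-\Phi_{{\bf S}_k})^{p_k}(I)\ge 0$ shows that $\tilde\varphi:=(\tilde\varphi_1,\ldots,\tilde\varphi_k)$ and $\tilde\xi$ lie in ${\bf B_n}(\otimes_{i=1}^k F^2(H_{n_i}))^-$, while normality together with $\lim_{p}\Phi_{{\bf S}_i}^p(I)=0$ forces $\lim_p\Phi_{\tilde\varphi_i}^p(I)=0$, so $\tilde\varphi$ (and likewise $\tilde\xi$) is a pure element. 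I would then define $\varphi_{i,j}({\bf X}):=\boldsymbol\cB_{\bf X}[\tilde\varphi_{i,j}]$ and $\xi_{i,j}({\bf X}):=\boldsymbol\cB_{\bf X}[\tilde\xi_{i,j}]$; since $\tilde\varphi_{i,j},\tilde\xi_{i,j}\in{\bf F}_{\bf n}^\infty$, the identification of ${\bf F}_{\bf n}^\infty$ with $H^\infty({\bf B_n})$ makes $\varphi,\xi\colon{\bf B_n}(\cH)\to{\bf B_n}(\cH)^-$ bounded free holomorphic.

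The heart of the argument is to show $\varphi\in\text{\rm Aut}({\bf B_n})$, and here I would reproduce the computation in Theorem~\ref{auto-C*}, replacing the norm-limit Fourier representation of elements of $\boldsymbol\cA_{\bf n}$ by the SOT-limit representation valid in ${\bf F}_{\bf n}^\infty$ and using that $\Gamma=\text{Ad}_{U^*}$ is SOT-continuous on bounded sets. Expanding $\tilde\xi_{i,j}$ and applying $\Gamma$ presents ${\bf S}_{i,j}=\Gamma(\tilde\xi_{i,j})$ as an SOT-limit of polynomials in $\tilde\varphi$; applying the Berezin transform $\boldsymbol\cB_{\bf X}$ and the pure-element part of Proposition~\ref{Berezin-comp} (legitimate since $\tilde\varphi$ is pure) yields $\xi_{i,j}(\varphi({\bf X}))=X_{i,j}$, and symmetrically $\varphi\circ\xi=\text{id}$, so $\varphi$ is a self-homeomorphism of ${\bf B_n}(\cH)^-$ with free holomorphic $\varphi,\varphi^{-1}$. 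To force $\text{\rm range}\,\varphi$ into the open polyball, I would run the open-mapping-plus-maximum-principle argument (Corollary~\ref{scalar} and Theorem~\ref{max-mod}) used in Theorem~\ref{auto-C*} to conclude $\|\varphi_i({\bf X})\|<1$, whence $\varphi\in\text{\rm Aut}({\bf B_n})$.

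Finally, with $\varphi\in\text{\rm Aut}({\bf B_n})$ in hand, Theorem~\ref{auto-C*} shows $\boldsymbol\cB_{\hat\varphi}$ is a unitarily implemented, hence WOT-continuous, automorphism of $C^*({\bf S})$ satisfying $\boldsymbol\cB_{\hat\varphi}[{\bf S}_{i,j}]=\hat\varphi_{i,j}=\tilde\varphi_{i,j}=\Gamma({\bf S}_{i,j})$. Since $\Gamma$ and $\boldsymbol\cB_{\hat\varphi}$ are WOT-continuous algebra homomorphisms of ${\bf F}_{\bf n}^\infty$ that agree on the WOT-generating set $\{{\bf S}_{i,j},I\}$, they agree on all of ${\bf F}_{\bf n}^\infty$, giving $\Gamma=\boldsymbol\cB_{\hat\varphi}$. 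The assignment $\Gamma\mapsto\varphi$ is then a bijection onto $\text{\rm Aut}({\bf B_n})$ (surjectivity from the easy direction, injectivity as in Theorem~\ref{auto-C*}), and Corollary~\ref{Berezin-comp2} shows it reverses composition exactly as there, so it is a group isomorphism $\text{\rm Aut}_u({\bf F}_{\bf n}^\infty)\simeq\text{\rm Aut}({\bf B_n})$. The main obstacle I anticipate is the topological bookkeeping: justifying normality and SOT-continuity of $\Gamma$, the passage from the SOT-limit Fourier representation to the composition identities, and the density step identifying the two homomorphisms on the weakly closed algebra; the algebraic and function-theoretic content is essentially inherited from Theorems~\ref{auto-C*}, \ref{structure2}, and \ref{unitar-shift}.
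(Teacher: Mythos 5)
Your overall architecture matches the paper's: the easy direction via Theorem \ref{auto-C*} and WOT-density, the defect, commutation and purity computations for $\tilde\varphi_{i,j}=\Gamma({\bf S}_{i,j})$ using that $\Gamma$ is conjugation by a unitary, and the final identification $\Gamma=\boldsymbol\cB_{\hat\varphi}$ by $w^*$-density of $\boldsymbol\cA_{\bf n}$. But there is a gap in the middle. The ``pure-element part of Proposition \ref{Berezin-comp}'' is not applicable as you cite it: that clause is an addendum to a proposition whose standing hypothesis is that $F$ extends continuously to ${\bf B_n}(\cH)^-$, equivalently that the entries of $\hat F$ lie in $\boldsymbol\cA_{\bf n}$, whereas here the entries of $\tilde\varphi$ are only in ${\bf F}_{\bf n}^\infty$. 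What your composition identity $\xi(\varphi({\bf X}))={\bf X}$ actually requires --- both to make $\xi(\varphi({\bf X}))$ meaningful via the ${\bf F}_{\bf n}^\infty$-functional calculus and to pass SOT-limits of the Fourier series through $\boldsymbol\cB_{\varphi({\bf X})}$ --- is that $\varphi({\bf X})$ itself be a \emph{pure} element of the closed polyball for every ${\bf X}\in{\bf B_n}(\cH)$. Purity of the boundary function $\tilde\varphi$, which is all you establish, does not by itself give this (Lemma \ref{pure} would, but it too assumes continuity up to the closed polyball). You need the direct computation the paper uses: $\sum_{\alpha\in\FF_{n_i}^+,|\alpha|=p}\varphi_{i,\alpha}({\bf X})\varphi_{i,\alpha}({\bf X})^*={\bf K}_{\bf X}^*\bigl(\sum_{\alpha\in\FF_{n_i}^+,|\alpha|=p}\tilde\varphi_{i,\alpha}\tilde\varphi_{i,\alpha}^*\otimes I_\cH\bigr){\bf K}_{\bf X}\to 0$ strongly.

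Second, the open-mapping step does not transplant. In Theorem \ref{auto-C*} that argument leans on $\varphi$ and $\xi$ extending to homeomorphisms of the closed polyball (entries in $\boldsymbol\cA_{\bf n}$), which is exactly what is unavailable here; the paper even remarks after Theorem \ref{auto-C*} that the correct move is to run the argument of Theorem \ref{auto-F} so as to \emph{avoid} the open mapping theorem. Moreover, once you have the purity of each $\varphi_i({\bf X})$ that is needed to set up your composition identity, the open-mapping detour is superfluous: $\varphi_i(0)=\lambda_i$ is then a pure scalar row contraction, so $\bigl(\sum_{j}|\lambda_{i,j}|^2\bigr)^p\to 0$, i.e.\ $\|\lambda_i\|_2<1$, whence $\|\varphi_i(0)\|<\|\varphi_i\|_\infty$ and Theorem \ref{max-mod} together with Proposition 1.3 of \cite{Po-Berezin-poly} yields $\varphi({\bf X})\in{\bf B_n}(\cH)$. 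That is the paper's route, and it is the natural repair of your step; the rest of your outline then goes through as in Theorems \ref{auto-C*} and \ref{auto-polyball}.
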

\begin{proof} Let $\boldsymbol\Psi=(\Psi_1,\ldots, \Psi_k)\in \text{\rm Aut}({\bf B_n})$. According to Theorem \ref{structure2},
 each $\hat{\Psi}_i=(\hat{\Psi}_{i,1},\ldots, \hat{\Psi}_{i,n_i})$  is a pure row isometry with entries in the noncommutative disk algebra generated by ${\bf S}_{i,1},\ldots, {\bf S}_{i,n_i}$ and the identity. Consider the Berezin transform $\boldsymbol{\cB}_{\hat {\Psi}}: {\bf F_n^\infty}\to B(F^2(H_{n_1})\otimes \cdots \otimes F^2(H_{n_k}))$ defined by

 $$
 \boldsymbol{\cB}_{\hat {\Psi}}[f]:={\bf K}_{\hat {\Psi}}[f\otimes I_{\cD_{\hat {\Psi}}}]{\bf K}_{\hat {\Psi}}^*,\qquad
f\in {\bf F_n^\infty}.
$$
Due to Theorem \ref{structure2} and Corollary \ref{unit-implement}, the noncommutative Berezin kernel ${\bf K}_{\hat {\Psi}}$ is a unitary operator. We recall that if $f\in {\bf F_n^\infty}$, then $f_r\in \boldsymbol\cA$, $\|f_r\|\leq \|f\|$ and $\text{\rm SOT-}\lim_{r\to 1} f_r=f$. Since $\boldsymbol{\cB}_{\hat {\Psi}}[{\bf S}_{(\alpha)}]=\hat\Psi_{(\alpha)}$ is in $\boldsymbol\cA_{\bf n}$ for any
$(\alpha)\in \FF_{n_1}^+\times \cdots \times \FF_{n_k}^+$, and ${\bf F_n^\infty}$ is the WOT-closed non-selfadjoint algebra generate by $\{{\bf S}_{(\alpha)}\}_{(\alpha)\in \FF_{n_1}^+\times \cdots \times \FF_{n_k}^+}$, we deduce that
$\boldsymbol{\cB}_{\hat {\Psi}}[{\bf F_n^\infty}]\subseteq {\bf F_n^\infty}$.
On the other hand, $\boldsymbol{\cB}_{\widehat {\Psi^{-1}}}$ has similar properties and, due to Proposition \ref{Berezin-comp}, we have
$(\boldsymbol{\cB}_{\hat {\Psi}}\boldsymbol{\cB}_{\widehat {\Psi^{-1}}})[f]=f$ for any
$f\in {\bf F_n^\infty}$.  Therefore $\boldsymbol{\cB}_{\hat {\Psi}}({\bf F_n^\infty})={\bf F_n^\infty}$ and $\boldsymbol{\cB}_{\hat {\Psi}}$ is a unitarily implemented automorphism of ${\bf F_n^\infty}$.

Now, we assume that $\Gamma$ is a unitarily implemented automorphism of ${\bf F}_{\bf n}^\infty $, i.e.,
there exists a unitary operator $U\in B(\otimes_{i=1}^k F^2(H_{n_i}))$ such that
$\Gamma(Y)=U^* YU$ for any $Y\in {\bf F}_{\bf n}^\infty$. For each $i\in\{1,\ldots,k\}$ and  $j\in\{1,\ldots,n_i\}$, set $\tilde\varphi_{i,j}:=\Gamma({\bf S}_{i,j})\in {\bf F}_{\bf n}^\infty$.
Since
$$
(id-\Phi_{\tilde\varphi_1})^{p_1}\circ \cdots \circ (id-\Phi_{\tilde\varphi_k})^{p_k}
=U^*\left[(id-\Phi_{{\bf S}_i})^{p_1}\circ \cdots \circ (id-\Phi_{{\bf S}_i})^{p_k} (I)\right]U\geq 0
$$
for any $p_i\in \{0,1\}$, and
$$
\tilde\varphi_{s,j}\tilde \varphi_{t,p}=U^*({\bf S}_{s,j}{\bf S}_{t,p})U=U({\bf S}_{t,p}{\bf S}_{s,j})U=\tilde \varphi_{t,p}\tilde\varphi_{s,j}
$$
for $s,t\in \{1,\ldots, k\}$, $s\neq t$, and  any $j\in \{1,\ldots, n_s\}$, $p\in \{1,\ldots, n_t\}$, we deduce that the $k$-tuple $\tilde\varphi:=(\tilde\varphi_1, \ldots, \tilde \varphi_k)$ is in the   closed  regular polyball
${\bf B_n}(\otimes_{i=1}^k F^2(H_{n_i}))^-$. On the other hand,
  each $n_i$-tuple $\tilde\varphi_i:=(\tilde\varphi_{i,1},\ldots, \tilde\varphi_{i,n_i})$ is a row isometry with entries in the  Hardy algebra ${\bf F}_{\bf n}^\infty$, and
$$
\Phi^p_{\tilde \varphi_i}(I)=\sum_{\alpha\in \FF_{n_i}^+, |\alpha|=p} \tilde \varphi_{i, \alpha}\tilde \varphi_{i,\alpha}^*=U^*\left(\sum_{\alpha\in \FF_{n_i}^+, |\alpha|=p} {\bf S}_{i,\alpha} {\bf S}_{i,\alpha}^*\right)U
$$
for any $p\in \NN$. Consequently, $\Phi^p_{\tilde \varphi_i}(I)\to 0$  strongly as $p\to \infty$. Setting
$$
\varphi_{i,j}({\bf X}):=\boldsymbol\cB_{\bf X}[\tilde \varphi_{i,j}],\qquad {\bf X}\in {\bf B_n}(\cH),
$$
 we deduce that the map $\varphi $ defined by
$\varphi({\bf X}):=(\varphi_1({\bf X}), \ldots, \varphi_k({\bf X}))$  is a free holomorphic function on ${\bf B_n}(\cH)$ with values in ${\bf B_n}(\cH)^-$.
If ${\bf X}\in {\bf B_n}(\cH)$, we can use the Berezin transform at ${\bf X}$ and obtain
$$
\sum_{\alpha\in \FF_{n_i}^+, |\alpha|=p}  \varphi_{i, \alpha}({\bf X}) \varphi_{i,\alpha}({\bf X})^*
={\bf K^*_X}\left(\sum_{\alpha\in \FF_{n_i}^+, |\alpha|=p} \tilde \varphi_{i, \alpha}\tilde \varphi_{i,\alpha}^*\otimes I_\cH\right){\bf K_X}.
$$
Since $\sum_{\alpha\in \FF_{n_i}^+, |\alpha|=p} \tilde \varphi_{i, \alpha}\tilde \varphi_{i,\alpha}^*\leq I$ for any $p\in \NN$ and $\sum_{\alpha\in \FF_{n_i}^+, |\alpha|=p} \tilde \varphi_{i, \alpha}\tilde \varphi_{i,\alpha}^*\to0$  strongly as $p\to\infty$, we deduce that
$\sum_{\alpha\in \FF_{n_i}^+, |\alpha|=p}  \varphi_{i, \alpha}({\bf X}) \varphi_{i,\alpha}({\bf X})^*\to 0$  strongly as $p\to\infty$. Therefore, each $\varphi_i({\bf X})$ is a pure row contraction for any ${\bf X}\in {\bf B_n}(\cH)$.
In particular, $\varphi_i(0)=\lambda_i=(\lambda_{i,1},\ldots, \lambda_{i, n_i})\in (\CC^{n_i})_1^-$.
Hence,  we deduce that
$$
\left(\sum_{j=1}^{n_i} |\lambda_{i,j}|^2\right)^p=\sum_{\alpha\in \FF_{n_i}^+, |\alpha|=p}  \varphi_{i, \alpha}(0) \varphi_{i,\alpha}(0)^*\to 0,\qquad \text{ as }p\to\infty.
$$
This implies $\|\lambda_i\|_2<1$ and $\varphi(0)=(\varphi_1(0),\ldots, \varphi_k(0))\in {\bf B_n}(\CC)$.
Therefore, for each $i\in \{1,\ldots, k\}$, $\varphi_i:{\bf B_n}(\cH)\to B(\cH)^{n_i}$  is a free holomorphic function with the properties: $\|\varphi_i\|_\infty=1$ and $\|\varphi_i(0)\|<1$. Applying Theorem \ref{max-mod}, we conclude that $\|\varphi_i({\bf X})\|<1$ for any ${\bf X}\in {\bf B_n}(\cH)$. Hence, and using Proposition 1.3 from \cite{Po-Berezin-poly}, we deduce that $\varphi({\bf X})\in {\bf B_n}(\cH)$.

Now, note that $\Gamma^{-1}(Y)=UYU^*$ for any $Y\in {\bf F}_{\bf n}^\infty$.
For each $i\in\{1,\ldots,k\}$ and  $j\in\{1,\ldots,n_i\}$, let $\tilde\xi_{i,j}:=\Gamma^{-1}({\bf S}_{i,j})\in {\bf F}_{\bf n}^\infty$. As in the first part of the proof, one can show that
the $k$-tuple $\tilde\xi:=(\tilde\xi_1, \ldots, \tilde \xi_k)$, with $\tilde\xi_i:=(\tilde\xi_{i,1},\ldots, \tilde\xi_{i,n_i})$,  is in the   closed  regular polyball
${\bf B_n}(\otimes_{i=1}^k F^2(H_{n_i}))^-$.
Using the noncommutative  Berezin transform, we define
$$
\xi_{i,j}({\bf X}):=\boldsymbol\cB_{\bf X}[\tilde \xi_{i,j}],\qquad {\bf X}\in {\bf B_n}(\cH),
$$
and using again  Proposition \ref{Berezin-comp} we deduce that the map $\xi$ defined by
$\xi({\bf X}):=(\xi_1({\bf X}), \ldots, \xi_k({\bf X}))$ and
$\xi_i({\bf X}):=(\xi_{i,1}({\bf X}), \ldots, \xi_{i,n_i}({\bf X}))$  is a free holomorphic function on ${\bf B_n}(\cH)$.
As above, one can  prove that
$\xi({\bf X})\in {\bf B_n}(\cH)$ for any ${\bf X}\in {\bf B_n}(\cH)$.
As in the proof of Theorem \ref{auto-C*}, we have  $(\xi\circ \varphi)({\bf X})=(\varphi\circ \xi)({\bf X})={\bf X}$ for any ${\bf X}\in {\bf B_n}(\cH)$, which shows $\varphi \in \text{\rm Aut}({\bf B_n})$.
 Moreover, one can show that $\Gamma|_{\boldsymbol \cA_{\bf n}}=\boldsymbol{\cB}_{\hat {\varphi}}|_{\boldsymbol \cA_{\bf n}}$. Since
$\boldsymbol \cA_{\bf n}$ is $w^*$-dense in ${\bf F_n^\infty}$ and $\Gamma$ and $\boldsymbol{\cB}_{\hat {\varphi}}$ are unitarily implemented (therefore $w^*$-continuous), we deduce that $\Gamma=\boldsymbol{\cB}_{\hat {\varphi}}$.
The fact that
$
 \text{\rm Aut}({\bf B_n}) \simeq  \text{\rm Aut}_u({\bf F_n^\infty})
$ can be proved as in Theorem \ref{auto-C*}.
The proof is complete.

\end{proof}

If $\Lambda: H^\infty({\bf B_n})\to H^\infty({\bf B_n})$
is an algebraic  homomorphism, it induces a unique homomorphism $\tilde
\Lambda:{\bf F^\infty_n}\to {\bf F^\infty_n}$ such that the  diagram
\begin{equation*}
\begin{matrix}
{\bf F^\infty_n}& \mapright{\tilde\Lambda}&
{\bf F^\infty_n}\\
 \mapdown{\boldsymbol\cB}& & \mapdown{\boldsymbol\cB}\\
H^\infty({\bf B_n})&  \mapright{\Lambda}& H^\infty({\bf B_n})
\end{matrix}
\end{equation*}
is commutative, i.e., $\Lambda \boldsymbol\cB=\boldsymbol\cB\tilde \Lambda$. The homomorphisms $\Lambda$
and $\tilde \Lambda$ uniquely determine each other by the formulas:
\begin{equation*}
\begin{split}
(\Lambda f)(X)&=\boldsymbol\cB_X[\tilde \Lambda(\hat f)], \qquad  f\in H^\infty({\bf B_n}),\
X\in {\bf B_n}(\cH), \quad \text{ and }\\
\tilde \Lambda(\hat f)&=\widehat{\Lambda(f)}, \qquad \hat f\in {\bf F^\infty_n}.
\end{split}
\end{equation*}

\begin{theorem} Let $\Lambda:H^\infty({\bf B_n})\to H^\infty({\bf B_n})$ be a unital algebraic automorphism. Then the following statements are equivalent.
\begin{enumerate}
 \item[(i)]
    $\tilde \Lambda$ is a unitarily implemented automorphism of ${\bf F}^\infty_{\bf n}$.
 \item[(ii)] There is $\varphi\in \text{\rm Aut}({\bf B_n})$ such that
    $$\Lambda(f)=f\circ \varphi,\qquad f\in H^\infty({\bf B_n}).$$
   \item[(iii)] $\tilde\Lambda$ is a $WOT$-continuous, completely contractive automorphism   of ${\bf F_n^\infty}$ with completely contractive hereditary linear extension.
    \item[(iv)] $\tilde\Lambda$ is norm-continuous and $WOT$-continuous such that $\{\tilde\Lambda({\bf S}_{i,j})\}$ and $\{\tilde\Lambda^{-1}({\bf S}_{i,j})\}$ are  in the polyball ${\bf B_n}(\otimes_{i=1}^k F^2(H_{n_i}))^-$, where ${\bf S}=\{{\bf S}_{i,j}\}$ is the universal model of the regular polyball ${\bf B_n}$.
\end{enumerate}

\end{theorem}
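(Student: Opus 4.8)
The plan is to prove the theorem by establishing the cycle of implications $(i)\Rightarrow(ii)\Rightarrow(iii)\Rightarrow(iv)\Rightarrow(i)$, following closely the argument given for the polyball algebra in Theorem \ref{auto-A}, but paying attention to the fact that ${\bf F}^\infty_{\bf n}$ is a $w^*$-closed (dual) algebra rather than a norm-closed one, so that both continuity hypotheses in $(iv)$ must genuinely be used. Throughout I would exploit that every element of the open polyball ${\bf B_n}(\cH)$ is pure, since $\|\Phi_{X_i}(I)\|<1$ forces $\Phi_{X_i}^{q}(I)\to 0$ in norm; this is what allows the pure-element version of the composition identity of Proposition \ref{Berezin-comp} to be applied at arbitrary points of the open polyball.

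For $(i)\Rightarrow(ii)$ I would invoke Theorem \ref{auto-F}: a unitarily implemented automorphism $\tilde\Lambda$ of ${\bf F}^\infty_{\bf n}$ is necessarily of the form $\boldsymbol\cB_{\hat\varphi}$ for a unique $\varphi\in\text{\rm Aut}({\bf B_n})$. Since $\hat\varphi$ is a pure element of ${\bf B_n}(\otimes_{i=1}^k F^2(H_{n_i}))^-$ by Theorem \ref{structure2}, the pure-element part of Proposition \ref{Berezin-comp} gives $\boldsymbol\cB_{\bf X}[\boldsymbol\cB_{\hat\varphi}[\hat f]]=\boldsymbol\cB_{\varphi({\bf X})}[\hat f]=f(\varphi({\bf X}))$ for every ${\bf X}\in{\bf B_n}(\cH)$ and $f\in H^\infty({\bf B_n})$. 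Combining this with the defining relation $(\Lambda f)({\bf X})=\boldsymbol\cB_{\bf X}[\tilde\Lambda(\hat f)]$ of the commutative diagram yields $\Lambda(f)=f\circ\varphi$. For $(ii)\Rightarrow(iii)$ I would compute $\tilde\Lambda(\hat f)=\widehat{\Lambda(f)}=\widehat{f\circ\varphi}=\boldsymbol\cB_{\hat\varphi}[\hat f]$, so that $\tilde\Lambda=\boldsymbol\cB_{\hat\varphi}|_{{\bf F}^\infty_{\bf n}}$; Theorem \ref{auto-F} together with Theorem \ref{auto-C*} then exhibits this as a $w^*$-continuous, completely contractive automorphism whose hereditary linear extension to $C^*({\bf S})$ is completely contractive.

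The implication $(iii)\Rightarrow(iv)$ I would carry out exactly as in Theorem \ref{auto-A}: complete contractivity of $\tilde\Lambda$ together with the completely contractive hereditary extension forces the defect inequalities $(id-\Phi_{\hat\varphi_{i_1}})\circ\cdots\circ(id-\Phi_{\hat\varphi_{i_p}})(I)\geq 0$ to be inherited by $\hat\varphi_{i,j}:=\tilde\Lambda({\bf S}_{i,j})$, and symmetrically for $\tilde\Lambda^{-1}$, which places $\{\tilde\Lambda({\bf S}_{i,j})\}$ and $\{\tilde\Lambda^{-1}({\bf S}_{i,j})\}$ in the closed polyball ${\bf B_n}(\otimes_{i=1}^k F^2(H_{n_i}))^-$; norm-continuity follows from complete contractivity and $WOT$-continuity is part of hypothesis $(iii)$.

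The decisive and most delicate step is $(iv)\Rightarrow(i)$. Here the von Neumann type inequality of \cite{Po-Berezin-poly} shows, as in Theorem \ref{auto-A}, that the membership of $\{\tilde\Lambda({\bf S}_{i,j})\}$ in the closed polyball makes $\tilde\Lambda$ completely contractive on the \emph{norm-closed} algebra $\boldsymbol\cA_{\bf n}$, and symmetrically for $\tilde\Lambda^{-1}$. The new obstacle, absent in the $A({\bf B_n})$ case, is that ${\bf F}^\infty_{\bf n}$ is the $w^*$-closure rather than the norm closure of $\boldsymbol\cA_{\bf n}$, so that to conclude $\tilde\Lambda$ is a genuine automorphism of ${\bf F}^\infty_{\bf n}$ I would use the assumed $WOT$-continuity of both $\tilde\Lambda$ and $\tilde\Lambda^{-1}$, together with the $w^*$-density of $\boldsymbol\cA_{\bf n}$ in ${\bf F}^\infty_{\bf n}$, to propagate the contractive homomorphism property from $\boldsymbol\cA_{\bf n}$ to all of ${\bf F}^\infty_{\bf n}$. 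Once $\tilde\Lambda$ and $\tilde\Lambda^{-1}$ are known to be completely contractive homomorphisms of ${\bf F}^\infty_{\bf n}$, the argument of Theorem \ref{auto-C*}, identifying the images of the generators with a universal model through the Berezin kernel via Theorem \ref{unitar-shift} and Corollary \ref{unit-implement}, shows that $\tilde\Lambda$ is unitarily implemented. I expect that the main technical point will be justifying the interchange of the $w^*$-limits defining the functional calculus with the map $\tilde\Lambda$, which is precisely where the $WOT$-continuity hypothesis is indispensable.
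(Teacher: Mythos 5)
Your proposal is correct and follows essentially the same route as the paper: the same cycle $(i)\Rightarrow(ii)\Rightarrow(iii)\Rightarrow(iv)\Rightarrow(i)$, with $(i)\Rightarrow(ii)\Rightarrow(iii)$ obtained from Theorem \ref{auto-F} and Proposition \ref{Berezin-comp}, $(iii)\Rightarrow(iv)$ exactly as in Theorem \ref{auto-A}, and $(iv)\Rightarrow(i)$ via the von Neumann type inequality followed by the $WOT$-density/$WOT$-continuity argument. The only difference is cosmetic: in the last step the paper quotes Theorem \ref{auto-A} to get $\tilde\Lambda|_{\boldsymbol\cA_{\bf n}}=\boldsymbol\cB_{\hat\varphi}|_{\boldsymbol\cA_{\bf n}}$ and then identifies $\tilde\Lambda$ with the already-constructed unitarily implemented automorphism $\boldsymbol\cB_{\hat\varphi}$ by $WOT$-continuity on the $WOT$-dense subalgebra $\boldsymbol\cA_{\bf n}$, rather than rerunning the full argument of Theorem \ref{auto-C*} as you propose; both come to the same thing.
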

\begin{proof} The  implications $(i)\implies (ii)\implies (iii)$  follow from
Theorem \ref{auto-F} and Proposition \ref{Berezin-comp}. Now, assume that item (iii) holds. As in the proof of Theorem \ref{auto-A} (implication $(iii)\implies (iv)$), one can prove that $\{\tilde\Lambda({\bf S}_{i,j})\}$ and $\{\tilde\Lambda^{-1}({\bf S}_{i,j})\}$ are  in the polyball ${\bf B_n}(\otimes_{i=1}^k F^2(H_{n_i}))^-$, hence,  item $(iv)$ holds. If we assume that $(iv)$ holds, then, due to the continuity in norm of $\tilde\Lambda$, we deduce, according to Theorem \ref{auto-A}, that $\varphi\in \text{\rm Aut}({\bf B_n})$ and $\tilde\Lambda|_{\boldsymbol\cA_{\bf n}}=\boldsymbol\cB_{\hat\varphi}|_{\boldsymbol\cA_{\bf n}}$.
 Recall that $\hat \varphi$  is pure (see Theorem \ref{structure2}) and  $\boldsymbol\cB_{\hat\varphi}$ is a unitarily implemented automorphism of ${\bf F_n^\infty}$.
 Since
$\boldsymbol \cA_{\bf n}$ is $WOT$-dense in ${\bf F_n^\infty}$ and $\tilde \Lambda$ and $\boldsymbol{\cB}_{\hat {\varphi}}$ are    $WOT$-continuous on ${\bf F_n^\infty}$, we deduce that $\tilde\Lambda=\boldsymbol{\cB}_{\hat {\varphi}}$. Therefore, item $(i)$ holds. The proof is complete.
\end{proof}

\smallskip

\section{The automorphism group $Aut({\bf B_n})$ and  unitary projective representations}
In this section, we prove that, under a natural topology, the free holomorphic automorphism group
    $\text{\rm Aut}( {\bf B_n}))$ is  a metrizable, $\sigma$-compact,  locally compact group,
     and provide  a  concrete unitary projective representation of it in terms
      of noncommutative Berezin kernels associated with regular polyballs.

According to Section 3, any
 $\Phi\in \text{\rm Aut}({\bf B_n})$, it is uniformly continuous on
${\bf B_n}(\cH)^-$.  Using standard arguments, one can easily  prove the following result.

\begin{lemma} \label{prod} Let $\Phi_m, \Phi, \Gamma_p$, and  $\Gamma$ be
in the automorphism group $\text{\rm Aut}({\bf B_n})$, where $m,p\in \NN$.  If $\Phi_m\to
\Phi$ and $\Gamma_p\to \Gamma$ uniformly on ${\bf B_n}(\cH)^-$, then
$\Phi_m\circ \Gamma_p\to \Phi\circ \Gamma$ uniformly on
${\bf B_n}(\cH)^-$, as $m,p\to\infty$.
\end{lemma}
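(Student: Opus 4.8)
The plan is to estimate $\|(\Phi_m\circ\Gamma_p)({\bf X})-(\Phi\circ\Gamma)({\bf X})\|$ uniformly over ${\bf X}\in{\bf B_n}(\cH)^-$ by inserting the intermediate term $\Phi(\Gamma_p({\bf X}))$ and applying the triangle inequality:
\begin{equation*}
\|\Phi_m(\Gamma_p({\bf X}))-\Phi(\Gamma({\bf X}))\|\leq \|\Phi_m(\Gamma_p({\bf X}))-\Phi(\Gamma_p({\bf X}))\|+\|\Phi(\Gamma_p({\bf X}))-\Phi(\Gamma({\bf X}))\|.
\end{equation*}
The first summand will be controlled by the uniform convergence $\Phi_m\to\Phi$, and the second by the uniform continuity of the limit map $\Phi$ combined with the uniform convergence $\Gamma_p\to\Gamma$.

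First I would record the structural input, namely Theorem \ref{structure2}(iii), which asserts that every element of $\text{\rm Aut}({\bf B_n})$ is a homeomorphism of ${\bf B_n}(\cH)^-$ onto itself; in particular each $\Gamma_p$ maps ${\bf B_n}(\cH)^-$ into ${\bf B_n}(\cH)^-$. Hence $\Gamma_p({\bf X})$ always lands in the set on which $\Phi_m\to\Phi$ uniformly, so that
\begin{equation*}
\sup_{{\bf X}\in{\bf B_n}(\cH)^-}\|\Phi_m(\Gamma_p({\bf X}))-\Phi(\Gamma_p({\bf X}))\|\leq \sup_{{\bf Y}\in{\bf B_n}(\cH)^-}\|\Phi_m({\bf Y})-\Phi({\bf Y})\|\longrightarrow 0
\end{equation*}
as $m\to\infty$, and this bound is independent of both $p$ and ${\bf X}$.

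For the second summand I would invoke the uniform continuity of $\Phi$ on ${\bf B_n}(\cH)^-$ noted just before the statement: given $\epsilon>0$ there is $\delta>0$ such that $\|{\bf Y}-{\bf Z}\|<\delta$ implies $\|\Phi({\bf Y})-\Phi({\bf Z})\|<\epsilon$ for all ${\bf Y},{\bf Z}\in{\bf B_n}(\cH)^-$. Since $\Gamma_p\to\Gamma$ uniformly, there is $P\in\NN$ with $\|\Gamma_p({\bf X})-\Gamma({\bf X})\|<\delta$ for every $p\geq P$ and every ${\bf X}\in{\bf B_n}(\cH)^-$; both $\Gamma_p({\bf X})$ and $\Gamma({\bf X})$ lie in ${\bf B_n}(\cH)^-$, so $\|\Phi(\Gamma_p({\bf X}))-\Phi(\Gamma({\bf X}))\|<\epsilon$ uniformly in ${\bf X}$ once $p\geq P$. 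Adding the two estimates yields $\Phi_m\circ\Gamma_p\to\Phi\circ\Gamma$ uniformly on ${\bf B_n}(\cH)^-$ as $m,p\to\infty$.

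The argument is elementary, and the step I would flag as carrying the real content is not a calculation but the use of two facts from the structure theory of Section 3: composition continuity of this kind genuinely requires the outer limit map $\Phi$ to be uniformly continuous (otherwise only pointwise convergence follows) and the inner maps $\Gamma_p$ to preserve the closed polyball (otherwise the uniform bound on the first summand breaks down). Both are guaranteed here because $\Phi,\Gamma_p\in\text{\rm Aut}({\bf B_n})$, so no further difficulty arises.
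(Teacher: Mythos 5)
Your argument is correct: the paper omits the proof of this lemma, dismissing it as a "standard argument," and your triangle-inequality decomposition through $\Phi(\Gamma_p({\bf X}))$, combined with the uniform continuity of $\Phi$ on ${\bf B_n}(\cH)^-$ (noted in the paper just before the lemma) and the fact from Theorem \ref{structure2}(iii) that each $\Gamma_p$ maps ${\bf B_n}(\cH)^-$ into itself, is exactly that standard argument, executed correctly. Nothing further is needed.
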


Let $\phi,\psi\in  \text{\rm Aut}({\bf B_n})$ and define
$$
d_{\bf B_n}(\phi,\psi):=\|\phi -\psi\|_\infty +
\|\phi^{-1}(0)-\psi^{-1}(0)\|.
$$
It is clear that  $d_{\bf B_n}$ is a metric on $\text{\rm Aut}({\bf B_n})$.

\begin{lemma} \label{conv-equi} Let $\boldsymbol\Psi_m=p_{\sigma^{(m)}}\circ\boldsymbol\Phi_{{\bf U}^{(m)}}\circ \boldsymbol\Psi_{\boldsymbol\lambda^{(m)}}$, $m\in \NN$,
and $\boldsymbol\Psi=p_\sigma\circ \boldsymbol\Phi_{\bf U}\circ \boldsymbol\Psi_{\boldsymbol\lambda}$ be free holomorphic automorphisms of the
 noncommutative polyball ${\bf B_n}(\cH)$ in standard form, where  $\sigma^{(m)}, \sigma\in \cS_k$, with $n_{\sigma^{(m)}(i)}=n_{\sigma(i)}=n_i$ for $i\in \{1,\ldots, k\}$,
  $${\bf U}^{(m)}=U_1^{(m)}\oplus\cdots \oplus U_k^{(m)}\ \text{ and } \  {\bf U}=U_1\oplus\cdots\oplus U_k\quad \text{ with }\ U_i^{(m)},U_i\in \cU(\CC^{n_i}),
  $$
  and
  $$\boldsymbol\lambda^{(k)}=(\lambda^{(k)}_1,\ldots, \lambda^{(k)}_k)\ \text{ and }\ \boldsymbol\lambda=(\lambda_1,\ldots, \lambda_k) \ \text{  with } \ \lambda^{(k)}_i, \lambda_i\in (\CC^{n_i})_1.
  $$
Then the following statements are equivalent:
\begin{enumerate}

\item[(i)] for each $i\in \{1,\ldots, k\}$, $U_i^{(m)}\to U_i$ in $B(\CC^{n_i})$ and $\lambda_i^{(m)}\to \lambda_i$
in the Euclidean norm of $\CC^{n_i}$, and  there is $N\in \NN$ such that $\sigma^{(m)}=\sigma$ for  any $m\geq N$.
    \item[(ii)]$p_{\sigma^{(m)}}\to p_{\sigma}$,  $\boldsymbol\Phi_{{\bf U}^{(m)}}\to \boldsymbol\Phi_{\bf U}$, and $\boldsymbol\Psi_{\boldsymbol\lambda^{(m)}}\to \boldsymbol\Psi_{\boldsymbol\lambda}$
     uniformly on  ${\bf B_n}(\cH)^-$.
     \item[(iii)] $\boldsymbol\Psi_m\to \boldsymbol\Psi$ in the metric $d_{\bf B_n}$;
\end{enumerate}
\end{lemma}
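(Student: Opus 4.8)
The plan is to prove the cyclic chain $(i)\Rightarrow(ii)\Rightarrow(iii)\Rightarrow(i)$, leaning on the uniqueness of the standard form furnished by Theorem \ref{structure}, on the continuity of composition in Lemma \ref{prod}, and on the explicit formula $\Psi_\lambda=-\Theta_\lambda$ recalled before Theorem \ref{structure}.

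For $(i)\Rightarrow(ii)$ I would treat the three factors separately. The permutation factor is the easiest: since $\sigma^{(m)}=\sigma$ for $m\geq N$, we have $p_{\sigma^{(m)}}=p_\sigma$ eventually, so $p_{\sigma^{(m)}}\to p_\sigma$ uniformly on ${\bf B_n}(\cH)^-$. For the linear factor, $\boldsymbol\Phi_{{\bf U}^{(m)}}({\bf X})={\bf X}{\bf U}^{(m)}$ depends linearly on the block-diagonal unitary ${\bf U}^{(m)}=U_1^{(m)}\oplus\cdots\oplus U_k^{(m)}$, and since $\|{\bf X}\|$ is bounded on the closed polyball, $U_i^{(m)}\to U_i$ forces $\boldsymbol\Phi_{{\bf U}^{(m)}}\to\boldsymbol\Phi_{\bf U}$ uniformly. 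For the hard factor $\boldsymbol\Psi_{\boldsymbol\lambda^{(m)}}$, I would invoke the closed form of $\Psi_{\lambda}=-\Theta_{\lambda}$ together with the continuity of the defect quantities $\Delta_{\lambda}=(1-\|\lambda\|_2^2)^{1/2}$ and $\Delta_{\lambda^*}=(I-\lambda^*\lambda)^{1/2}$ in $\lambda$, and the uniform continuity of the resolvent $\bigl(I-\sum_j\bar\lambda_{i,j}X_{i,j}\bigr)^{-1}$: once $\lambda_i^{(m)}\to\lambda_i$ with $\|\lambda_i\|_2<1$, the norms $\|\lambda_i^{(m)}\|_2$ are eventually bounded by some $\rho<1$, so the resolvents stay uniformly invertible on the closed ball and converge uniformly. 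This yields $\Psi_{\lambda_i^{(m)}}\to\Psi_{\lambda_i}$ uniformly on $[B(\cH)^{n_i}]_1^-$, hence $\boldsymbol\Psi_{\boldsymbol\lambda^{(m)}}\to\boldsymbol\Psi_{\boldsymbol\lambda}$ uniformly on ${\bf B_n}(\cH)^-$.

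The implication $(ii)\Rightarrow(iii)$ is short: Lemma \ref{prod}, applied to the uniformly convergent factors, gives $\boldsymbol\Psi_m\to\boldsymbol\Psi$ uniformly, so $\|\boldsymbol\Psi_m-\boldsymbol\Psi\|_\infty\to 0$. For the second term of $d_{\bf B_n}$ I would use $\boldsymbol\Psi_m^{-1}(0)=\boldsymbol\lambda^{(m)}=\boldsymbol\Psi_{\boldsymbol\lambda^{(m)}}(0)\to\boldsymbol\Psi_{\boldsymbol\lambda}(0)=\boldsymbol\lambda=\boldsymbol\Psi^{-1}(0)$, which follows from the uniform convergence of $\boldsymbol\Psi_{\boldsymbol\lambda^{(m)}}$ evaluated at $0$. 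Thus $d_{\bf B_n}(\boldsymbol\Psi_m,\boldsymbol\Psi)\to 0$.

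The genuinely substantive step is $(iii)\Rightarrow(i)$, and this is where I expect the main obstacle: recovering the \emph{discrete} permutation data and the \emph{norm} convergence of the unitaries from metric convergence of the full automorphisms. From $d_{\bf B_n}(\boldsymbol\Psi_m,\boldsymbol\Psi)\to0$ one reads off $\lambda_i^{(m)}\to\lambda_i$ directly, and then by the argument of the first paragraph $\boldsymbol\Psi_{\boldsymbol\lambda^{(m)}}\to\boldsymbol\Psi_{\boldsymbol\lambda}$ uniformly. Using that $\boldsymbol\Psi_{\boldsymbol\lambda^{(m)}}$ is an involution, I would write $p_{\sigma^{(m)}}\circ\boldsymbol\Phi_{{\bf U}^{(m)}}=\boldsymbol\Psi_m\circ\boldsymbol\Psi_{\boldsymbol\lambda^{(m)}}$, and Lemma \ref{prod} then gives $p_{\sigma^{(m)}}\circ\boldsymbol\Phi_{{\bf U}^{(m)}}\to\boldsymbol\Psi\circ\boldsymbol\Psi_{\boldsymbol\lambda}=p_\sigma\circ\boldsymbol\Phi_{\bf U}$ uniformly on ${\bf B_n}(\cH)^-$. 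These composites are degree-one homogeneous maps, so evaluating on $r{\bf S}$ and extracting the coefficients of the monomials ${\bf S}_{i,j}$ (by the Cauchy-type estimate of Theorem \ref{Cauchy-ineq} together with uniqueness of the representation) shows that the associated $(n_1+\cdots+n_k)\times(n_1+\cdots+n_k)$ matrices converge. Each such matrix is a block generalized permutation whose $k$ nonzero blocks are the unitaries $U_i^{(m)}$ (respectively $U_i$), placed in positions prescribed by $\sigma^{(m)}$ (respectively $\sigma$). Since $\cS_k$ is finite and unitary blocks have norm $1$, a mismatch $\sigma^{(m)}\neq\sigma$ would force a norm-one block to sit where the limit matrix has a zero block, which is incompatible with convergence for large $m$; hence $\sigma^{(m)}=\sigma$ eventually, and convergence of the surviving blocks yields $U_i^{(m)}\to U_i$ for each $i$. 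This closes the cycle and completes the proof.
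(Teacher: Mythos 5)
Your proposal is correct and follows essentially the same route as the paper: the same factor-by-factor analysis of the standard form (resolvent/defect continuity for $\boldsymbol\Psi_{\boldsymbol\lambda^{(m)}}$, linearity for $\boldsymbol\Phi_{{\bf U}^{(m)}}$), Lemma \ref{prod} for compositions, the involution trick $p_{\sigma^{(m)}}\circ\boldsymbol\Phi_{{\bf U}^{(m)}}=\boldsymbol\Psi_m\circ\boldsymbol\Psi_{\boldsymbol\lambda^{(m)}}$ to isolate the linear part, and the observation that a permutation mismatch would force a norm-one unitary block to converge to zero. The only differences are organizational (you prove a cycle where the paper proves $(i)\Leftrightarrow(ii)$ separately, and you detect the permutation rigidity via matrix coefficients where the paper uses a direct sup-norm lower bound of $1$), which are immaterial.
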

\begin{proof}
First, we prove that (i) is equivalent to (ii).
Assume that $U_i^{(m)}=[u_{st}^{(m)}]_{n_i\times n_i}$, $m\in \NN$, and $U_i=[u_{st}]_{n_i\times n_i}$ are unitary matrices with scalar entries, and  $\boldsymbol\Phi_{{\bf U}^{(m)}}\to \boldsymbol\Phi_{U}$ uniformly on  ${\bf B_n}(\cH)^-$, as $m\to\infty$.
For each $j=1,\ldots,n_i$, denote ${\bf E}_{ij}:=[0,\ldots,E_{ij},\ldots, 0]$, where $E_{ij}$ is on the $i$-position, and $E_{ij}=[0,\ldots, I,\ldots 0]$, where the identity is on the $j$-position.  Note that
$$
\|\boldsymbol\Phi_{{\bf U}^{(m)}}({\bf E}_{i,j})-\boldsymbol\Phi_{\bf U}({\bf E}_{ij})\|=\left(\sum_{j=1}^{n_i} |u_{ij}^{(m)}- u_{ij}|^2\right)^{1/2}.
$$
Consequently, if $\boldsymbol\Phi_{{\bf U}^{(m)}}\to \boldsymbol\Phi_{\bf U}$, then, for each $i\in \{1,\ldots,k\}$, we have  $u_{ij}^{(m)}\to u_{ij}$ as $m\to\infty$. Hence, $U_i^{(m)}\to U_i$ in $B(\CC^{n_i})$.
Conversely, assume that the latter condition holds. Since
$$\|\boldsymbol\Phi_{{\bf U}^{(m)}}({\bf X})-\boldsymbol\Phi_{\bf U}({\bf X})\|\leq k\|{\bf X}\|\max_{i\in \{1,\ldots,k\}}\|U_i^{(m)}-U_i\|
$$
 for any ${\bf X}\in {\bf B_n}(\cH)^-$, we deduce that  $\boldsymbol\Phi_{{\bf U}^{(m)}}\to \boldsymbol\Phi_{{\bf U}}$ uniformly on
${\bf B_n}(\cH)^-$.

Now we prove that $\lambda_i^{(m)}\to \lambda_i$
in the Euclidean norm of $\CC^{n-i}$ if and only if $\boldsymbol\Psi_{\boldsymbol\lambda^{(m)}}\to \boldsymbol\Psi_{\boldsymbol\lambda}$
     uniformly on  ${\bf B_n}(\cH)^-$. Since $\boldsymbol\Psi_{\boldsymbol\lambda^{(m)}}(0)=\boldsymbol\lambda^{(m)}$ and $\boldsymbol\Psi_{\boldsymbol\lambda}(0)=\boldsymbol\lambda$,  one implication is clear. To prove the converse, assume that, for each $i\in \{1,\ldots, k\}$, $\lambda_i^{(m)}\to \lambda_i$
in the Euclidean norm of $\CC^{n_i}$.
Since the left creation operators ${\bf S}_{i,1},\ldots, {\bf S}_{i,n_i}$ are isometries with orthogonal ranges on the full Fock space  $F^2(H_{n_i})$, we have
$$
\left\|\sum_{j=1}^{n_i} \overline{\lambda}_{i,j} {\bf S}_{i,j}\right\|=\left(\sum_{j=1}^{n_i} |\lambda_{i,j}|^2\right)^{1/2}<1.
$$
Consequently, $\left(\sum_{j=1}^{n_i} \overline{\lambda}^{(m)}_{i,j} {\bf S}_{i,j}\right)^{-1}$ converges to $\left(\sum_{j=1}^{n_i} \overline{\lambda}_{i,j} {\bf S}_{i,j}\right)^{-1}$, as $m\to \infty$, in the operator norm.
Taking into account that
$$\widehat{\Psi}_{\lambda_i}=\lambda_i -\Delta_{\lambda_i} \left(I-\sum_{j=1}^{n_i} \overline{\lambda}_{i,j} {\bf S}_{i,j}\right)^{-1}[{\bf S}_{i,1},\ldots,{\bf S}_{i,n_i}]\Delta_{\lambda_i^*}
$$
and a similar relation holds for $\widehat{\Psi}_{\lambda_i^{(m)}}$, we deduce that
$\widehat{\Psi}_{\lambda_i^{(m)}}\to \widehat{\Psi}_{\lambda_i}$ in the operator norm.
Due to the noncommutative von Neumann inequality \cite{Po-poisson}, we have
$$
\|\boldsymbol\Psi_{\boldsymbol\lambda^{(m)}}(X)-\boldsymbol\Psi_{\boldsymbol\lambda}(X)\|\leq k\max_{i\in \{1,\ldots, k\}}\|\widehat{\Psi}_{\lambda_i^{(m)}} - \widehat{\Psi}_{\lambda_i}\|
$$
for any   ${\bf X}\in {\bf B_n}(\cH)^-$. Hence,  $\boldsymbol\Psi_{\boldsymbol\lambda^{(m)}}\to \boldsymbol\Psi_{\boldsymbol\lambda}$
     uniformly on  ${\bf B_n}(\cH)^-$, which proves our assertion.

 If $\sigma^{(m)}\neq \sigma$, then there is $i_0\in \{1,\ldots, k\}$ such that   $\sigma^{(m)}(i_0)\neq \sigma(i_0)$. Hence
 $$
 \|p_{\sigma^{(m)}}-p_\sigma\|_\infty\geq \sup_{{\bf X}=(X_1,\ldots, X_k)\in {\bf B_n}(\cH)}
 \| X_{\sigma^{(m)}(i_0)} -X_{ \sigma(i_0)}\|\geq \sup_{X_{ \sigma(i_0)}\in [B(\cH)^{n_{i_0}}]_1}
 \|X_{ \sigma(i_0)}\| =1.
 $$
 Therefore, $p_{\sigma^{(m)}}\to p_\sigma$ as $m\to \infty$ if and only if there is $N\in \NN$ such that $\sigma^{(m)}=\sigma$ for  any $m\geq N$.
     In conclusion, (i) is equivalent to (ii).

     Now, we prove that (iii)$\implies$(i). Assume that $d_{\bf B_n}(\boldsymbol\Psi_m,\boldsymbol\Psi)\to 0$ as $k\to\infty$. Hence, $\boldsymbol\Psi_m\to \boldsymbol\Psi$ uniformly on ${\bf B_n}(\cH)^-$ and
     $\boldsymbol\lambda^{(m)}=\boldsymbol\Psi_m^{-1}(0)\to \boldsymbol\lambda=\boldsymbol\Phi^{-1}(0)$ in ${\bf P_n}(\CC)$. Consequently, as proved above, we have that $\boldsymbol\Psi_{\boldsymbol\lambda^{(m)}}\to \boldsymbol\Psi_{\boldsymbol\lambda}$
     uniformly on  ${\bf B_n}(\cH)^-$. Using Lemma \ref{prod} and the fact that
     $\boldsymbol\Psi_m=p_{\sigma^{(m)}}\circ\boldsymbol\Phi_{{\bf U}^{(m)}}\circ \boldsymbol\Psi_{\boldsymbol\lambda^{(m)}}$, $m\in \NN$,
and $\boldsymbol\Psi=p_\sigma\circ \boldsymbol\Phi_{\bf U}\circ \boldsymbol\Psi_{\boldsymbol\lambda}$, we deduce that
\begin{equation}
\label{uconv}
p_{\sigma^{(m)}}\circ\boldsymbol\Phi_{{\bf U}^{(m)}}\to p_\sigma\circ \boldsymbol\Phi_{\bf U}, \quad \text{ as } \ m\to \infty,
\end{equation}
uniformly on ${\bf B_n}(\cH)^-$.
Note that $\Phi_{{\bf U}^{(m)}}({\bf X})=(X_1U_1^{(m)},\ldots, X_kU^{(m)}_k)$ and
$\Phi_{{\bf U}}({\bf X})=(X_1U_1,\ldots, X_kU_k)$
for any ${\bf X}=(X_1,\ldots, X_k)\in {\bf B_n}(\cH)$. If $\sigma^{(m)}\neq \sigma$, then there is $i_0\in \{1,\ldots, k\}$ such that   $\sigma^{(m)}(i_0)\neq \sigma(i_0)$. Consequently, we have
\begin{equation*}
\begin{split}
 \|p_{\sigma^{(m)}}\circ\boldsymbol\Psi_{{\bf U}^{(m)}}-p_\sigma\circ \boldsymbol\Psi_{{\bf U}} \|_\infty
 &\geq \sup_{{\bf X}=(X_1,\ldots, X_k)\in {\bf B_n}(\cH)}
 \| X_{\sigma^{(m)}(i_0)}U_{\sigma^{(m)}(i_0)} -X_{ \sigma(i_0)}U_{ \sigma(i_0)}\|\\
 &\geq \sup_{X_{ \sigma(i_0)}\in [B(\cH)^{n_{i_0}}]_1}
 \|X_{ \sigma(i_0)}U_{ \sigma(i_0)}\| =1.
 \end{split}
 \end{equation*}
Hence, we deduce that relation \eqref{uconv} holds if and only if  there is $N\in \NN$ such that $\sigma^{(m)}=\sigma$ for  any $m\geq N$, and $\boldsymbol\Phi_{{\bf U}^{(m)}}\to \boldsymbol\Phi_{\bf U}$
     uniformly on  ${\bf B_n}(\cH)^-$. Due to the equivalence of (i) with (ii), the latter convergence is  equivalent to $U_i^{(m)}\to U_i$ in $B(\CC^{n_i})$ for each $i\in \{1,\ldots, k\}$.

 The implication (i)$\implies$(iii) is straightforward if one uses the equivalence of (i) with (ii) and
   Lemma \ref{prod}.  The proof is complete.
\end{proof}

After these preliminaries, we can prove the following

\begin{theorem} \label{Aut}
The free holomorphic automorphism group \
 $\text{\rm Aut}({\bf B_n})$ is a   $\sigma$-compact, locally
compact topological group with respect to the topology induced by the metric $d_{\bf B_n}$.
\end{theorem}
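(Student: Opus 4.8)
The plan is to use the explicit parametrization of $\text{\rm Aut}({\bf B_n})$ furnished by Theorem \ref{structure} to identify the metric space $(\text{\rm Aut}({\bf B_n}),d_{\bf B_n})$ with a concrete product of classical spaces, and then read off all the topological properties from that model. By Theorem \ref{structure} every $\boldsymbol\Psi\in\text{\rm Aut}({\bf B_n})$ has a unique standard form $\boldsymbol\Psi=p_\sigma\circ\boldsymbol\Phi_{\bf U}\circ\boldsymbol\Psi_{\boldsymbol\lambda}$, with $\sigma\in\Sigma:=\{\tau\in\cS_k:\ n_{\tau(i)}=n_i \text{ for all } i\}$, ${\bf U}=U_1\oplus\cdots\oplus U_k$ where $U_i\in\cU(\CC^{n_i})$, and $\boldsymbol\lambda=\boldsymbol\Psi^{-1}(0)\in(\CC^{n_1})_1\times\cdots\times(\CC^{n_k})_1$. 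Thus the assignment $\Theta(\boldsymbol\Psi):=(\sigma,U_1,\ldots,U_k,\lambda_1,\ldots,\lambda_k)$ is a bijection of $\text{\rm Aut}({\bf B_n})$ onto
$$P:=\Sigma\times\cU(\CC^{n_1})\times\cdots\times\cU(\CC^{n_k})\times(\CC^{n_1})_1\times\cdots\times(\CC^{n_k})_1,$$
where $\Sigma$ carries the discrete topology and $P$ the product topology.

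First I would show $\Theta$ is a homeomorphism. Since $d_{\bf B_n}$ is a metric, its topology is sequential, and Lemma \ref{conv-equi} states exactly that $\boldsymbol\Psi_m\to\boldsymbol\Psi$ in $d_{\bf B_n}$ if and only if $U_i^{(m)}\to U_i$ and $\lambda_i^{(m)}\to\lambda_i$ for each $i$ and $\sigma^{(m)}=\sigma$ for large $m$, i.e. if and only if $\Theta(\boldsymbol\Psi_m)\to\Theta(\boldsymbol\Psi)$ in $P$. Hence both $\Theta$ and $\Theta^{-1}$ are continuous. Now $P$ is a finite disjoint union (indexed by $\Sigma$) of copies of $Q:=\prod_i\cU(\CC^{n_i})\times\prod_i(\CC^{n_i})_1$; each $\cU(\CC^{n_i})$ is compact, and each open ball $(\CC^{n_i})_1$ is locally compact (being open in $\CC^{n_i}$) and $\sigma$-compact since $(\CC^{n_i})_1=\bigcup_m\overline{(\CC^{n_i})_{1-1/m}}$ is a countable union of compact balls. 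A finite product of locally compact $\sigma$-compact spaces is locally compact and $\sigma$-compact, as is a finite disjoint union of such; therefore $P$ is metrizable, $\sigma$-compact and locally compact, and through the homeomorphism $\Theta$ so is $\text{\rm Aut}({\bf B_n})$.

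It remains to check that composition and inversion are continuous for $d_{\bf B_n}$, and this is the step requiring the most care: composition of automorphisms is \emph{not} a coordinatewise operation on $P$ (the group is a nonabelian extension, not a direct product), so one cannot simply invoke continuity of the operations of a product group. Instead I would argue directly with Lemma \ref{prod}. For inversion, if $\boldsymbol\Psi_m\to\boldsymbol\Psi$ then, writing $\boldsymbol\Psi_m^{-1}=\boldsymbol\Psi_{\boldsymbol\lambda^{(m)}}\circ\boldsymbol\Phi_{({\bf U}^{(m)})^*}\circ p_{(\sigma^{(m)})^{-1}}$, each factor converges uniformly on ${\bf B_n}(\cH)^-$ by Lemma \ref{conv-equi} (the parameters converge and $({\bf U}^{(m)})^*\to{\bf U}^*$), so $\boldsymbol\Psi_m^{-1}\to\boldsymbol\Psi^{-1}$ uniformly by Lemma \ref{prod}; together with $\|(\boldsymbol\Psi_m^{-1})^{-1}(0)-(\boldsymbol\Psi^{-1})^{-1}(0)\|=\|\boldsymbol\Psi_m(0)-\boldsymbol\Psi(0)\|\to0$ this gives $d_{\bf B_n}(\boldsymbol\Psi_m^{-1},\boldsymbol\Psi^{-1})\to0$. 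For composition, if $\boldsymbol\Phi_m\to\boldsymbol\Phi$ and $\boldsymbol\Gamma_m\to\boldsymbol\Gamma$, then Lemma \ref{prod} yields $\|\boldsymbol\Phi_m\circ\boldsymbol\Gamma_m-\boldsymbol\Phi\circ\boldsymbol\Gamma\|_\infty\to0$, while $(\boldsymbol\Phi_m\circ\boldsymbol\Gamma_m)^{-1}(0)=\boldsymbol\Gamma_m^{-1}(\boldsymbol\Phi_m^{-1}(0))$ converges to $\boldsymbol\Gamma^{-1}(\boldsymbol\Phi^{-1}(0))$ upon combining the uniform convergence $\boldsymbol\Gamma_m^{-1}\to\boldsymbol\Gamma^{-1}$ just obtained with $\boldsymbol\Phi_m^{-1}(0)\to\boldsymbol\Phi^{-1}(0)$ and the uniform continuity of $\boldsymbol\Gamma^{-1}$. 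Hence both group operations are continuous, and $\text{\rm Aut}({\bf B_n})$ is a metrizable, $\sigma$-compact, locally compact topological group.
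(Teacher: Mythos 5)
Your proposal is correct and follows essentially the same route as the paper's proof: both use the unique standard decomposition from Theorem \ref{structure} together with Lemmas \ref{prod} and \ref{conv-equi} to verify continuity of composition and inversion, and both identify $\text{\rm Aut}({\bf B_n})$ homeomorphically with $\Sigma\times\cU(\CC^{n_1})\times\cdots\times\cU(\CC^{n_k})\times{\bf P_n}(\CC)$ to read off $\sigma$-compactness and local compactness. The only differences are cosmetic (order of the steps, and your slightly more abstract treatment of $(\boldsymbol\Phi_m\circ\boldsymbol\Gamma_m)^{-1}(0)$ via uniform continuity rather than the explicit standard-form computation).
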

\begin{proof} First, we prove that the map
$$
\text{\rm Aut}({\bf B_n})\times \text{\rm Aut}({\bf B_n})\ni
(\boldsymbol\Psi,\boldsymbol\Gamma)\mapsto \boldsymbol\Psi\circ\boldsymbol\Gamma\in \text{\rm Aut}({\bf B_n}) $$
is continuous when $\text{\rm Aut}({\bf B_n})$ has the topology
induced by the metric $d_{\bf B_n}$. For $m,p\in \NN$,  let

\begin{equation*}
\begin{split}
\boldsymbol\Psi_m&=p_{\sigma^{(m)}}\circ\Phi_{{\bf U}^{(m)}}\circ \boldsymbol\Psi_{\boldsymbol\lambda^{(m)}},\quad \boldsymbol\Psi=p_\sigma\circ\Phi_{\bf U}\circ \boldsymbol\Psi_{\boldsymbol\lambda},\\
\boldsymbol\Gamma_p&=p_{\omega^{(p)}}\circ\Phi_{\boldsymbol W^{(p)}}\circ \boldsymbol\Psi_{\boldsymbol\mu^{(p)}},\quad   \boldsymbol\Gamma=p_\omega\circ\Phi_{\bf W}\circ
\boldsymbol\Psi_{\boldsymbol\mu},
\end{split}
\end{equation*}
be free holomorphic automorphisms of  ${\bf B_n}$, in
standard decomposition. Then
\begin{equation*}
\begin{split}
{\bf U}^{(m)}=U^{(m)}_1\oplus\cdots\oplus U^{(m)}_k,&\quad  {\bf U}=U_1\oplus\cdots\oplus U_k,\\
{\bf W}^{(p)}=W_1^{(p)}\oplus\cdots\oplus W_k^{(p)},&\quad  {\bf W}=W_1\oplus\cdots\oplus W_k,
\end{split}
\end{equation*}
 where $U_i^{(m)}, W_i^{(p)}, U_i, W_i$ are unitary
operators on $\CC^{n_i}$ and $\boldsymbol\lambda^{(m)}, \boldsymbol\mu^{(p)}, \boldsymbol\lambda,\boldsymbol\mu$ are
in ${\bf P_n}(\CC)$ satisfying relations
$$
\boldsymbol\lambda^{(m)}=\boldsymbol\Psi^{-1}_k(0),\ \boldsymbol\mu^{(p)}=\boldsymbol\Gamma_p^{-1}(0), \
\boldsymbol\lambda=\boldsymbol\Psi^{-1}(0),  \text{ and } \boldsymbol\mu=\boldsymbol\Gamma^{-1}(0).
$$
Assume that
$d_{\bf B_n}({\bf \Psi}_m, {\bf \Psi})\to 0$ as $m\to\infty$ and
$d_{\bf B_n}(\boldsymbol\Gamma_p,\boldsymbol\Gamma)\to 0$ as $p\to \infty$.
Using  Lemma
\ref{conv-equi}, we deduce that $\boldsymbol\Psi_m\circ \boldsymbol\Gamma_p\to \boldsymbol\Psi\circ
\boldsymbol\Gamma$ uniformly  on ${\bf B_n}(\cH)$.
  Note that
\begin{equation*}
\begin{split}
(\boldsymbol\Psi_m\circ \boldsymbol\Gamma_p)^{-1}(0)&= (\boldsymbol\Psi_{\mu^{(p)}}^{-1}\circ
\Phi_{{\bf W}^{(p)}}^{-1}\circ
p_{\omega^{(p)}}^{-1}\circ {\bf \Psi}_m^{-1})(0)\\
&=\left(\boldsymbol\Psi_{\mu^{(p)}}\circ \Phi_{({\bf W}^{(p)})^*}\circ p_{(\omega^{(p)})^{-1}}\right)(\boldsymbol \lambda^{(m)}).
\end{split}
\end{equation*}
Similarly, we have
\begin{equation*}
\begin{split}
(\boldsymbol\Psi\circ \boldsymbol\Gamma)^{-1}(0)&= (\boldsymbol\Psi_{\mu}^{-1}\circ
\Phi_{{\bf W}}^{-1}\circ
p_{\omega}^{-1}\circ {\bf \Psi}^{-1})(0)\\
&=\left(\boldsymbol\Psi_{\mu}\circ \Phi_{{\bf W}^*}\circ p_{\omega^{-1}}\right)(\boldsymbol \lambda).
\end{split}
\end{equation*}
According to Lemma
\ref{conv-equi}, $\boldsymbol\lambda^{(m)}\to \boldsymbol\lambda$ in ${\bf P_n}(\CC)$,
$W_i^{(p)}\to W_i$ in $B(\CC^n)$, $p_{\omega^{(p)}}^{-1}\to p_{\omega^{-1}}$ and  $\boldsymbol\Psi_{\mu^{(p)}}\to \boldsymbol\Psi_\mu$ uniformly  on
${\bf B_n}(\cH)^-$. Consequently,
$
(\boldsymbol\Psi_m\circ \boldsymbol\Gamma_p)^{-1}(0)\to (\boldsymbol\Psi\circ \boldsymbol\Gamma)^{-1}(0)
$
as  $m,p\to \infty$. Therefore,  $\boldsymbol\Psi_m\circ \boldsymbol\Gamma_p\to \boldsymbol\Psi\circ
\boldsymbol\Gamma$  in the topology induced by the metric $d_{\bf B_n}$.

  In what follows, we show that the map ${\bf \Psi}\mapsto {\bf \Psi}^{-1}$ is
  continuous on $\text{\rm Aut}({\bf B_n})$ with the topology
  induced by  $d_{\bf B_n}$.
Assume that   $d_{\bf B_n}({\bf \Psi}_m, {\bf \Psi})\to 0$  as $k\to\infty$.
Using the same notations as above, we have
$$
{\bf \Psi}_m^{-1}={\bf \Psi}_{{\boldsymbol \lambda}^{(m)}}\circ \Phi_{({\bf U}^{(m)})^*}\circ p_{(\sigma^{(m)})^{-1}} \quad \text{ and } \quad {\bf \Psi}_m^{-1}={\bf \Psi}_{{\boldsymbol \lambda}^{(m)}}\circ \Phi_{({\bf U}^{(m)})^*}\circ p_{(\sigma^{(m)})^{-1}}.
 $$
Using Lemma \ref{prod} and Lemma \ref{conv-equi}, one can easily see that $d_{\bf B_n}({\bf \Psi}_m^{-1}, {\bf \Psi}^{-1})\to 0$ as $m\to\infty$. Therefore, $\text{\rm Aut}({\bf B_n})$ is a   topological group with respect to the topology induced by the metric $d_{\bf B_n}$.

On the other hand, each free holomorphic automorphism ${\bf \Psi} \in \text{\rm Aut}({\bf B_n})$ has a unique representation
$\boldsymbol\Psi=p_\sigma\circ\Phi_{\bf U}\circ \boldsymbol\Psi_{\boldsymbol\lambda}$, where $\boldsymbol\lambda:={\bf \Phi}^{-1}(0)$ and  ${\bf U}=U_1\oplus \cdots \oplus U_k$ with  $U_i\in \cU(\CC^{n_i})$, the unitary group on $\CC^{n_i}$.
This generates  a bijection
$$\chi: \text{\rm Aut}({\bf B_n})\to \Sigma \times \cU(\CC^{n_1})\times\cdots \times \cU(\CC^{n_k}) \times {\bf P_n}(\CC),
$$
 by setting
$\chi({\bf \Psi}):=(\sigma, U_1,\cdots, U_k,\boldsymbol\lambda)$, where $\Sigma$ is the discrete subgroup
$$
 \Sigma:=\{\sigma\in \cS_k: \ (n_{\sigma(1)},\ldots, n_{\sigma(k)})=(n_1,\ldots, n_k)\}.
 $$
According to Lemma \ref{conv-equi}, the map $\chi$ is a homeomorphism of topological spaces, where $\text{\rm Aut}({\bf B_n})$ has the topology induced by the metric $d_{\bf B_n}$ and  $\cU(\CC^{n_i})$ and $ {\bf P_n}(\CC)$ have the natural topology. Consequently, since
$\Sigma \times \cU(\CC^{n_1})\times\cdots \times \cU(\CC^{n_k}) \times {\bf P_n}(\CC)$ is a $\sigma$-compact, locally compact topological space, so is  the automorphism  group $\text{\rm Aut}({\bf B_n})$. The proof is complete.
\end{proof}

\begin{corollary}  Let ${\bf n}=(n_1,\ldots, n_k)\in \NN^k$ and
 $$
 \Sigma:=\{\sigma\in \cS_k: \ (n_{\sigma(1)},\ldots, n_{\sigma(k)})=(n_1,\ldots, n_k)\}.
 $$
 The free holomorphic automorphism group \
 $\text{\rm Aut}({\bf B_n})$  has $\text{\rm card} (\Sigma)$  path connected components.
\end{corollary}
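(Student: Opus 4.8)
The plan is to read off the answer from the homeomorphism $\chi$ already constructed in the proof of Theorem \ref{Aut}, which identifies $\text{\rm Aut}({\bf B_n})$, topologized by the metric $d_{\bf B_n}$, with the product space
\[
\Sigma \times \cU(\CC^{n_1})\times\cdots \times \cU(\CC^{n_k}) \times {\bf P_n}(\CC),
\]
where $\Sigma$ carries the discrete topology while each unitary group $\cU(\CC^{n_i})$ and the scalar polyball ${\bf P_n}(\CC)=(\CC^{n_1})_1\times\cdots\times(\CC^{n_k})_1$ carry their natural topologies. Since a homeomorphism induces a bijection between the sets of path-connected components of two spaces, it suffices to count the path components of this product.

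First I would check that every factor other than $\Sigma$ is path connected. The scalar polyball ${\bf P_n}(\CC)$ is a finite product of open Euclidean balls, each of which is convex and hence path connected, so ${\bf P_n}(\CC)$ is path connected. Each unitary group $\cU(\CC^{n_i})$ is path connected as well: writing a unitary $U$ in diagonal form $U=VDV^*$, where $D$ is a diagonal unitary with entries $e^{i\theta_1},\ldots,e^{i\theta_{n_i}}$, the path $t\mapsto VD_tV^*$ with $D_t$ having entries $e^{it\theta_1},\ldots,e^{it\theta_{n_i}}$, $t\in[0,1]$, joins $U$ to the identity inside $\cU(\CC^{n_i})$. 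A finite product of path-connected spaces being path connected, the space $Y:=\cU(\CC^{n_1})\times\cdots\times\cU(\CC^{n_k})\times{\bf P_n}(\CC)$ is path connected.

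Next I would invoke the discreteness of $\Sigma$. Since $\Sigma$ is finite with the discrete topology, the product $\Sigma\times Y$ is the disjoint union $\bigsqcup_{\sigma\in\Sigma}\{\sigma\}\times Y$, and each slice $\{\sigma\}\times Y$ is clopen (because $\{\sigma\}$ is clopen in $\Sigma$) and path connected (being homeomorphic to $Y$). To see that these slices are exactly the path components, note that any path $\gamma\colon[0,1]\to\Sigma\times Y$ composed with the projection onto $\Sigma$ is a continuous map from the connected interval $[0,1]$ into the discrete space $\Sigma$, hence constant; thus no path can leave a slice. Therefore $\Sigma\times Y$ has exactly $\text{\rm card}(\Sigma)$ path components, and transporting this count back through $\chi$ yields that $\text{\rm Aut}({\bf B_n})$ has $\text{\rm card}(\Sigma)$ path-connected components, as claimed.

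The only substantive inputs are the path connectedness of the unitary groups and of the balls, both standard, together with the elementary observation that a continuous image of $[0,1]$ in a discrete space is constant; I do not anticipate any real obstacle once the homeomorphism $\chi$ of Theorem \ref{Aut} is available, since the statement reduces entirely to the topology of a product with a finite discrete factor.
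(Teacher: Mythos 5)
Your proposal is correct and follows essentially the same route as the paper: the paper's proof likewise reads the count off the homeomorphism $\chi$ from Theorem \ref{Aut}, citing the path connectedness of $\cU(\CC^{n_i})$ and $(\CC^{n_i})_1$ and the discreteness of $\Sigma$. You merely supply more of the standard topological details (diagonalization path in the unitary group, constancy of paths into a discrete space) that the paper leaves implicit.
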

\begin{proof} We saw in the proof of Theorem \ref{Aut} that the map
$$\chi: \text{\rm Aut}({\bf B_n})\to \Sigma \times \cU(\CC^{n_1})\times\cdots \times \cU(\CC^{n_k}) \times (\CC^{n_1})_1\times \cdots\times (\CC^{n_k})_1
$$
is a homeomorphism.
Since $\cU(\CC^{n_i})$ and $(\CC^{n_i})_1$ are path connected and $\Sigma$ has $\text{\rm card} (\Sigma)$  path connected components, the result follows.
\end{proof}

Let $\text{\rm Aut}({\bf B_n})$ be the free holomorphic
automorphism group of the noncommutative polyball ${\bf B_n}$ and let
$\cU(\cK)$ be the unitary group on the Hilbert space $\cK$.
According to Theorem \ref{Aut}, $\text{\rm Aut}({\bf B_n})$ is a topological group with respect to the metric $d_{\bf B_n}$.
A  map
$\pi: \text{\rm Aut}({\bf B_n})\to \cU(\cK)$  is called (unitary)
projective representation if the following conditions are satisfied:
\begin{enumerate}
\item[(i)] $\pi(id)=I$, where $id$ is the identity on ${\bf B_n}(\cH)$;

\item[(ii)] $
 \pi({\bf \Phi}) \pi({\bf \Psi})=c_{({\bf \Phi},{\bf \Psi})} \pi({{\bf \Phi}\circ {\bf \Psi}})$,
  for any  ${\bf \Phi}, {\bf \Psi}\in \text{\rm Aut}({\bf B_n})$, where $c_{({\bf \Phi},{\bf \Psi})}$
  is a complex number  with
 $|c_{({\bf \Phi},{\bf \Psi})}|=1$;
\item[(iii)]
 the map $\text{\rm Aut}({\bf B_n})\ni {\bf \Phi}\mapsto \left<\pi({\bf\Phi})\xi,\eta\right> \in \CC$
is continuous for each $\xi,\eta\in \cK$.
\end{enumerate}

\begin{theorem}\label{projective}   For each
${\bf \Psi}=(\Psi_1,\ldots, \Psi_k)\in \text{\rm Aut}({\bf B_n})$ with $\Psi_i=(\Psi_{i,1},\ldots, \Psi_{i, n_i})$, there is a unitary operator $U_{\bf \Psi}\in B(F^2(H_{n_1})\otimes \cdots\otimes F^2(H_{n_k}))$ satisfying the relations
$$
\Psi_{i,j}({\bf S})=U_{\bf \Psi}^* {\bf S}_{i,j} U_{\bf \Psi},\qquad i\in \{1,\ldots, k\}, j\in \{1,\ldots, n_i\},$$
   and
$$U_{\bf \Psi}  U_{\bf \Phi}=c_{({\bf \Psi},{\bf \Phi})} U_{\bf \Psi\circ \Phi},\qquad {\bf \Phi,\Psi}\in \text{\rm Aut}({\bf B_n})
$$
 for some complex number  $c_{({\bf \Phi},{\bf \Psi})}\in \TT$.
Moreover, the map ${\bf \Psi}\to U_{\bf \Psi}^*$ is continuous from the uniform topology to the strong operator topology, and
    the map
    $$\pi: \text{\rm Aut}({\bf B_n})\to B(F^2(H_{n_1})\otimes \cdots\otimes F^2(H_{n_k}))\ \text{ defined by } \ \pi({\bf \Psi}):=U_{\bf \Psi}
    $$ is
a
projective representation  of the automorphism group $\text{\rm Aut}({\bf B_n})$.
\end{theorem}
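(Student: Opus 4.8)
The plan is to realize $U_{\bf \Psi}$ as the canonical intertwiner produced by the unitary equivalence $\hat{\bf \Psi}\sim{\bf S}$ of Theorem \ref{structure2}, and then to read off the three defining properties of a projective representation from the Berezin-transform calculus developed in Sections 4--5. First I would record existence and the intertwining relation. By Theorem \ref{structure2}(vi) the boundary function $\hat{\bf \Psi}$ is unitarily equivalent to the universal model ${\bf S}$, and by Theorem \ref{auto-C*}, Theorem \ref{unitar-shift}, and Corollary \ref{unit-implement} this equivalence is implemented by the (unitary) noncommutative Berezin kernel: there is a unitary $U_{\bf \Psi}$ on $F^2(H_{n_1})\otimes\cdots\otimes F^2(H_{n_k})$ with $\Psi_{i,j}({\bf S})=\hat\Psi_{i,j}=U_{\bf \Psi}^*{\bf S}_{i,j}U_{\bf \Psi}$ for all $i,j$, so that $\text{\rm Ad}(U_{\bf \Psi}^*)$ restricted to $C^*({\bf S})$ coincides with the automorphism $\boldsymbol\cB_{\hat\Psi}$ of Theorem \ref{auto-C*}, i.e. $\boldsymbol\cB_{\hat\Psi}[g]=U_{\bf \Psi}^*gU_{\bf \Psi}$. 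Setting $\xi_{\bf \Psi}:=U_{\bf \Psi}^*1$ and using $U_{\bf \Psi}^*{\bf S}_{(\alpha)}=\hat\Psi_{(\alpha)}U_{\bf \Psi}^*$, I obtain the explicit formula $U_{\bf \Psi}^*e_{(\alpha)}=\hat\Psi_{(\alpha)}\xi_{\bf \Psi}$ on the orthonormal basis $\{e_{(\alpha)}\}$, which will later drive the continuity argument.

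For the cocycle relation I would combine $\boldsymbol\cB_{\hat\Psi}[g]=U_{\bf \Psi}^*gU_{\bf \Psi}$ with Corollary \ref{Berezin-comp2}, namely $\boldsymbol\cB_{\widehat{\Psi\circ\Phi}}[g]=\boldsymbol\cB_{\hat\Phi}[\boldsymbol\cB_{\hat\Psi}[g]]$. This yields $U_{{\bf \Psi}\circ{\bf \Phi}}^*\,g\,U_{{\bf \Psi}\circ{\bf \Phi}}=(U_{\bf \Psi}U_{\bf \Phi})^*\,g\,(U_{\bf \Psi}U_{\bf \Phi})$ for every $g\in C^*({\bf S})$, so that $U_{\bf \Psi}U_{\bf \Phi}$ and $U_{{\bf \Psi}\circ{\bf \Phi}}$ implement the same automorphism of $C^*({\bf S})$. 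A short rearrangement shows that $U_{{\bf \Psi}\circ{\bf \Phi}}(U_{\bf \Psi}U_{\bf \Phi})^*$ commutes with every element of $C^*({\bf S})$; since $C^*({\bf S})$ is irreducible (it contains the compact operators, as recalled after Theorem \ref{auto-C*}), this unitary is a scalar $c_{({\bf \Psi},{\bf \Phi})}\in\TT$, which is precisely the asserted relation $U_{\bf \Psi}U_{\bf \Phi}=c_{({\bf \Psi},{\bf \Phi})}U_{{\bf \Psi}\circ{\bf \Phi}}$. The normalization $\pi(id)=U_{id}=I$ then follows at once, because for ${\bf \Psi}=id$ one has $\hat{\bf \Psi}={\bf S}$ and $\xi_{id}=1$, whence $U_{id}^*e_{(\alpha)}={\bf S}_{(\alpha)}1=e_{(\alpha)}$.

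The remaining and main point is the continuity of ${\bf \Psi}\mapsto U_{\bf \Psi}^*$ from the metric $d_{\bf B_n}$ to the strong operator topology. If ${\bf \Psi}_m\to{\bf \Psi}$ in $d_{\bf B_n}$, then $\|{\bf \Psi}_m-{\bf \Psi}\|_\infty\to0$; since $\|\hat G_{i,j}\|\le\|G\|_\infty$ this gives $\hat\Psi_{m,i,j}\to\hat\Psi_{i,j}$ in norm, and hence $\hat\Psi_{m,(\alpha)}\to\hat\Psi_{(\alpha)}$ in norm for every $(\alpha)$. In view of the formula $U_{\bf \Psi}^*e_{(\alpha)}=\hat\Psi_{(\alpha)}\xi_{\bf \Psi}$, the uniform bound $\|U_{\bf \Psi}^*\|=1$, and $\|\hat\Psi_{(\alpha)}\|\le1$, it suffices to prove $\xi_{{\bf \Psi}_m}\to\xi_{\bf \Psi}$. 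Here I would identify $\xi_{\bf \Psi}$ intrinsically: since $\hat{\bf \Psi}$ is a pure doubly commuting tuple of isometries with $\text{\rm rank}\,{\bf \Delta_{\hat\Psi}}=1$ (Theorem \ref{structure2}(ii),(v),(vi)), the defect ${\bf \Delta_{\hat\Psi}}(I)$ is the rank-one orthogonal projection onto the joint wandering space $\CC\xi_{\bf \Psi}$, and the explicit Berezin-kernel expression for the range vector shows $\left<\xi_{\bf \Psi},1\right>>0$; thus $\xi_{\bf \Psi}$ is the unique unit vector in the range of ${\bf \Delta_{\hat\Psi}}(I)$ with positive vacuum component. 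Because ${\bf X}\mapsto{\bf \Delta_X}(I)$ is a norm-continuous polynomial expression in the entries of ${\bf X}$ and their adjoints, $\hat{\bf \Psi}_m\to\hat{\bf \Psi}$ forces ${\bf \Delta}_{\hat\Psi_m}(I)\to{\bf \Delta_{\hat\Psi}}(I)$ in norm, and convergence of rank-one projections with this normalization gives $\xi_{{\bf \Psi}_m}\to\xi_{\bf \Psi}$. Assembling these facts, $U_{{\bf \Psi}_m}^*e_{(\alpha)}\to U_{\bf \Psi}^*e_{(\alpha)}$ on the basis, and uniform boundedness of the unitaries upgrades this to $U_{{\bf \Psi}_m}^*\to U_{\bf \Psi}^*$ in the strong operator topology. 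Taking adjoints yields the weak continuity of ${\bf \Phi}\mapsto\left<\pi({\bf \Phi})\xi,\eta\right>$, i.e. condition (iii), so that $\pi({\bf \Psi}):=U_{\bf \Psi}$ is a projective representation. The delicate step is exactly this continuous, phase-coherent selection of the wandering vector $\xi_{\bf \Psi}$, which is what the positivity normalization $\left<\xi_{\bf \Psi},1\right>>0$ together with the norm-continuity of the rank-one defect projection are designed to secure.
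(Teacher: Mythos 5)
Your proposal is correct and follows essentially the same route as the paper: $U_{\bf \Psi}$ is the unitary Berezin kernel (composed with the identification of the one-dimensional defect space) from Theorem \ref{structure2} and Corollary \ref{unit-implement}, the cocycle identity comes from Corollary \ref{Berezin-comp2} together with the irreducibility of $C^*({\bf S})$, and SOT-continuity is read off from the action $U_{\bf \Psi}^*e_{(\alpha)}=\hat\Psi_{(\alpha)}\xi_{\bf \Psi}$ on the orthonormal basis. The only cosmetic difference is that you obtain $\xi_{{\bf \Psi}_m}\to\xi_{\bf \Psi}$ via norm convergence of the rank-one defect projections and the positivity normalization $\left<\xi_{\bf \Psi},1\right>=\boldsymbol\Delta_{\boldsymbol\lambda}^{1/2}>0$, whereas the paper converges the explicit vector $\boldsymbol\Delta_{\boldsymbol\lambda}^{-1/2}{\bf \Delta}_{\hat{\bf \Psi}}(I)(1)$ directly; both are valid.
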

\begin{proof}

Let ${\bf \Psi}=(\Psi_1,\ldots, \Psi_n)\in \text{\rm Aut}({\bf B_n})$ and let $\widehat {\bf \Psi}=(\widehat\Psi_1,\ldots, \widehat\Psi_n)$ be its  boundary
function with respect to the universal model ${\bf S}$. According to Theorem \ref{structure2},
$\widehat {\bf \Psi}$ is a pure element in the polyball $ {\bf B_n}(\otimes_{i=1}^k F^2(H_{n_i}))^-$
  and
$\widehat{\Psi}_i=(\widehat{\Psi}_{i,1},\ldots, \widehat{\Psi}_{i,n_i})$  is an  isometry with entries in the noncommutative disk algebra generated by ${\bf S}_{i,1},\ldots, {\bf S}_{i,n_i}$ and the identity.
Moreover, $\text{\rm rank}\ {\bf \Delta_{\widehat\Psi}}=1$ and $\hat{\bf \Psi}$ is unitarily equivalent to the universal model ${\bf S}$. Combining these results with  Theorem \ref{unitar-shift} and Corollary \ref{unit-implement}, we deduce that the noncommutative Berezin kernel ${\bf K}_{\widehat {\bf \Psi}}$ is a unitary operator. Moreover, in this case,   we have
\begin{equation}
\label{BeW}
\widehat\Psi_{i,j}={\bf K}^*_{\widehat {\bf \Psi}} ( {\bf S}_{i,j}\otimes I_{\cD_{\widehat {\bf \Psi}}}){\bf K}_{\widehat {\bf \Psi}}={\bf K}^*_{\widehat {\bf \Psi}}\tilde W {\bf S}_{i,j} \tilde W^* {\bf K}_{\widehat {\bf \Psi}}, \qquad i\in \{1,\ldots, k\}, j\in \{1,\ldots, n_i\},
\end{equation}
where $\tilde W_{\bf \Psi}:\otimes_{i=1}^k F^2(H_{n_i})\to (\otimes_{i=1}^k F^2(H_{n_i}))\otimes \CC v_0$ is the unitary operator defined by
$$
\tilde W_{\bf \Psi}g:= g\otimes v_0,\qquad g\in \otimes_{i=1}^k F^2(H_{n_i}),
$$
 and the   defect space $\cD_{\widehat {\bf \Psi}}=\CC v_0$ for some vector $v_0\in \otimes_{i=1}^k F^2(H_{n_i})$ with $\|v_0\|=1$.

 According to Theorem \ref{structure}, if  $\boldsymbol\Psi\in Aut({\bf B_n}(\cH))$ and $\boldsymbol\lambda=(\lambda_1,\ldots, \lambda_k)=\boldsymbol\Psi^{-1}(0)$, then there are  unique unitary operators $U_i\in B(\CC^{n_i})$, $i\in \{1,\ldots, k\}$, and a unique permutation $\sigma\in \cS_k$ with $n_{\sigma(i)}=n_i$  such that
 $$
 \boldsymbol\Psi=p_\sigma\circ \boldsymbol\Phi_{\bf U}\circ \boldsymbol\Psi_{\boldsymbol\lambda},
 $$
 where ${\bf U}:=U_1\oplus\cdots \oplus U_k$ and $\boldsymbol\Psi_{\boldsymbol\lambda}:=(\Psi_{\lambda_1},\ldots, \Psi_{\lambda_k})$.
Moreover, we have
$$
 {\bf \Delta}_{\widehat {\bf \Psi}}(I)=
 {\bf \Delta}_{\widehat {\bf \Psi}_{\boldsymbol \lambda}}(I)
 =\boldsymbol\Delta_{\boldsymbol\lambda}\left[\prod_{i=1}^k \left(I_\cH-\sum_{j=1}^k \bar{\lambda}_{i,j} {\bf S}_{i,j}\right)^{-1}\right]P_\CC
 \left[\prod_{i=1}^k \left(I_\cH-\sum_{j=1}^k {\lambda}_{i,j} {\bf S}_{i,j}^*\right)^{-1}\right],
 $$
 where  $\boldsymbol\Delta_{\boldsymbol\lambda}=\prod_{i=1}^k(1-\|\lambda_i\|^2_2)$.
Hence we deduce that
$$\|{\bf \Delta}_{\widehat {\bf \Psi}}(I)^{1/2}(1)\|^2=\left\|\boldsymbol\Delta_{\boldsymbol\lambda}^{1/2}P_\CC
 \left[\prod_{i=1}^k \left(I_\cH-\sum_{j=1}^k {\lambda}_{i,j} {\bf S}_{i,j}^*\right)^{-1}\right](1)\right\|^2=\boldsymbol\Delta_{\boldsymbol\lambda}.
$$
Let $v_0:=\boldsymbol\Delta_{\boldsymbol\lambda}^{-1/2} {\bf \Delta}_{\widehat {\bf \Psi}}(I)^{1/2}(1)\in \otimes_{i=1}^k F^2(H_{n_i})$  and note that $\|v_0\|=1$.
Now, relation \eqref{BeW} becomes
$$
\widehat \Psi_{i,j}=\Psi_{i,j}({\bf S})=U_{\bf \Psi}^* {\bf S}_{i,j} U_{\bf \Psi},\qquad i\in \{1,\ldots, k\}, j\in \{1,\ldots, n_i\},
$$
where $U_{\bf \Psi}:=\tilde W^* {\bf K}_{\widehat {\bf \Psi}}$.
  If ${\bf \Phi}\in
\text{\rm Aut}({\bf B_n})$ with ${\bf \Phi}=(\Phi_1,\ldots, \Phi_k)$ and $\Phi_{i}=(\Phi_{i,1},\ldots, \Phi_{i,n_i})$, then the relation above written for ${\bf \Psi}\circ {\bf \Phi}$  shows that
\begin{equation}
\label{comp}
({\bf \Psi}_{i,j}\circ {\bf \Phi})({\bf S})=(\widehat{{\bf \Psi}\circ {\bf \Phi}})_{i,j}
=U_{{\bf \Psi}\circ {\bf \Phi}}{\bf S}_{i,j} U_{{\bf \Psi}\circ {\bf \Phi}}.
\end{equation}

On the other hand, due to Corollary \ref{Berezin-comp2},
$$
\boldsymbol\cB_{\widehat{\Psi\circ \Phi}}[g]=(\boldsymbol{\cB}_{\hat \Phi} \boldsymbol {\cB}_{\hat \Psi})[g]
$$
for any $g$ in the Cuntz-Toeplitz algebra $ C^*({\bf S})$. In
particular, when $g={\bf S}_{i,j}$,  we obtain
$${\bf K}_{\widehat{{\bf \Psi}\circ
{\bf \Phi}}}^*({\bf S}_{i,j}\otimes I_{\cD_{{\widehat{{\bf \Psi}\circ
{\Phi}}}}}){\bf K}_{\widehat{{\bf \Psi}\circ {\bf \Phi}}} ={\bf K}_{\widehat{ {\bf \Phi}}}^*\left\{[
{\bf K}_{\widehat{{\bf \Psi} }}^*({\bf S}_{i,j}\otimes I_{\cD_{\widehat{{\bf \Psi}
}}}){\bf K}_{\widehat{\bf \Psi}}]\otimes I_{\cD_{\widehat{\bf \Phi }}}\right\}
{\bf K}_{\widehat{ \bf \Phi}}.
$$
Hence, and using relation \eqref{comp}, we deduce that
$$
 ({\bf \Psi}_{i,j}\circ {\bf \Phi})({\bf S})=(\widehat{{\bf \Psi}\circ {\bf \Phi}})_{i,j}
 = U_{\bf \Phi}^* U_{\bf \Psi}^* {\bf S}_{i,j} U_{\bf \Psi} U_{\bf \Phi},\qquad i\in \{1,\ldots, k\}, j\in \{1,\ldots, n_i\}.
 $$
Combining this relation with \eqref{comp}, we deduce that
$$
U_{{\bf \Psi}\circ {\bf \Phi}}{\bf S}_{i,j} U_{{\bf \Psi}\circ {\bf \Phi}}=U_{\bf \Phi}^* U_{\bf \Psi}^* {\bf S}_{i,j} U_{\bf \Psi} U_{\bf \Phi}
$$
which is equivalent to
$$
 U_{\bf \Psi}U_{\bf \Phi}U_{{\bf \Psi}\circ {\bf \Phi}}^*{\bf S}_{i,j}={\bf S}_{i,j} U_{\bf \Psi}U_{\bf \Phi}U_{{\bf \Psi}\circ {\bf \Phi}}^*.
$$
Since $C^*({\bf S})$ is irreducible and $U_{\bf \Psi}U_{\bf \Phi}U_{{\bf \Psi}\circ {\bf \Phi}}^*$ is a unitary operator, we have
$U_{\bf \Psi}U_{\bf \Phi}U_{{\bf \Psi}\circ {\bf \Phi}}^*={c_{({\bf \Psi},{\bf \Phi})}} I$ for some complex number with
$|c_{({\bf \Psi},{\bf \Phi})}|=1$. Hence, we deduce that $
U_{\bf \Psi}  U_{\bf \Phi}=c_{({\bf \Psi},{\bf \Phi})} U_{\bf \Psi\circ \Phi}
$ for any ${\bf \Phi,\Psi}\in \text{\rm Aut}({\bf B_n})$.

 Let $\boldsymbol\Psi^{(m)}= (\Psi^{(m)}_1,\ldots, \Psi^{(m)}_k)$, $m\in \NN$, with $\Psi^{(m)}_i=(\Psi^{(m)}_{i,1},\ldots, \Psi^{(m)}_{i,n_i})$
and $\boldsymbol\Psi= (\Psi_1,\ldots, \Psi_k)$, $m\in \NN$, with $\Psi_i=(\Psi_{i,1},\ldots, \Psi_{i,n_i})$ be free holomorphic automorphisms of the
 noncommutative polyball ${\bf B_n}(\cH)$. Assume that $\boldsymbol\Psi^{(m)}\to \boldsymbol\Psi$ in the uniform norm. Then, for each $i\in \{1,\ldots, k\}$ and  $j\in \{1,\ldots, n_i\}$,
 $\widehat\Psi^{(m)}_{i,j}\to \widehat\Psi_{i,j}$ in the operator norm topology.

   Now consider the standard representations $\boldsymbol\Psi^{(m)}=p_{\sigma^{(m)}}\circ\boldsymbol\Phi_{{\bf U}^{(m)}}\circ \boldsymbol\Psi_{\boldsymbol\lambda^{(m)}}$
and $\boldsymbol\Psi=p_\sigma\circ \boldsymbol\Phi_{\bf U}\circ \boldsymbol\Psi_{\boldsymbol\lambda}$. Since  $\boldsymbol\Psi^{(m)}(0)={\boldsymbol \lambda}^{(m)}$ and $\boldsymbol\Psi(0)={\boldsymbol \lambda}$, we deduce that
$\|\boldsymbol\lambda^{(m)}\|_2\to \|\boldsymbol \lambda\|_2$ as $m\to\infty$.
Given $\epsilon>0$ and $x=\sum_{(\alpha)\in \FF_{n_1}^+\times\cdots \times \FF_{n_k}^+} a_{(\alpha)} e_{(\alpha)}\in \otimes _{i=1}^k F^2(H_{n_i})$,  let $q\in \NN$ be such that
\begin{equation}
\label{xe}
\|x-\sum_{{(\alpha)\in \FF_{n_1}^+\times\cdots \times \FF_{n_k}^+}\atop{|\alpha_1|+\cdots |\alpha_k|\leq q}} a_{(\alpha)} e_{(\alpha)}\|<\frac{\epsilon}{4}.
\end{equation}
Since $U_{{\bf \Psi}^{(m)}}:=\tilde W_{{\bf \Psi}^{(m)}}^* {\bf K}_{\widehat{\bf \Psi}^{(m)}}$ and
$\tilde W_{{\bf \Psi}^{(m)}}:\otimes_{i=1}^k F^2(H_{n_i})\to (\otimes_{i=1}^k F^2(H_{n_i}))\otimes \cD_{\widehat{\bf \Psi}^{(m)}}$ is the unitary operator defined by
$$
\tilde W_{{\bf \Psi}^{(m)}}g:= g\otimes \boldsymbol\Delta_{\boldsymbol\lambda^{(m)}}^{-1/2} {\bf \Delta}_{\widehat {\bf \Psi}^{(m)}}(I)^{1/2}(1),\qquad g\in \otimes_{i=1}^k F^2(H_{n_i}),
$$
we can use the properties of the noncommutative Berezin kernel  to deduce that
\begin{equation*}
\begin{split}
\sum_{{(\alpha)\in \FF_{n_1}^+\times\cdots \times \FF_{n_k}^+}\atop{|\alpha_1|+\cdots |\alpha_k|\leq q}} a_{(\alpha)} U^*_{{\bf \Psi}^{(m)}} e_{(\alpha)}
&=\sum_{{(\alpha)\in \FF_{n_1}^+\times\cdots \times \FF_{n_k}^+}\atop{|\alpha_1|+\cdots |\alpha_k|\leq q}} a_{(\alpha)}{\bf K}_{\widehat{\bf \Psi}^{(m)}}^* \tilde W_{{\bf \Psi}^{(m)}}e_{(\alpha)}\\
&=\sum_{{(\alpha)\in \FF_{n_1}^+\times\cdots \times \FF_{n_k}^+}\atop{|\alpha_1|+\cdots |\alpha_k|\leq q}} a_{(\alpha)}{\bf K}_{\widehat{\bf \Psi}^{(m)}}^*
\left(e_{(\alpha)}\otimes
\boldsymbol\Delta_{\boldsymbol\lambda^{(m)}}^{-1/2} {\bf \Delta}_{\widehat {\bf \Psi}^{(m)}}(I)^{1/2}(1)\right)\\
&=
\sum_{{(\alpha)\in \FF_{n_1}^+\times\cdots \times \FF_{n_k}^+}\atop{|\alpha_1|+\cdots |\alpha_k|\leq q}} a_{(\alpha)}[{\widehat{\bf \Psi}^{(m)}}]_{(\alpha)}
\boldsymbol\Delta_{\boldsymbol\lambda^{(m)}}^{-1/2} {\bf \Delta}_{\widehat {\bf \Psi}^{(m)}}(I)(1).
\end{split}
\end{equation*}
A similar relation holds if  we replace  $\boldsymbol\Psi^{(m)}$ with $\boldsymbol\Psi$.
Since,  for each $i\in \{1,\ldots, k\}$ and  $j\in \{1,\ldots, n_i\}$,
 $\widehat\Psi^{(m)}_{i,j}\to \widehat\Psi_{i,j}$ in the operator norm topology, and
$\|\boldsymbol\lambda^{(m)}\|_2\to \|\boldsymbol \lambda\|_2$ as $m\to\infty$,  there is $N\in \NN$ such that
\begin{equation}
\label{aUaU}
\left\|\sum_{{(\alpha)\in \FF_{n_1}^+\times\cdots \times \FF_{n_k}^+}\atop{|\alpha_1|+\cdots |\alpha_k|\leq q}} a_{(\alpha)} U^*_{{\bf \Psi}^{(m)}} e_{(\alpha)} -\sum_{{(\alpha)\in \FF_{n_1}^+\times\cdots \times \FF_{n_k}^+}\atop{|\alpha_1|+\cdots |\alpha_k|\leq q}} a_{(\alpha)} U^*_{{\bf \Psi}} e_{(\alpha)}\right\|<\frac{\epsilon}{2}
\end{equation}
for all $q\geq N$. Using  relations \eqref{xe}, \eqref{aUaU} and the fact that  $U_{{\bf \Psi}^{(m)}}$ and $U_{{\bf \Psi}}$ are unitary operators, one can easily deduce that
\begin{equation*}
\begin{split}
\|U_{{\bf \Psi}^{(m)}}^* x-U_{{\bf \Psi}}^*x\|
 <\epsilon
\end{split}
\end{equation*}
for any $q\geq N$. Therefore  the map ${\bf \Psi}\to U_{\bf \Psi}^*$ is continuous from the uniform topology to the strong operator topology.

To prove the last part of this theorem,  note  that if $\boldsymbol\Psi^{(m)}\to \boldsymbol\Psi$ in  the metric $d_{\bf B_n}$, then  $\boldsymbol\Psi^{(m)}\to \boldsymbol\Psi$ in the uniform norm and,  using  the first part of the theorem, we can complete the proof.
\end{proof}

\bigskip

       %

      \end{document}